\documentclass[a4paper,11pt]{article}
\usepackage[utf8]{inputenc}
\usepackage[T1]{fontenc}
\usepackage{textcomp}
\usepackage{fancyhdr}
\usepackage{fullpage}
\usepackage{lmodern}
\usepackage{amsmath,amssymb,bm,bbm}
\usepackage{algorithm}
\usepackage{algorithmic}
\usepackage{float}
\usepackage{graphicx}
\usepackage{extarrows}
\usepackage{caption}
\usepackage{graphicx, subfig}
\usepackage{titling}
\usepackage{yhmath}
\usepackage{mathrsfs}
\usepackage{cite}
\usepackage{footnote}
\usepackage{amsthm}
\usepackage{color}
\usepackage{slashed}
\usepackage{verbatim}
\usepackage[colorlinks=true,linkcolor=red,citecolor=blue]{hyperref}
\usepackage{amsfonts,euscript}
\usepackage{latexsym, multicol}
\usepackage{epstopdf}
\usepackage{psfrag}
\usepackage{mathrsfs}
\usepackage{xcolor}
\usepackage{comment}
\usepackage{authblk}
\usepackage{indentfirst}
\usepackage{lipsum}
\DeclareFontFamily{U}{mathx}{\hyphenchar\font45}
\DeclareFontShape{U}{mathx}{m}{n}{
<5> <6> <7> <8> <9> <10>
<10.95> <12> <14.4> <17.28> <20.74> <24.88>
mathx10}{}
\DeclareSymbolFont{mathx}{U}{mathx}{m}{n}
\DeclareFontSubstitution{U}{mathx}{m}{n}
\DeclareMathAccent{\widecheck}{0}{mathx}{"71}
\numberwithin{equation}{section}
\allowdisplaybreaks[3]
\def\uti{\widetilde{u}}
\renewcommand{\c}{\cdot}

\newcommand{\unl}{\underline{L}}

\newcommand{\pa}{\partial}
\newcommand{\pr}{\pa}
\newcommand{\unu}{{\underline{u}}}
\newcommand{\unc}{{\underline{C}}}
\newcommand{\una}{\underline{\alpha}}
\newcommand{\unb}{\underline{\beta}}
\newcommand{\bg}{\mathbf{g}}
\newcommand{\mo}{\mathcal{O}}
\newcommand{\unchi}{\underline{\chi}}
\newcommand{\uome}{\underline{\omega}}
\newcommand{\ue}{\underline{\eta}}
\newcommand{\sld}{d}
\def\aac{\mathring{\aa}}
\def\aas{\underline{\slashed{\a}}}
\def\rg{\b^{(1)}}
\def\mobr{\breve{\mo}}
\def\sfr{\mathfrak{s}}
\def\sk{\sfr}
\def\ovla{\overset{\circ}{\la}}
\def\Rfk{\mathfrak{R}}
\newcommand{\kk}{\mathcal{K}}
\newcommand{\mr}{\mathcal{R}}
\newcommand{\ur}{\underline{\mr}}
\newcommand{\uo}{\underline{\mo}}

\newcommand{\chr}{\widecheck{\rho}}
\newcommand{\M}{\mathcal{M}}
\newcommand{\D}{\mathbf{D}}

\newcommand{\uu}{\underline{\mu}}
\newcommand{\uf}{\underline{F}}

\newcommand{\tio}{{\widetilde{\mo}}}

\newcommand{\tir}{{\widetilde{\mr}}}
\newcommand{\cuv}{{C_u^V}}
\newcommand{\ucuv}{{\unc_\unu^V}}
\newcommand{\dd}{{\mathfrak{d}}}

\newcommand{\Ric}{{\ric}}
\DeclareMathOperator{\sRic}{Ric}
\newcommand{\g}{\bg}
\newcommand{\R}{{\mathbf{R}}}
\newcommand{\K}{\mathbf{K}}
\def\ac{\mathring{\a}}
\def\as{\slashed{\a}}
\def\hot{\widehat{\otimes}}
\def\uobr{\breve{\uo}}
\def\IIbr{\breve{\II}_0}
\def\fb{\underline{f}}

\def\Lb{\unl}
\def\lot{l.o.t.}
\def\II{\mathcal{I}}
\def\la{\lambda}
\def\rhoc{\chr}
\def\Rk{\mathfrak{R}_0^2}
\def\fc{\widecheck{f}}

\def\MM{\mathcal{M}}
\def\ub{\unu}
\def\chib{\unchi}
\def\Cb{\unc}
\def\omb{\uome}
\def\om{\omega}
\def\ze{\zeta}
\def\Om{\Omega}
\def\etab{\ue}
\def\aa{\una}
\def\bb{\unb}
\def\Si{\Sigma}
\def\si{\sigma}
\def\mub{{\underline{\mu}}}
\def\dk{\dd}
\def\dkb{\slashed{\dk}}

\def\ga{\gamma}
\def\Ga{\Gamma}
\def\xib{\underline{\xi}}
\def\a{\alpha}
\def\b{\beta}
\def\hch{\widehat{\chi}}
\def\hchb{\widehat{\chib}}
\def\trch{\tr\chi}
\def\trchb{\tr\chib}
\def\trchc{\widecheck{\trch}}
\def\trchbc{\widecheck{\trchb}}

\def\de{\delta}
\def\De{\Delta}
\def\nab{\nabla}
\def\ov{\overline}
\def\omc{\widecheck{\om}}
\def\ombc{\widecheck{\omb}}
\def\bbb{\underline{b}}
\def\Gab{\Ga_b}
\def\Gag{\Ga_g}
\def\Gaa{\Ga_a}
\def\ep{\epsilon}
\def\les{\lesssim}
\def\RR{\mr}
\def\RRb{\ur}
\def\OO{\mo}

\def\sic{\widecheck{\si}}

\def\upa{\wideparen{u}}

\def\Omc{\widecheck{\Om}}
\def\omtrchc{\widecheck{\Om\trch}}
\def\omtrchbc{\widecheck{\Om\trchb}}
\DeclareMathOperator{\curl}{curl}
\DeclareMathOperator{\err}{Err}

\DeclareMathOperator{\grad}{grad}

\DeclareMathOperator{\tr}{tr}
\DeclareMathOperator{\sdiv}{div}
\def\bdiv{\mathbf{Div}}
\def\ric{\mathbf{Ric}}
\DeclareMathOperator{\osc}{Osc}
\newtheorem{thm}{Theorem}[section]
\newtheorem{prop}[thm]{Proposition}
\newtheorem{lem}[thm]{Lemma}
\newtheorem{cor}[thm]{Corollary}
\newtheorem{rk}[thm]{Remark}
\newtheorem{df}[thm]{Definition}

\title{Stability of Minkowski spacetime in exterior regions}
\author{Dawei Shen\footnote{Email adress: dawei.shen@polytechnique.edu \par \indent\hspace{0.26cm} Laboratoire Jacques-Louis Lions, Sorbonne Universit\'e, 75252 Paris, France}}
\begin{document}
\maketitle
\begin{abstract}
In 1993, the global stability of Minkowski spacetime has been proven in the celebrated work of Christodoulou and Klainerman \cite{Ch-Kl} in a maximal foliation. In 2003, Klainerman and Nicol\`o \cite{kn} gave a second proof of the stability of Minkowski in the case of the exterior of an outgoing null cone. In this paper, we give a new proof of \cite{kn}. Compared to \cite{kn}, we reduce the number of derivatives needed in the proof, simplify the treatment of the last slice, and provide a unified treatment of the decay of initial data. Also, concerning the treatment of curvature estimates, we replace the vectorfield method used in \cite{kn} by the $r^p$--weighted estimates of Dafermos and Rodnianski \cite{Da-Ro}.
\end{abstract}
{\bf Keywords:} \textit{double null foliation, geodesic foliation, Minkowski stability, Peeling properties, $r^p$--weighted estimates}
\tableofcontents
\section{Introduction}\label{sec6}
\subsection{Einstein vacuum equations and the Cauchy problem}
A Lorentzian $4$--manifold $(\MM,\g)$ is called a vacuum spacetime if it solves the Einstein vacuum equations:
\begin{equation}\label{EVE}
    \Ric(\g)=0\quad \mbox{ in }\MM,
\end{equation}
where $\Ric$ denotes the Ricci tensor of the Lorentzian metric $\g$. The Einstein vacuum
equations are invariant under diffeomorphisms, and therefore one considers equivalence
classes of solutions. Expressed in general coordinates, (1.1) is a non-linear geometric coupled system of partial differential equations of order 2 for $\g$. In suitable coordinates, for example so-called wave coordinates, it can be shown that \eqref{EVE} is hyperbolic and hence admits an initial value formulation. \\ \\
The corresponding initial data for the Einstein vacuum equations is given by specifying a
triplet $(\Si,g,k)$ where $(\Si,g)$ is a Riemannian $3$--manifold and $k$ is the traceless symmetric $2$--tensor on $\Si$ satisfying the constraint equations:
\begin{align}
    \begin{split}\label{constraintk}
        R&=|k|^2-(\tr k)^2,\\
        D^j k_{ij}&=D_i(\tr k),
    \end{split}
\end{align}
where $R$ denotes the scalar curvature of $g$, $D$ denotes the Levi-Civita connection of $g$ and
\begin{align*}
    |k|^2:=g^{ad}g^{bc}k_{ab}k_{cd},\qquad\quad \tr k:=g^{ij}k_{ij}.
\end{align*}
In the future development $(\MM,\g)$ of such initial data $(\Si,g,k)$, $\Si\subset \MM$ is a spacelike hypersurface with induced metric $g$ and second fundamental form $k$.\\ \\
The seminal well-posedness results for the Cauchy problem obtained in \cite{cb,cbg} ensure that for any smooth Cauchy data, there exists a unique smooth maximal globally hyperbolic development $(\MM,\g)$ solution of Einstein equations \eqref{EVE} such that $\Si\subset \MM$ and $g$, $k$ are respectively the first and second fundamental forms of $\Si$ in $\MM$. \\ \\
The prime example of a vacuum spacetime is Minkowski space:
\begin{equation*}
    \MM=\mathbb{R}^4,\qquad \g=-dt^2+(dx^1)^2 +(dx^2)^2+(dx^3)^2,
\end{equation*}
for which Cauchy data are given by
\begin{equation*}
    \Si=\mathbb{R}^3,\qquad g=(dx^1)^2+(dx^2)^2+(dx^3)^2,\qquad k=0.
\end{equation*}
In the present work, we consider the problem of the stability of Minkowski spacetime and start with reviewing the literature on this problem.
\subsection{Previous works of the stability of Minkowski spacetime}\label{ssec6.1}
In 1993, Christodoulou and Klainerman \cite{Ch-Kl} proved the stability of Minkowski for the Einstein-vacuum equations, a milestone in the domain of mathematical general relativity. In 2003, Klainerman and Nicol\`o \cite{kn} gave a second proof of this result in the exterior of an outgoing cone. Moreover, Klainerman and Nicol\`o \cite{kncqg} showed that under stronger asymptotic decay and regularity properties than those used in \cite{Ch-Kl,kn}, asymptotically flat initial data sets lead to solutions of the Einstein vacuum equations which have strong peeling properties. Given that the goal of this paper is to provide a new proof of the stability of Minkowski in the exterior region, we will state the results of \cite{Ch-Kl} and \cite{kn} in section \ref{ssec6.2}.\\ \\
The proofs in \cite{Ch-Kl} and \cite{kn} are based respectively on the maximal foliation and the double null foliation. Lindblad and Rodnianski \cite{lr1,lr2} gave a new proof of the stability of the Minkowski spacetime using \emph{wave-coordinates} and showing that the Einstein equations verify the so called \emph{weak null structure} in that gauge. Bieri \cite{bieri} gave a proof requiring less derivative and less vectorfield compared to \cite{Ch-Kl}. Huneau \cite{huneau} proved the nonlinear stability of Minkowski spacetime with a translation Killing field using generalised wave-coordinates. Using the framework of Melrose’s b-analysis, Hintz and Vasy \cite{hintz} reproved the stability of Minkowski space. Graf \cite{graf} proved the global nonlinear stability of Minkowski space in the context of the spacelike-characteristic Cauchy problem for Einstein vacuum equations, which together with \cite{kn} allows to reobtain \cite{Ch-Kl}.\\ \\
There are also stability results concerning Einstein's equations coupled with non trivial matter fields:
\begin{itemize}
 \item Einstein-Maxwell system: Zipser \cite{zipser} extended the framework of \cite{Ch-Kl} to show the stability of the Minkowski spacetime solution to the Einstein–Maxwell system. In \cite{lo09}, Loizelet used the framework of \cite{lr1,lr2} to demonstrate the stability of the Minkowski spacetime solution of the Einstein-scalar field-Maxwell system in $(1+n)$-dimensions $(n\geq 3)$. Speck \cite{speck} gave a proof of the global nonlinear stability of the $(1+3)$-dimensional Minkowski spacetime solution to the coupled system for a family of electromagnetic fields, which includes the standard Maxwell fields.
\item Einstein-Klein-Gordon system: Lefloch and Ma \cite{lefloch} and Wang \cite{wang} proved the global stability of Minkowski for the Einstein-Klein-Gordon system with initial data coinciding with the Schwarzschild solution with small mass outside a compact set. Ionescu and Pausader \cite{ionescu} proved the global stability of Minkowski for the Einstein-Klein-Gordon system for general initial data.
\item Einstein-Vlasov system: Taylor \cite{taylor} considered the massless case where the initial data for the Vlasov part is compactly supported on the mass shell. Fajman, Joudioux and Smulevici \cite{fajman} considered the massive case where the initial data coincides with Schwarzschild in the exterior region and with compact support assumption only in space on the Vlasov part. Lindblad and Taylor \cite{lt} considered the massive case where the initial data has compact support for the Vlasov part. Bigorgne, Fajman, Joudioux, Smulevici and Thaller \cite{bigo} considered the massless case for general initial data. Wang \cite{wxc} considered the massive case for general initial data.
\end{itemize}
\subsection{Minkowski Stability in \texorpdfstring{\cite{Ch-Kl}}{} and \texorpdfstring{\cite{kn}}{}}\label{ssec6.2}
We recall in this section the results in \cite{Ch-Kl,kn}. First, we recall the definition of a \emph{maximal hypersurface}, which plays an important role in the statements of the main theorems in \cite{Ch-Kl,kn}.
\begin{df}\label{def6.1}
An initial data $(\Si,g,k)$ is posed on a maximal hypersurface if it satisfies 
\begin{equation}
    \tr k=0.
\end{equation}
In this case, we say that $(\Si,g,k)$ is a maximal initial data set, and the constraint equations \eqref{constraintk} reduce to
\begin{equation}
    R=|k|^2,\qquad \sdiv k=0,\qquad \tr k=0.
\end{equation}
\end{df}
We introduce the notion of \emph{$s$--asymptotically flat initial data}.
\begin{df}\label{def6.3}
Let $s$ be a real number with $s>3$. We say that a data set $(\Sigma_0,g,k)$ is $s$--asymptotically flat if there exists a coordinate system $(x^1,x^2,x^3)$ defined outside a sufficiently large compact set such that\footnote{The notation $f=o_l(r^{-m})$ means $\pr^\a f=o(r^{-m-|\a|})$, $|\a|\leq l$.}
\begin{align}
    \begin{split}\label{old1.3}
        g_{ij}=\left(1-\frac{2M}{r}\right)^{-1} dr^2+ r^2 d\si_{\mathbb{S}^2}+o_4(r^{-\frac{s-1}{2}}),\\
        k_{ij}=o_3(r^{-\frac{s+1}{2}}).
    \end{split}
\end{align}
\end{df}
We also introduce the following functional associated to any asymptotically flat initial data set:
\begin{equation}
    J_0(\Sigma_0,g,k):=\sup_{\Sigma_0}\Big((d_0^2+1)^3 |\sRic|^2 \Big)+\int_{\Sigma_0}\sum_{l=0}^3(d_0^2+1)^{l+1}|D^l k|^2 +\int_{\Sigma_0}\sum_{l=0}^1 (d_0^2+1)^{l+3}|D^l B|^2,\label{6.4}
\end{equation}
where $d_0$ is the geodesic distance from a fixed point $O\in \Sigma_0$, and $B_{ij}:=(\curl\widehat{\overline{R}})_{ij}$ is the \emph{Bach tensor}, $\widehat{\overline{R}}$ is the traceless part of $\sRic$. Now, we can state the main theorems of \cite{Ch-Kl} and \cite{kn}.
\begin{thm}[Global stability of Minkowski space \cite{Ch-Kl}]
There exists an $\epsilon>0$ sufficiently small such that if $J_0(\Sigma_0,g,k)\leq \epsilon^2$, then the initial data set $(\Sigma_0,g,k)$, $4$--asymptotically flat (in the sense of Definition \ref{def6.3}) and maximal, has a unique, globally hyperbolic, smooth, geodesically complete solution. This development is globally asymptotically flat, i.e. the Riemann curvature tensor tends to zero along any causal or space-like geodesic. Moreover, there exists a global maximal time function $t$ and an optical function $u$\footnote{An optical function $u$ is a scalar function satisfying $\g^{\a\b}\pr_\a u\pr_\b u=0$.} defined everywhere in an external region.
\end{thm}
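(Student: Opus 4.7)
The plan is to construct the global solution via a continuity/bootstrap argument based on weighted $L^2$ energy estimates for the spacetime curvature. Local well-posedness from \cite{cb,cbg} produces a maximal globally hyperbolic development $(\MM,\g)$; one then sets up a bootstrap region $\MM^*\subseteq\MM$ on which the curvature and connection coefficients satisfy quantitative smallness assumptions (a mild enlargement of the initial smallness $\epsilon$), and shows that these assumptions can be strictly improved. A standard continuity argument then forces $\MM^*=\MM$, and the quantitative decay obtained along the way yields future causal/geodesic completeness.

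\textbf{Geometric setup.} Inside $\MM^*$ one builds two canonical foliations. First, a maximal time function $t$ whose level sets $\Si_t$ satisfy $\tr k=0$; each $\Si_t$ is obtained by solving the prescribed mean curvature equation with Euclidean behaviour at infinity, treated as a perturbation of the Minkowski foliation. Second, on the exterior of a large central region one constructs an outgoing optical function $u$ by solving $\g^{\a\b}\pa_\a u\pa_\b u=0$ with data prescribed via a last-slice argument at infinity. Together $(t,u)$ generate an adapted null frame $(e_3,e_4,e_1,e_2)$ with $e_4$ tangent to the outgoing cones $C_u=\{u=\mathrm{const}\}$, relative to which the Weyl curvature decomposes into $(\a,\b,\rho,\sigma,\bb,\aa)$ and the connection decomposes into the Ricci coefficients $(\chi,\chib,\eta,\etab,\om,\uome,\ze)$ and the lapse/shift data of the foliations.

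\textbf{Energy estimates.} The analytic engine is the Bel-Robinson tensor $Q[\R]$ of the Weyl field contracted with carefully chosen approximate conformal Killing vectorfields: the time translation $T_0=\pa_t$, the scaling $S=t\pa_t+r\pa_r$, the rotations $\mo^{(i)}$ on each sphere, and most importantly the Morawetz multiplier $K_0=(t^2+r^2)\pa_t+2tr\pa_r$. Since $(\MM,\g)$ is only close to Minkowski these fields are only approximately Killing, so the resulting divergence identities produce deformation-tensor error terms that must be reabsorbed using the bootstrap assumptions. Higher-derivative control is obtained by commuting the Bianchi equations with $\mathcal{L}_{T_0}, \mathcal{L}_S, \mathcal{L}_{\mo^{(i)}}$ and reiterating the Bel-Robinson identity on the commuted Bianchi system. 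The output is a hierarchy of weighted $L^2$ curvature norms which, after Sobolev embedding on the $2$-spheres $S_{t,u}$ and on $\Si_t$, translate into peeling-type pointwise decay $|\a|\les r^{-7/2}$, $|\b|\les r^{-7/2}$, $|\rho|,|\sigma|\les r^{-3}$, $|\bb|\les r^{-2}$, $|\aa|\les r^{-1}$, modulo appropriate powers of $u$.

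\textbf{Closing the bootstrap and completeness.} These curvature bounds are fed into the null structure equations --- transport equations along $e_3,e_4$ coupled with elliptic Hodge systems on the $2$-spheres --- to recover improved pointwise control of all Ricci coefficients and of the lapse/shift data of both foliations. Substitution back into the deformation tensors strictly improves the error terms in the energy identity, closing the bootstrap. Geodesic completeness then follows because the decay of the Weyl curvature and of the connection coefficients prevents focusing of null geodesics and guarantees that the affine parameter diverges along every inextendible causal curve. The single most delicate step, as in \cite{Ch-Kl}, is the top-order energy estimate involving the Morawetz multiplier $K_0$: its deformation tensor decays only marginally, and the error terms controlling the slowest-decaying curvature component $\aa$ are genuinely borderline with respect to the available $r$-weights and derivative counts; balancing them against the angular-momentum and scaling contributions is where the whole scheme either closes or fails.
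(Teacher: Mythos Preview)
This theorem is not proved in the paper at all: it is quoted verbatim from \cite{Ch-Kl} in the introductory Section~\ref{ssec6.2}, where the paper is simply recalling the literature (``We recall in this section the results in \cite{Ch-Kl,kn}''). There is no proof of this statement anywhere in the paper, so there is nothing to compare your proposal against.

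Your outline is, in broad strokes, a faithful sketch of the original Christodoulou--Klainerman argument from \cite{Ch-Kl} --- maximal foliation, optical function, Bel--Robinson tensor contracted with the approximate Killing fields $T_0,S,\mathcal{O}^{(i)},K_0$, commutation, and a bootstrap closed via the null structure equations. So as an account of what \cite{Ch-Kl} does, it is reasonable at this level of detail (though some of your peeling exponents are slightly off relative to what \cite{Ch-Kl} actually obtains, e.g.\ $\alpha$ has the best $r$-decay, not the same as $\beta$).

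What the present paper actually proves is the distinct Theorem~\ref{th8.1} (exterior-region stability, following \cite{kn}), and its method is pointedly \emph{different} from your sketch: it replaces the vectorfield/Bel--Robinson machinery by the $r^p$--weighted estimates of Dafermos--Rodnianski applied directly to Bianchi pairs, uses a double null rather than maximal foliation, and uses a geodesic (not canonical) foliation on the last slice. If your intent was to summarize the proof strategy of the paper you were given, you have described the wrong theorem and the wrong method.
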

\begin{thm}[Minkowski Stability in the exterior region \cite{kn}]\label{knmain}
Consider an initial data set $(\Sigma_0,g,k)$, $4$--asymptotically flat and maximal, and assume $J_0(\Sigma_0,g,k)$ is bounded. Then, given a sufficiently large compact set $K\subset\Sigma_0$ such that $\Sigma_0 \setminus K$ is diffeomorphic to $\mathbb{R}^3\setminus \overline{B}_1$, and under additional smallness assumptions, there exists a unique development $(\M,\bg)$ with the following properties:\\
(1) $(\M,\bg)$ can be foliated by a double null foliation $\{C_u\}$ and $\{\unc_\unu\}$ whose outgoing leaves $C_u$ are complete.\\
(2) We have detailed control of all the quantities associated with the double null foliations of the spacetime, see Theorem 3.7.1 of \cite{kn}.
\end{thm}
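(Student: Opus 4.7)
I plan to run a bootstrap/continuity argument on a spacetime region $\M$ foliated by a double null foliation $\{C_u\}$, $\{\unc_\unu\}$. The region will be bounded below by an initial outgoing cone $C_{u_0}$ issuing from the boundary of the compact set $K$ in $\Si_0$, together with the initial hypersurface $\Si_0\setminus K$, and capped by a \emph{last slice} on which one of the two optical functions is initialized. The unknowns are the Ricci coefficients $\{\trch,\hch,\trchb,\hchb,\om,\omb,\eta,\etab,\ze\}$ and the null curvature components $\{\a,\b,\rho,\si,\bb,\aa\}$, measured in $r$-weighted norms matching the Minkowski peeling decay. The bootstrap bounds will be phrased in terms of a single smallness parameter $\ep$ controlled by $J_0(\Si_0,g,k)$ via the initial-layer analysis near $\Si_0\setminus K$.

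\textbf{Transport estimates for Ricci coefficients.} Assuming $L^2$-bootstrap bounds on the curvature, I would integrate the null structure equations, which are first-order transport equations along $L$ or $\Lb$ with a linear curvature source and quadratic Ricci source. Combined with elliptic estimates on the spheres $S_{u,\unu}$ for the trace-free and gauge-dependent parts, this yields improved $L^2$ and pointwise control of all Ricci coefficients with better constants than the bootstrap. The reduction in the number of derivatives advertised in the abstract comes from a careful pairing of angular derivatives with spherical elliptic estimates, so that top-order quantities are not redifferentiated unnecessarily and one can work with roughly one derivative fewer than in \cite{kn}.

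\textbf{$r^p$--weighted curvature estimates.} Instead of the vectorfield method based on an approximate conformal Killing field as in \cite{Ch-Kl,kn}, I would apply the $r^p$--weighted method of Dafermos--Rodnianski \cite{Da-Ro} directly to the null Bianchi pairs $(\a,\b)$, $(\b,(\rho,\si))$, $((\rho,\si),\bb)$, $(\bb,\aa)$. Each pair satisfies schematically $\pa_\Lb \psi_1 = \mathcal{D}\psi_2 + \lot$ and $\pa_L \psi_2 = \mathcal{D}\psi_1+\lot$; multiplying by $r^p \psi_1$ and integrating on a slab bounded by two outgoing cones produces an energy identity with good $L$-flux on the outgoing cones, good $\Lb$-flux on the incoming cone, and a spacetime bulk term of favourable sign for $p\in[0,2]$. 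Iterating across the admissible range of $p$, together with Hardy-type inequalities, generates the full peeling hierarchy for the curvature. Higher-derivative estimates follow by commuting the Bianchi system with $rL$, $\Lb$, and angular momentum operators, the commutator terms being absorbed by the Ricci bounds from the previous step.

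\textbf{Closure, last slice, and the main obstacle.} Combining the two steps, a continuity argument closes the bootstrap for $\ep$ sufficiently small, and completeness of the outgoing cones $C_u$ then follows from the transport equation for $\Om\trch$. The step I expect to be the main obstacle is the top-order balance: the Bianchi estimates lose one angular derivative through their commutators with Ricci coefficients, while the transport estimates for Ricci coefficients lose one derivative through the curvature source term, so one must arrange carefully which quantity absorbs each derivative loss in order to keep the derivative count minimal. A secondary delicate point is the treatment of the last slice: in contrast to the canonical-foliation construction of \cite{kn}, I would set up the last slice directly, with its geometric data controlled by a combination of propagation from the initial data in an initial layer and the bulk bootstrap bounds, yielding the simplification mentioned in the abstract. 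Finally, the unified treatment of initial-data decay is handled by interpreting the $s$--asymptotically flat hypothesis for $s>3$ as a single weighted estimate on $\Si_0\setminus K$, matched to the $r^p$--weighted norms in the bulk without needing a case distinction.
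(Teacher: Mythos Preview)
Your proposal is correct and follows essentially the same architecture as the paper's new proof: a bootstrap on a double null foliation bounded by a last slice, $r^p$--weighted estimates applied to the Bianchi pairs for curvature (Theorem M1), transport plus spherical elliptic estimates for the Ricci coefficients (Theorem M3), and a simplified last-slice construction replacing the canonical foliation of \cite{kn}. The only refinements worth noting are that the paper (i) uses specifically a \emph{geodesic} foliation on the last slice, (ii) splits $\rho$ into its spherical average $\overline{\rho}$ and the average-free part $\widecheck{\rho}$ so that the Bianchi pairs involve $(\widecheck{\rho},\sigma)$ rather than $(\rho,\sigma)$, (iii) runs the $r^p$ estimates with $p$ ranging up to $s$ (so $p=s,4,2,0$ for the four pairs when $s\in[4,6]$) rather than only $p\in[0,2]$, and (iv) commutes only with the angular operator $r\nabla$ at first order, handling $\nabla_4\alpha$ and $\nabla_3\underline{\alpha}$ separately via the Teukolsky equations instead of commuting with $rL,\underline{L}$.
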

\begin{rk}\label{peelingcqg}
\cite{kncqg} extends Theorem \ref{knmain} to the case $s>7$ by assuming sufficient regular initial data $(\Si_0,g,k)$.
\end{rk}
The goal of this paper is to reprove \cite{kn} by a different method and to treat more general asymptotic behavior than $s=4$ in \eqref{old1.3}.
\subsection{Rough version of the main theorem}\label{ssec6.3}
In this section, we state a simple version of our main theorem. For the explicit statement, see Theorem \ref{th8.1}.
\begin{thm}[Main Theorem (first version)]\label{old1.6}
Let $s>3$, and let an initial data set $(\Si_0,g,k)$ which is $s$--asymptotically flat in the sense of Definition \ref{def6.3}. Let a sufficiently large compact set $K\subset \Si_0$ such that $\Si_0\setminus K$ is diffeomorphic to $\mathbb{R}^3\setminus \ov{B}_1$. Assume that we have a smallness conditions in an initial layer region $\kk_{(0)}$\footnote{The initial data layer $\kk_{(0)}$ is defined in section \ref{sssec7.1.1}.} near $\Si_0\setminus K$. Then, there exists a unique future development $(\M,\g)$ in its future domain of dependence with the following properties:
\begin{itemize}
    \item $(\M,\g)$ can be foliated by a double null foliation $(C_u,\unc_\unu)$ whose outgoing leaves $C_u$ are complete for all $u\leq u_0$;
    \item We have detailed control of all the quantities associated with the double null foliations of the spacetime, see Theorem \ref{th8.1}.
\end{itemize}
\begin{rk}
    In the particular case $s=4$, we reobtain the results of \cite{kn}. Also, in the case $s>7$, we reobtain the strong peeling properties of \cite{kncqg}.
\end{rk}
\end{thm}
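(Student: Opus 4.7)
The plan is to prove Theorem \ref{old1.6} by a continuity/bootstrap argument inside a maximal double-null-foliated region, following the overall scheme of Klainerman--Nicol\`o \cite{kn} but replacing the curvature vectorfield method by the $r^p$-weighted estimates of Dafermos--Rodnianski \cite{Da-Ro}. The smallness on the initial data layer $\kk_{(0)}$ supplies initial values of Ricci coefficients and of weighted fluxes of curvature components on an initial outgoing cone reached from $\Si_0\setminus K$, and it is this input that fixes the decay exponents of the bootstrap norms in terms of the parameter $s$.

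Concretely, I would first construct a double null foliation on $\kk_{(0)}$ by solving an eikonal equation from infinity and read off from \eqref{old1.3} weighted bounds for Ricci coefficients $\Ga$ and curvature $\RR$ with $r$-decay governed by $s$. Next I would introduce a bootstrap set $\mathcal{A}$ of parameters for which the maximal Cauchy development admits a double null foliation $(C_u,\unc_\unu)$ satisfying a quantitative smallness bound $\mathcal{N}\leq \ep^{1/2}$ on weighted norms of Ricci coefficients and curvature. Non-emptiness follows from the local-in-$u$ theory applied inside $\kk_{(0)}$, openness is automatic, and the real task is closedness, which amounts to strictly improving every bootstrap bound.

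The curvature estimates come from running the $r^p$-hierarchy on the null-decomposed Bianchi system in the slab between two consecutive outgoing cones: applying an $r^p$-weighted multiplier to the relevant Bianchi pair and integrating over the foliated region, one obtains, up to nonlinear errors borrowed from the bootstrap with room to spare, a hierarchy of weighted flux bounds for the components $\a,\b,(\rho,\si),\bb,\aa$ whose weights depend on $s$ and reproduce the peeling rates in the range $s>7$. Commuting with angular derivatives and with a renormalized null derivative produces higher-order bounds, and the use of $r^p$-estimates trims the number of commuted derivatives below the one in \cite{kn}. Ricci coefficients are then obtained by integrating the null structure equations along the generators of $C_u$ and $\unc_\unu$, with angular top-order derivatives recovered through elliptic Hodge systems on the spheres $S_{u,\unu}$; the constraint equations \eqref{constraintk} together with the initial layer bounds provide the data. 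A last-slice argument, streamlined compared with \cite{kn}, terminates the foliation: on the candidate last cone one imposes a direct gauge choice making $\Om\trchb$ coincide with its Minkowskian value and propagates the dual optical function backward by solving the eikonal equation.

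The main obstacle will be arranging the weighted hierarchy so that the $r^p$-decay scheme, the reduced differentiability count, and the $s$-dependent asymptotic rates are simultaneously consistent. In particular, when $3<s<7$ the top-level $r^p$-estimate for $\aa$ must be set up with a smaller weight than in the peeling regime, and this weight propagates through the Ricci transport equations and dictates the admissible weights on $\hchb$ and $\ombc$; balancing these weights so that every nonlinear error involves at most one bootstrap-critical factor is the bulk of the calculation. The unified treatment of the initial layer, which lets the scheme absorb any $s>3$ without redoing the argument range by range, is the other crucial organizational ingredient.
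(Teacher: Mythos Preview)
Your overall strategy---bootstrap on a double null foliation, $r^p$-weighted curvature estimates, transport-plus-elliptic for Ricci coefficients, and a last-slice construction---matches the paper's scheme. However, several specific ingredients differ from what the paper actually does, and a couple of these are not harmless.

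First, your last-slice gauge is not the paper's. You propose fixing $\Om\trchb$ to its Minkowskian value; the paper instead uses the \emph{geodesic} foliation on $\unc_*$ (so $\Om=\tfrac12$, $\omb=0$ there). This is the central simplification over \cite{kn}: because the $r^p$-method lets one close with only first derivatives of both curvature and Ricci coefficients, the canonical foliation of \cite{kn} (which was needed to gain an extra derivative on Ricci coefficients) becomes unnecessary, and the crude geodesic gauge suffices. Your proposed gauge would push you back toward a canonical-type construction and you would lose this simplification.

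Second, you do not mention the mechanism for comparing the bootstrap foliation $(u,\ub)$ with the initial layer foliation $(u_{(0)},\ub)$. In the paper this is Theorem~M0: one introduces a null frame transformation $(f,\la)$ and an $\osc$-norm, derives an elliptic system for $f$ from the $\trch$ transformation law, and closes $\osc\lesssim\ep_0$. Without this step you cannot transfer the initial-layer smallness to the bootstrap norms, so the bootstrap does not start.

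Third, your identification of the main obstacle is off. The delicate $s$-dependence is not in the $\aa$ weight for $3<s<7$; it is in the $(\a,\b)$ pair when $s>6$, where the admissibility condition $4a_{(2)}-2-p\ge 0$ fails for $p=s$. The paper handles this by passing to the Teukolsky equation for $\a$ and applying the $r^p$-estimate to the derived quantities $(\mathring\a,\slashed\a)$, together with the sharp Poincar\'e constant on spheres. For $s\in(3,4)$ the adjustment is simply a different choice of weights ($p=s,s,2,0$ instead of $s,4,2,0$); the $\aa$ and $\hchb$ norms are unchanged.
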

The proof of Theorem \ref{old1.6} has the same structure as in \cite{kn}, see section \ref{ssec8.6}. Below, we compare the proof of this paper and that of \cite{kn}:
\begin{enumerate}
\item In \cite{kn}, the functional \eqref{6.4} is introduced to fix the initial conditions on the initial hypersurface. Here we fix the initial conditions in an initial layer region $\kk_{(0)}$ near the initial hypersurface $\Si_0$.
\item To estimate the norms of curvature components, \cite{kn} uses the vectorfield method introduced in \cite{Ch-Kl}. Here, we estimate the curvature norms by $r^p$-weighted estimate, a method introduced by Dafermos and Rodnianski in \cite{Da-Ro}. This allows for a simpler treatment of the curvature estimates.
\item \cite{kn} uses second order derivatives of curvature and third order derivatives of Ricci coefficients. Thanks to the use of $r^p$--weighted estimates, we only need first order derivatives of both curvature and Ricci coefficients.
\item In order to control one more derivative for Ricci coefficients compared to curvature, \cite{kn} relies on the canonical foliation on the last slice $\Cb_*$. Since we control the same number of derivatives of curvature and Ricci coefficients, instead of introducing the canonical foliation, we use the geodesic foliation to simplify the estimates on the last slice $\Cb_*$.
\end{enumerate}
\subsection{Structure of the paper}\label{ssec6.4}
\begin{itemize}
    \item In section \ref{sec7}, we recall the fundamental notions and the basic equations.
    \item In section \ref{sec8}, we present the main theorem. We then state intermediate results, and prove the main theorem. The rest of the paper focuses on the proof of these intermediary results. In sections \ref{sec9}--\ref{sec11}, we consider the case $s\in[4,6]$ and deal with the other cases in Appendices \ref{secc} and \ref{secd}.
    \item In section \ref{sec9}, we apply $r^p$--weighted estimates to Bianchi equations to control curvature.
    \item In section \ref{sec10}, we estimate the Ricci coefficients using the null structure equations.
    \item In section \ref{sec11}, we estimate the norms of curvature components and Ricci coefficients in the initial layer region $\kk_{(0)}$ and on the last slice $\unc_*$. We also show how to extend the spacetime in the context of a bootstrap argument.
    \item In Appendix \ref{secb}, we prove the equivalence between optical functions and area radius associated to various frames used in the proof.
    \item In Appendix \ref{secc}, we prove Theorem \ref{old1.6} in the case $s\in (3,4)$. 
    \item In Appendix \ref{secd}, we prove Theorem \ref{old1.6} in the case $s> 6$ by applying the $r^p$--weighted method to the Teukolsky equation of $\a$.
\end{itemize}
\subsection{Acknowledgements} The author is very grateful to J\'er\'emie Szeftel for his support, discussions, encouragements and patient guidance.
\section{Preliminaries}\label{sec7}
\subsection{Geometric set-up}\label{ssec7.1}
\subsubsection{Double null foliation}
We first introduce the geometric setup. We denote $(\MM,\bg)$ a spacetime $\MM$ with the Lorentzian metric $\g$ and $\D$ its Levi-Civita connection. Let $u$ and $\unu$ be two optical functions on $\MM$, that is
\begin{equation*}
    \bg(\grad u,\grad u)=\bg(\grad\unu,\grad\unu)=0.
\end{equation*}
The spacetime $\M$ is foliated by the level sets of $u$ and $\unu$ respectively, and the functions $u,\unu$ are required to increase towards the future. We use $C_u$ to denote the outgoing null hypersurfaces which are the level sets of $u$ and use $\unc_\unu$ to denote the incoming null hypersurfaces which are the level sets of $\unu$. We denote
\begin{equation}
    S(u,\unu):=C_u\cap\unc_\unu,
\end{equation}
which are space-like $2$--spheres. We introduce the vectorfields $L$ and $\unl$ by
\begin{equation*}
    L:=-\grad u,\quad \unl:= -\grad\unu.
\end{equation*}
We define a positive function $\Omega$ by the formula
\begin{equation*}
    \bg(L,\unl)=-\frac{1}{2\Omega^2},
\end{equation*}
where $\Om$ is called the lapse function. We then define the normalized null pair $(e_3,e_4)$ by
\begin{equation*}
   e_3=2\Omega\unl,\quad e_4=2\Omega L, 
\end{equation*}
and define another null pair by
\begin{equation*}
    \underline{N}=\Omega e_3, \quad N=\Omega e_4.
\end{equation*}
On a given two sphere $S(u,\unu)$, we choose a local frame $(e_1,e_2)$, we call $(e_1,e_2,e_3,e_4)$ a null frame. As a convention, throughout the paper, we use capital Latin letters $A,B,C,...$ to denote an index from 1 to 2 and Greek letters $\alpha,\beta,\gamma,...$ to denote an index from 1 to 4, e.g. $e_A$ denotes either $e_1$ or $e_2$.\\ \\
The spacetime metric $\bg$ induces a Riemannian metric $\ga$ on $S(u,\unu)$. We use $\nabla$ to denote the Levi-Civita connection of $\ga$ on $S(u,\unu)$. Using $(u,\unu)$, we introduce a local coordinate system $(u,\unu,\phi^A)$ on $\M$ with $e_4(\phi^A)=0$. In that coordinates system, the metric $\g$ takes the form:
\begin{equation}\label{metricg}
\g=-2\Omega^2 (d\unu\otimes du+du\otimes d\unu)+\ga_{AB}(d\phi^A-\bbb^Adu)\otimes (d\phi^B-\bbb^Bdu),
\end{equation}
and we have 
\begin{equation*}
   \underline{N}=\pa_u+\bbb,\qquad N=\pa_{\unu},\qquad \bbb:=\bbb^A \pr_{\phi^A}.
\end{equation*}
We recall the null decomposition of the Ricci coefficients and curvature components of the null frame $(e_1,e_2,e_3,e_4)$ as follows:
\begin{align}
\begin{split}\label{defga}
\chib_{AB}&=\g(\D_A e_3, e_B),\qquad\quad \chi_{AB}=\g(\D_A e_4, e_B),\\
\xib_A&=\frac 1 2 \g(\D_3 e_3,e_A),\qquad\quad\,\,  \xi_A=\frac 1 2 \g(\D_4 e_4, e_A),\\
\omb&=\frac 1 4 \g(\D_3e_3 ,e_4),\qquad\quad \,\,\,\,\, \om=\frac 1 4 \g(\D_4 e_4, e_3), \\
\etab_A&=\frac 1 2 \g(\D_4 e_3, e_A),\qquad\quad\,  \eta_A=\frac 1 2 \g(\D_3 e_4, e_A),\\
\ze_A&=\frac 1 2 \g(\D_{e_A}e_4, e_3),
\end{split}
\end{align}
and
\begin{align}
\begin{split}\label{defr}
\a_{AB} &=\R(e_A, e_4, e_B, e_4) , \qquad \,\,\,\aa_{AB} =\R(e_A, e_3, e_B, e_3), \\
\b_{A} &=\frac 1 2 \R(e_A, e_4, e_3, e_4), \qquad\,\, \;\bb_{A}=\frac 1 2 \R(e_A, e_3, e_3, e_4),\\
\rho&= \frac 1 4 \R(e_3, e_4, e_3, e_4), \qquad\,\;\;\,\,\;\, \si =\frac{1}{4}{^*\R}( e_3, e_4, e_3, e_4),
\end{split}
\end{align}
where $^*\R$ denotes the Hodge dual of $\R$. The null second fundamental forms $\chi, \chib$ are further decomposed in their traces $\trch$ and $\trchb$, and traceless parts $\hch$ and $\hchb$:
\begin{align*}
\trch&:=\de^{AB}\chi_{AB},\qquad\quad \,\hch_{AB}:=\chi_{AB}-\frac{1}{2}\de_{AB}\trch,\\
\trchb&:=\de^{AB}\chib_{AB},\qquad\quad \, \hchb_{AB}:=\chib_{AB}-\frac{1}{2}\de_{AB}\trchb.
\end{align*}
We define the horizontal covariant operator $\nab$ as follows:
\begin{equation*}
\nab_X Y:=\D_X Y-\frac{1}{2}\chib(X,Y)e_4-\frac{1}{2}\chi(X,Y)e_3.
\end{equation*}
We also define $\nab_4 X$ and $\nab_3 X$ to be the horizontal projections:
\begin{align*}
\nab_4 X&:=\D_4 X-\frac{1}{2} \g(X,\D_4e_3)e_4-\frac{1}{2} \g(X,\D_4e_4)e_3,\\
\nab_3 X&:=\D_3 X-\frac{1}{2} \g(X,\D_3e_3)e_3-\frac{1}{2} \g(X,\D_3e_4)e_4.
\end{align*}
A tensor field $\psi$ defined on $\MM$ is called tangent to $S$ if it is a priori defined on the spacetime $\M$ and all the possible contractions of $\psi$ with either $e_3$ or $e_4$ are zero. We use $\nabla_3 \psi$ and $\nabla_4 \psi$ to denote the projection to $S(u,\unu)$ of usual derivatives $\D_3\psi$ and $\D_4\psi$. As a direct consequence of \eqref{defga}, we have the Ricci formulas:
\begin{align}
\begin{split}\label{ricciformulas}
    \D_A e_B&=\nab_A e_B+\frac{1}{2}\chi_{AB} e_3+\frac{1}{2}\chib_{AB}e_4,\\
    \D_A e_3&=\chib_{AB}e_B+\ze_A e_3,\\
    \D_A e_4&=\chi_{AB}e_B-\ze_A e_4,\\
    \D_3 e_A&=\nab_3 e_A+\eta_A e_3+\xib_A e_4,\\
    \D_4 e_A&=\nab_4 e_A+\etab_A e_4+\xi_A e_4,\\
    \D_3 e_3&=-2\omb e_3+2\xib_B e_B,\\ 
    \D_3 e_4&=2\omb e_4+2\eta_B e_B,\\
    \D_4 e_4&=-2\om e_4+2\xi_B e_B,\\
    \D_4 e_3&=2\om e_3+2\etab_B e_B.
\end{split}
\end{align}
In addition to 
\begin{equation}
    \xi=\xib=0,
\end{equation}
the following identities hold for a double null foliation:
\begin{align}
\begin{split}\label{6.6}
    \nabla\log\Omega&=\frac{1}{2}(\eta+\ue),\qquad\;\;\;\,\omega=-\frac{1}{2}\D_4(\log\Omega), \qquad\;\;\;\,\omb=-\frac{1}{2}\D_3(\log\Omega),\\ 
    \eta&=\zeta+\nabla\log\Omega,\qquad \ue=-\zeta+\nabla\log\Omega,
\end{split}
\end{align}
see for example (6) in \cite{kr}.
\subsubsection{Initial layer region}\label{sssec7.1.1}
Let $\Si_0$ be a spacelike hypersurface. Let $K\subset\Si_0$ be a compact subset such that $\Si_0\setminus K$ is diffeomorphic to $\mathbb{R}^3 \setminus \ov{B_1}$ where $\ov{B_1}$ is the unit closed ball in $\mathbb{R}^3$. We fix a foliation on the initial hypersurface $\Sigma_0 \setminus K$ by the level sets of a scalar function $w$. The leaves are denoted by
\begin{equation}
    S_{(0)}(w_1)=\{p\in\Si_0/\, w(p)=w_1\},
\end{equation}
where $w_1\in \mathbb{R}$. We assume that
\begin{equation}
    \pa K=\{p\in\Si_0/\,w(p)=w_0\},\qquad K=\{p\in\Si_0/\, w(p)\leq w_0\},
\end{equation}
where $w_0$ is the area radius of $\pr K$ defined by
\begin{equation}\label{defw0}
    w_0:=\sqrt{\frac{|\pr K|}{4\pi}}.
\end{equation}
We can construct a double null foliation in a neighborhood of $\Si_0\setminus K$. For this, we denote $T$ the normal vectorfield of $\Sigma_0$ oriented towards the future and $N$ the unit vectorfield tangent to $\Si_0$, oriented towards infinity and orthogonal to the leaves $S_0(w)$. We define two null vectors on $\Sigma_0\setminus K$ by
\begin{equation}
    L_{(0)}:=N(w)(T+N),\qquad\unl:=N(w)(T-N).\label{tntn}
\end{equation}
We extend the definition of $L_{(0)}$ and $\unl$ to a neighborhood of $\Sigma_0\setminus K$ by the geodesic equations:
\begin{equation}
    \D_{L_{(0)}} L_{(0)}=0,\qquad \D_{\unl} \unl=0.\label{geo}
\end{equation}
The lapse function $\Om_{(0)}$ is defined by
\begin{equation}
    \g(L_{(0)},\Lb)=-\frac{1}{2{\Om_{(0)}}^2}.
\end{equation}
Then, we define two optical functions $u_{(0)}$ and $\unu$ satisfying the initial conditions
\begin{equation*}
    u_{(0)}\big|_{\Sigma_0\setminus K}=-w,\qquad \unu\big|_{\Sigma_0\setminus K}=w,
\end{equation*}
and the equations
\begin{equation*}
    L_{(0)}=-\grad u_{(0)},\qquad \unl=-\grad \unu.
\end{equation*}
Recall that $u_{(0)}+\unu=0$ on $\Sigma_0\setminus K$, we define the region
\begin{equation}\label{defde0}
    \kk_{(0)}:=\{p/\,0\leq u_{(0)}(p)+\unu(p)\leq 2\de_0\},
\end{equation}
where $0<\de_0<1$ is a constant. We call $\kk_{(0)}$ the \emph{initial layer region} of height $\delta_0$, the double null foliation $(u_{(0)},\unu)$ is called the \emph{initial layer foliation}. Its leaves are denoted by
\begin{equation*}
    S_{(0)}(u_{(0)},\unu):=(C_{(0)})_{u_{(0)}}\cap \unc_\unu.
\end{equation*}
We define 
\begin{align}
    w:=\frac{\ub-u_{(0)}}{2} \quad \mbox{ in }\kk_{(0)},
\end{align}
which extend the definition of $w$ from $\Si_0\setminus K$ to $\kk_{(0)}$. Moreover, we transport the coordinates ${\phi}_{(0)}^A$ from $\Si_0\setminus K$ to $\kk_{(0)}$ by $\Lb({\phi}_{(0)}^A)=0$. Hence, we deduce that the metric $\g$ in $\kk_{(0)}$ takes the following form in the $(\ub,u_{(0)},\phi_{(0)}^A)$ coordinates system:
\begin{equation}
    \g=-2{\Om_{(0)}}^2(d\ub\otimes du_{(0)}+du_{(0)}\otimes d\ub)+(\ga_{(0)})_{AB}(d{\phi}_{(0)}^A-b^A d\ub)(d{\phi}_{(0)}^B-b^B d\ub),
\end{equation}
where
\begin{equation*}
b^A=\Om_{(0)}(e_4)_{(0)}({\phi}_{(0)}^A).
\end{equation*}
\subsubsection{Bootstrap region}\label{sssec7.1.2}
Now, we define the bootstrap region and its double null foliation. We choose a value $\unu_*>w_0$, then $w=\unu_*$ defines a leaf of $\Sigma_0\setminus K$, denoted by $S_0(\ub_*)$. We construct a incoming cone $\Cb_*$ called the \emph{last slice} from $S_0(\ub_*)$. We denote
\begin{equation}
    \unl_*:=\unl\big|_{\unc_*},
\end{equation}
which is fixed by \eqref{tntn} and \eqref{geo}. Notice that $\unl_*$ is a generator of $\unc_*$. We denote $u^*$ an affine parameter, that is a function defined on $\unc_*$ satisfying
\begin{equation}\label{6.12}
    \unl_* (u^*)=2.
\end{equation}
Thus, $\Cb_*$ is endowed with a geodesic foliation of affine parameter $u^*$. For every $u$, $\{u^*=u\}$ is a sphere on $\unc_*$. We can therefore construct an outgoing cone from $\{u^*=u\}$ to the past, denoted by $C_u$. Then, $C_u\cap\Si_0$ is a sphere on $\Si_0$, which does a priori not coincide with the leaves of $\Sigma_0\setminus K$, see Figure \ref{fig3}.
\begin{figure}
  \centering
  \includegraphics[width=0.5\textwidth]{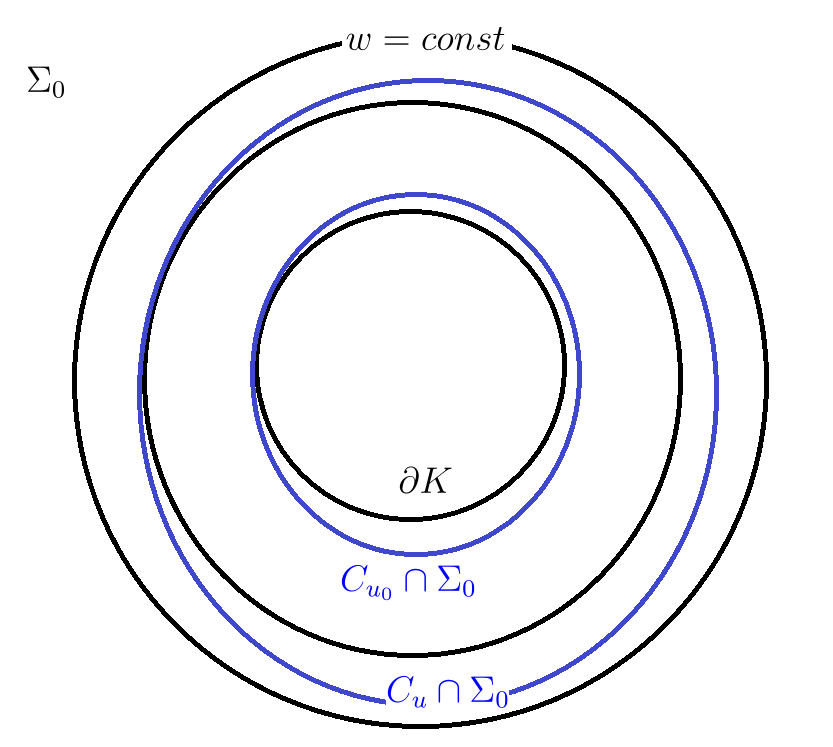}
  \caption{The leaves of $\Sigma_0$}\label{fig3}
\end{figure}
There exists a unique cone $C_{u_0}$ located outside of the $J^+(K)$, the causal future of $K$, but touch the boundary $\pa K$, see Figure \ref{fig3}. By \eqref{6.12}, $u^*$ is unique up to a constant. We choose $u_0=-w_0$ where $w_0$ is defined in \eqref{defw0}. Combining with \eqref{6.12}, this fixes $u^*$.
\begin{rk}
    Remark that we fix $u^*$ by choosing the value near $\pr K$. We do not have
    \begin{equation*}
        u^*|_{S_0(\ub_*)}=-\ub_*.
    \end{equation*}
    The reason is that we need to integrate the quantities in the initial layer forward instead of backward. This will lead to a logarithm difference which is sufficient to prove the equivalences, see Appendix \ref{secb}.
\end{rk}
Now we define the null vector $L$ on $\unc_*$ by the relation
\begin{equation}
    \bg(L,\unl)=-2,
\end{equation}
Hence, we have $\Omega=\frac{1}{2}$ on $\unc_*$. By the equations
\begin{equation}
    -\grad u=L,\qquad u|_{\unc_*}=u^*,
\end{equation}
we can extend $u$ to the causal past of $\Cb_*$. So we have constructed a double null foliation $(u,\unu)$ which is geodesic on $\unc_*$. For every $(u,\unu)$, we denote
\begin{equation}
    V(u,\unu):=J^-(S(u,\unu))\cap J^+(\Sigma_0),
\end{equation}
where for a domain $\MM$, $J^+(\MM)$ and resp. $J^-(\MM)$ denotes the causal future and resp. causal past of $\MM$. We also denote the bootstrap region by
\begin{equation}
    \kk:=V(u_0,\ub_*).
\end{equation}
Remark that $\kk$ is covered by the double null foliation $(u,\unu)$, and that the boundaries of $\kk$ are:
\begin{enumerate}
\item a finite region of $\Sigma_0 \setminus K$; 
\item a portion of an outgoing cone $C_0$, where $C_0:=\{p\in\kk |u(p)=u_0\}$;
\item a portion of the last slice $\unc_*$.
\end{enumerate}
Note that we have the following types of manifolds in this paper: spacetime regions $\kk$ and $\kk_{(0)}$, initial hypersurface $\Si_0$, and the spheres $S(u,\unu)$ and $S_{(0)}(u_{(0)},\ub)$. Every manifold has its metric, Levi-Civita connection and curvature tensor:
\begin{align*}
 (\kk,\g,\D,\R),\quad (\kk_{(0)},\g_{(0)},\D_{(0)},\R_{(0)}),\quad (\Si_0,g,D,R),\quad (S,\ga,\nab,\K),\quad (S_{(0)},\ga_{(0)},\nab_{(0)},\K_{(0)}),
\end{align*}
where $\K$ (resp. $\K_{(0)}$) is the Gauss curvature of $S$ (resp. $S_{(0)}$).
\subsection{Integral formulas}\label{ssecave}
We define the $S$-average of scalar functions.
\begin{df}\label{average}
Given any scalar function $f$, we denote its average and its average free part by
\begin{equation*}
    \overline{f}:=\frac{1}{|S|}\int_{S}f\,d\ga,\qquad \fc:=f-\overline{f}.
\end{equation*}
\end{df}
The following lemma follows immediately from the definition.
\begin{lem}\label{chav}
For any two scalar functions $u$ and $v$, we have
\begin{equation*}
    \overline{uv}=\overline{u}\,\overline{v}+\overline{\widecheck{u}\widecheck{v}},
\end{equation*}
and
\begin{equation*}
uv-\overline{uv}=\widecheck{u}\overline{v}+\overline{u}\widecheck{v}+\left(\widecheck{u}\widecheck{v}-\overline{\widecheck{u}\widecheck{v}}\right).
\end{equation*}
\end{lem}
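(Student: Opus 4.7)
The plan is entirely algebraic: decompose $u = \overline{u} + \widecheck{u}$ and $v = \overline{v} + \widecheck{v}$ according to Definition \ref{average}, and expand the product. This gives $uv = \overline{u}\,\overline{v} + \overline{u}\,\widecheck{v} + \widecheck{u}\,\overline{v} + \widecheck{u}\,\widecheck{v}$, which is the starting point for both identities.

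For the first identity, I would take the $S$-average of this expansion. The key facts to exploit are that (i) $\overline{u}$ and $\overline{v}$ are constants on $S$, so by linearity of the integral in Definition \ref{average} they commute with the averaging operator, and (ii) $\overline{\widecheck{f}} = \frac{1}{|S|}\int_S (f - \overline{f})\,d\gamma = 0$ for any scalar $f$, since $\overline{f}$ is constant on $S$ and $\int_S d\gamma = |S|$. Combining these observations immediately kills the two cross terms $\overline{\overline{u}\,\widecheck{v}} = \overline{u}\cdot\overline{\widecheck{v}} = 0$ and $\overline{\widecheck{u}\,\overline{v}} = 0$, which leaves $\overline{uv} = \overline{u}\,\overline{v} + \overline{\widecheck{u}\widecheck{v}}$.

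For the second identity, I would subtract $\overline{uv}$ from the expansion of $uv$ written above and substitute the first identity for $\overline{uv}$. The constant term $\overline{u}\,\overline{v}$ cancels, and regrouping the remaining terms as $\widecheck{u}\overline{v} + \overline{u}\widecheck{v} + (\widecheck{u}\widecheck{v} - \overline{\widecheck{u}\widecheck{v}})$ gives the stated formula.

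There is no real obstacle here: the statement is a pointwise/average identity that uses no geometric information about $S(u,\unu)$ beyond the existence of a finite area measure $d\gamma$, and no regularity of $u,v$ beyond integrability so that $\overline{uv}$ makes sense. The whole lemma is a bookkeeping device for splitting nonlinear products into a mean, a linear fluctuation and a purely quadratic fluctuation, and the proof will take only a few lines.
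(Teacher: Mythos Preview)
Your proof is correct and matches the paper's treatment: the paper simply states that the lemma ``follows immediately from the definition,'' and your expansion $u = \overline{u} + \widecheck{u}$, $v = \overline{v} + \widecheck{v}$ together with $\overline{\widecheck{f}} = 0$ is exactly the immediate computation the paper has in mind.
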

We recall the following integral formulas, which will be used repeatedly in this paper.
\begin{lem}\label{dint}
For any scalar function $f$, the following equations hold:
\begin{align*}
    \Om e_4\left(\int_{S(u,\unu)} f d\ga\right) &= \int_{S(u,\unu)} \left(\Om e_4(f) + \Omega \tr \chi f \right) d\ga ,\\
    \Om e_3 \left(\int_{S(u,\unu)} f d\ga\right) &= \int_{S(u,\unu)} \left(\Om e_3(f)+ \Om\trchb f \right) d\ga .
\end{align*}
Taking $f=1$, we obtain
\begin{equation*}
    e_4(r)=\frac{\overline{\Omega\tr\chi}}{2\Omega}r,\qquad e_3(r)=\frac{\overline{\Omega\tr\unchi}}{2\Omega}r,\label{e3e4r}
\end{equation*}
where $r$ is the \emph{area radius} defined by
\begin{equation*}
    r(u,\unu):=\sqrt{\frac{|S(u,\unu)|}{4\pi}}.
\end{equation*}
\end{lem}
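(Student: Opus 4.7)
The plan is to compute in the coordinate chart $(u,\ub,\phi^A)$ introduced in Section~\ref{ssec7.1}, in which $\Om e_4 = \pa_\ub$, $\Om e_3 = \pa_u + \bbb$ with $\bbb$ tangent to $S(u,\ub)$, and
\begin{equation*}
\int_{S(u,\ub)} f\, d\ga = \int f\sqrt{\det \ga}\, d\phi^1 d\phi^2.
\end{equation*}
In this formulation both identities reduce to computing $\pa_\ub\sqrt{\det\ga}$ and $\pa_u\sqrt{\det\ga}$, and combining with the action of the corresponding vector field on $f$.

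For the $\Om e_4$ identity, the condition $e_4(\phi^A)=0$ ensures $[\pa_\ub,\pa_A]=0$, so $\pa_\ub\ga_{AB} = (\mathcal{L}_{\pa_\ub}\g)(\pa_A,\pa_B) = 2\Om\chi_{AB}$ by \eqref{ricciformulas}. Taking the trace gives $\pa_\ub\sqrt{\det\ga} = \Om\trch\sqrt{\det\ga}$, which together with $\pa_\ub f = \Om e_4(f)$ yields the first formula. For the $\Om e_3$ identity, the complication is that the $\phi^A$ are not transported by $\pa_u$, so the shift $\bbb$ enters; a direct computation using $\pa_u + \bbb = \Om e_3$ produces
\begin{equation*}
\pa_u \ga_{AB} = 2\Om\chib_{AB} - (\mathcal{L}_\bbb \ga)_{AB},\qquad \pa_u\sqrt{\det\ga} = \left(\Om\trchb - \sdiv \bbb\right)\sqrt{\det\ga},
\end{equation*}
while $\pa_u f = \Om e_3(f) - \bbb(f)$. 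Since $\int_S f\, d\ga$ depends only on $(u,\ub)$ and $\bbb$ is tangent to $S$, the left-hand side of the desired identity equals $\pa_u\int_S f\, d\ga$, and the $\bbb$-contributions collect into $-\int_S \sdiv(f\bbb)\,d\ga$, which vanishes by Stokes' theorem on the closed $2$-sphere $S$.

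Finally, specialising to $f\equiv 1$ and using $|S(u,\ub)| = 4\pi r^2$ reduces either identity to a single scalar equation for $e_4(r)$ and $e_3(r)$ respectively, from which the stated formulas follow. The only genuine obstacle is the appearance of the shift $\bbb$ in the $\Om e_3$ direction, but it is cleanly absorbed by the divergence theorem on $S$; everything else is a routine application of the Ricci equations \eqref{ricciformulas}.
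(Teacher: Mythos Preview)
Your proof is correct and complete: the coordinate computation with $\Om e_4=\pa_\ub$ and $\Om e_3=\pa_u+\bbb$ is exactly the standard route, and you handle the shift $\bbb$ properly via the divergence theorem on the closed sphere. The paper itself does not prove this lemma but simply refers to Lemma~2.26 of \cite{kl-ro} and Lemma~3.1.3 of \cite{kn}, whose arguments are essentially the one you wrote down; so there is no meaningful difference in approach to discuss.
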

\begin{proof}
See Lemma 2.26 of \cite{kl-ro} or Lemma 3.1.3 of \cite{kn}.
\end{proof}
\begin{cor}\label{dav}
For any scalar function $f$, the following equations hold:
\begin{align*}
    \Om e_4(\ov{f}) &=\ov{\Om e_4(f)}+\ov{\omtrchc\, \fc} ,\\
    \Om e_3 (\ov{f}) &=\ov{\Om e_3(f)}+\ov{\omtrchbc\, \fc}.
\end{align*}
\end{cor}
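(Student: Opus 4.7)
The plan is to derive both identities by differentiating the definition $\ov{f}=|S|^{-1}\int_S f\,d\ga$ and combining Lemma \ref{dint} (which controls the evolution of the integral $\int_S f\,d\ga$ and of $|S|$) with the average decomposition provided by Lemma \ref{chav}. By the symmetry between the $e_3$ and $e_4$ directions in Lemma \ref{dint}, it suffices to treat the $e_4$ identity in detail; the $e_3$ case will follow line by line with $\chi$ replaced by $\chib$.

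First, I would apply the quotient rule:
\begin{equation*}
\Om e_4(\ov{f})=\Om e_4\!\left(\frac{1}{|S|}\int_S f\,d\ga\right)=\frac{1}{|S|}\,\Om e_4\!\left(\int_S f\,d\ga\right)-\frac{\Om e_4(|S|)}{|S|^2}\int_S f\,d\ga.
\end{equation*}
The first term is handled by the first identity of Lemma \ref{dint}, which gives
\begin{equation*}
\frac{1}{|S|}\Om e_4\!\left(\int_S f\,d\ga\right)=\ov{\Om e_4(f)}+\ov{\Om\trch\, f}.
\end{equation*}
For the second term, specializing Lemma \ref{dint} to $f=1$ yields $\Om e_4(|S|)=\int_S \Om\trch\,d\ga=|S|\,\ov{\Om\trch}$, hence
\begin{equation*}
\frac{\Om e_4(|S|)}{|S|^2}\int_S f\,d\ga=\ov{\Om\trch}\;\ov{f}.
\end{equation*}

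Combining these,
\begin{equation*}
\Om e_4(\ov{f})=\ov{\Om e_4(f)}+\ov{\Om\trch\, f}-\ov{\Om\trch}\;\ov{f}.
\end{equation*}
To finish, I apply Lemma \ref{chav} to the product $\Om\trch\cdot f$, which decomposes $\ov{\Om\trch\, f}=\ov{\Om\trch}\;\ov{f}+\ov{\omtrchc\,\fc}$; the two averaged products cancel, leaving the desired identity $\Om e_4(\ov{f})=\ov{\Om e_4(f)}+\ov{\omtrchc\,\fc}$. The same computation with $e_4$ replaced by $e_3$ and $\chi$ by $\chib$, using the second lines of Lemma \ref{dint} and Lemma \ref{chav}, produces the $e_3$ identity.

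This proof is essentially a bookkeeping exercise — no analytical obstacle arises, since both ingredients (the commutator formula for $\Om e_4$ with integration over $S$ and the average decomposition of a product) are already at our disposal. The only point requiring a bit of care is the cancellation of $\ov{\Om\trch}\,\ov{f}$ between the quotient-rule term and the Lemma \ref{chav} splitting, which is what makes the checked quantity $\omtrchc$ (rather than $\Om\trch$) appear in the final answer.
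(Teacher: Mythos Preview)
Your proof is correct and follows essentially the same approach as the paper: quotient rule on $\ov{f}=|S|^{-1}\int_S f\,d\ga$, Lemma~\ref{dint} for both pieces, then cancellation of $\ov{\Om\trch}\,\ov{f}$ to leave $\ov{\omtrchc\,\fc}$. The only cosmetic difference is that the paper routes the computation of $\Om e_4(|S|)/|S|$ through the area radius $r$ (using $e_4(r)=\tfrac{r}{2\Om}\ov{\Om\trch}$), whereas you specialize Lemma~\ref{dint} to $f=1$ directly; both are equivalent.
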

\begin{proof}
Applying Lemma \ref{dint}, we infer
\begin{align*}
    \Om e_4 (\ov{f})&=\Om e_4 \left(\frac{1}{|S|}\int_S f d\ga \right)=-\frac{\Om e_4(|S|)}{|S|^2}\int_S f d\ga +\frac{1}{|S|}\int_S \left(\Om e_4(f)+\Om\trch f\right)d\ga \\
    &=-\frac{2\Om e_4(r)}{r}\ov{f}+\ov{\Om e_4 (f)}+\ov{\Om\trch\, f}=-\ov{\Om\trch}\,\ov{f}+\ov{\Om e_4(f)}+\ov{\Om\trch\, f}\\
    &=\ov{\Om e_4(f)}+\ov{\omtrchc \,\widecheck{f}},
\end{align*}
which implies the first identity. The second identity can be obtained similarly. This concludes the proof of Corollary \ref{dav}.
\end{proof}
\subsection{Hodge systems}\label{ssec7.2}
\begin{df}\label{tensorfields}
For tensor fields defined on a $2$--sphere $S$, we denote by $\sfr_0:=\sfr_0(S)$ the set of pairs of scalar functions, $\sfr_1:=\sfr_1(S)$ the set of $1$--forms and $\sfr_2:=\sfr_2(S)$ the set of symmetric traceless $2$--tensors.
\end{df}
\begin{df}\label{def7.2}
Given $\xi\in\sfr_1$, we define its Hodge dual
\begin{equation*}
    {^*\xi}_A := \in_{AB}\xi^B.
\end{equation*}
Clearly $^*\xi \in \sfr_1$ and
\begin{equation*}
    ^*(^*\xi)=-\xi.
\end{equation*}
Given $U \in \sfr_2$, we define its Hodge dual
\begin{equation*}
{^*U}_{AB}:= \in_{AC} {U^C}_B.
\end{equation*}
Observe that $^*U\in\sfr_2$ and
\begin{equation*}
     ^*(^*U)=-U.
\end{equation*}
\end{df}
\begin{df}
    Given $\xi,\eta\in\sfr_1$, we denote
\begin{align*}
    \xi\cdot \eta&:= \de^{AB}\xi_A \eta_B,\\
    \xi\wedge \eta&:= \in^{AB} \xi_A \eta_B,\\
    (\xi\hot \eta)_{AB}&:=\xi_A \eta_B +\xi_B \eta_A -\de_{AB}\xi\cdot \eta.
\end{align*}
Given $\xi\in \sfr_1$, $U\in \sfr_2$, we denote
\begin{align*}
    (\xi \cdot U)_A:= \de^{BC} \xi_B U_{AC}.
\end{align*}
Given $U,V\in \sfr_2$, we denote
\begin{align*}
    (U\wedge V)_{AB}:=\in^{AB}U_{AC}V_{CB}.
\end{align*}
\end{df}
\begin{df}
    For a given $\xi\in\sfr_1$, we define the following differential operators:
    \begin{align*}
        \sdiv \xi&:= \de^{AB} \nab_A\xi_B,\\
        \curl \xi&:= \in^{AB} \nab_A \xi_B,\\
        (\nab\hot\xi)_{AB}&:=\nab_A \xi_B+\nab_B \xi_A-\de_{AB}(\sdiv\xi).
    \end{align*}
\end{df}
\begin{df}
    We define the following Hodge type operators, as introduced in section 2.2 in \cite{Ch-Kl}:
    \begin{itemize}
        \item $d_1$ takes $\sfr_1$ into $\sfr_0$ and is given by:
        \begin{equation*}
            d_1 \xi :=(\sdiv\xi,\curl \xi),
        \end{equation*}
        \item $d_2$ takes $\sfr_2$ into $\sfr_1$ and is given by:
        \begin{equation*}
            (d_2 U)_A := \nab^{ B} U_{AB}, 
        \end{equation*}
        \item $d_1^*$ takes $\sfr_0$ into $\sfr_1$ and is given by:
        \begin{align*}
            d_1^*(f,f_*)_{ A}:=-\nab_A f +\in_{AB} \nab_B f_*,
        \end{align*}
        \item $d_2^*$ takes $\sfr_1$ into $\sfr_2$ and is given by:
        \begin{align*}
            d_2^* \xi := -\frac{1}{2} \nab \hot \xi.
        \end{align*}
    \end{itemize}
\end{df}
We have the following identities:
\begin{align}
    \begin{split}\label{dddd}
        d_1^*d_1&=-\De_1+\mathbf{K},\qquad\qquad  d_1 d_1^*=-\De_{{0}},\\
        d_2^*d_2&=-\frac{1}{2}\De_2+\mathbf{K},\qquad\quad d_2 d_2^*=-\frac{1}{2}(\De_1+\mathbf{K}).
    \end{split}
\end{align}
where $\mathbf{K}$ denotes the Gauss curvature on $S$. See for example (2.2.2) in \cite{Ch-Kl} for a proof of \eqref{dddd}. 
\begin{df}\label{dfdkb}
We define the weighted angular derivatives $\dkb$ as follows:
\begin{align*}
    \dkb U &:= rd_2 U,\qquad \forall U\in \sk_2,\\
    \dkb \xi&:= rd_1 \xi,\qquad\,\,\, \forall \xi\in \sk_1,\\
    \dkb f&:= rd_1^* f,\qquad \,\,\forall f\in \sk_0.
\end{align*}
We denote for any tensor $h\in\sk_k$, $k=0,1,2$,
\begin{equation*}
    h^{(0)}:=h,\qquad h^{(1)}:=(h,\dkb h).
\end{equation*}
\end{df}
\subsection{Elliptic estimates for Hodge systems}
\begin{df}\label{Lpnorms}
For a tensor field $f$ on a $2$--sphere $S$, we denote its $L^p$--norm:
\begin{equation}
    |f|_{p,S}:= \left(\int_S |f|^p \right)^\frac{1}{p}.
\end{equation}
\end{df}
\begin{prop}[$L^p$ estimates for Hodge systems]\label{prop7.3}
Assume that $S$ is an arbitrary compact 2-surface with positive bounded Gauss curvature. Then the following statements hold for all $p\in(1,+\infty)$:
\begin{enumerate}
\item Let $\phi\in\sfr_0$ be a solution of $\De \phi=f$. Then we have
\begin{align*}
    |\nabla^2 \phi|_{p,S}+r^{-1}|\nabla \phi|_{p,S}+r^{-2} |\phi-\overline{\phi}|_{p,S}\les |f|_{p,S}.
\end{align*}
\item Let $\xi\in\sfr_1$ be a solution of $d_1\xi=(f,f_*)$. Then we have
\begin{align*}
    |\nab\xi|_{p,S}+r^{-1}|\xi|_{p,S}\les |(f,f_*)|_{p,S}.
\end{align*}
\item Let $U\in\sfr_2$ be a solution of $d_2 U=f$. Then we have
\begin{align*}
   |\nab U|_{p,S}+r^{-1}|U|_{p,S}\les |f|_{p,S}.
\end{align*}
\end{enumerate}
\end{prop}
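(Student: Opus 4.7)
The plan is to reduce parts (2) and (3) to part (1), prove part (1) first in $L^2$ via a Bochner identity, and then pass to general $L^p$ by Calderón-Zygmund theory. The $r$-weights in all three statements can be recovered by scaling: if we rescale the metric on $S$ by $r^{-2}$, the resulting surface has area comparable to $4\pi$ with Gauss curvature still positive and bounded by hypothesis, and the weights $r^{-1}|\nab\,\cdot\,|_{p,S}$ and $r^{-2}|\,\cdot\,|_{p,S}$ become the natural $L^p$ norms associated with the rescaled operators. Hence it suffices to prove the inequalities on a normalized surface and read off the powers of $r$ by homogeneity.

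For part (1) with $p=2$, I would use the Bochner identity on a $2$-surface,
\begin{equation*}
\int_S (\De\phi)^2\, d\ga = \int_S |\nab^2\phi|^2 \, d\ga + \int_S \K\, |\nab\phi|^2 \, d\ga,
\end{equation*}
which, together with positivity and boundedness of $\K$, immediately yields $|\nab^2\phi|_{2,S}\les |f|_{2,S}$; testing $\De\phi=f$ against $\phi-\ov\phi$ and invoking Poincaré recovers the lower-order terms $|\nab\phi|_{2,S}$ and $|\phi-\ov\phi|_{2,S}$. To pass to general $p\in(1,\infty)$, I would invoke the classical Calderón-Zygmund $L^p$ theory for the Laplacian on a compact Riemannian surface of bounded geometry: the constants depend only on $p$ and on upper and lower bounds for $\K$, both of which are controlled by hypothesis. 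This is the content proved e.g.\ in Chapter~2 of Christodoulou-Klainerman and Chapter~4 of Klainerman-Nicolò.

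For part (2), I would use the first identity of \eqref{dddd}, $d_1^* d_1 = -\De_1+\K$. Since $S$ is topologically a $2$-sphere, Hodge decomposition allows me to write any $\xi\in\sfr_1$ as $\xi = d_1^*(\phi,\phi_*)$ with $(\phi,\phi_*)$ of zero mean. Substituting into $d_1\xi = (f,f_*)$ and using $d_1 d_1^* = -\De_0$ yields the scalar Poisson system $\De\phi = -f$, $\De\phi_* = -f_*$. Part (1) applied to each scalar controls $|\nab^2(\phi,\phi_*)|_{p,S}$ and $r^{-1}|\nab(\phi,\phi_*)|_{p,S}$ by $|(f,f_*)|_{p,S}$, which is precisely $|\nab\xi|_{p,S}+r^{-1}|\xi|_{p,S}$ after unpacking $\xi = d_1^*(\phi,\phi_*)$. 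Part (3) is handled in the same spirit using the second identity $d_2^* d_2 = -\tfrac12 \De_2 +\K$: Hodge decomposition on $S$ writes $U = d_2^*\xi$ with suitable gauge choice, and $d_2 U = f$ becomes a first-order elliptic system for $\xi$ to which part (2) applies.

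The main technical obstacle is the Calderón-Zygmund extension from $L^2$ to $L^p$ for $p\neq 2$ together with the uniform solvability of the Hodge decompositions on the perturbed sphere $S$. Both rely in an essential way on the assumption that $\K$ is positive and bounded: positivity rules out conformal Killing $1$-forms and symmetric traceless conformal Killing $2$-tensors, so that $d_1^*$ and $d_2^*$ have trivial kernels on $\sfr_0$ and $\sfr_1$ modulo constants, and the resulting inverse operators are bounded with constants depending only on the bounds for $\K$. Since these points are proved in detail in the references cited in \cite{Ch-Kl,kn}, I would simply cite them rather than reproducing the Calderón-Zygmund argument.
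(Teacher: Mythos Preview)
Your proposal is correct and, like the paper, ultimately defers to the classical reference: the paper's entire proof is the single line ``See Corollary 2.3.1.1 of \cite{Ch-Kl}.'' Your sketch of the Bochner/$L^2$ identity, Calder\'on--Zygmund extension, and Hodge-theoretic reduction of (2) and (3) to (1) is exactly the content of that cited corollary, so there is no substantive difference in approach.
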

\begin{proof}
See Corollary 2.3.1.1 of \cite{Ch-Kl}.
\end{proof}
\subsection{Schematic notation \texorpdfstring{$\Gag$}{}, \texorpdfstring{$\Gab$}{} and \texorpdfstring{$\Gaa$}{}}
We introduce the following schematic notations for the Ricci coefficients.
\begin{df}\label{gammag}
We divide the Ricci coefficients into three parts:
\begin{align*}
    \Gamma_g&:=\left\{\eta,\,\ue,\,\zeta,\,\widehat\chi,\,\widecheck{\Om\tr\chi},\,\widecheck{\Om\tr\unchi},\,\widecheck{\om},\,\nab\log\Om\right\},\\
    \Gamma_b&:=\{\widehat\unchi,\,\widecheck{\omb}\},\\
    \Ga_a&:=\left\{\om,\,\omb,\,\trch-\frac{2}{r},\,\trchb+\frac{2}{r}\right\}.
\end{align*}
We also denote:
\begin{align*}
    \Gag^{(1)}:=(r\nab)^{\leq 1}\Gag\cup\{r\b,r\rhoc,r\si\},\qquad \Gab^{(1)}:=(r\nab)^{\leq 1}\Gab\cup\{r\bb\},\qquad \Gaa^{(1)}:=(r\nab)^{\leq 1}\Gaa\cup\{r\ov{\rho}\}.
\end{align*}
\end{df}
\begin{rk}
    The justification of Definition \ref{gammag} has to do with the expected decay properties of the Ricci coefficients, see Lemma \ref{decayGagGabGaa}.
\end{rk}
\begin{rk}
In the sequel, we choose the following conventions:
\begin{itemize}
    \item For a quantity $h$ satisfying the same or even better decay as $\Ga_{i}$, for $i=g,b,a$, we write
    \begin{equation*}
        h\in\Ga_i ,\quad i=g,b,a.
    \end{equation*}
    \item For a sum of schematic notations, we ignore the terms which have same or even better decay. For example, we write
    \begin{equation*}
        \Gag+\Gab=\Gab,\qquad \Gag+\Gaa=\Gaa,
    \end{equation*}
    since $\Gag$ has better decay than $\Gab$ and $\Gaa$ throughout this paper.
\end{itemize}
\end{rk}
\subsection{Null structure equations}
We recall the null structure equations for a double null foliation, see for example (3.1)-(3.5) of \cite{kr}.
\begin{prop}\label{nulles}
We have the null structure equations:
\begin{align}
\begin{split}
\nabla_4\eta&=-\chi\cdot(\eta-\ue)-\beta,\\
\nabla_3\ue&=-\unchi\cdot(\ue-\eta)+\unb, \\
\nabla_4\widehat{\chi}+(\tr\chi)\widehat{\chi}&=-2\omega\widehat{\chi}-\alpha,\\
\nabla_4\tr\chi+\frac{1}{2}(\tr\chi)^2&=-|\widehat\chi|^2-2\omega\tr\chi,\\
\nabla_3\widehat{\unchi}+(\tr\unchi)\widehat{\unchi}&=-2\uome\widehat{\unchi}-\una,\\
\nabla_3\tr\unchi+\frac{1}{2}(\trchb)^2&=-|\widehat{\unchi}|^2-2\uome\tr\unchi,\\
\nabla_4\widehat{\unchi}+\frac{1}{2}(\tr\chi)\widehat{\unchi}&=\nabla\widehat\otimes\ue+2\omega\widehat{\unchi}-\frac{1}{2}(\tr\unchi)\widehat{\chi}+\ue\widehat\otimes\ue,\\
\nabla_3\widehat{\chi}+\frac{1}{2}(\tr\unchi)\widehat{\chi}&=\nabla\widehat\otimes\eta+2\uome\widehat{\chi}-\frac{1}{2}(\tr\chi)\widehat{\unchi}+\eta\widehat\otimes\eta,\\
\nabla_4\tr\unchi+\frac{1}{2}(\tr\chi)\tr\unchi &=2\omega\tr\unchi+2\rho-\widehat\chi\cdot\widehat\unchi+2\sdiv\ue+2|\ue|^2,\\
\nabla_3\tr\chi+\frac{1}{2}(\tr\unchi)\tr\chi &=2\uome\tr\chi+2\rho-\widehat\chi\cdot\widehat\unchi+2\sdiv\eta+2|\eta|^2.
\end{split}
\end{align}
We have the Codazzi equations:
\begin{align}
\begin{split}
\sdiv\widehat\chi=\frac{1}{2}\nabla\tr\chi-\zeta\cdot\left(\widehat\chi-\frac{1}{2}\tr\chi\right)-\beta,\\ 
\sdiv\widehat\unchi=\frac{1}{2}\nabla\tr\unchi+\zeta\cdot\left(\widehat\unchi-\frac{1}{2}\tr\unchi\right)+\unb,
\label{codazzi}
\end{split}
\end{align}
the torsion equation:
\begin{equation}
\curl\eta=-\curl\ue=\sigma+\frac{1}{2}\widehat\unchi\wedge\widehat\chi,\label{torsion}
\end{equation}
and the Gauss equation:
\begin{equation}
    \mathbf{K}=-\frac{1}{4}\tr\chi\tr\unchi+\frac{1}{2}\widehat\unchi\cdot\widehat\chi-\rho. \label{gauss}
\end{equation}
Moreover,
\begin{align}
    \begin{split}
\nabla_4\uome&=2\omega\uome+\frac{3}{4}|\eta-\ue|^2-\frac{1}{4}(\eta-\ue)\cdot(\eta+\ue)-\frac{1}{8}|\eta+\ue|^2+\frac{1}{2}\rho,\\
\nabla_3\omega&=2\omega\uome+\frac{3}{4}|\eta-\ue|^2+\frac{1}{4}(\eta-\ue)\cdot(\eta+\ue)-\frac{1}{8}|\eta+\ue|^2+\frac{1}{2}\rho. \label{omom}
    \end{split}
\end{align}
\end{prop}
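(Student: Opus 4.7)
\bigskip
\noindent\textbf{Proof plan for Proposition \ref{nulles}.}

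\smallskip
The plan is to derive every identity directly from the Ricci formulas \eqref{ricciformulas}, the null decomposition \eqref{defr} of $\R$, and the vacuum condition $\Ric(\g)=0$, without invoking external black-box results. The key tool will be the definition of the Riemann tensor $\R(X,Y)Z=\D_X\D_Y Z-\D_Y\D_X Z-\D_{[X,Y]}Z$ applied to the null frame $(e_1,e_2,e_3,e_4)$. Throughout I will use freely that for tangent $S$-vectors $X,Y$ we have $[e_A,e_B]=(\nab_A e_B-\nab_B e_A)$ plus terms in $e_3,e_4$ that are controlled by $\chi,\chib,\xi,\xib$ via \eqref{ricciformulas}.

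\smallskip
First I would handle the null transport equations for $\chi,\chib,\eta,\etab$. For instance, to obtain $\nab_4\hch+\tr\chi\,\hch=-2\om\,\hch-\a$, I start from $\chi_{AB}=\g(\D_Ae_4,e_B)$ and write
\begin{equation*}
(\nab_4\chi)_{AB}=e_4\big(\g(\D_Ae_4,e_B)\big)-\chi(\nab_4e_A,e_B)-\chi(e_A,\nab_4e_B).
\end{equation*}
I then commute $\D_4$ past $\D_A$ using the Riemann tensor, i.e.\ $\D_4\D_Ae_4=\D_A\D_4e_4+\D_{[e_4,e_A]}e_4+\R(e_4,e_A)e_4$, expand $\D_4 e_4=-2\om e_4$ (using $\xi=0$), replace the commutator $[e_4,e_A]$ and all stray covariant derivatives via \eqref{ricciformulas}, and identify $\g(\R(e_4,e_A)e_4,e_B)=\a_{AB}$. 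Taking the traceless part gives the equation for $\hch$ and contracting with $\de^{AB}$ gives the transport equation for $\tr\chi$ after using $\tr\a=0$ (a consequence of $\Ric(\g)=0$). The conjugate equations for $\hchb,\trchb$ follow by the symmetry $e_3\leftrightarrow e_4$, which interchanges $\chi\leftrightarrow\chib$, $\om\leftrightarrow\omb$, $\eta\leftrightarrow\ue$, $\a\leftrightarrow\aas$. The mixed equations for $\nab_3\hch$, $\nab_4\hchb$, $\nab_4\trchb$, $\nab_3\trch$ are derived the same way, with the new feature that $\rho$ appears through $\g(\R(e_3,e_A)e_4,e_B)$ and that commutator terms now involve $\eta,\ue$. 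The equations for $\nab_4\eta$ and $\nab_3\ue$ follow from differentiating $\eta_A=\frac12\g(\D_3e_4,e_A)$ and $\ue_A=\frac12\g(\D_4e_3,e_A)$ along $e_4$ and $e_3$ respectively, again commuting derivatives and identifying $\b$ or $\unb$.

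\smallskip
For the Codazzi equations \eqref{codazzi} the approach is elliptic rather than hyperbolic: I take $\nab^B\chi_{AB}$ and use the second Bianchi identity/the symmetries of $\R$ to rewrite it in terms of $\nab_A\trch$ plus a curvature component. Concretely, writing $\nab_B\chi_{AC}-\nab_A\chi_{BC}$ in terms of $\g(\R(e_A,e_B)e_4,e_C)$ via the frame version of the second fundamental form identity and then contracting indices produces the right-hand side $\frac12\nab\trch-\zeta\cdot(\hch-\frac12\tr\chi)-\b$; the conjugate Codazzi equation follows analogously. The torsion identity \eqref{torsion} is obtained by taking $\curl\eta=\in^{AB}\nab_A\eta_B$, rewriting $\nab_A\eta_B$ using the definition of $\eta$ and commuting $\D_A\D_3e_4$ as above to produce $\si$ (via $\frac14\R(e_3,e_4,e_A,e_B)\in^{AB}$) and the $\hch\wedge\hchb$ term. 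The Gauss equation \eqref{gauss} is simply Gauss--Codazzi: I compute $\mathbf K=\g(\R(e_1,e_2)e_2,e_1)+$ (second fundamental form terms involving $\chi,\chib$), and the second fundamental form terms reproduce $-\frac14\tr\chi\trchb+\frac12\hchb\cdot\hch$, while the ambient curvature contribution equals $-\rho$.

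\smallskip
Finally, for the $\om,\omb$ equations \eqref{omom} I differentiate the relations in \eqref{6.6}, namely $\om=-\frac12\D_4\log\Om$ and $\omb=-\frac12\D_3\log\Om$, along $e_3$ and $e_4$ respectively, using $[\D_3,\D_4]\log\Om$ expanded via the Riemann tensor, and substituting the identities in \eqref{6.6} to convert $\nab\log\Om$ into $\frac12(\eta+\ue)$ and $\zeta$ into $\frac12(\eta-\ue)$. The main obstacle throughout is purely bookkeeping: correctly tracking the commutator terms produced by $\D_4e_A=\nab_4e_A+\ue_A e_4$ (and its conjugate), consistently separating trace and traceless parts, and repeatedly applying $\Ric(\g)=0$ to kill the traces $\tr\a=\tr\aas=0$ and to identify the appropriate curvature components $\a,\b,\rho,\si,\bb,\aas$ in the algebraic reduction of $\R(e_\mu,e_\nu)e_\la$. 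Since these computations are standard, I would either carry them out once in full in a single representative case (e.g.\ the $\nab_4\hch$ equation) and indicate that the remaining identities follow by analogous manipulations and the $e_3\leftrightarrow e_4$ symmetry, or, as the author does, cite (3.1)--(3.5) of \cite{kr}.
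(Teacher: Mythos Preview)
Your proposal is correct and in fact more detailed than the paper's own treatment: the paper simply cites Section~7.4 of \cite{Ch-Kl} (and (3.1)--(3.5) of \cite{kr} in the preamble to the proposition) without reproducing any of the computations. One small slip: in your discussion of the $e_3\leftrightarrow e_4$ symmetry and of $\tr\a=0$, you write $\aas$ where you mean $\aa$ (i.e.\ $\una$), since in this paper $\aas$ denotes $rd_2\aa$ rather than $\una$ itself.
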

\begin{proof}
See Section 7.4 of \cite{Ch-Kl}.
\end{proof}
\subsection{Bianchi equations}\label{ssec7.4}
We recall the Bianchi equations in double null foliation, see Proposition 3.2.4 of \cite{kn}.
\begin{prop}\label{prop7.4}
In a double null frame, the Bianchi equations take the form
\begin{align*}
\nabla_4\una+\frac{1}{2}\tr\chi\una&=-\nabla\widehat{\otimes}\unb+h[\una_4],\\
h[\una_4]&=4\omega\una-3(\widehat\unchi\rho-{^*\widehat\unchi}\sigma)+(\zeta-4\ue)\widehat{\otimes}\unb,\\
\nabla_3\unb+2\tr\unchi\unb&=-\sdiv\una+h[\unb_3],\\
h[\unb_3]&=-2\uome\unb+(2\zeta-\eta)\cdot\una,\\
\nabla_4\unb+\tr\chi\unb&=-\nabla\rho+{^*\nabla\sigma}+h[\unb_4],\\
h[\unb_4]&=2\omega\unb+2\widehat\unchi\cdot\beta-3(\ue\rho-{^*\ue}\sigma),\\
\nabla_3\rho+\frac{3}{2}\tr\unchi\,\rho&=-\sdiv\unb+h[\rho_3],\\
h[\rho_3]&=-\frac{1}{2}\widehat\chi\cdot\una+\ze\cdot\unb-2\eta\cdot\unb,\\
\nabla_4\rho+\frac{3}{2}\tr\chi\rho&=\sdiv\beta+h[\rho_4],\\
h[\rho_4]&=-\frac{1}{2}\widehat\unchi\cdot\alpha+\zeta\cdot\beta+2\ue\cdot\beta,\\
\nabla_3\sigma+\frac{3}{2}\tr\unchi\sigma&=-\sdiv{^*\unb}+h[\sigma_3],\\
h[\sigma_3]&=\frac{1}{2}\widehat\chi\cdot{^*\una}-\zeta\cdot{^*\unb}+2\eta\cdot{^*\unb},\\
\nabla_4\sigma+\frac{3}{2}\tr\chi\sigma&=-\sdiv{^*\beta}+h[\sigma_4],\\
h[\sigma_4]&=\frac{1}{2}\widehat\unchi\cdot{^*\alpha}-\zeta\cdot{^*\beta}-2\ue\cdot{^*\beta},\\
\nabla_3\beta+\tr\unchi\beta&=\nabla\rho+{^*\nabla}\sigma+h[\beta_3],\\
h[\beta_3]&=2\uome\beta+2\widehat\chi\cdot\unb+3(\eta\rho+{^*\eta}\sigma),\\
\nabla_4\beta+2\tr\chi\beta&=\sdiv\alpha+h[\beta_4],\\
h[\beta_4]&=-2\omega\beta+(2\zeta+\ue)\alpha,\\
\nabla_3\alpha+\frac{1}{2}\tr\unchi\alpha&=\nabla\widehat{\otimes}\beta+h[\alpha_3],\\
h[\alpha_3]&=4\uome\alpha-3(\widehat\chi\rho+{^*\widehat\chi}\sigma)+(\zeta+4\eta)\widehat{\otimes}\beta.
\end{align*}
\end{prop}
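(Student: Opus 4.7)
The plan is to derive the ten equations as a direct consequence of the second Bianchi identity and the Ricci formulas \eqref{ricciformulas}. Since $(\MM,\g)$ solves the Einstein vacuum equations \eqref{EVE}, the Riemann tensor $\R$ coincides with the Weyl tensor and is divergence-free, so the second Bianchi identity is equivalent to
\begin{equation*}
\D^\mu \R_{\mu\nu\a\b}=0.
\end{equation*}
Each of the ten Bianchi equations is then produced by contracting this identity with an appropriate pair of null frame vectors.

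To illustrate the procedure, consider the equation for $\nab_4 \a_{AB}$. Starting from $\R(e_A,e_4,e_B,e_4)=\a_{AB}$, apply $\D_4$ and use the Leibniz rule:
\begin{equation*}
\D_4\bigl[\R(e_A,e_4,e_B,e_4)\bigr]=(\D_4\R)(e_A,e_4,e_B,e_4)+\R(\D_4 e_A,e_4,e_B,e_4)+\ldots
\end{equation*}
The Ricci formulas \eqref{ricciformulas} give $\D_4 e_A=\nab_4 e_A+\etab_A e_4$ (since $\xi=0$) and $\D_4 e_4=-2\om e_4$, which extract the horizontal derivative $\nab_4\a_{AB}$ and produce the lower-order contributions $\trch\cdot\a$, $\om\cdot\a$, and cross terms with $\b$ contracted against $\etab,\ze$. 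To treat $(\D_4\R)(\cdots)$, one contracts the second Bianchi identity $\D_\mu\R_{\nu\rho\a\b}+\D_\nu\R_{\rho\mu\a\b}+\D_\rho\R_{\mu\nu\a\b}=0$ with the frame, obtaining the principal angular derivative term (here $\nab\hot\b$, since the equation is the symmetric tracefree projection on the $(A,B)$ indices). Repeating this for each curvature component and each direction $\nab_3$ or $\nab_4$ gives the full system. The roles of $(e_3,e_4)$ are interchanged between Bianchi pairs (e.g.\ $\a\leftrightarrow\aas$, $\b\leftrightarrow\bb$), so the underlined equations follow from the same computation with $3\leftrightarrow 4$, $\chi\leftrightarrow\chib$, $\om\leftrightarrow\omb$, $\eta\leftrightarrow\etab$ and an appropriate sign on $\ze$.

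The structural split is purely conventional: the principal terms are those involving $\nab$, $\sdiv$, $^*\nab$, $\nab\hot$ of a single curvature component, along with the leading zeroth order term $\tfrac{k}{2}\tr\chi\,(\text{curv})$ or $\tfrac{k}{2}\trchb\,(\text{curv})$ coming from $\D_A e_B$ containing a $\tfrac{1}{2}\chi_{AB}e_3+\tfrac{1}{2}\chib_{AB}e_4$ piece; all remaining products of curvature with Ricci coefficients collect into the $h[\cdot]$ tensors exactly as stated.

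The main obstacle is not conceptual but bookkeeping: one must carefully track factors of $2$, signs, Hodge duals, and the symmetric-tracefree symmetrization $\hot$ in each of the ten equations, and verify that the stated $h[\cdot]$ tensors match the computation without residue. Since this derivation is entirely standard and no shortcut reduces the length of the computation, the cleanest presentation is to carry it out explicitly for one or two representative equations ($\nab_3\a$ and $\nab_4\rho$, say) and invoke the analogous computations for the remaining eight, ultimately pointing to Section~7.4 of \cite{Ch-Kl} or Proposition~3.2.4 of \cite{kn} for the full detailed verification.
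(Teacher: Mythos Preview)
Your proposal is correct and takes essentially the same approach as the paper: the paper's proof is simply ``See Section 7.4 of \cite{Ch-Kl}'', and you likewise outline the standard derivation from the second Bianchi identity and the Ricci formulas before deferring to the same references. Your sketch of the mechanism (contracting $\D^\mu \R_{\mu\nu\a\b}=0$ against frame vectors and using \eqref{ricciformulas}) is accurate and, if anything, more informative than what the paper provides.
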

We also derive equations for $\rhoc$ and $\ov{\rho}$.
\begin{lem}\label{prop7.5}
We have the following equations:
\begin{align*}
\nabla_4\unb+\tr\chi\unb&=-\nabla\widecheck\rho+{^*\nabla\sigma}+h[\bb_4],\\
     \nabla_3\beta+\tr\unchi\beta&=\nabla\widecheck\rho+{^*\nabla}\sigma+h[\b_3],\\ 
        \nabla_3\chr+\frac{3}{2}\tr\unchi\chr&=-\sdiv\unb+h[\chr_3],\\ h[\chr_3]&:=\Gag\cdot\bb+h[\rho_3]-\overline{h[\rho_3]},\\
        \nabla_3\ov{\rho}+\frac{3}{2}\tr\unchi\ov{\rho}&=h[\ov{\rho}_3],\\ h[\ov{\rho}_3]&:=\Gag\cdot\bb+\overline{h[\rho_3]},\\
        \nabla_4\chr+\frac{3}{2}\tr\chi\chr&=\sdiv\beta+h[\chr_4],\\
        h[\chr_4]&:=\Gag\cdot\rhoc+h[\rho_4]-\overline{h[\rho_4]},\\
        \nab_4\ov{\rho}+\frac{3}{2}\trch\ov{\rho}&=h[\ov{\rho}_4],\\
        h[\ov{\rho}_4]&:=\Gag\cdot\rhoc+\overline{h[\rho_4]},
\end{align*}
where $h[\bb_4]$, $h[\b_3]$, $h[\rho_3]$ and $h[\rho_4]$ are defined in Proposition \ref{prop7.4}.
\end{lem}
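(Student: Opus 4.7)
The first two identities are immediate. Since $\ov{\rho}$ is constant on each sphere $S(u,\unu)$, the tangential gradient $\nab\ov{\rho}$ vanishes, and hence $\nab\rho = \nab\rhoc$. Substituting this identity into the Bianchi equations for $\nab_4\unb$ and $\nab_3\beta$ from Proposition \ref{prop7.4} yields the first two equations of the lemma without any further manipulation.

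For the four remaining identities, the plan is to split the two $\rho$ Bianchi equations into their spherical mean and mean-free parts. Consider first the $e_3$ direction. Multiply the equation
\[
\nab_3 \rho + \tfrac{3}{2}\trchb\,\rho = -\sdiv \unb + h[\rho_3]
\]
by $\Om$ and take the $S$-average. Corollary \ref{dav} applied to $f=\rho$ gives $\ov{\Om e_3 \rho} = \Om e_3 \ov{\rho} - \ov{\omtrchbc\,\chr}$, while Lemma \ref{chav} yields the decomposition $\ov{\Om\trchb\,\rho} = \ov{\Om\trchb}\,\ov{\rho} + \ov{\omtrchbc\,\chr}$. The divergence term is treated by integration by parts on $S$: since $\nab\log\Om = \tfrac{1}{2}(\eta+\ue)\in\Gag$ by \eqref{6.6}, we obtain
\[
\ov{\Om\sdiv\unb} \;=\; -\ov{\nab\Om\cdot\unb} \;=\; \Om\,\Gag\cdot\unb
\]
schematically. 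Dividing by $\Om$ and collecting all the resulting terms produces the equation $\nab_3\ov{\rho} + \tfrac{3}{2}\trchb\ov{\rho} = \Gag\cdot\unb + \ov{h[\rho_3]}$, which is the claimed identity for $\ov{\rho}$ (with the schematic identification $\Gag\cdot\unb\sim\Gag\cdot\bb$). The equation for $\chr$ is then obtained by subtracting this mean equation from the original Bianchi equation: the only nontrivial piece is $\tfrac{3}{2}(\trchb\rho-\ov{\trchb}\,\ov{\rho}) = \tfrac{3}{2}\trchb\chr + \tfrac{3}{2}\trchbc\ov{\rho}$, where $\trchb\chr$ joins the LHS while $\trchbc\ov{\rho}$ is schematically of $\Gag\cdot$(curvature) type and is absorbed into the $\Gag\cdot\bb$ remainder of $h[\chr_3]$.

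The two equations in the $e_4$ direction are derived by exactly the same procedure, starting from the $\nab_4\rho$ Bianchi equation in Proposition \ref{prop7.4}, using the $e_4$ statement of Corollary \ref{dav}, and integrating by parts on $\ov{\Om\sdiv\beta}$. Here the integration-by-parts residue is $\Gag\cdot\beta$, which falls into the same schematic class as $\Gag\cdot\rhoc$ in view of the grouping $\{r\beta,r\rhoc,r\si\}\subset\Gag^{(1)}$, matching the form $\Gag\cdot\rhoc$ stated in $h[\chr_4]$ and $h[\ov{\rho}_4]$.

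The main obstacle is purely organisational: tracking the three sources of correction terms, namely (i) the non-commutativity of the $S$-average with $\nab_3$ and $\nab_4$ encoded in Corollary \ref{dav}, (ii) the spread $\trchb\rho - \ov{\trchb}\,\ov{\rho}$ (and its $e_4$ analogue) created when splitting the transport coefficient, and (iii) the integration-by-parts contribution from $\sdiv\unb$ and $\sdiv\beta$, and checking that each of them falls into the asserted schematic class ($\Gag\cdot\bb$ in the $e_3$ case and $\Gag\cdot\rhoc$ in the $e_4$ case). Once Corollary \ref{dav}, Lemma \ref{chav}, and the identities \eqref{6.6} are in hand, this reduces to routine algebra.
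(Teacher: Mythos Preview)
Your proof is correct and follows essentially the same route as the paper: the first two equations come from $\nab\rho=\nab\rhoc$, the averaged equations are obtained via Corollary~\ref{dav} together with integration by parts on $\ov{\Om\sdiv\unb}$ (resp.\ $\ov{\Om\sdiv\beta}$) using $\nab\log\Om\in\Gag$, and the mean-free equations follow by subtraction. Your explicit bookkeeping of the term $\tfrac{3}{2}\trchbc\,\ov{\rho}$ is slightly more detailed than the paper, which absorbs it silently into the schematic error, but the argument is the same.
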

\begin{proof}
Note that $\nab\rhoc=\nab\rho$, hence the first two equations follow directly from Proposition \ref{prop7.4}. On the other hand, applying Corollary \ref{dav}, we infer
\begin{align*}
    \Om\nab_3 \overline{\rho}&=\ov{\omtrchbc\,\rhoc}+\ov{\Om\nab_3\rho}\\
    &=\ov{-\frac{3}{2}\Om\trchb \rho-\Om\sdiv\bb+h[\rho_3]}+\Gag\cdot\rhoc\\
    &=-\frac{3}{2}\Om\trchb\,\ov{\rho}+\ov{h[\rho_3]}+\ov{\bb\cdot\nab\Om}+\Gag\cdot\rhoc,\\
    &=-\frac{3}{2}\Om\trchb\,\ov{\rho}+\ov{h[\rho_3]}+\Gag\cdot\bb.
\end{align*}
So, we have
\begin{align*}
\nab_3\ov{\rho}+\frac{3}{2}\trchb\,\ov{\rho}&=\Gag\cdot\bb+\ov{h[\rho_3]},
\end{align*}
which implies the fourth equation. Combining with the equation of $\nab_3\rho$ in Proposition \ref{prop7.4}, we infer
\begin{align*}
    \nabla_3\rhoc+\frac{3}{2}\trchb\,\rhoc=-\sdiv\unb+\Gag\cdot\bb+h[\rho_3]-\overline{h[\rho_3]},
\end{align*}
which implies the third equation. The proof of the fifth and sixth equations are similar. This concludes the proof of Lemma \ref{prop7.5}.
\end{proof}
We rewrite the Bianchi equations of Proposition \ref{prop7.4} and Lemma \ref{prop7.5} in the following Corollary.
\begin{cor}\label{prop7.6}
\begin{align*}
\nabla_4\una+\frac{1}{2}\tr\chi\una&=-\nabla\widehat{\otimes}\unb+h[\una_4],\\
\nabla_3\unb+2\tr\unchi\unb&=-\sdiv\una+h[\unb_3],\\
\nabla_4\unb+\tr\chi\unb&=-\nabla\widecheck\rho+{^*\nabla\sigma}+h[\unb_4],\\
\nabla_3\widecheck\rho+\frac{3}{2}\tr\unchi\widecheck\rho&=-\sdiv\unb+h[\chr_3], \\
\nabla_4\rhoc+\frac{3}{2}\tr\chi\widecheck\rho&=\sdiv\beta+h[\chr_4],\\
\nab_3\ov{\rho}+\frac{3}{2}\trchb\ov{\rho}&=h[\ov{\rho}_3],\\
\nab_4\ov{\rho}+\frac{3}{2}\trch\ov{\rho}&=h[\ov{\rho}_4],\\
\nabla_3\sigma+\frac{3}{2}\tr\unchi\sigma&=-\sdiv{^*\unb}+h[\sigma_3],\\
\nabla_4\sigma+\frac{3}{2}\tr\chi\sigma&=-\sdiv{^*\beta}+h[\sigma_4],\\
\nabla_3\beta+\tr\unchi\beta&=\nabla\widecheck\rho+{^*\nabla}\sigma+h[\beta_3],\\
\nabla_4\beta+2\tr\chi\beta&=\sdiv\alpha+h[\beta_4],\\
\nabla_3\alpha+\frac{1}{2}\tr\unchi\alpha&=\nabla\widehat{\otimes}\beta+h[\alpha_3],
\end{align*}
where $h[\aa_4]$, $h[\bb_3]$ , ... , $h[\a_3]$ are defined in Proposition \ref{prop7.4} and Lemma \ref{prop7.5}.
\end{cor}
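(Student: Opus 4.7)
The plan is to observe that Corollary \ref{prop7.6} is purely a rearrangement of the already-established Bianchi equations, so the proof reduces to collecting equations from Proposition \ref{prop7.4} and Lemma \ref{prop7.5} and checking one simple compatibility relation. First I would take verbatim from Proposition \ref{prop7.4} the equations for $\nabla_4\una$, $\nabla_3\unb$, $\nabla_3\sigma$, $\nabla_4\sigma$, $\nabla_3\beta$, $\nabla_4\beta$, and $\nabla_3\alpha$, since none of these involve $\rho$ on the right-hand side in a way that needs to be split. The equations for $\nabla_3\chr$, $\nabla_4\chr$, $\nabla_3\ov{\rho}$, and $\nabla_4\ov{\rho}$ are then imported directly from Lemma \ref{prop7.5}.

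The only step that requires a small argument is the equation for $\nabla_4\unb$. Proposition \ref{prop7.4} gives
\begin{equation*}
\nabla_4\unb+\tr\chi\,\unb=-\nabla\rho+{^*\nabla}\sigma+h[\unb_4],
\end{equation*}
and we want to replace $\nabla\rho$ by $\nabla\chr$. This is immediate from the decomposition $\rho=\ov{\rho}+\chr$, combined with the fact that $\ov{\rho}$ depends only on $(u,\unu)$ and is constant on each sphere $S(u,\unu)$, so $\nabla\ov{\rho}=0$ and hence $\nabla\rho=\nabla\chr$. The same remark is implicit in writing $\nabla_3\unb$ via $\nabla\chr$ if one is consistent, and indeed this identity $\nabla\rho=\nabla\chr$ is already noted at the beginning of the proof of Lemma \ref{prop7.5}.

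Since there is no analytic obstacle here — everything is an algebraic rewriting of equations already proved — the main ``obstacle'' is purely bookkeeping: making sure the error terms $h[\cdot]$ in each line are the ones defined in the correct source (Proposition \ref{prop7.4} for the unmodified lines, Lemma \ref{prop7.5} for the lines involving $\chr$ and $\ov{\rho}$), and that the grouping of terms matches. Once this is checked, the corollary follows.
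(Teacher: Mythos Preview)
Your proposal is correct and matches the paper's approach: the paper gives no explicit proof of this corollary, treating it as an immediate compilation of Proposition~\ref{prop7.4} and Lemma~\ref{prop7.5}, which is exactly what you do. Two small bookkeeping remarks: the equations for $\nabla_4\unb$ and $\nabla_3\beta$ with $\nabla\chr$ in place of $\nabla\rho$ are already the first two lines of Lemma~\ref{prop7.5}, so you can import them directly rather than re-derive them; and where you write ``$\nabla_3\unb$ via $\nabla\chr$'' you presumably mean $\nabla_3\beta$, since the $\nabla_3\unb$ equation involves $\sdiv\una$, not $\nabla\rho$.
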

\subsection{Commutation identities}
We recall the following commutation identities for scalar functions, see Proposition 4.8.1 in \cite{kn}.
\begin{prop}\label{prop7.7}
For any scalar function $f$, we have:
\begin{align*}
    [\nabla_4,\nabla]f&=-\chi\cdot\nabla f+(\nabla\log\Omega)\nabla_4 f,\\
    [\nabla_3,\nabla]f&=-\unchi\cdot\nabla f+(\nabla\log\Omega)\nabla_3 f,\\
    [\nabla_4,\nabla_3]f&=2\omega\nabla_3 f -2\uome\nabla_4 f-4\zeta\cdot\nabla f.
\end{align*}
\end{prop}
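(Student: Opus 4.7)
The identities in Proposition \ref{prop7.7} concern scalar functions, which is the simplest case for commutators, since for a scalar $f$ one has $\nabla_X f = X(f)$ and $\D_X \D_Y f - \D_Y \D_X f = [X,Y]f$ by torsion-freeness of $\D$. The plan is to reduce each commutator to a Lie bracket of frame vectors $[e_4, e_A]$, $[e_3, e_A]$, $[e_4, e_3]$, and then expand these brackets using the Ricci formulas \eqref{ricciformulas} together with the double null identities \eqref{6.6}.

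For the first identity, I would compute
\begin{equation*}
(\nab_4 \nab f)_A - \nab_A(\nab_4 f) = e_4(e_A f) - (\nab_4 e_A)(f) - e_A(e_4 f) = [e_4,e_A]f - (\nab_4 e_A)(f),
\end{equation*}
where $\nab_4 e_A$ denotes the horizontal part of $\D_4 e_A$. Using the Ricci formulas $\D_4 e_A = \nab_4 e_A + \etab_A e_4 + \xi_A e_3$ (recalling $\xi=0$) and $\D_A e_4 = \chi_{AB} e_B - \ze_A e_4$, together with $\etab + \ze = \nab \log \Om$ from \eqref{6.6}, the bracket becomes
\begin{equation*}
[e_4, e_A] = \D_4 e_A - \D_A e_4 = \nab_4 e_A - \chi_{AB} e_B + (\nab_A \log \Om)\, e_4.
\end{equation*}
Substituting back, the $(\nab_4 e_A)(f)$ terms cancel, leaving $-\chi\cdot \nab f + (\nab\log\Om)\nab_4 f$, as claimed.

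The second identity follows by the same computation with $e_3$ in place of $e_4$: using $\D_3 e_A = \nab_3 e_A + \eta_A e_3 + \xib_A e_4$ (with $\xib=0$) and $\D_A e_3 = \chib_{AB} e_B + \ze_A e_3$, together with $\eta - \ze = \nab\log\Om$, one finds $[e_3, e_A] = \nab_3 e_A - \chib_{AB} e_B + (\nab_A \log \Om) e_3$, giving the stated formula.

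For the third identity, one simply has $[\nab_4, \nab_3] f = [e_4, e_3] f$, since $\nab_4 f = e_4 f$ and $\nab_3 f = e_3 f$ are scalars. From the Ricci formulas $\D_4 e_3 = 2\om e_3 + 2\etab_B e_B$ and $\D_3 e_4 = 2\omb e_4 + 2\eta_B e_B$, together with $\etab - \eta = -2\ze$, we get
\begin{equation*}
[e_4, e_3] = 2\om e_3 - 2\omb e_4 + 2(\etab - \eta)_B e_B = 2\om e_3 - 2\omb e_4 - 4\ze_B e_B,
\end{equation*}
which yields the third identity upon acting on $f$. No step presents a serious obstacle; the only subtlety is being careful about horizontal projections and the sign conventions relating $\eta, \etab, \ze, \nab\log\Om$, so the key check is consistent use of \eqref{6.6} and \eqref{ricciformulas}.
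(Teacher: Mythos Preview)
Your proof is correct. The paper itself does not provide a proof but simply cites Proposition~4.8.1 of \cite{kn}; your direct computation via the Ricci formulas \eqref{ricciformulas} and the identities \eqref{6.6} is exactly the standard argument one would find there, and all sign checks (in particular $\etab+\ze=\nab\log\Om$, $\eta-\ze=\nab\log\Om$, $\etab-\eta=-2\ze$) are handled correctly.
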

The following corollary is a direct consequence of Proposition \ref{prop7.7}.
\begin{cor}\label{commcor}
For any scalar function $f$, we have:
\begin{align*}
    [\Om\nabla_4,\nabla]f&=-\Om\chi\cdot\nabla f,\\
    [\Om\nabla_3,\nabla]f&=-\Om\unchi\cdot\nabla f,\\
    [\Om\nab_4,r\nab]f&=\Gag\cdot r\nab f,\\
    [\Om\nab_3,r\nab]f&=\Gab\cdot r\nab f.
\end{align*}
\end{cor}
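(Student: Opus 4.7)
The plan is to derive Corollary \ref{commcor} as a direct consequence of Proposition \ref{prop7.7}, combined with Leibniz rule manipulations and the identity for $e_4(r)$, $e_3(r)$ from Lemma \ref{dint}. There is no substantial obstacle here; the main task is careful bookkeeping of cancellations and of which pieces land in $\Gag$ versus $\Gab$.

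First, for the two identities without the $r$-weight, I would write for a scalar $f$
\begin{equation*}
[\Om\nab_4,\nab]f \;=\; \Om\,[\nab_4,\nab]f \,-\, (\nab\Om)\,\nab_4 f,
\end{equation*}
and plug in Proposition \ref{prop7.7}, which yields $\Om[\nab_4,\nab]f = -\Om\chi\cdot\nab f + \Om(\nab\log\Om)\nab_4 f$. The last term equals $(\nab\Om)\nab_4 f$ and so cancels exactly with the extra piece from the Leibniz manipulation; this leaves $-\Om\chi\cdot\nab f$, as required. The $\nab_3$ identity is obtained by the same argument, replacing Proposition \ref{prop7.7}'s first line by the second.

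Next, for the weighted versions, I would expand
\begin{equation*}
[\Om\nab_4, r\nab]f \;=\; r\,[\Om\nab_4,\nab]f \,+\, \Om\nab_4(r)\,\nab f,
\end{equation*}
and use the first identity of the corollary to replace the first summand by $-r\Om\chi\cdot\nab f$. Then I would invoke the formula $e_4(r) = \tfrac{\overline{\Om\trch}}{2\Om}r$ from Lemma \ref{dint} to rewrite $\Om\nab_4(r)\nab f = \tfrac{1}{2}\overline{\Om\trch}\,r\nab f$. Decomposing $\chi = \hch + \tfrac{1}{2}\trch\,\de$, the two contributions combine to
\begin{equation*}
-r\Om\hch\cdot\nab f \,-\, \tfrac{1}{2}\bigl(\Om\trch - \overline{\Om\trch}\bigr)\,r\nab f \;=\; -r\Om\hch\cdot\nab f \,-\, \tfrac{1}{2}\,\omtrchc\,r\nab f.
\end{equation*}
Since both $\hch$ and $\omtrchc$ belong to $\Gag$ by Definition \ref{gammag}, the right-hand side can be written schematically as $\Gag\cdot r\nab f$.

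The $\nab_3$ case proceeds identically using the formula for $e_3(r)$ and the decomposition $\unchi = \hchb + \tfrac{1}{2}\trchb\,\de$, producing the two schematic terms $-r\Om\hchb\cdot\nab f$ and $-\tfrac{1}{2}\,\omtrchbc\,r\nab f$. Here $\omtrchbc\in\Gag$ but $\hchb\in\Gab$, and since by the conventions introduced after Definition \ref{gammag} the $\Gag$ contribution is absorbed into the worse $\Gab$ term, the final right-hand side reads $\Gab\cdot r\nab f$. This completes the plan; the only care needed is to verify the sign cancellation of $\nab\log\Om$ in the unweighted identities and to track that $\omtrchbc$ decays as $\Gag$ rather than $\Gab$ when combining schematic terms.
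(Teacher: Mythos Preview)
Your proposal is correct and follows exactly the approach the paper intends: the paper simply states that the corollary is a direct consequence of Proposition \ref{prop7.7}, and your computation spells out precisely that deduction, with the correct cancellation of the $\nab\log\Om$ terms and the correct identification of the remaining pieces in $\Gag$ and $\Gab$.
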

We also need the commutation identities for more general tensor fields. For this, we record the following commutation lemma.
\begin{lem}\label{comm}
Let $U_{A_1...A_k}$ be an $S$-tangent $k$-covariant tensor on $(\M,\bg)$. Then
\begin{align*}
    [\nabla_4,\nabla_B]U_{A_1...A_k}&=-\chi_{BC}\nab_C U_{A_1...A_k}+F_{4BA_1...A_k},\\
    F_{4BA_1...A_k}&:=(\zeta_B+\ue_B)\nabla_4 U_{A_1...A_k}+\sum_{i=1}^k (\chi_{A_iB}\,\ue_C-\chi_{BC}\,\ue_{A_i}+\in_{A_iC}{^*\beta}_B)U_{A_1...C...A_k},\\
    [\nabla_3,\nabla_B]U_{A_1...A_k}&=-\unchi_{BC}\nabla_C U_{A_1...A_k}+F_{3BA_1...A_k},\\
    F_{3BA_1...A_k}&:=(\eta_B-\ze_B)\nab_3U_{A_1...A_k}+\sum_{i=1}^k(\unchi_{A_iB}\,\eta_C-\unchi_{BC}\,\eta_{A_i}+\in_{A_iC}{^*\unb}_B)U_{A_1...C...A_k},\\
    [\nabla_3,\nabla_4]U_{A_1...A_k}&=F_{34A_1...A_k},\\
    F_{34A_1...A_k}&:=-2\om\nab_3 U+2\omb\nab_4 U+4\ze_B\nab_B U_{A_1...A_k}+2\sum_{i=1}^k(\etab_{A_i}\,\eta_C-\etab_{A_i}\,\eta_C+\in_{A_iC}\si)U_{A_1...C...A_k}.
\end{align*}
\end{lem}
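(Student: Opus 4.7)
The plan is to derive all three identities from (i) the spacetime Ricci identity $[\D_\alpha, \D_\beta] T_{\mu_1\ldots\mu_k} = -\sum_i \R^\nu{}_{\mu_i\alpha\beta} T_{\mu_1\ldots\nu\ldots\mu_k}$ for generic spacetime tensors, combined with (ii) the Ricci formulas \eqref{ricciformulas} expressing $\D$ applied to the null frame in terms of $\nabla$ and the Ricci coefficients. The key preliminary observation is that for any $S$-tangent tensor $U$, one has $\nabla_B U_{A_1\ldots A_k} = (\D_{e_B} U)(e_{A_1},\ldots,e_{A_k})$ and $\nabla_{3,4} U_{A_1\ldots A_k} = (\D_{3,4} U)(e_{A_1},\ldots,e_{A_k})$, since by \eqref{ricciformulas} the $e_3$- and $e_4$-components of $\D_\ast e_{A_i}$ annihilate against $U$. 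In particular $\nabla U$ is a horizontal $(k+1)$-tensor agreeing with $\D U$ evaluated on horizontal arguments.

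For $[\nabla_4, \nabla_B]$ I expand $\nabla_4 \nabla_B U - \nabla_B \nabla_4 U$ and split the result into two pieces. The piece from $[e_4, e_B]$ applied to $U$ as a collection of scalars: by torsion-freeness and \eqref{ricciformulas}, $[e_4, e_B] = \D_4 e_B - \D_B e_4 = \nabla_4 e_B - \chi_{BC} e_C + (\ue_B + \zeta_B) e_4$, which produces exactly the $-\chi_{BC}\nabla_C U$ and $(\zeta_B + \ue_B)\nabla_4 U$ terms of the statement. The piece from the spacetime Ricci identity applied to each tensor slot of $U$: after discarding $e_3, e_4$-contractions that vanish against $U$, what remains is $-\sum_i \delta^{CD} \R(e_D, e_{A_i}, e_4, e_B) U_{A_1\ldots C\ldots A_k}$. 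Decomposing the 3-horizontal-1-null Weyl component $\R(e_D, e_{A_i}, e_4, e_B)$ in the null frame (using its antisymmetries together with the definitions \eqref{defr}) produces the $\in_{A_iC}\,{}^*\beta_B$ term, plus $\delta$-type corrections which, combined with the residual contributions from $\D_B e_4 = \chi_{BC} e_C - \zeta_B e_4$ and $\D_4 e_{A_i} = \nabla_4 e_{A_i} + \ue_{A_i} e_4$ acting at each index, yield the $\chi_{A_iB}\ue_C - \chi_{BC}\ue_{A_i}$ contributions.

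The case $[\nabla_3, \nabla_B]$ follows identically under the substitutions $e_4 \leftrightarrow e_3$, $\chi \leftrightarrow \unchi$, $\ue \leftrightarrow \eta$, $\omega \leftrightarrow \omb$, $\beta \leftrightarrow \unb$. For $[\nabla_3, \nabla_4]$ I compute $[e_3, e_4] = \D_3 e_4 - \D_4 e_3 = -2\omega e_3 + 2\omb e_4 + 2(\eta - \ue)_B e_B$ using \eqref{ricciformulas}, and with $\eta - \ue = 2\zeta$ from \eqref{6.6} this produces the first three terms of $F_{34\cdots}$. The curvature contribution $2\in_{A_iC}\sigma$ comes from the null-frame identity $\R(e_D, e_{A_i}, e_3, e_4) = 2\sigma\in_{DA_i}$. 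The remaining $2(\ue_{A_i}\eta_C - \eta_{A_i}\ue_C)$ terms arise from iterating \eqref{ricciformulas}: the non-horizontal parts $\eta_{A_i} e_3$ of $\D_3 e_{A_i}$ and $\ue_{A_i} e_4$ of $\D_4 e_{A_i}$ are struck at the second step by $\D_4 e_3 = 2\omega e_3 + 2\ue_B e_B$ and $\D_3 e_4 = 2\omb e_4 + 2\eta_B e_B$, whose horizontal pieces survive the projection.

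The principal obstacle is bookkeeping: tracking signs carefully and correctly identifying the null-frame decomposition of the 3-horizontal-1-null Weyl components, separating the genuine $\beta, \unb$ curvature pieces (with the Hodge-dual $\in$ structure) from the ``$\delta$-type'' pieces that must be absorbed into Ricci-coefficient terms via a further application of \eqref{ricciformulas}. This is standard but tedious.
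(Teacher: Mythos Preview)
Your proposal is correct and follows exactly the standard route: expand the commutators using the Ricci formulas \eqref{ricciformulas} for the Lie bracket contributions and the spacetime Ricci identity for the curvature contributions, then decompose the relevant Weyl components in the null frame. The paper's own proof is simply a one-line citation of Lemma~7.3.3 in \cite{Ch-Kl}, whose content is precisely the computation you sketch; so your argument is essentially a self-contained version of the cited proof rather than a different approach.
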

\begin{proof}
It is a direct consequence of Lemma 7.3.3 in \cite{Ch-Kl}.
\end{proof}
Notice that Lemma \ref{comm} implies the following proposition.
\begin{prop}\label{commutation}
We have the following simple schematic consequences of the commutator identities
\begin{align*}
[r\nabla,\Om\nabla_4]&=\Gamma_g\cdot r\nabla+\Gag^{(1)},\\
[r\nabla,\Om\nabla_3]&=\Gamma_b\cdot r\nabla+\Gab^{(1)},\\
[\Om\nab_4,\Om\nab_3]&=\Gag\cdot\nab+r^{-1}\Gag^{(1)}.
\end{align*}
\end{prop}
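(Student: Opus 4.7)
The plan is to derive all three identities as direct consequences of the tensorial commutator formulas of Lemma \ref{comm} combined with the product rule, by exploiting a handful of exact algebraic cancellations forced by the double null structure.

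For $[r\nabla_B, \Om\nabla_4]U$, I would first write
$$[r\nabla, \Om\nabla_4]U = \Om\nabla_4(r)\nabla U + r[\Om\nabla_4,\nabla]U - (\nabla\Om)(\nabla_4 U),$$
and then substitute $[\nabla_4,\nabla_B]U$ from Lemma \ref{comm}. The crucial first cancellation is that the coefficient of $\nabla_4 U$ produced by $F_{4B...}$ is $\zeta_B+\ue_B$, which by \eqref{6.6} equals $\nabla_B\log\Om$, cancelling exactly against the product-rule term $-(\nabla_B\Om)\nabla_4 U = -\Om\nabla_B\log\Om\cdot\nabla_4 U$ after factoring the $\Om$. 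The second cancellation is at leading order in $r$: using Lemma \ref{dint} one has $\Om\nabla_4(r) = \tfrac{\ov{\Om\trch}}{2}r$, and after splitting $\chi = \hch + \tfrac12\trch\,\de$, the trace pieces combine as
$$\Om\nabla_4(r)\,\nabla_B U - \tfrac{r\Om\trch}{2}\,\nabla_B U = -\tfrac12\,\widecheck{\Om\trch}\cdot r\nabla_B U,$$
with $\widecheck{\Om\trch}\in\Gag$. The surviving $-\Om\hch\cdot r\nabla U$ is also $\Gag\cdot r\nabla U$. The remaining tensorial terms from $F_4$ are of the form $r\chi\cdot\ue\cdot U$ and $r\beta\cdot U$; the first decomposes into $r\hch\cdot\ue\cdot U$ (higher order) and $\tfrac{r\trch}{2}\ue\cdot U \sim \ue\cdot U \in\Gag\cdot U \subset \Gag^{(1)}\cdot U$, and $r\beta \in \Gag^{(1)}$ by Definition \ref{gammag}, which yields the $\Gag^{(1)}$ contribution.

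The second identity $[r\nabla, \Om\nabla_3]U$ is structurally identical. The analogous first cancellation uses $\eta-\zeta = \nabla\log\Om$, killing the $\nabla_3 U$ term produced by $F_{3B...}$ against the product rule. The $\trchb$ leading part combines with $\Om\nabla_3(r) = \tfrac{\ov{\Om\trchb}}{2}r$ to leave $-\tfrac12\widecheck{\Om\trchb}\cdot r\nabla U$, which lies in $\Gag\cdot r\nabla U \subset \Gab\cdot r\nabla U$, and the remaining coefficient $-\Om\hchb$ sits in $\Gab$. The lower-order pieces become $r\unb \in \Gab^{(1)}$ and $r\unchi\cdot\eta$, which after trace/traceless decomposition reduces to a term in $\Gag\cdot U \subset \Gab^{(1)}\cdot U$. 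For the third identity, I expand
$$[\Om\nabla_4,\Om\nabla_3]U = \Om(\nabla_4\Om)\nabla_3 U - \Om(\nabla_3\Om)\nabla_4 U - \Om^2 F_{34},$$
and use $\nabla_4\Om = -2\om\,\Om$, $\nabla_3\Om = -2\omb\,\Om$ from \eqref{6.6} together with the fact that $F_{34}$ contains precisely $-2\om\nabla_3 U + 2\omb\nabla_4 U$; these terms cancel, eliminating the potentially dangerous $\om,\omb \in \Gaa$ contributions. What remains is $-4\Om^2\,\zeta\cdot\nabla U$, which gives $\Gag\cdot\nabla U$ since $\zeta\in\Gag$, together with the curvature term $\in\si\cdot U$ and bilinear $\etab\eta\cdot U$; the former contributes $r^{-1}\,r\si\cdot U \in r^{-1}\Gag^{(1)}\cdot U$ and the latter is even better.

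The main conceptual point — and the only potential pitfall — is bookkeeping: each identity requires recognizing that the potentially large terms ($\nabla_4 U$, $\nabla_3 U$, and the $2/r$ leading parts of $\trch,\trchb$) are exactly annihilated by the product-rule contributions from $\Om$ and $r$, thanks respectively to $\zeta+\ue = \nabla\log\Om$, $\eta-\zeta = \nabla\log\Om$, and $\nabla_{3,4}(\log\Om) = -2\om_{3,4}$. Once these cancellations are in place, the remaining task is purely mechanical: match each surviving coefficient to its schematic class in Definition \ref{gammag}, using $r\beta, r\rhoc, r\si \in \Gag^{(1)}$ and $r\bb\in\Gab^{(1)}$ to absorb the curvature terms produced by Lemma \ref{comm}.
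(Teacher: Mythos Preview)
Your proposal is correct and follows exactly the route the paper intends: the paper's own proof is the single sentence ``Notice that Lemma \ref{comm} implies the following proposition,'' and your argument is precisely the natural unpacking of that sentence, identifying the three cancellations (of $\nabla_4 U$, $\nabla_3 U$, and the $\om,\omb$ terms) forced by \eqref{6.6} and then matching the survivors to Definition \ref{gammag}. A minor cosmetic point: your displayed expansion of $[r\nabla,\Om\nabla_4]U$ has sign errors (the correct product-rule expansion is $r(\nabla\Om)\nabla_4 U - \Om(\nabla_4 r)\nabla U - r\Om[\nabla_4,\nabla]U$), but since the conclusion is schematic and you correctly identify which terms cancel, this does not affect the argument.
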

\subsection{Teukolsky equations}
We state the following \emph{Teukolsky equations} first derived by Teukolsky in the linearized setting in \cite{teukolsky}.
\begin{prop}
We have the following Teukolsky equations for $\a$ and $\aa$:
\begin{equation}
\begin{split}\label{teukolsky}
    \Om\nab_3(\Om\nab_4(r\Om^2\a))+2r\Om^2d_2^*d_2(\Om^2\a)&=-\frac{2\Om}{r}\nab_3(r\Om^2\a)+(\Gag\cdot(\b,\a))^{(1)}, \\
    \Om\nab_4(\Om\nab_3(r\Om^2\aa))+2r\Om^2d_2^*d_2(\Om^2\aa)&=\frac{2\Om}{r}\nab_4(r\Om^2\aa)+(\Gab\cdot(\aa,\bb))^{(1)}.
\end{split}
\end{equation}
\end{prop}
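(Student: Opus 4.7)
The plan is to derive the Teukolsky equation for $\a$ by combining the two Bianchi equations in Corollary~\ref{prop7.6} involving $\a$ and $\b$, namely
\begin{equation*}
\nab_3\a+\tfrac{1}{2}\trchb\,\a=\nab\hot\b+h[\a_3],\qquad \nab_4\b+2\trch\b=\sdiv\a+h[\b_4];
\end{equation*}
the equation for $\aa$ then follows from the obvious $e_3\leftrightarrow e_4$, $\a\leftrightarrow\aa$, $\b\leftrightarrow-\bb$ symmetry applied to $\nab_4\aa$ and $\nab_3\bb$.

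First I would rewrite the $\nab_3\a$ equation as $-2d_2^*\b=\nab_3\a+\tfrac12\trchb\,\a-h[\a_3]$ via the identity $d_2^*\xi=-\tfrac12\nab\hot\xi$, apply $\Om\nab_4$, and use the second Bianchi equation to substitute $\nab_4\b=\sdiv\a-2\trch\b+h[\b_4]$. Commuting $\Om\nab_4$ with $d_2^*$ by Corollary~\ref{commcor} and Proposition~\ref{commutation}, the principal part becomes $-2\Om d_2^*d_2\a$, so that after using the commutator $[\nab_4,\nab_3]\a$ from Lemma~\ref{comm} one obtains the schematic identity
\begin{equation*}
\Om\nab_3(\Om\nab_4\a)+2\Om^2 d_2^*d_2\,\a=(\text{l.o.t.}),
\end{equation*}
where the l.o.t.\ gather the mass terms $\tfrac12\trchb\,\a$ and $2\trch\b$, the Bianchi errors $h[\a_3]$, $h[\b_4]$, and the commutator contributions.

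Next I would conjugate by the conformal weight $r\Om^2$: write $\Psi:=r\Om^2\a$ and use Lemma~\ref{dint} together with $e_3(\log\Om)=-2\omb$, $e_4(\log\Om)=-2\om$ to expand $\Om\nab_{3,4}(r\Om^2)$. Passing all derivatives through the weight produces the claimed form $\Om\nab_3(\Om\nab_4(r\Om^2\a))+2r\Om^2 d_2^*d_2(\Om^2\a)$ on the left-hand side, while the term $-\frac{2\Om}{r}\nab_3(r\Om^2\a)$ on the right-hand side arises from the mismatch between the natural $\trchb$-transport weight on $\a$ and the chosen weight $r\Om^2$; more precisely it comes from writing $\tfrac12\Om\trchb=\tfrac{\Om}{r}+(\tfrac12\Om\trchb-\tfrac{\Om}{r})$, isolating the Minkowskian coefficient $\tfrac{\Om}{r}$ and sending the remainder into the error.

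The main obstacle is purely bookkeeping: one must verify that every remainder term produced by the commutators, by the conformal rescaling, and by the Bianchi errors $h[\a_3],h[\b_4]$ (as well as their $r\nab$--derivatives) can be absorbed into the schematic class $(\Gag\cdot(\b,\a))^{(1)}$ of Definition~\ref{gammag}. This requires in particular that (i) the coefficients $\Om\trch-\tfrac{2\Om}{r}$, $\widecheck{\Om\trchb}$, $\nab\log\Om$, and $\widecheck\om$ all belong to $\Gag$, so that products such as $(\Om\nab_4(r\Om^2)-\tfrac12 r\Om^2\cdot\Om\trch)\cdot\a$ land in $\Gag\cdot\a$; (ii) the commutators $[\Om\nab_4,\Om\nab_3]$, $[\Om\nab_4,r\nab]$ and $[d_2^*,\Om\nab_4]$ produce at worst $\Gag\cdot(r\nab)^{\le 1}$ terms by Proposition~\ref{commutation}; and (iii) the structural terms in $h[\a_3]$ and $h[\b_4]$ (namely $\uome\a$, $\hchb\cdot\a$, $\widehat\chi\rho$, $\etab\hot\b$, $\om\b$, $\hchb\cdot\b$, $\ue\a$\ldots) are of the form $\Gag\cdot(\b,\a)$ after noting that $\uome=\widecheck\omb+\ov\omb$, $\hchb\in\Gab$, while all the remaining Ricci coefficients appearing there lie in $\Gag$. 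To extend the estimate to the full class $(\Gag\cdot(\b,\a))^{(1)}$ I would finally commute one $r\nab$ through the whole identity using Proposition~\ref{commutation}, which preserves the schematic structure at the cost of adding one derivative. The equation for $\aa$ is obtained by the same procedure with $3\leftrightarrow 4$; the sign flip in $\frac{2\Om}{r}\nab_4(r\Om^2\aa)$ reflects that $\trch$ and $\trchb$ have opposite signs in the leading transport behavior.
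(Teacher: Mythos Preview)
The paper does not give a proof; it simply refers the reader to Propositions~3.4.6 and~3.4.7 of \cite{DHRT}. Your derivation from the Bianchi pair $(\a,\b)$ is exactly the computation carried out there, so in substance your plan agrees with the underlying argument.

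A few points of your error bookkeeping need correction, however. In~(iii) you list $\hchb\cdot\a$ and $\hchb\cdot\b$ among the terms of $h[\a_3]$ and $h[\b_4]$, but by Proposition~\ref{prop7.4} no $\hchb$ appears in either, so the $\Gab$ concern you raise there is vacuous for the $\a$--equation. More substantively, the term $-3(\hch\rho+{}^*\hch\si)$ in $h[\a_3]$ is \emph{not} of the form $\Gag\cdot(\b,\a)$: it is $\Gag\cdot(\rho,\si)$, and $\ov\rho$ decays only like $r^{-3}$, strictly worse in $r$ than $\b$. What actually makes this harmless is that in your scheme it gets hit by $\Om\nab_4$, and the equations $\nab_4\hch=-\trch\hch-2\om\hch-\a$ and $\nab_4\rho=-\tfrac{3}{2}\trch\rho+\sdiv\b+h[\rho_4]$ convert $\Om\nab_4(\hch\rho)$ into contributions that do fit into the claimed class (using $r\rhoc,r\si\in\Gag^{(1)}$); this mechanism should be made explicit rather than folded into the blanket claim of~(iii). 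Finally, your last paragraph misreads the notation: the superscript~$(1)$ on the error means the error \emph{may contain} one angular derivative of $\Gag\cdot(\b,\a)$, and such terms arise automatically from $d_2^*(\Om h[\b_4])$ and from the commutators---no additional commutation of the whole identity by $r\nab$ is required.
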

\begin{proof}
    See for example Propositions 3.4.6 and 3.4.7 in \cite{DHRT}.\footnote{Remark that $\Om=1+O(\ep)$ in \cite{DHRT} while $\Om=\frac{1}{2}+O(\ep)$ in this paper.}
\end{proof}
\begin{lem}\label{teulm}
We define the following quantities:
\begin{align}
    \begin{split}
        \ac&:=\frac{1}{r^4}\nab_4(r^5\a)\in \sk_2,\qquad\quad \as:=rd_2\a \in \sk_1,\\
        \aac&:=\frac{1}{r^4}\nab_3(r^5\aa)\in \sk_2,\qquad\quad \aas:=rd_2\aa \in \sk_1.
    \end{split}
\end{align}
Then, we have the following equations:
\begin{align}
    \begin{split}\label{teu}
    \nab_3\ac&=-2d_2^*\as+\frac{4\a}{r}+\Gaa\cdot(\b,\a)^{(1)}+\Gag^{(1)}\cdot(\b,\a),\\
    \nab_4\as+\frac{5}{2}\trch \,\as&=d_2\ac+\Gaa\cdot(\b,\a)^{(1)}+\Gag\c\ac+\Gag^{(1)}\cdot(\b,\a),
    \end{split}
\end{align}
and
\begin{align}
\begin{split}\label{teuaa}
\nab_4\aac&=-2d_2^*\aas+\frac{4\aa}{r}+\Gaa\cdot(\bb,\aa)^{(1)}+(\Gab\cdot(\bb,\aa))^{(1)},\\
\nab_3\aas+\frac{5}{2}\trchb\,\aas&=d_2\aac+\Gaa\cdot(\bb,\aa)^{(1)}+\Gab\c\aac+(\Gab\cdot(\bb,\aa))^{(1)}.
\end{split}
\end{align}
\end{lem}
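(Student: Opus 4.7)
The proof proceeds by direct computation from the Bianchi equations (Proposition \ref{prop7.4}), the Teukolsky equations \eqref{teukolsky}, and the commutator Lemma \ref{comm}, together with the formulas $\nab_4 r = \overline{\Om\trch}\,r/(2\Om)$ and $\nab_3 r = \overline{\Om\trchb}\,r/(2\Om)$ from Lemma \ref{dint}. The symmetry $(e_3,\a,\b,\chi,\eta,\om)\leftrightarrow(e_4,\aa,-\bb,\chib,\ue,\omb)$ interchanges \eqref{teu} and \eqref{teuaa}, so it suffices to derive \eqref{teu}. For the equation $\nab_4\as + \frac{5}{2}\trch\,\as = d_2\ac + \textnormal{err}$, since $r$ is sphere-constant ($\nab r=0$), the product rule yields $\nab_4\as = \nab_4(r)\,d_2\a + r\nab_4(d_2\a)$ and $d_2\ac = r\,d_2(\nab_4\a) + 5\nab_4(r)\,d_2\a + \textnormal{err}$, where the correction in $d_2\ac$ comes from $\nab(\nab_4(r)) = -(r\overline{\Om\trch}/(2\Om))\nab\log\Om \in \Gag$, using $\nab\log\Om = \frac{1}{2}(\eta+\ue)$. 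The commutator $[\nab_4, d_2]\a$ is computed via Lemma \ref{comm}: crucially, the horizontal connection preserves the induced sphere metric ($\nab_4\ga^{AB} = 0$), so only one copy of $\chi$ appears, and its trace contribution yields $[\nab_4, d_2]\a = -\frac{1}{2}\trch\,d_2\a + \textnormal{err}$, where the remaining $\hch\cdot\nab\a$, $\Gag\cdot\nab_4\a$, and $F$-terms fall into the allowed error categories (in particular, $\Gag\cdot\nab_4\a$ is converted to $\Gag\cdot\ac$ via $r\nab_4\a = \ac - 5\nab_4(r)\a$). Combining with $\nab_4(r)/r = \overline{\Om\trch}/(2\Om) = \trch/2 + \Gag$ (since $\Om\trch - \overline{\Om\trch} = \omtrchc \in \Gag$), one finds $\nab_4\as - d_2\ac = -4\nab_4(r)\,d_2\a - \frac{1}{2}\trch\,\as + \textnormal{err} = -\frac{5}{2}\trch\,\as + \textnormal{err}$.

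For the equation $\nab_3\ac = -2d_2^*\as + 4\a/r + \textnormal{err}$, the plan is to invoke the first Teukolsky equation in \eqref{teukolsky}. Setting $\Psi := \Om\nab_4(r\Om^2\a)$, expansion using $\nab_4\Om = -2\om\Om$ and $r\nab_4\a = \ac - 5\nab_4(r)\a$ produces $\Psi = \Om^3[\ac - 4(\nab_4(r)+\om r)\a]$. Substituting this into the Teukolsky equation, using the identity $d_2^*\as = r\,d_2^*d_2\a$ (since $r$ is sphere-constant), $d_2^*d_2(\Om^2\a) = \Om^2 d_2^*d_2\a + \Gag\cdot\a^{(1)}$, and expanding $\nab_3(r\Om^2\a)$ with the Bianchi equation $\nab_3\a = -\frac{1}{2}\trchb\a + \nab\hot\b + h[\a_3]$, one checks that the $\nab\hot\b$ contributions on the two sides cancel. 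The remaining algebraic $\trchb\a$ and $\nab_3(r)\a$ contributions combine, using $\trchb = -2/r + \Gaa$ and $\nab_3(r)/r = \trchb/2 + \Gag$, into $+4\a/r$, while the $2r\Om^2 d_2^*d_2(\Om^2\a)$ term produces $-2d_2^*\as$ after dividing through by $\Om^4 = 1/16 + \Gag$.

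The principal obstacle is the meticulous bookkeeping of the many lower-order correction terms, which must be organized into the schematic error categories $\Gaa\cdot(\b,\a)^{(1)}$, $\Gag\cdot\ac$, and $\Gag^{(1)}\cdot(\b,\a)$: tracking the difference $\Om\trch - \overline{\Om\trch} = \omtrchc \in \Gag$, handling nonzero angular derivatives of sphere-varying scalar factors such as $\nab_4(r)$ and $\Om$, and absorbing the tensorial commutator error terms from Lemma \ref{comm}. The clean numerical coefficients $-2$, $4$, and $\frac{5}{2}$ (and their counterparts in \eqref{teuaa}) arise from precise algebraic cancellations that depend critically on the choice of weight $r^5$ in the definitions $\ac = r^{-4}\nab_4(r^5\a)$ and $\aac = r^{-4}\nab_3(r^5\aa)$; a different weight would leave a residual $\a/r$ (respectively $\trch\,\as$) term that could not be absorbed into the error.
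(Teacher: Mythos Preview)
Your proposal is correct and uses essentially the same ingredients as the paper (the Teukolsky identity, the commutator lemma, and the schematic error bookkeeping), but for the first equation you organize the computation differently. The paper first strips the $\Om$--factors from \eqref{teukolsky}, obtaining
\[
\nab_3(\nab_4(r\a))+2rd_2^*d_2\a=-\frac{4}{r}\nab_3(r\a)+\Gaa\cdot(\b,\a)^{(1)}+\Gag^{(1)}\cdot(\b,\a),
\]
and then uses the exact identity $\ac = 4e_4(r)\a + \nab_4(r\a)$ to compute $\nab_3\ac$. The point is that $\nab_3(4\a)=\frac{4}{r}\nab_3(r\a)-4e_3(r)\frac{\a}{r}$, so the $\frac{4}{r}\nab_3(r\a)$ contributions cancel \emph{exactly}, without ever invoking the Bianchi equation for $\nab_3\a$. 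Your route instead keeps the $\Om$'s, expresses $\Psi=\Om^3[\ac-4(\nab_4(r)+\om r)\a]$, and then expands $\nab_3\a$ via Bianchi; the resulting $\nab\hot\b$ contributions do cancel, but only at leading order (the coefficient reduces to $O(r\Gaa)$ because $2\Om e_4(r)=\overline{\Om\trch}\,r=1+r\Gaa$), so you have to track an extra layer of $\Gaa$--corrections. Both routes arrive at the same place; the paper's is more economical because the cancellation is algebraic rather than asymptotic.

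Two small imprecisions worth noting: (i) $\Om-\frac12$ decays like $\ep/r$ (one power of $r$ short of $\Gag$), so your ``$\Om^4=1/16+\Gag$'' should read $1/16+O(r\Gag)$; this is harmless since $r\Gag\cdot d_2^*\as\in\Gag\cdot\a^{(1)}$, which is absorbed into $\Gaa\cdot(\b,\a)^{(1)}$ by the schematic conventions. (ii) Your closing remark about the weight $r^5$ is half right: for general weight $r^n$ the \emph{first} equation acquires a residual $\frac{n-5}{r}\nab_3(r\a)$, which via Bianchi contains a genuine $\nab\hot\b$ principal term (not merely $\a/r$), while the \emph{second} equation simply gets the harmless coefficient $\frac{n}{2}\trch\,\as$ in place of $\frac{5}{2}\trch\,\as$. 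The choice $n=5$ is special precisely because it kills the uncontrolled $\nab_3(r\a)$ in the first equation.
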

\begin{proof}
We have from \eqref{teukolsky}
\begin{align*}
    \nab_3(\nab_4(r\a))+2rd_2^*d_2\a=-\frac{4}{r}\nab_3(r\a)+\Gaa\cdot(\b,\a)^{(1)}+\Gag^{(1)}\cdot(\b,\a).
\end{align*}
Hence, we have
    \begin{align*}
        \nab_3\ac&=\nab_3(r^{-4}\nab_4(r^5\a))\\
        &=\nab_3(r^{-4}(4r^3e_4(r)r\a+r^4\nab_4(r\a))\\
        &=\nab_3(4\a+\nab_4(r\a))+\Gaa\cdot(\b,\a)^{(1)}+\Gag^{(1)}\cdot(\b,\a)\\
        &=\frac{4}{r}r\nab_3\a+\nab_3\nab_4(r\a)+\Gaa\cdot(\b,\a)^{(1)}+\Gag^{(1)}\cdot(\b,\a)\\
        &=\frac{4}{r}\nab_3(r\a)-4e_3(r)\frac{\a}{r}-2rd_2^*d_2\a-\frac{4}{r}\nab_3(r\a)+\Gaa\cdot(\b,\a)^{(1)}+\Gag^{(1)}\cdot(\b,\a)\\
        &=\frac{4\a}{r}-2d_2^*\as+\Gaa\cdot(\b,\a)^{(1)}+\Gag^{(1)}\cdot(\b,\a),
    \end{align*}
    which implies the first equation in \eqref{teu}. Applying Proposition \ref{commutation}, we have
    \begin{align*}
        \Om\nab_4\as&=\Om\nab_4\left(r^{-5}(rd_2(r^5\a))\right)\\
        &=-5r^{-6}\Om e_4(r)r^5(rd_2)\a +r^{-5}\Om\nab_4 rd_2 (r^5\a)\\
        &=-\frac{5\Om}{r} rd_2\a+r^{-1}rd_2 r^{-4}(\Om\nab_4 (r^5\a))+\Gaa\cdot(\b,\a)^{(1)}+\Gag^{(1)}\cdot(\b,\a)\\
        &=-\frac{5}{2}\Om\trch\,\as+\Om d_2\ac+\Gaa\cdot(\b,\a)^{(1)}+\Gag\c\ac+\Gag^{(1)}\cdot(\b,\a),
    \end{align*}
     which implies the second equation in \eqref{teu}. The proof of \eqref{teuaa} is similar. This concludes the proof of Lemma \ref{teulm}.
\end{proof}
\subsection{Sobolev inequalities}
\begin{df}\label{L2flux}
For a tensor field $h$ on a null cone $C$, we define its $L^2$--flux:
\begin{equation}
    \|h\|_{2,C}:= \left(\int_C |h|^2 \right)^\frac{1}{2}.
\end{equation}
\end{df}
We recall the following Sobolev inequalities.
\begin{prop}\label{prop7.8}
Let $F$ be a tensor field, tangent to $S:=S(u,\unu)$ at every point. We have the following estimates:
\begin{align*}
        |rF|_{4,S}&\les \|F\|_{2,C_u\cap V(u,\ub)}+\|r\nab F\|_{2,C_u\cap V(u,\ub)}+\|r\nab_4F\|_{2,C_u\cap V(u,\ub)},\\
        |r^\frac{1}{2}|u|^\frac{1}{2}F|_{4,S}&\les\|F\|_{2,\Cb_\ub\cap V(u,\ub)}+\|r\nab F\|_{2,\Cb_\ub\cap V(u,\ub)}+|u|\|\nab_3F\|_{2,\Cb_\ub\cap V(u,\ub)}.
\end{align*} 
\end{prop}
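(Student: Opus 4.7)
The proof is the standard Klainerman--Sobolev inequality obtained by combining a 2D Gagliardo--Nirenberg bound on the sphere $S=S(u,\ub)$ with integration along the null generator of $C_u$ (resp.\ $\Cb_\ub$). I outline the proof of the first inequality; the second is entirely analogous.

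\textbf{Step 1 (Sobolev on the sphere).} On a 2-sphere $S$ of area radius $r$, with uniformly controlled isoperimetric constant (which holds in the bootstrap region), the standard Gagliardo--Nirenberg inequality applied to $|F|$ gives
\begin{equation*}
|rF|_{4,S}^{2} \les r|F|_{2,S}\bigl(|F|_{2,S} + r|\nab F|_{2,S}\bigr).
\end{equation*}
Cauchy--Schwarz then bounds the right-hand side by
\begin{equation*}
\sup_{\ub'}\bigl(r|F|_{2,S(u,\ub')}^{2}\bigr) + \sup_{\ub'}\bigl(r|\nab F|_{2,S(u,\ub')}^{2}\bigr),
\end{equation*}
the supremum being taken over $\ub'$ with $S(u,\ub')\subset C_u\cap V(u,\ub)$. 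It thus suffices to bound each supremum by the desired flux quantities on $C_u\cap V(u,\ub)$.

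\textbf{Step 2 (Transport along $C_u$).} Applying Lemma \ref{dint} with $f=|F|^2$, and using $\Om e_4(r) = (r/2)\overline{\Om\trch}$ together with $\Om\trch=O(r^{-1})$ from the bootstrap, yields
\begin{equation*}
\bigl|\Om e_4\bigl(r|F|_{2,S}^{2}\bigr)\bigr| \les |F|_{2,S}^{2} + |F|_{2,S}\cdot r|\Om\nab_4 F|_{2,S}.
\end{equation*}
Integrating in $\ub'$ from an auxiliary starting value $\ub_0$ and applying Cauchy--Schwarz in the $\ub$-variable,
\begin{equation*}
\sup_{\ub'}\bigl(r|F|_{2,S(u,\ub')}^{2}\bigr) \les r|F|_{2,S(u,\ub_0)}^{2} + \|F\|_{2,C_u\cap V(u,\ub)}^{2} + \|r\nab_4 F\|_{2,C_u\cap V(u,\ub)}^{2}.
\end{equation*}
The boundary term is eliminated by taking $\ub_0$ to minimize $\ub'\mapsto r|F|_{2,S(u,\ub')}^{2}$ and bounding it by its mean against $1/|I|$ over a suitable interval $I$ in $\ub'$, which converts it into an additional $L^2(C_u)$-flux contribution. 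The same scheme applied to $|\nab F|^2$, after commuting $\nab$ through $\Om\nab_4$ via $[\Om\nab_4,\nab]=-\Om\chi\cdot\nab$ from Corollary \ref{commcor} (whose contribution is absorbable as \lot\ under the bootstrap bounds), yields
\begin{equation*}
\sup_{\ub'}\bigl(r|\nab F|_{2,S}^{2}\bigr) \les \|r\nab F\|_{2,C_u\cap V(u,\ub)}^{2}\cdot r^{-1} + \|r\nab_4 F\|_{2,C_u\cap V(u,\ub)}^{2} + \lot,
\end{equation*}
and substituting both bounds into Step 1 gives the first inequality.

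\textbf{Step 3 (Incoming cone and the main obstacle).} For the second inequality, one repeats the scheme along $\Cb_\ub$ using the $e_3$-version of Lemma \ref{dint} and the commutator $[\Om\nab_3,\nab]=-\Om\unchi\cdot\nab$. Along $\Cb_\ub$ the area radius $r\simeq\ub$ is essentially constant, whereas $|u|$ measures the length of the cone and therefore plays the role of $r$ in the one-dimensional transport step; this accounts for the weight $r^{1/2}|u|^{1/2}$ in the sphere norm and for the factor $|u|$ multiplying $\|\nab_3F\|_{2,\Cb_\ub}$. The main technical subtlety is handling the boundary term coming from the fundamental theorem of calculus step, disposed of by the averaging trick above; once this is done, the rest is routine Cauchy--Schwarz together with the commutator estimates of Corollary \ref{commcor}.
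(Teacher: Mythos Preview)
Your overall strategy---Gagliardo--Nirenberg on the sphere followed by transport along the generator---is the correct one, and is indeed what the paper invokes by citing Corollary~3.2.1.1 of \cite{Ch-Kl} and Corollary~4.1.1 of \cite{kn}. However, Step~2 as written for $|\nab F|^2$ does not close.

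When you transport $r|\nab F|_{2,S}^2$ (or, after a correct application of AM--GM in Step~1, the quantity $r^{3}|\nab F|_{2,S}^{2}$) along $\Om e_4$, the commutator $[\Om\nab_4,\nab]=-\Om\chi\cdot\nab$ is indeed lower order, but the \emph{principal} contribution that remains is $\nab(\Om\nab_4 F)$, an angular derivative of $\nab_4 F$. The resulting flux term is $\|r\nab\nab_4 F\|_{2,C_u}$ (or worse in $r$), one derivative beyond what the proposition allows on the right-hand side. This is not a cosmetic issue: take a smooth bump $F$ of $\ub$-width $\delta$ and angular width $\epsilon^{1/2}$; then $\sup_{\ub'} r^{3}|\nab F|_{2,S}^{2}\sim r^{3}$, while the three fluxes in the statement are $O\bigl(\delta r^{2}+\epsilon r^{4}/\delta\bigr)$ and can be made $\ll r^{3}$. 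So the quantity you attempt to bound in Step~2 is simply not controlled by the stated data.

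The loss occurs already in Step~1, where your Cauchy--Schwarz on the Gagliardo--Nirenberg cross term $r|F|_{2,S}\,|r\nab F|_{2,S}$ replaces a product by a sum of squares; it is the square $\sup_{\ub'} r^{3}|\nab F|_{2,S}^{2}$ that then forces the hopeless transport. The arguments in \cite{Ch-Kl,kn} keep the product structure $|rF|_{4,S}^{4}\lesssim\bigl(r|F|_{2,S}^{2}\bigr)\bigl(|F|_{2,S}^{2}+|r\nab F|_{2,S}^{2}\bigr)$ intact: only the undifferentiated factor $r|F|_{2,S}^{2}$ is transported (requiring just $\nab_4 F$), while the angular-derivative factor is handled by integration in $\ub'$, so that $\nab_4$ never acts on $\nab F$. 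The same remark applies verbatim to Step~3 along $\Cb_\ub$.
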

\begin{proof}
See Corollary 3.2.1.1 in \cite{Ch-Kl} and Corollary 4.1.1 in \cite{kn}.
\end{proof}
We also recall the following standard Sobolev inequalities.
\begin{prop}\label{standardsobolev}
Let $F$ be a tensor field on a $2$--sphere $S$. Then, we have
\begin{equation*}
    \sup_{S} r^{\frac{1}{2}}|F| \les |F|_{4,S}+|r\nab F|_{4,S}.
\end{equation*}
\end{prop}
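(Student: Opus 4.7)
The plan is to reduce the tensorial statement to a scalar one via Kato's inequality, and then to derive the scalar estimate by scaling to the unit $2$--sphere and invoking the classical two-dimensional $W^{1,4}\hookrightarrow L^\infty$ Sobolev embedding. First, by Kato's inequality $|\nab|F||\le|\nab F|$, valid almost everywhere on $S$ and extended to $W^{1,4}$ by approximation, it is enough to establish
\begin{equation*}
    r^{\frac{1}{2}}\sup_S f\les |f|_{4,S}+|r\nab f|_{4,S}
\end{equation*}
for any nonnegative scalar $f$ on $S$.

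Next, I would use the fact that within the framework of this paper each sphere $S=S(u,\ub)$ is a $2$--sphere of area radius $r$ whose Gauss curvature satisfies $\K=r^{-2}(1+O(\ep))$, so that its induced metric $\ga$ is uniformly equivalent to $r^2\ga_{\mathbb{S}^2}$. Rescaling to $\widetilde{\ga}:=r^{-2}\ga$ and invoking the classical Sobolev embedding on the unit round sphere,
\begin{equation*}
    \sup_{\widetilde S}f\les |f|_{4,\widetilde S}+|\widetilde\nab f|_{4,\widetilde S},
\end{equation*}
one tracks the scalings $d\mu_\ga=r^2\,d\mu_{\widetilde\ga}$ and $|\nab f|_\ga=r^{-1}|\widetilde\nab f|_{\widetilde\ga}$, yielding
\begin{equation*}
    |f|_{4,S}=r^{\frac{1}{2}}|f|_{4,\widetilde S},\qquad |\nab f|_{4,S}=r^{-\frac{1}{2}}|\widetilde\nab f|_{4,\widetilde S},
\end{equation*}
and substitution gives
\begin{equation*}
    \sup_S f\les r^{-\frac{1}{2}}\Big(|f|_{4,S}+|r\nab f|_{4,S}\Big),
\end{equation*}
which is the desired statement after multiplication by $r^{1/2}$.

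The main subtlety is to ensure that the Sobolev constant transported to $\widetilde S$ is uniform in $u,\ub$. This reduces to a uniform isoperimetric inequality on $\widetilde S$, itself a consequence of the positive bounded Gauss curvature standing assumption in Proposition \ref{prop7.3} (via Bol--Fiala). Alternatively, following Corollary 3.2.1.1 of \cite{Ch-Kl}, one may avoid comparing $\widetilde S$ to $\mathbb{S}^2$ and run directly a Gagliardo--Nirenberg argument on $S$: use $|\nab(f^2)|=2f|\nab f|$, apply the isoperimetric inequality to bound $|f^2|_{\infty,S}$ in terms of $r^{-1}|f|_{2,S}^2+|f|_{2,S}|\nab f|_{2,S}$, and conclude by H\"older $|f|_{2,S}\les r^{1/2}|f|_{4,S}$ together with Young's inequality.
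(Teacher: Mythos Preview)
Your proposal is correct. The paper does not give its own argument here but simply refers to Lemma 4.1.3 of \cite{kn}, where the proof proceeds via the isoperimetric constant of $S$---essentially the alternative Gagliardo--Nirenberg route you sketch in your final paragraph---so your approach coincides with the one the paper defers to.
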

\begin{proof}
See Lemma 4.1.3 of \cite{kn}.
\end{proof}
\subsection{Evolution lemma}
We recall the following evolution lemma, which will be used repeatedly in Sections \ref{sec10} and \ref{ssec11.5}.
\begin{lem}[Evolution lemma]\label{evolutionlemma}
Consider the spacetime $\mathcal{K}$ foliated by a double null foliation $(u,\ub)$. Assume that, for $\ep>0$ small enough, we have
\begin{equation}\label{old2.33}
    |\Gag|\leq\frac{\ep}{r^2}.
\end{equation}
Then, the following holds:
\begin{enumerate}
\item Let $U,F$ be $k$-covariant $S$-tangent tensor fields satisfying the outgoing evolution equation
\begin{equation}
    \nab_4 U +\la_0\trch\, U= F,
\end{equation}
where $\lambda_0\geq 0$. Denoting $\lambda_1=2(\lambda_0-\frac{1}{p})$, we have along $C_u$
\begin{equation}\label{transubU}
|r^{\la_1}U|_{p,S}(u,\ub)\les |r^{\la_1}U|_{p,S}(u,\ub_*)+\int_{\ub}^{\ub_*}  |r^{\la_1}F|_{p,S}(u,\ub')d\ub'.
\end{equation}
Here $\unu_*$ is the value that the function $\unu(p)$ assumes on $\unc_*$.
\item Let $V,\uf$ be k-covariant and S-tangent tensor fields satisfying the incoming evolution equation
\begin{equation}\label{old2.36}
\nab_3 V+\lambda_0\trchb\, V=\uf.
\end{equation}
Denoting $\lambda_1 = 2(\lambda_0 - \frac{1}{p})$, we have along $\unc_\unu$
\begin{equation}\label{transuV}
    |r^{\lambda_1}V|_{p,S}(u,\unu) \lesssim |r^{\lambda_1}V|_{p,S(u_0(\unu),\unu)} + \int_{u_0(\unu)}^{u}  |r^{\lambda_1}\underline{F}|_{p,S}(u',\unu)du',
\end{equation}
where $S(u_0(\unu),\unu)= C_{u_0(\unu)} \cap {\unc_\unu}$ and $u_0(\ub)$ is the unique value of $u$ such that $S(u,\ub)$ is in the future of $\Si_0$ and touches $\Si_0$.
\end{enumerate}
\end{lem}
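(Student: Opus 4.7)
The plan is to reduce both parts of the lemma to a Grönwall estimate on the weighted $L^p$ norm, where the weight $r^{\la_1}$ is chosen precisely to cancel the leading order $\trch$ coefficient arising from the transport equation. I detail part 1; part 2 follows by the symmetric argument with $\nab_3$, $\trchb$, $\omtrchbc$, and forward integration from $u_0(\ub)$ along $\unc_\ub$.

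Setting $Y(\ub) := |r^{\la_1} U|_{p,S}^p$ and applying Lemma \ref{dint} with $f = r^{p\la_1}|U|^p$, together with the identity $\Om e_4(r) = \tfrac{\ov{\Om\trch}}{2}r$ from the same lemma, I would compute
\begin{equation*}
\pr_\ub Y = \int_S \left(\tfrac{p\la_1}{2} \ov{\Om\trch}\,|U|^p + p |U|^{p-2}\langle U, \Om\nab_4 U\rangle + \Om\trch\, |U|^p\right) r^{p\la_1} d\ga.
\end{equation*}
Substituting $\Om\nab_4 U = \Om F - \la_0 \Om\trch\, U$ from the evolution equation, the coefficient of $r^{p\la_1}|U|^p$ collapses to $\bigl(\tfrac{p\la_1}{2} + 1 - p\la_0\bigr)\Om\trch - \tfrac{p\la_1}{2}\omtrchc$. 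The choice $\la_1 = 2(\la_0 - \tfrac{1}{p})$ annihilates the bracket exactly, leaving only the residual error $-\tfrac{p\la_1}{2}\omtrchc r^{p\la_1}|U|^p$. Since $\omtrchc \in \Gag$, hypothesis \eqref{old2.33} bounds this by $C\ep/r^2$ times $r^{p\la_1}|U|^p$. Applying Hölder's inequality to the source term and dividing by $pY^{(p-1)/p}$ yields
\begin{equation*}
\big|\pr_\ub |r^{\la_1}U|_{p,S}\big| \les |r^{\la_1} F|_{p,S} + \frac{C\ep}{r^2}|r^{\la_1}U|_{p,S}.
\end{equation*}

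Integrating this differential inequality backwards from $\ub_*$ down to $\ub$ along $C_u$ and applying Grönwall's inequality in the reverse direction yields \eqref{transubU}. The Grönwall exponential factor $\exp\bigl(C\ep\int_\ub^{\ub_*} r^{-2}(u,\ub')d\ub'\bigr)$ is uniformly bounded because $r$ grows linearly in $\ub$ along the null generators of $C_u$ (again from Lemma \ref{dint}), so $\int_\ub^{\ub_*} r^{-2} d\ub' = O(r^{-1}(u,\ub))$ and the exponent remains $O(\ep)$.

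The only delicate point is the algebraic verification that the specific weight $\la_1 = 2(\la_0 - 1/p)$ produces exact cancellation of the leading $\trch$ coefficient — a short bookkeeping exercise, not a substantive obstacle. Once this cancellation is established, the remainder is a routine Grönwall estimate made possible by the fact that the residual $\omtrchc/r^2$ coefficient is integrable in $\ub$ with uniformly small integral. Part 2 proceeds identically on $\unc_\ub$ by integrating $\Om e_3 = \pr_u + \bbb$ forward from the sphere $S(u_0(\ub),\ub)$ where $\unc_\ub$ meets $\Si_0$, observing that $\omtrchbc \in \Gag$ so that \eqref{old2.33} again supplies the needed smallness for the error term.
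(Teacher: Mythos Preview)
Your proposal is correct and is exactly the standard argument. The paper itself does not give a proof but simply cites Lemma~4.1.5 of Klainerman--Nicol\`o \cite{kn}; the argument there is precisely the one you outline: differentiate the weighted $L^p$ norm along the null generator using Lemma~\ref{dint}, verify that the choice $\la_1 = 2(\la_0 - 1/p)$ kills the leading $\Om\trch$ coefficient, bound the residual $\omtrchc\in\Gag$ term by $O(\ep r^{-2})$, and close with Gr\"onwall.
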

\begin{proof}
See Lemma 4.1.5 in \cite{kn}.
\end{proof}
\section{Main theorem}\label{sec8}
\subsection{Fundamental norms}\label{ssec8.1}
Our result holds for $s$--asymptotically flat initial data sets in the sense of Definition \ref{def6.3} with $s>3$. We will focus on the case $s\in[4,6]$ in sections \ref{sec8} to \ref{sec11}, and we postpone the necessary adaptations to the case $s\in(3,4)$ and $s>6$ to Appendices \ref{secc} and \ref{secd}. The norms in sections \ref{secRnorms}-\ref{Ostar} are defined with respect to the foliation $(u,\ub)$ in $\kk$ while the norms in section \ref{initialO0} are defined  with respect to the foliation $(u_{(0)},\ub)$ in $\kk_{(0)}$.
\subsubsection{\texorpdfstring{$\mr$}{} norms (curvature)}\label{secRnorms}
We define the norms of Ricci coefficients in the bootstrap region $\kk$. We denote
\begin{equation*}
\mr_0^{S}:=\mr_0^S[\a]+\mr_0^S[\b],\qquad\ur_0^{S}:=\ur_0^{S}[\b]+\ur_0^{S}[\chr,\sigma]+\ur_0^{S}[\unb]+\ur_0^S[\aa]+\ur_0^S[\ov{\rho}],
\end{equation*}
where
\begin{align*}
\mr_0^S[\a]&:=\sup_\kk\sup_{p\in[2,4]}|r^{\frac{s+3}{2}-\frac{2}{p}}\a|_{p,S(u,\unu)},\\
\mr_0^S[\beta]&:=\sup_\kk\sup_{p\in[2,4]}|r^{\frac{7}{2}-\frac{2}{p}}|u|^{\frac{s-4}{2}}\b|_{p,S(u,\unu)},\\
\ur_0^S[\beta]&:=\sup_\kk\sup_{p\in[2,4]}|r^{\frac{s+2}{2}-\frac{2}{p}}|u|^{\frac{1}{2}}\b|_{p,S(u,\unu)},\\
\ur_0^S[\chr,\sigma]&:=\sup_\kk\sup_{p\in[2,4]}|r^{3-\frac{2}{p}}|u|^{\frac{s-3}{2}} (\chr,\sigma) |_{p,S(u,\unu)},\\
\ur_0^S[\unb]&:=\sup_\kk\sup_{p\in[2,4]}|r^{2-\frac{2}{p}}|u|^{\frac{s-1}{2}}\bb |_{p,S(u,\unu)},\\
\ur_0^S[\aa]&:=\sup_\kk\sup_{p\in[2,4]}|r^{1-\frac{2}{p}}|u|^{\frac{s+1}{2}}\aa |_{p,S(u,\unu)},\\
\ur_0^S[\ov{\rho}]&:=\sup_\kk|r^{3}\ov{\rho}|,
\end{align*}
where the $L^p$--norms are defined in Definition \ref{Lpnorms}.
Then we define the norms of flux of curvature components. We denote
\begin{equation*}
    \mr:=\mr_{[1]}+\ur_{[1]},
\end{equation*}
where
\begin{align*}
    \mr_{[1]}&:=\mr_{0}+\mr_0^S+\mr_{1},\qquad\ur_{[1]}:=\ur_{0}+\ur_0^S+\ur_{1},
\end{align*}
with
\begin{align*}
    \mathcal{R}_{0}&:= \left(\mathcal{R}_{0}[\alpha]^2 + \mathcal{R}_{0}[\beta]^2 + \mathcal{R}_{0}[(\chr, \sigma)]^2 + \mathcal{R}_{0}[\unb]^2 \right)^{\frac{1}{2}} ,\\
        \underline{\mathcal{R}}_{0}&:= \left( \underline{\mathcal{R}}_{0}[\beta]^2 + \underline{\mathcal{R}}_{0}[(\chr, \sigma)]^2 + \underline{\mathcal{R}}_{0}[\unb]^2 +
        \underline{\mathcal{R}}_{0}[\una]^2 \right)^{\frac{1}{2}},\\
    \mathcal{R}_{1}&:= \left(\mathcal{R}_{1}[\alpha]^2 + \mathcal{R}_{1}[\beta]^2 + \mathcal{R}_{1}[(\chr, \sigma)]^2 + \mathcal{R}_{1}[\unb]^2 +\mr_1[\a_4]^2\right)^{\frac{1}{2}} ,\\
        \underline{\mathcal{R}}_{1}&:=\left(\underline{\mathcal{R}}_{1}[\beta]^2 + \underline{\mathcal{R}}_{1}[(\chr, \sigma)]^2 + \underline{\mathcal{R}}_{1}[\unb]^2 +
        \underline{\mathcal{R}}_{1}[\una]^2 +\ur_1[\aa_3]^2\right)^{\frac{1}{2}},
\end{align*}
and
\begin{align*}
    \mathcal{R}_{0,1}[w]&:= \sup_{\mathcal{K}} \mathcal{R}_{0,1} [w](u,\unu), \\
    \underline{\mathcal{R}}_{0,1} [w]&:= \sup_{\mathcal{K}} \underline{\mathcal{R}}_{0,1} [w](u,\unu).
\end{align*}
It remains to define $\mathcal{R}_{q}[w](u,\unu)$ and $\underline{\mathcal{R}}_{q}[w](u,\unu)$ for $q=0,1$:
\begin{align*}
         \mathcal{R}_{q}[\alpha](u,\unu)&:= \Vert r^{\frac{s}{2}}(r\nab)^q\a\Vert_{2,C_u \cap V(u,\unu)},\\
         \mathcal{R}_{q}[\beta](u,\unu)&:= \Vert r^{2}|u|^{\frac{s-4}{2}}(r\nab)^q\b\Vert_{2,C_u \cap V(u,\unu)},\\
         \mathcal{R}_{q}[(\chr,\sigma)](u,\unu)&:= \Vert r|u|^{\frac{s-2}{2}}(r\nab)^q (\rhoc,\si) \Vert_{2, C_u \cap V(u,\unu)},\\
         \mathcal{R}_{q}[\unb](u,\unu)&:= \Vert |u|^{\frac{s}{2}}(r\nab)^q\bb \Vert_{2,C_u \cap V(u,\unu)} ,\\
         \underline{\mathcal{R}}_{q}[\beta](u,\unu)&:= \Vert r^{\frac{s}{2}}(r\nab)^q \b \Vert_{2, {\unc_\unu} \cap V(u,\unu)},\\
         \underline{\mathcal{R}}_{q}[(\rhoc,\sigma)](u,\unu)&:=  |u|^{\frac{s-4}{2}} \Vert r^{2}(r\nab)^q (\rhoc,\sigma)\Vert_{2, {\unc_\unu} \cap V(u,\unu)},\\
         \underline{\mathcal{R}}_{q}[\unb](u,\unu)&:= |u|^{\frac{s-2}{2}} \Vert r(r\nab)^q\unb\Vert_{2, {\unc_\unu} \cap V(u,\unu)},\\
         \underline{\mathcal{R}}_{q}[\una](u,\unu)&:= |u|^{\frac{s}{2}} \Vert (r\nab)^q\una \Vert_{2, {\unc_\unu} \cap V(u,\unu)},\\
         \mathcal{R}_1[\a_4](u,\unu)&:=\Vert r^{\frac{s+2}{2}}\nab_4\a\Vert_{2,C_u \cap V(u,\unu)},\\
         \underline{\mathcal{R}}_1[\aa_3](u,\unu)&:= |u|^{\frac{s+2}{2}} \Vert \nab_3\aa\Vert_{2, {\unc_\unu} \cap V(u,\unu)}.
\end{align*}
\subsubsection{\texorpdfstring{$\mo$}{} norms (Ricci coefficients)}\label{sssec8.1.3}
We define the norms of Ricci coefficients in the bootstrap region $\kk$. Denoting
\begin{equation}
    \mathcal{O}:= \mathcal{O}_{[1]}+\uo_{[1]},
\end{equation}
where
\begin{align*}
    \mo_{[1]}&:=\mo_1+\mo_{0}+\mo_0(\Om\omb)+\sup_{\mathcal{K}}\left|r^2 \left(\overline{\tr\chi}-\frac{2}{r}\right)\right|+\sup_{\mathcal{K}}\left|r\left(\Omega-\frac{1}{2}\right)\right|,\\
        \uo_{[1]}&:=\uo_1+\uo_{0}+\mo_0(\Om\om)+\sup_{\mathcal{K}}\left|r|u| \left(\overline{\tr\unchi}+\frac{2}{r}\right)\right|.
\end{align*}
We define $\mo_q$ and $\uo_q$ as follows:
\begin{align*}
\mo_q&:=\mo_q(\widecheck{\Om\trch})+\mo_q(\widehat{\chi})+\mo_q(\eta)+\mo_q(\widecheck{\Om\omb})+\mo_q(\Omc),\qquad\,\, q=0,1,\\
\uo_q&:=\mo_q(\widecheck{\Om\trchb})+\mo_q(\widehat{\unchi})+\mo_q(\ue)+\mo_q(\widecheck{\Om\om}),\qquad\qquad\qquad\; q=0,1,
\end{align*}
where
\begin{equation*}
    \mo_q(\Ga):=\sup_{p\in [2,4]}\sup_\kk\mo_q^{p,S}(\Ga)(u,\ub).
\end{equation*}
It remains to define $\mo_q^{p,S}(\Gamma)(u,\unu)$ for $q=0,1$:
\begin{align*}
    \mo_q^{p,S}(\widecheck{\Om\trch})(u,\unu)&:=|r^{2+q-\frac{2}{p}}|u|^{\frac{s-3}{2}}\nab^q \widecheck{\Om\trch}|_{p,S(u,\unu)} ,\\
    \mo_q^{p,S}(\widecheck{\Om\trchb})(u,\unu)&:=|r^{2+q-\frac{2}{p}}|u|^{\frac{s-3}{2}} \nab^q\widecheck{\Om\trchb}|_{p,S(u,\unu)}, \\
  \mo_q^{p,S}(\widehat\chi)(u,\unu)&:=|r^{2+q-\frac{2}{p}}|u|^{\frac{s-3}{2}}\nabla^q \widehat\chi|_{p,S(u,\unu)}, \\
\mo_q^{p,S}(\widehat\unchi)(u,\unu)&:=|r^{1+q-\frac{2}{p}}|u|^{\frac{s-1}{2}}\nabla^q\widehat\unchi|_{p,S(u,\unu)}, \\
\mo_q^{p,S}(\eta)(u,\unu)&:=|r^{2+q-\frac{2}{p}}|u|^{\frac{s-3}{2}}\nabla^q \eta|_{p,S(u,\unu)}, \\
\mo_q^{p,S}(\etab)(u,\unu)&:=|r^{2+q-\frac{2}{p}}|u|^{\frac{s-3}{2}}\nabla^q \underline{\eta}|_{ p,S(u,\unu)}, \\
\mo_0^{p,S}(\Om\om)(u,\unu)&:=|r^{2-\frac{2}{p}}\Om\om|_{p,S(u,\unu)}, \\
\mo_0^{p,S}(\Om\omb)(u,\unu)&:=|r^{2-\frac{2}{p}}\Om\omb|_{p,S(u,\unu)},\\
\mo_q^{p,S}(\widecheck{\Om\om})(u,\unu)&:=|r^{2+\frac{s-3}{6}-\frac{2}{p}}|u|^{\frac{s-3}{3}}(r\nab)^q\widecheck{\Om\om}|_{p,S(u,\unu)},\\
\mo_q^{p,S}(\widecheck{\Om\omb})(u,\unu)&:=|r^{1-\frac{2}{p}}|u|^{\frac{s-1}{2}}(r\nab)^q\widecheck{\Om\omb}|_{p,S(u,\unu)},\\
\mo_{q}^{p,S}(\Omc)(u,\unu)&:=|r^{1-\frac{2}{p}}|u|^{\frac{s-3}{2}}(r\nab)^q\Omc|_{p,S(u,\unu)}.
\end{align*}
We also define
\begin{align*}
\mobr:=\sum_{q=0}^1\left(\mo_q(\widecheck{\Om\trch})+\mo_q(\eta)+\mo_q(\Omc)\right)+\sup_{\kk}r\left|\Om-\frac{1}{2}\right|+\sup_\kk r^2\left|\ov{\Om\trch}-\frac{1}{r}\right|,
\end{align*}
which appears in sections \ref{sec10} and \ref{ssec11.1}.
\subsubsection{\texorpdfstring{$\uo(\Si_0\setminus K)$}{}-norms and \texorpdfstring{$\Rfk_0$}{}-norms (Initial data on the foliation of \texorpdfstring{$\kk$}{})}\label{Si0setminusK}
Notice that for every $\unu$, there exists a unique leaf $S(u_0(\unu),\unu)$ of $\unc_\unu$, which located in the future of $\Sigma_0\setminus K$ and touches $\Sigma_0\setminus K$. Moreover, we have
\begin{equation*}
    S(u_0(\ub),\ub)\subset \kk_{(0)}.
\end{equation*}
The following norms are defined on the union of spheres $S(u_0(\ub),\ub)$. We define
\begin{equation*}
     \uo(\Si_0\setminus K):=\uo_1(\Si_0\setminus K)+\uo_0(\Si_0\setminus K)+\uo_0(\Si_0\setminus K)(\Om\om)+\sup_{\ub}\left|r^2\left(\Om\trchb+\frac{1}{r}\right)\right|,
\end{equation*}
where
\begin{equation*}
    \uo_q(\Si_0\setminus K):=\sum_{\Gamma\in\{\widecheck{\Om^{-1}\trchb},\,\etab,\,\widecheck{\Om\om}\}}\uo_q(\Sigma_0\setminus K)(\Ga),\qquad q=0,1,
\end{equation*}
with
\begin{equation*}
    \uo_q(\Sigma_0\setminus K)(\Ga):=\sup_{p\in [2,4]}\sup_{\unu}\mo_q^{p,S}(\Sigma_0\setminus K)(\Gamma)(\unu).
\end{equation*}
It remains to define $\uo_q^{p,S}(\Sigma_0\setminus K)(\Gamma)(\unu)$:
\begin{align*}
\mo_q^{p,S}(\Sigma_0\setminus K)(\widecheck{\Om^{-1}\trchb})(\unu)&:=|r^{\frac{s+1}{2}-\frac{2}{p}}(r\nab)^q \widecheck{\Om^{-1}\trchb}|_{p,S(u_0(\unu),\unu)}, \\
\mo_q^{p,S}(\Sigma_0\setminus K)(\underline{\eta})(\unu)&:=|r^{\frac{s+1}{2}-\frac{2}{p}}(r\nab)^q \underline{\eta}|_{ p,S(u_0(\unu),\unu)}, \\
\mo_0^{p,S}(\Sigma_0\setminus K)(\Om\om)(\unu)&:=|r^{2-\frac{2}{p}} \Om\om|_{p,S(u_0(\unu),\unu)}, \\
\mo_q^{p,S}(\Sigma_0\setminus K)(\widecheck{\Om\om})(\unu)&:=|r^{\frac{s+1}{2}-\frac{2}{p}}(r\nab)^q \widecheck{\Om\om}|_{p,S(u_0(\unu),\unu)}.
\end{align*}
We also define
\begin{equation*}
    \uobr(\Sigma_0\setminus K):=\sum_{q=0}^1\left(\uo_q(\Sigma_0\setminus K)(\widecheck{\Om^{-1}\trchb})+\uo_q(\Sigma_0\setminus K)(\widecheck{\Om\om})\right)+\uo_0(\Si_0\setminus K)(\Om\om),
\end{equation*}
which appears in sections \ref{sec10} and \ref{ssec11.1}. \\ \\
We can extend the foliation $(u,\ub)$ to a neighborhood of $\Si_0\setminus K$ in $J^-(\Si_0\setminus K)$\footnote{Recall that $J^-(\Si_0\setminus K)$ denotes the causal past of $\Si_0\setminus K$.} such that it is well defined on $\Si_0\setminus K$. The curvature flux on $\Si_0\setminus K$ is defined by:
\begin{equation}
\Rk:=\int_{\Sigma_0\setminus K}\sum_{l=0}^1 {r}^{s}\left(|{\mathfrak{d}}^l\alpha|^2+|{\mathfrak{d}}^l\beta|^2+|{\mathfrak{d}}^l (\chr,\sigma)|^2+|{\mathfrak{d}}^l\unb|^2+|{\mathfrak{d}}^l \aa|^2\right)+\sup_{\Sigma_0\setminus K} |{r}^3\overline{\rho}|^2,
\end{equation}
with
\begin{equation*}
    {\mathfrak{d}}:=(r\nabla,r\nabla_3,r\nabla_4).
\end{equation*}
\subsubsection{\texorpdfstring{$\mo^*(\Cb_*)$}{} norms (Ricci coefficients on the last slice)}\label{Ostar}
We define the following norms on the last slice $\Cb_*$ in the foliation $(u,\ub)$ of $\kk$. We denote
\begin{equation*}
    \mo^{*}(\unc_*):=\mo^*_{1}+\mo^{*}_{0}+\sup_{\unc_*}\left|r^2\left(\overline{\tr\chi}-\frac{2}{r}\right)\right|+\mo_2^*(\trchbc),
\end{equation*}
where
\begin{align*}
    \mo_q^{*}&:=\mo_q^*(\trchc)+\mo_q^*(\hch)+\mo_q^*(\trchbc)+\mo_q^*(\hchb)+\mo_q^*(\ze),
\end{align*}
with
\begin{equation*}
    \mo_q^{*}(\Ga):=\sup_{p\in[2,4]}\sup_{\unc_*}\mo_q^{*p,S}(\Ga).
\end{equation*}
It remains to define $\mo_q^{*p,S}(\Ga)$:
\begin{align*}
    \mo_q^{*p,S}(\widehat\chi)&:=|r^{2-\frac{2}{p}}|u|^{\frac{s-3}{2}}(r\nab)^q\widehat\chi|_{p,S(u,\unu_*)},\\
    \mo_q^{*p,S}(\widehat\chib)&:=|r^{1-\frac{2}{p}}|u|^{\frac{s-1}{2}}(r\nab)^q\widehat\chib|_{p,S(u,\unu_*)},\\
    \mo_q^{*p,S}(\trchc)&:=|r^{2-\frac{2}{p}}|u|^{\frac{s-3}{2}}(r\nab)^q\widecheck{\tr\chi}|_{p,S(u,\unu_*)},\\
    \mo_q^{*p,S}(\trchbc)&:=|r^{2-\frac{2}{p}}|u|^{\frac{s-3}{2}}(r\nab)^q\widecheck{\tr\chib}|_{p,S(u,\unu_*)},\\
    \mo_q^{*p,S}(\ze)&:=|r^{2-\frac{2}{p}}|u|^{\frac{s-3}{2}}(r\nab)^q\ze|_{p,S(u,\unu_*)}.
\end{align*}
\subsubsection{\texorpdfstring{$\mo_{(0)}$}{} and \texorpdfstring{$\Rfk_{(0)}$}{} norms (Initial data on the foliation of \texorpdfstring{$\kk_{(0)}$)}{}}\label{initialO0}
In this section, all the norms are defined by the initial layer foliation $(u_{(0)},\ub)$ in $\kk_{(0)}$. We define
\begin{align*}
    \OO_{(0)}&:=\sum_{\Ga_{(0)}}\mo_{(0)}(\Ga_{(0)})+\sup_{\kk_{(0)}}\left|r_{(0)}^2 \left(\trch_{(0)}-\frac{2}{r_{(0)}}\right)\right|+\sup_{\kk_{(0)}}\left|r_{(0)}^2\left(\trchb_{(0)}+\frac{2}{r_{(0)}}\right)\right|\\
&+\sup_{\kk_{(0)}}\left|r_{(0)}^2\om_{(0)}\right|+\sup_{\kk_{(0)}}\left|r_{(0)}^2\omb_{(0)}\right|+\sup_{\kk_{(0)}}\left|r_{(0)}\left(\Om_{(0)}-\frac{1}{2}\right)\right|,
\end{align*}
where
\begin{equation*}
    \mo_{(0)}(\Ga_{(0)}):=\sum_{q=0}^2\sup_{\kk_{(0)}}\sup_{p\in [2,4]}\left|r_{(0)}^{\frac{s+1}{2}-\frac{2}{p}}(r_{(0)}\nab)^q\Ga_{(0)}\right|_{p,S_{(0)}(u_{(0)},\ub)},
\end{equation*}
with
\begin{equation*}
    \Ga_{(0)}\in\{\trchc_{(0)},\trchbc_{(0)},\hch_{(0)},\hchb_{(0)},\eta_{(0)},\etab_{(0)},\omc_{(0)},\ombc_{(0)}\}.
\end{equation*}
We define the curvature flux on the initial hypersurface:
\begin{align*}
    \Rfk_{(0)}^2:=\sum_{l=0}^2\int_{\Si_0\setminus K}r_{(0)}^s\left|\dk_{(0)}^{l}\left(\a_{(0)},\b_{(0)},\rhoc_{(0)},\si_{(0)},\bb_{(0)},\aa_{(0)}\right)\right|^2
    +\sup_{\Si_0\setminus K}|r_{(0)}^3{\rho_{(0)}}|,
\end{align*}
where
\begin{equation}
    \dk_{(0)}:=\{r_{(0)}\nab_3,r_{(0)}\nab_4, r_{(0)}\nab\}.
\end{equation}
\subsubsection{\texorpdfstring{$\osc$}{}--norms (Oscillation control)}
We have two foliations $(u,\ub)$ and $(u_{(0)},\ub)$ in the initial layer $\kk_{(0)}$ which will be compared using $\osc$--norms. We denote $(f,\la)$ the change of frame from $(u,\ub)$ to $(u_{(0)},\ub)$, defined in Lemma \ref{lemchange} in section \ref{nullframetrans}. We define
\begin{align*}
    \osc:=\osc(f)+\osc(\la)+\osc(r),
\end{align*}
where
\begin{align*}
    \osc(f)=\sup_{\kk_{(0)}}\left|r^\frac{s-1}{2}\dk^{\leq 1} f\right|,\qquad \osc(\la)=\sup_{\kk_{(0)}}r|\ovla|,\qquad\osc(r):=\sup_{\kk_{(0)}}|r_{(0)}-r|,
\end{align*}
with 
\begin{align*}
    \ovla:=\la-1,\qquad \dk:=\{r\nab_3,r\nab_4,r\nab\}.
\end{align*}
See Definition \ref{generalchange} in section \ref{ssec12.1} for a generalized definition of $\osc$--norms.
\subsection{Main theorem}\label{ssec8.3}
The goal of this paper is to prove the following theorem, which provides a new proof of the seminal result obtained by Klainerman and Nicol\`o in \cite{kn}.
\begin{thm}[Main Theorem]\label{th8.1}
Consider an initial data set $(\Si_0,g,k)$ $s$--asymptotically flat in the sense of Definition \ref{def6.3} with $s>3$. Assume that we have the following control of the initial layer region $\kk_{(0)}$ defined in section \ref{sssec7.1.1}
\begin{equation}
    \mo_{(0)}\leq \ep_0,\qquad \Rfk_{(0)}\leq \ep_0,
\end{equation}
where $\mo_{(0)}$, $\mathfrak{R}_{(0)}$ are defined in section \ref{initialO0} and $\ep_0>0$ is small enough.\\ \\
Then, the initial layer $\kk_{(0)}$ has a unique development $(\M,\bg)$ in its future domain of dependence with the following properties:
\begin{enumerate}
    \item $(\M,\bg)$ can be foliated by a double null foliation $(u,\ub)$. Moreover, the outgoing cones $C_u$ are complete for all $u\leq u_0$.
    \item The norms $\mo$ and $\mr$ defined in section \ref{ssec8.1} for $s\in [4,6]$, and in sections \ref{secc1} and \ref{ssecd1} respectively for $s\in (3,4)$ and $s>6$ satisfy
\begin{equation}\label{finalesti}
    \mo\lesssim\epsilon_0,\qquad\mr\lesssim\epsilon_0.
\end{equation}
\end{enumerate}
\end{thm}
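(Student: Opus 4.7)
\medskip

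\noindent\textbf{Proof Proposal.} The plan is a continuity/bootstrap argument in the spacetime region $\kk = V(u_0, \ub_*)$ constructed in section \ref{sssec7.1.2}, varying $\ub_*$. Fix a small constant $\epsilon > 0$ with $\epsilon_0 \ll \epsilon \ll 1$, and assume on an a priori development that the bootstrap assumptions
\begin{equation*}
   \mo \leq \epsilon, \qquad \mr \leq \epsilon
\end{equation*}
hold on $\kk$, with the foliation $(u,\ub)$ of section \ref{sssec7.1.2}. The goal is to improve both bounds to $\mo \lesssim \epsilon_0$ and $\mr \lesssim \epsilon_0$, which, combined with a local extension argument, yields the existence of the double null foliation up to $\ub_* = \infty$.

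First, I would propagate the initial data on $\kk_{(0)}$, controlled by $\mo_{(0)}$ and $\Rfk_{(0)}$, to the foliation $(u,\ub)$ of $\kk$. Since the two foliations $(u,\ub)$ and $(u_{(0)},\ub)$ coexist in the overlap $\kk \cap \kk_{(0)}$, this requires the change of frame $(f,\la)$ described in the definition of the $\osc$--norms and the equivalences of Appendix \ref{secb}. One checks $\osc \lesssim \epsilon_0$ from the bootstrap assumption plus the Bianchi/transport equations in $\kk_{(0)}$, which then transfers $\mo_{(0)}, \Rfk_{(0)}$ to smallness for $\uo(\Si_0 \setminus K)$ and $\Rk$ with constants of size $\epsilon_0$. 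This is the content of section \ref{sec11}.

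Next, the curvature estimate. Using the Bianchi equations in the form of Corollary \ref{prop7.6}, I would apply the $r^p$--weighted method of Dafermos--Rodnianski directly to the pairs
\begin{equation*}
   (\a,\b), \quad (\b, (\rhoc,\si)), \quad ((\rhoc,\si), \bb), \quad (\bb, \aa),
\end{equation*}
integrating weighted energy identities on slabs bounded by $C_u$, $\unc_\ub$, $\Si_0 \setminus K$ and $\unc_*$. The structure of the Bianchi pairs gives each pair a coercive boundary flux on the right face provided the $r$-weight is chosen compatibly with the decay exponents encoded in the definition of $\mr$. The nonlinear error terms $h[\cdot]$ are quadratic and, under the bootstrap assumptions, absorb into the left-hand side with a gain of $\epsilon$. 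The output is the flux and $L^p$--sphere bounds on curvature in terms of $\Rk$ plus the last-slice norms, hence $\mr \lesssim \epsilon_0 + \epsilon^2 \lesssim \epsilon_0$ (section \ref{sec9}). The Teukolsky equations of Lemma \ref{teulm} enter if one wants to recover sharper peeling for $\a,\aa$ in the regime $s>6$ (Appendix \ref{secd}).

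Given the improved curvature estimate, the Ricci coefficients are controlled via the null structure equations of Proposition \ref{nulles}, the Codazzi/torsion/Gauss equations, and the Hodge elliptic estimates of Proposition \ref{prop7.3}. Using the evolution Lemma \ref{evolutionlemma}, I would integrate the $\Gag$-type quantities along outgoing cones from $\unc_*$ backwards and the $\Gab$-type quantities along incoming cones from $\Si_0 \setminus K$ forward, choosing the $r$-weight $\la_1 = 2(\la_0 - 1/p)$ dictated by the decay powers in the definition of $\mo_q^{p,S}$. The commutator Proposition \ref{commutation} shows that commuting with $r\nab$ produces only terms already present at lower order, so the $q=1$ estimate closes on the back of the $q=0$ estimate and the curvature flux. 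Combining with Corollary \ref{dav} to control averages $\ov{\Om\trch}, \ov{\Om\trchb}$, this gives $\mobr \lesssim \epsilon_0$, and then the remaining Ricci components follow. The control on the last slice $\unc_*$ uses the geodesic foliation ($\Om = 1/2$): here the transport equations for $\trchbc,\hchb,\ze$ on $\unc_*$ close directly from the curvature flux, without needing the canonical foliation of \cite{kn}. This is the content of sections \ref{sec10} and \ref{ssec11.5}.

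The main obstacle, as usual in this kind of problem, is the simultaneous closure of curvature and Ricci: the $r^p$--weighted estimates need $\Gag, \Gab, \Gaa$ decay to absorb nonlinearities, while the Ricci estimates need curvature flux bounds. The resolution is the hierarchy imposed by $\mo$ and $\mr$ at the same level of differentiability — a crucial gain over \cite{kn} enabled by Dafermos--Rodnianski, which is precisely why only $(r\nab)^{\leq 1}$ is needed. Once $\mo \lesssim \epsilon_0$ and $\mr \lesssim \epsilon_0$ are proven under the bootstrap assumption, a standard continuity argument in $\ub_*$ (extension Lemma, cf.\ section \ref{sec11}) promotes the estimates to the whole maximal development and establishes the completeness of the cones $C_u$ for $u \leq u_0$, yielding \eqref{finalesti}.
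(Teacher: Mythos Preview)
Your overall architecture is right, but there is a genuine ordering/circularity gap in how you handle the oscillation control and the transfer of initial data.

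You claim to establish $\osc \lesssim \ep_0$ at the very start, ``from the bootstrap assumption plus the Bianchi/transport equations in $\kk_{(0)}$'', and then use it to transfer the full initial data norm $\uo(\Si_0\setminus K)$ at $\ep_0$ level. This does not work. The oscillation control for $f$ comes from the elliptic system \eqref{ellipticsystemf}, whose right-hand side contains $\nab(\Om\trch)$ in the \emph{bootstrap} foliation. Under $\mo\leq\ep$ this term is only $\ep$--small, so at this stage you only get $\osc\lesssim\ep$, not $\ep_0$. And $\osc\lesssim\ep$ is not enough to transfer $\etab$ at $\ep_0$ level, since the frame transformation gives $\etab=\etab'-\tfrac14\trchb\,f+\lot$, and the $f$ term is then only $O(\ep r^{-(s+1)/2})$. (Incidentally, $\osc\leq\ep$ should be part of your bootstrap assumptions; the paper includes it in Definition \ref{bootstrap}.)

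The paper breaks this circularity by splitting both Theorem M0 and Theorem M3 into two stages. First, one transfers only the \emph{partial} initial data $\uobr(\Si_0\setminus K)$ (the quantities $\widecheck{\Om^{-1}\trchb}$, $\widecheck{\Om\om}$, $\Om\om$) and $\Rfk_0$ --- these particular transformation formulas do not involve $f$ linearly, so $\osc\leq\ep$ suffices to get them at $\ep_0$ level. With $\Rfk_0\lesssim\ep_0$ one runs the curvature estimate (M1), then the last slice (M2), then the \emph{first} part of M3, which uses only $\uobr$ as initial input and outputs $\mobr\lesssim\ep_0$ (this is $\widecheck{\Om\trch}$, $\eta$, $\Omc$ and the averages). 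Only now, with $\nab(\Om\trch)$ controlled at $\ep_0$ level, does the oscillation lemma give $\osc\lesssim\ep_0$, and only then can the full $\uo(\Si_0\setminus K)$ (including $\etab$) be transferred, feeding the \emph{second} part of M3 to close $\mo\lesssim\ep_0$. Your proposal collapses this two-pass structure into one, and the collapsed version is circular.

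A smaller point: the extension step is not quite ``standard continuity''. One must construct a \emph{new} double null foliation $(\upa,\ub)$ geodesic on the new last slice $\unc_{**}$, compare it to the old one via a second oscillation argument, and re-derive all estimates in the new foliation (Theorem M4). This is where the $\osc$ machinery is used a second time.
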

\begin{rk}
Theorem \ref{th8.1} contains also a number of important conclusions following from \eqref{finalesti}: peeling properties, complete future infinity, Bondi mass formula and so on, see Chapter 8 in \cite{kn}.
\end{rk}
\begin{rk}
Theorem \ref{th8.1} is proved in section \ref{ssec8.6} for $s\in [4,6]$, and extended to $s\in(3,4)$ and $s>6$ in Appendices \ref{secc} and \ref{secd}.
\end{rk}
The proof of Theorem \ref{th8.1} is given in section \ref{ssec8.6}. It hinges on a sequence of basic theorems stated in section \ref{ssec8.4}, concerning estimates for $\mo$ and $\mr$ norms. \\ \\
We choose $\ep_0$ and $\ep$ small enough such that
\begin{equation*}
    \ep\ll \de_0<1, \qquad \ep:=\ep_0^{\frac{2}{3}},
\end{equation*}
where we recall that $\de_0$ is the height of the initial layer, see \eqref{defde0}. Here, $A\ll B$ means that $CA<B$ where $C$ is the largest universal constant among all the constants involved in the proof via $\lesssim$.
\subsection{Main intermediate results}\label{ssec8.4}
The following lemma provides comparisons between $r_{(0)},r,w,u_{(0)},\ub,u$.
\begin{lem}\label{equivalence}
Under the assumptions
\begin{align}\label{equiass}
    \mo_{(0)}\leq \ep_0,\qquad \mo\leq\ep,\qquad \osc\leq\ep,
\end{align}
we have in the initial layer region $\kk_{(0)}$
\begin{align}\label{equivalenceused}
|r_{(0)}-w|\les\ep_0\log r_{(0)}, \qquad |u_{(0)}-u|\les\ep\log r_{(0)},
\end{align}
and in the bootstrap region $\kk$
\begin{equation}\label{equivalencerubu}
 \left|r-\frac{\ub-u}{2}\right|\les \ep\log r.
\end{equation}
\end{lem}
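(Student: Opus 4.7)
The strategy is to integrate transport equations for the area radius along the two null directions, in each of the two foliations, using Lemma \ref{dint} together with the pointwise bounds packaged in the $\mo$ and $\mo_{(0)}$ norms. The logarithmic factor on the right-hand side will come from integrating $1/r$ along an outgoing null generator, where $r$ grows linearly.

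\medskip
\noindent\textbf{Step 1 (transport equations for $r$).} From Lemma \ref{dint}, in the bootstrap foliation $(u,\bar u,\phi^A)$ we have
\begin{align*}
\partial_{\bar u} r \;=\; \Om\, e_4(r) \;=\; \tfrac12\,\overline{\Omega\trch}\,r, \qquad
\partial_{u} r \;=\; \Om\, e_3(r) \;=\; \tfrac12\,\overline{\Omega\trchb}\,r.
\end{align*}
Using the bounds included in $\mo_{[1]}+\uo_{[1]}$, namely $r|\Omega-\tfrac12|\lesssim\ep$, $r^2|\overline{\trch}-2/r|\lesssim\ep$, $r|u|\,|\overline{\trchb}+2/r|\lesssim\ep$, together with Lemma \ref{chav} to handle the average of the product, one gets
\begin{align*}
\partial_{\bar u} r = \tfrac12 + O(\ep/r), \qquad \partial_{u} r = -\tfrac12 + O(\ep/r) \quad\text{in }\kk,
\end{align*}
and the analogous identities in $\kk_{(0)}$ with $\ep$ replaced by $\ep_0$, using the $\mo_{(0)}$ bounds.

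\medskip
\noindent\textbf{Step 2 (proof of \eqref{equivalencerubu}).} Set $\Phi := r-\tfrac12(\bar u-u)$. Step 1 gives $\partial_{\bar u}\Phi = O(\ep/r)$ and $\partial_{u}\Phi = O(\ep/r)$. Integrate $\Phi$ along an outgoing cone $C_u$ starting from a reference sphere inside the initial layer $\kk_{(0)}$, where the comparison to $w=(\bar u - u_{(0)})/2$ is already under control by parts \eqref{equivalenceused} below. Along $C_u$ the area radius grows to leading order as $r\sim\bar u/2$, hence
\begin{align*}
\int_{\bar u_{\mathrm{in}}}^{\bar u}\frac{d\bar u'}{r(u,\bar u')} \;\lesssim\; \log r,
\end{align*}
so accumulating the source $O(\ep/r)$ yields $|\Phi|\lesssim\ep\log r$, which is \eqref{equivalencerubu}.

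\medskip
\noindent\textbf{Step 3 (proof of the first part of \eqref{equivalenceused}).} In $\kk_{(0)}$ set $\Phi_{(0)} := r_{(0)}-w$, with $w=(\bar u - u_{(0)})/2$ by definition. By Step 1 (in the initial layer foliation),
\begin{align*}
\partial_{\bar u}\Phi_{(0)} = O(\ep_0/r_{(0)}), \qquad \partial_{u_{(0)}}\Phi_{(0)} = O(\ep_0/r_{(0)}).
\end{align*}
The $s$--asymptotic flatness assumption and the definition \eqref{defw0} of $w_0$ imply $\Phi_{(0)}|_{\Sigma_0\setminus K}=O(\ep_0)$ (the foliation $\{S_{(0)}(w)\}$ on $\Sigma_0\setminus K$ has area radius equal to $w$ up to a correction consistent with Definition \ref{def6.3}). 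Integrating over the layer then gives $|r_{(0)}-w|\lesssim\ep_0\log r_{(0)}$.

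\medskip
\noindent\textbf{Step 4 (proof of the second part of \eqref{equivalenceused}).} The two foliations $(u,\bar u)$ and $(u_{(0)},\bar u)$ in $\kk_{(0)}$ share the same incoming optical function $\bar u$, so any discrepancy between $u$ and $u_{(0)}$ is captured by the change of null frame $(f,\la)$ of Lemma \ref{lemchange}. The oscillation bounds $\osc(f)+\osc(\la)+\osc(r)\lesssim\ep$ translate, via the relation between $\grad u$ and $\grad u_{(0)}$ in the two frames, into a transport equation of the schematic form
\begin{align*}
\Lb\bigl(u-u_{(0)}\bigr) \;=\; O\!\left(\tfrac{\ep}{r^{(s-1)/2}}\right) + O\!\left(\tfrac{\ep}{r}\right),
\end{align*}
with vanishing initial datum on $\Sigma_0\setminus K$ (since $u|_{\Si_0}$ and $u_{(0)}|_{\Si_0}=-w$ are calibrated on the same foliation up to an $O(\ep)$ mismatch). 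Integrating along $\Lb$ again produces at worst a $\log r_{(0)}$ factor, yielding $|u-u_{(0)}|\lesssim\ep\log r_{(0)}$.

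\medskip
\noindent\textbf{Main obstacle.} The routine part is Steps 1--3, which are bookkeeping on the transport equations; the only genuinely delicate issue is Step 4, i.e.\ turning the change-of-frame oscillation bounds on $(f,\la)$ into a pointwise comparison of the two optical functions $u$ and $u_{(0)}$. This requires unpacking the change-of-frame formulas (stated in Lemma \ref{lemchange}) in a way that correctly matches initial data on $\Sigma_0\setminus K$ with the value of $u$ fixed on the last slice $\unc_*$, so that no un-integrated constant of order larger than $\ep\log r$ appears; the logarithmic loss is the price paid for integrating a $1/r$--source along a full outgoing cone.
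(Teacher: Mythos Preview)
Your Step 1 and Step 2 are correct and match the paper's treatment of \eqref{equivalencerubu}: one integrates $\partial_{\ub}\bigl(r-\tfrac12(\ub-u)\bigr)=O(\ep/r)$ along $C_u$ from a sphere in the initial layer, and the $\log r$ comes from $\int d\ub/r$.

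The genuine gap is in Steps 3 and 4, and it is the same mistake in both places: you assume the comparison holds on all of $\Sigma_0\setminus K$ and then propagate into the layer, but the only place where the quantities are actually calibrated is $\partial K$. The function $w$ is an \emph{arbitrary} foliation of $\Sigma_0\setminus K$ subject only to $w|_{\partial K}=w_0=\text{area radius of }\partial K$; nothing in Definition \ref{def6.3} forces $r_{(0)}=w+O(\ep_0)$ on $\Sigma_0$. Likewise $u$ is fixed by the geodesic foliation on $\unc_*$ and the normalization $u_0=-w_0$, so on $\Sigma_0$ you only know $u=u_{(0)}$ at one point of $\partial K$ (the paper explicitly warns that $u^*|_{S_0(\ub_*)}\ne-\ub_*$). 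Hence your claimed initial data $\Phi_{(0)}|_{\Sigma_0}=O(\ep_0)$ and $(u-u_{(0)})|_{\Sigma_0}=O(\ep)$ are unjustified, and integrating over the layer (height $\le 2\de_0<2$) cannot produce the $\log r$ you need.

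What the paper actually does is integrate \emph{radially along $\Sigma_0$} from $\partial K$, using the tangential vector field $\Om'e'_4-\Om'e'_3$. One computes
\[
(\Om'e'_4-\Om'e'_3)(r'-w)=r'\Gaa'+O(f^2)=O(\ep_0/r'),\qquad
(\Om'e'_4-\Om'e'_3)(u-u')=\tfrac{\Om}{4}|f|^2+1-\tfrac{{\Om'}^2}{\Om^2}=O(\ep/r'),
\]
using the change-of-frame formula $e'_4=\la(e_4+f^Ae_A+\tfrac14|f|^2e_3)$ and the $\osc$ bounds on $(f,\la)$. Integrating from $w_0$ to $r'$ gives $\int_{w_0}^{r'}O(\ep_0/\tau)\,d\tau\lesssim\ep_0\log r'$; this is the source of the logarithm. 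Only then does one propagate off $\Sigma_0$ into the layer via $e'_4(u-u')=\tfrac{|f|^2}{4\Om'}=O(\ep^2 r^{1-s})$, which over the finite height contributes negligibly. Your ``main obstacle'' paragraph correctly flags the issue of matching initial data for $u$, but the resolution is this tangential integration from $\partial K$, not a transport equation along $\Lb$.
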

\begin{proof}
See Appendix \ref{secb}.
\end{proof}
\begin{rk}
Recall that we have $|u|\leq \ub$, and $u<0$ in $\kk$. Together with \eqref{equivalencerubu}, this yields
\begin{equation}\label{equiuubr}
   |u|\leq \ub\simeq r \quad \mbox{ in }\kk.
\end{equation}
In the sequel, we will use \eqref{equivalenceused}, \eqref{equivalencerubu} and \eqref{equiuubr} frequently without explicitly mentioning them.
\end{rk}
The following theorems are important intermediate steps in the proof of Theorem \ref{th8.1}.
\newtheorem*{m0}{Theorem M0}
\begin{m0}
Under the assumptions
\begin{align*}
    \mo_{(0)}\leq \ep_0,\qquad \mathfrak{R}_{(0)}\leq \ep_0, \qquad \mo\leq\ep,\qquad \osc\leq\ep,
\end{align*}
we have
\begin{equation}
    \mathfrak{R}_0\les \ep_0,\qquad \uobr(\Si_0\setminus K)\les\ep_0.
\end{equation}
If in addition we assume that
\begin{equation*}
    \mobr\les\ep_0,
\end{equation*}
then, we have
\begin{equation}\label{oscm0control}
    \osc\les\ep_0,\qquad \uo(\Si_0\setminus K)\les \ep_0.
\end{equation}
\end{m0}
Theorem M0 is proved in section \ref{ssec11.1}. The proof is based on null frame transformation formulae introduced by Klainerman and Szeftel in \cite{kl-sz1}, see Proposition \ref{transformation} in section \ref{nullframetrans}.
\begin{rk}
The second part of Theorem M0 implies that we need an estimate for $\mobr$ to control $\osc$. To this end, we first estimate the quantities in $\uobr(\Si_0\setminus K)$, which are nonlinearly dependent on $\osc$. Next, we apply them as initial data to estimate part of the Ricci coefficients, i.e. $\mobr$, see \eqref{eq3.17} in Theorem M3. Then, we obtain the oscillation control, which allows us to estimate all the initial data, i.e. the control \eqref{oscm0control} for $\uo(\Si_0\setminus K)$.
\end{rk}
\newtheorem*{m1}{Theorem M1}
\begin{m1}
Assume that
\begin{equation}
    \mo_{(0)}\leq\ep_0,\qquad \mo\leq\epsilon,\qquad \ur_0^S\leq\epsilon.
\end{equation}
Then, we have
\begin{equation}
    \mr\les \mathfrak{R}_0.
\end{equation}
\end{m1}
Theorem M1 is proved in section \ref{sec9}. The proof is based on the $r^p$--weighted method introduced by Dafermos and Rodnianski in \cite{Da-Ro}.
\newtheorem*{m2}{Theorem M2}
\begin{m2}
Assume that
\begin{equation}
    \mo^*(\unc_*)\leq\epsilon,\qquad \ur_0^S\leq \Delta_0,\qquad\mo_{(0)}(\Sigma_0\cap\unc_*)\leq \mathcal{I}_0.
\end{equation}
Then, we have
\begin{equation}
    \mo^*(\unc_*) \lesssim \Delta_0+\mathcal{I}_0+\epsilon^2.
\end{equation}
\end{m2}
Theorem M2 is proved in section \ref{ssec11.5}. The proof is done by integrating the transport equations along the null generator $\Lb$ of the last slice $\Cb_*$ and applying elliptic estimates on $2$--spheres of the geodesic foliation of $\Cb_*$.
\newtheorem*{m3}{Theorem M3}
\begin{m3}
Assume that
\begin{align}
        \mo\leq\epsilon,\qquad \mr_0^S+\ur_0^S\leq\Delta_0,\qquad\mo^*(\unc_*)\leq \mathcal{I}_*, \qquad\uobr(\Si_0\setminus K)\leq \IIbr,
\end{align}
then, we have
\begin{equation}\label{eq3.17}
    \mobr\les \IIbr+\mathcal{I}_*+\Delta_0+\epsilon^2.
\end{equation}
If we assume in addition that
\begin{equation}
    \uo(\Si_0\setminus K)\leq\II_0,
\end{equation}
then, we have
\begin{equation}
    \mo\lesssim \mathcal{I}_0+\mathcal{I}_*+\Delta_0+\epsilon^2.
\end{equation}
\end{m3}
Theorem M3 is proved in section \ref{sec10}. The proof is done by integrating the transport equations along the outgoing and incoming null cones and applying elliptic estimate on $2$--spheres of the double null foliation of the spacetime $\kk$.
\newtheorem*{m4}{Theorem M4}
\begin{m4}
We consider the spacetime $\kk$ and its double null foliation $(u,\unu)$, which satisfy the hypotheses
\begin{equation}
\mo\les\epsilon_0,\qquad \mr\les\epsilon_0,\qquad \osc\les\ep_0.
\end{equation}
Then, we can extend the spacetime $\kk=V(u_0,\unu_*)$ and the double null foliation $(u,\unu)$ to a new spacetime $\wideparen{\kk}=\wideparen{V}(u_0,\unu_*+\nu)$, where $\nu$ is sufficiently small, and an associated double null foliation $(\upa,\unu)$. Moreover, the new foliation $(\upa,\ub)$ is geodesic on the new last slice $\Cb_{**}:=\Cb_{\ub_*+\nu}$ and
the new norms satisfy
\begin{align}
\wideparen{\mo}\les\ep_0,\qquad\wideparen{\mr}\les\ep_0,\qquad \wideparen{\osc}\les\ep_0.
\end{align}
\end{m4}
Theorem M4 is proved in section \ref{ssec12.1}. The proof is based on local existence, and the null frame transformation formulas of Proposition \ref{transformation}. We also need to reapply Theorems M1, M2 and M3 in the extended spacetime region $\wideparen{\kk}$.
\begin{rk}
When compared to \cite{kn}, we have the following similarities and differences:
\begin{enumerate}
    \item We treat the general decay $s>3$ in Theorem \ref{th8.1} while the main result of \cite{kn}\footnote{We also reobtain the strong peeling properties of \cite{kncqg} that correspond to $s>7$.} correspond to the particular case $s=4$ in Theorem \ref{th8.1}.
    \item In \cite{kn}, third order derivatives of Ricci coefficients and second order derivatives of curvature are estimated. In this paper, we only estimate first order derivative of both Ricci coefficients and curvature respectively in Theorems M0-M2-M3 and M1.
    \item To estimate the curvature components in Theorem M1, instead of using vectorfield method as in \cite{kn}, we use the $r^p$--weighted estimates introduced by Dafermos and Rodnianski in \cite{Da-Ro}.
    \item On the last slice $\Cb_*$, a canonical foliation is used in \cite{kn} while we use a geodesic foliation in Theorem M2.
\item The estimates of Ricci coefficients and the extension argument, i.e. the proof of Theorems M3 and M4, are similar to \cite{kn}.
\end{enumerate}
\end{rk}
\subsection{Proof of the main theorem}\label{ssec8.6}
We now use Theorems M0-M4 to prove Theorem \ref{th8.1}.
\begin{df}\label{bootstrap}
Let $\aleph(\unu_*)$ the set of spacetimes $\kk$ associated with a double null foliation $(u,\unu)$ which satisfy the following properties:
\begin{enumerate}
    \item $\kk=V(u_0,\unu_*)$.
    \item The foliation on $\unc_*$ is geodesic.
    \item We have the following bootstrap bounds:
\begin{align}
    \mo\leq\epsilon,\qquad\mr\leq\ep,\qquad \osc\leq\ep.\label{B2}
\end{align}
\end{enumerate}
\end{df}
\begin{df}\label{defboot}
We denote $\mathcal{U}$ the set of values $\unu_*$ such that $\aleph(\unu_*)\ne\emptyset$.
\end{df}
Applying the local existence result of Theorem 10.2.1 in \cite{Ch-Kl} and the assumption $\mo_{(0)}\leq\ep_0$, we deduce that \eqref{B2} holds if $\ub_*$ is sufficiently small. So, we have $\mathcal{U}\ne\emptyset$.\\ \\
Define $\unu_*$ to be the supremum of the set $\mathcal{U}$. We want to prove $\unu_*=+\infty$. We assume by contradiction that $\unu_*$ is finite. In particular we may assume $\unu_*\in\mathcal{U}$. We consider the region $\kk=V(u_0,\unu_*)$. Recall that we have
\begin{equation}
    \mo_{(0)}\leq\ep_0,\qquad \mathfrak{R}_{(0)}\leq\ep_0,
\end{equation}
according to the assumptions of Theorem \ref{th8.1}. Applying Theorems M0 and M1, we obtain
\begin{equation}\label{mrest}
    \mr\les \mathfrak{R}_0\les\ep_0,\qquad \uobr(\Si_0\setminus K)\les \ep_0.
\end{equation}
Then, applying Theorem M2 and the hypothesis $\mo\leq\epsilon$, we obtain\footnote{By definition, $\mo\leq\ep$ implies $\mo^*(\Cb_*)\leq\ep$.}
\begin{equation*}
    \mo^{*}(\unc_*)\les\ep_0.
\end{equation*}
In view of the above, the quantities in Theorem M3 satisfy the following conditions:
\begin{equation*}
\mathcal{I}_*\les\ep_0,\qquad \Delta_0\les \ep_0,\qquad \IIbr\les\ep_0.
\end{equation*}
Applying the first part of Theorem M3, we obtain
\begin{equation}
    \mobr\les\ep_0.
\end{equation}
Then, we can apply the second part of Theorem M0 to deduce
\begin{equation}\label{M0final}
    \osc\les\ep_0,\qquad\quad\uo(\Si_0\setminus K)\les \ep_0.
\end{equation}
Note that \eqref{M0final} implies $\II_0\leq\ep_0$ where $\II_0$ is defined in Theorem M3. Thus, we may apply the second part of Theorem M3 to obtain
\begin{equation}\label{moest}
    \mo\les\ep_0.
\end{equation}
Notice that \eqref{mrest}, \eqref{M0final} and \eqref{moest} implies that we can apply Theorem M4 to extend $\kk$ to $\wideparen{\kk}:=\wideparen{V}(u_0,\unu_*+\nu)$ for a $\nu$ sufficiently small. We denote $\wideparen{\mo}$ and $\wideparen{\mr}$ the norms associated to the new foliation $(\upa,\ub)$. We have
\begin{equation*}
    \wideparen{\mo}\les\ep_0,\qquad \wideparen{\mr}\les\ep_0,\qquad\wideparen{\osc}\les\ep_0,
\end{equation*}
as a consequence of Theorem M4. We deduce that $\wideparen{V}(u_0,\unu_*+\nu)$ with the double null foliation $(\upa,\ub)$ satisfies all the properties in Definition \ref{bootstrap}, and so $\aleph(\unu_*+\nu)\ne\emptyset$, which is a contradiction. Thus, we have $\unu_*=+\infty$, which implies property 1 of Theorem \ref{th8.1}. Moreover, we have
\begin{equation}
    \mo\les \ep_0,\qquad \mr\les\ep_0,
\end{equation}
in the whole exterior region, which implies property 2 of Theorem \ref{th8.1}. This concludes the proof of Theorem \ref{th8.1}.
\section{Curvature estimates (Theorem M1)}\label{sec9}
In this section, we prove Theorem M1 by the $r^p$--weighted estimate method introduced in \cite{Da-Ro} and applied to Bianchi equations in \cite{holzegel} and \cite{kl-sz}. For convenience, we recall the statement below.
\begin{m1}
Assume that
\begin{equation}
    \mo_{(0)}\leq\ep_0,\qquad \mo\leq\epsilon,\qquad \ur_0^S\leq\epsilon,\qquad \osc\leq\ep.
\end{equation}
Then, we have
\begin{equation}
    \mr\les \mathfrak{R}_0. 
\end{equation}
\end{m1}
\begin{figure}
  \centering
  \includegraphics[width=1\textwidth]{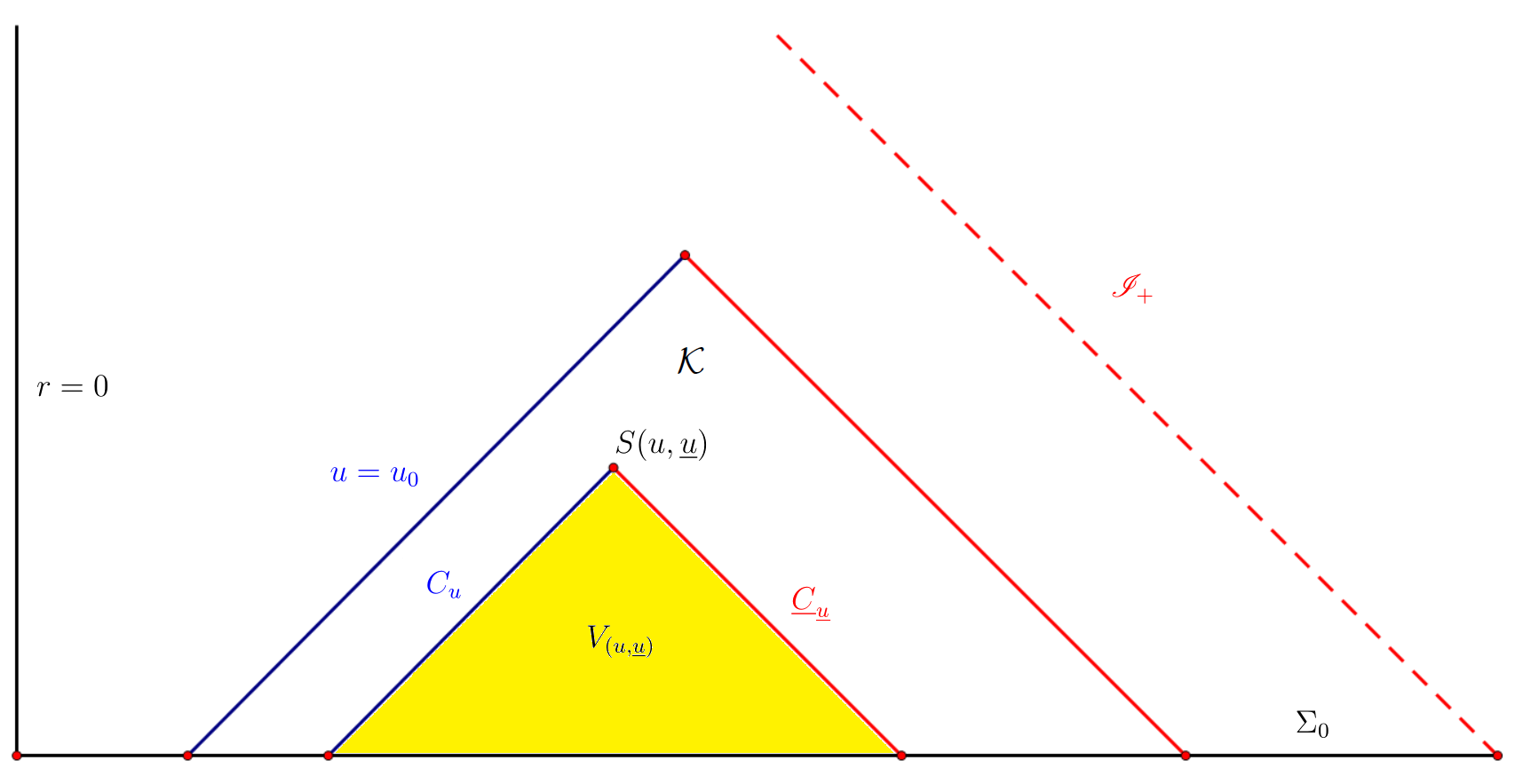}
  \caption{Domain of integration $V(u,\ub)$}\label{fig4}
\end{figure}
\begin{lem}\label{decayGagGabGaa}
We have the following estimates:
\begin{equation}\label{estGagba}
    |\Gag|\lesssim \frac{\epsilon}{r^2|u|^{\frac{s-3}{2}}},\qquad\quad |\Gab|\lesssim \frac{\epsilon}{r|u|^{\frac{s-1}{2}}},\qquad\quad |\Gaa|\lesssim\frac{\epsilon}{r^2},
\end{equation}
and
\begin{equation}\label{estnabGagb}
    |r^{2-\frac{2}{p}}|u|^\frac{s-3}{2}\Gag^{(1)}|_{p,S}\les\ep,\qquad |r^{1-\frac{2}{p}}|u|^\frac{s-1}{2}\Gab^{(1)}|_{p,S}\les\ep,\qquad p\in [2,4].
\end{equation}
\end{lem}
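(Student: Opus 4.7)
\medskip
\noindent\textbf{Proof proposal.} The plan is to deduce both estimates directly from the definitions of the norms $\mo$, $\mo_{[1]}$, $\uo_{[1]}$ in Section \ref{sssec8.1.3} and of $\ur_0^S$ in Section \ref{secRnorms}, combined with the standard Sobolev inequality of Proposition \ref{standardsobolev}. No Bianchi or transport equation is needed: the lemma is essentially a bookkeeping statement certifying that the schematic classes $\Gag$, $\Gab$, $\Gaa$ and their $(r\nab)^{\le 1}$-extensions have decay consistent with the weights built into the norms assumed to be $\lesssim \ep$.

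\smallskip
\noindent\emph{Pointwise estimates.} For a representative member of $\Gag$, say $\eta$, the definition of $\mo_0(\eta)$ and $\mo_1(\eta)$ with $p=4$ gives
\begin{equation*}
|\eta|_{4,S}+|r\nab\eta|_{4,S}\les \ep\, r^{-3/2}|u|^{-(s-3)/2},
\end{equation*}
so Proposition \ref{standardsobolev} yields $r^{1/2}|\eta|\les \ep r^{-3/2}|u|^{-(s-3)/2}$, i.e.\ $|\eta|\les \ep r^{-2}|u|^{-(s-3)/2}$. The same Sobolev argument applies to $\ue,\zeta,\hch,\widecheck{\Om\trch},\widecheck{\Om\trchb},\widecheck\om,\nab\log\Om$; for the slightly different weights $r^{(s-3)/6}|u|^{(s-3)/3}$ appearing in $\mo_q^{p,S}(\widecheck{\Om\om})$, the output is at least as strong as $\ep r^{-2}|u|^{-(s-3)/2}$ since $|u|\les r$. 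The members of $\Gab$ are handled identically using the weights $r^{1+q-2/p}|u|^{(s-1)/2}$ from $\mo_q^{p,S}(\hchb)$, $\mo_q^{p,S}(\widecheck{\Om\omb})$. For $\Gaa$, one splits $\trch-\tfrac{2}{r}=\widecheck{\trch}+(\ov{\trch}-\tfrac{2}{r})$; the average part is controlled by the $L^\infty$ bounds on $\ov{\trch}-\tfrac{2}{r}$ and $\ov{\trchb}+\tfrac{2}{r}$ which are built into $\mo_{[1]}$ and $\uo_{[1]}$, while the average-free part is controlled by Sobolev applied to $\widecheck{\Om\trch}$, combined with the pointwise bound $|\Om-\tfrac12|\les \ep/r$ (also inside $\mo_{[1]}$) used to pass between $\Om\trch$ and $\trch$. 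The coefficients $\om,\omb$ are controlled through $\Om\om,\Om\omb$ in the same way.

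\smallskip
\noindent\emph{$L^p$ estimates for $\Gag^{(1)}$ and $\Gab^{(1)}$.} By Definition \ref{gammag},
\begin{equation*}
\Gag^{(1)}=(r\nab)^{\le 1}\Gag\cup\{r\b,r\chr,r\si\},\qquad \Gab^{(1)}=(r\nab)^{\le 1}\Gab\cup\{r\bb\}.
\end{equation*}
For the Ricci piece $(r\nab)^{\le 1}\Gag$, the bound $|r^{2-2/p}|u|^{(s-3)/2}(r\nab)^{\le 1}\Gag|_{p,S}\les \ep$ is, after a direct comparison of exponents, exactly the content of $\mo_0+\mo_1\le\ep$; one simply checks that the factor $r$ in $r\nab$ absorbs the shift in the $r$-weight in $\mo_q^{p,S}$. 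For the curvature pieces: the bound $|r^{2-2/p}|u|^{(s-3)/2}\cdot r(\rhoc,\si)|_{p,S}=|r^{3-2/p}|u|^{(s-3)/2}(\rhoc,\si)|_{p,S}\le \ur_0^S[\rhoc,\si]\les \ep$ is immediate from the definition of $\ur_0^S[\rhoc,\si]$; the bound on $r\b$ follows from $\ur_0^S[\b]\les \ep$ combined with the factor $(|u|/r)^{(s-4)/2}\les 1$ valid since $s\ge 4$ and $|u|\les r$. The $L^p$ estimate for $\Gab^{(1)}$ is analogous: the Ricci part is read off from $\mo_0(\hchb)+\mo_0(\widecheck{\Om\omb})+\mo_1(\hchb)+\mo_1(\widecheck{\Om\omb})$ with the matching weights $r^{1-2/p}|u|^{(s-1)/2}$, while $r\bb$ is directly controlled by $\ur_0^S[\bb]$.

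\smallskip
\noindent\emph{Main obstacle.} There is no serious obstacle; the only care required is to keep track of the fact that the norms $\mo$ control $\Om\om$, $\Om\omb$, $\Om\trch$, $\Om\trchb$ rather than the raw Ricci coefficients, so the transition back uses $|\Om-\tfrac12|\les\ep/r$, and to verify that the borderline weight $(|u|/r)^{(s-4)/2}$ arising when controlling $r\b$ through $\ur_0^S[\b]$ stays bounded, which is precisely the range $s\in[4,6]$ treated in this part of the paper.
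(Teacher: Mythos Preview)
Your proof is correct and follows exactly the approach indicated in the paper: the lemma is an immediate consequence of the bootstrap assumption $\mo\le\ep$, the assumption $\ur_0^S\le\ep$, Definition~\ref{gammag}, and the Sobolev inequality of Proposition~\ref{standardsobolev}. Your detailed bookkeeping (splitting $\trch-\tfrac{2}{r}$ into average and average-free parts, passing between $\Om\trch$ and $\trch$ via $|\Om-\tfrac12|\les\ep/r$, and checking that the borderline factor $(|u|/r)^{(s-4)/2}$ stays bounded for $s\in[4,6]$) is exactly what the paper's one-line proof is tacitly invoking.
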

\begin{proof}
    It follows directly from the assumption $\mo\leq\ep$, Definition \ref{gammag} and the Sobolev inequality of Proposition \ref{standardsobolev}.
\end{proof}
\subsection{Estimates for general Bianchi pairs}
The following lemma provides the general structure of Bianchi pairs. It will be used repeatedly in this section. See Lemma 8.24 in \cite{kl-sz} for an analog version under the assumption of axial polarization.
\begin{lem}\label{keypoint}
Let $k=1,2$ and $a_{(1)}$, $a_{(2)}$ real numbers. Then, we have the following properties.
\begin{enumerate}
    \item If $\psi_{(1)},h_{(1)}\in\sk_k$ and $\psi_{(2)},h_{(2)}\in \sk_{k-1}$ satisfying
    \begin{align}
    \begin{split}\label{bianchi1}
        \nab_3(\psi_{(1)})+a_{(1)}\trchb\,\psi_{(1)}&=-kd_k^*(\psi_{(2)})+h_{(1)},\\
        \nab_4(\psi_{(2)})+a_{(2)}\trch\,\psi_{(2)}&=d_k(\psi_{(1)})+h_{(2)}.
    \end{split}
    \end{align}
Then, the pair $(\psi_{(1)},\psi_{(2)})$ satisfies for any real number $p$
\begin{align}
\begin{split}\label{div}
       &\bdiv(r^p |\psi_{(1)}|^2e_3)+k\bdiv(r^p|\psi_{(2)}|^2e_4)\\
       +&\left(2a_{(1)}-1-\frac{p}{2}\right)r^{p}\trchb|\psi_{(1)}|^2+k\left(2a_{(2)}-1-\frac{p}{2}\right)r^{p}\trch|\psi_{(2)}|^2\\
       =&2k r^p\sdiv(\psi_{(1)}\cdot\psi_{(2)})
       +2r^p\psi_{(1)}\cdot h_{(1)}+2kr^p\psi_{(2)}\cdot h_{(2)}-2r^p\omb|\psi_{(1)}|^2\\
       -&2kr^p\om|\psi_{(2)}|^2+pr^{p-1}\left(e_3(r)-\frac{r}{2}\trchb\right)|\psi_{(1)}|^2+kpr^{p-1}\left(e_4(r)-\frac{r}{2}\tr\chi\right)|\psi_{(2)}|^2.
\end{split}
\end{align}
    \item If $\psi_{(1)},h_{(1)}\in\sk_{k-1}$ and $\psi_{(2)},h_{(2)}\in\sk_k$ satisfying
    \begin{align}
        \begin{split}\label{bianchi2}
        \nab_3(\psi_{(1)})+a_{(1)}\trchb\,\psi_{(1)}&=d_k(\psi_{(2)})+h_{(1)},\\
        \nab_4(\psi_{(2)})+a_{(2)}\trch\,\psi_{(2)}&=-kd_k^*(\psi_{(1)})+h_{(2)}.
        \end{split}
    \end{align}
Then, the pair $(\psi_{(1)},\psi_{(2)})$ satisfies for any real number $p$
\begin{align}
\begin{split}\label{div2}
       &k\bdiv(r^p |\psi_{(1)}|^2e_3)+\bdiv(r^p|\psi_{(2)}|^2e_4)\\
       +&k\left(2a_{(1)}-1-\frac{p}{2}\right)r^{p}\trchb|\psi_{(1)}|^2+\left(2a_{(2)}-1-\frac{p}{2}\right)r^{p}\trch|\psi_{(2)}|^2\\
       =&2 r^p \sdiv(\psi_{(1)}\cdot\psi_{(2)})
       +2kr^p\psi_{(1)}\cdot h_{(1)}+2r^p\psi_{(2)}\cdot h_{(2)}-2k r^p\omb|\psi_{(1)}|^2\\
       -&2r^p\om|\psi_{(2)}|^2+k pr^{p-1}\left(e_3(r)-\frac{r}{2}\trchb\right)|\psi_{(1)}|^2+pr^{p-1}\left(e_4(r)-\frac{r}{2}\trch\right)|\psi_{(2)}|^2.
\end{split}
\end{align}
\end{enumerate}
\end{lem}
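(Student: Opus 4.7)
The two items are parallel---item 2 follows from item 1 by swapping the roles $\nabla_3\leftrightarrow\nabla_4$ and $d_k\leftrightarrow d_k^*$ (and the sides on which the factor $k$ appears), so I focus on item 1. The strategy is a null-frame Leibniz computation, followed by substitution of the Bianchi pair, followed by an integration by parts on $S$. First I will establish the pointwise identities
\[
\bdiv(f\, e_3) = e_3(f) + (\trchb - 2\omb)\, f,\qquad \bdiv(f\, e_4) = e_4(f) + (\trch - 2\om)\, f,
\]
valid for any scalar $f$; both follow by expanding $\bdiv X = -\tfrac12\g(e_4,\D_3 X)-\tfrac12\g(e_3,\D_4 X)+\sum_A\g(e_A,\D_A X)$ against the Ricci formulas \eqref{ricciformulas}.

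Next I will apply these to $f=r^p|\psi_{(1)}|^2$ and $f=r^p|\psi_{(2)}|^2$. The Leibniz rule produces the weight terms $pr^{p-1}e_3(r)|\psi_{(1)}|^2$ and $pr^{p-1}e_4(r)|\psi_{(2)}|^2$, the $\omb, \om$ pieces, and the genuine evolution terms $2r^p\psi_{(1)}\cdot\nabla_3\psi_{(1)}$ and $2r^p\psi_{(2)}\cdot\nabla_4\psi_{(2)}$. Into the last two I substitute the Bianchi pair \eqref{bianchi1}, turning them into $-2a_{(1)}r^p\trchb|\psi_{(1)}|^2-2kr^p\psi_{(1)}\cdot d_k^*\psi_{(2)}+2r^p\psi_{(1)}\cdot h_{(1)}$ and the analogous expression on the $\psi_{(2)}$ side. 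Multiplying the $\psi_{(2)}$ identity by $k$ and summing, and then using
\[
pr^{p-1}e_3(r) = pr^{p-1}\left(e_3(r)-\tfrac{r}{2}\trchb\right)+\tfrac{p}{2}r^p\trchb
\]
and the analogous identity for $e_4(r)$ to peel off a half-weight of $\trchb,\trch$ to the other side, all the trace contributions collect into the coefficients $(2a_{(1)}-1-\tfrac{p}{2})$ and $k(2a_{(2)}-1-\tfrac{p}{2})$ displayed in \eqref{div}.

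The only term not yet in the desired form is the coupling $-2kr^p\psi_{(1)}\cdot d_k^*\psi_{(2)}+2kr^p\psi_{(2)}\cdot d_k\psi_{(1)}$. Here I invoke the pointwise adjoint identity
\[
-\psi_{(1)}\cdot d_k^*\psi_{(2)}+\psi_{(2)}\cdot d_k\psi_{(1)} = \sdiv(\psi_{(1)}\cdot\psi_{(2)}),
\]
where $\psi_{(1)}\cdot\psi_{(2)}$ is the natural $S$-tangent vector field produced by the contraction---for $k=2$ it is $(\psi_{(1)}\cdot\psi_{(2)})^B=\psi_{(1)}^{AB}\psi_{(2)A}$; for $k=1$ it is $f\xi+f_*{}^*\xi$ with $\psi_{(1)}=\xi$ and $\psi_{(2)}=(f,f_*)$. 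This identity is a one-line Leibniz computation from the definitions of $d_k,d_k^*$ in section \ref{ssec7.2}. Inserting it produces the $2kr^p\sdiv(\psi_{(1)}\cdot\psi_{(2)})$ piece of \eqref{div}, and the proof is complete. The whole argument is essentially bookkeeping; I expect the only delicate point to be sign-tracking in the adjoint identity, which must be checked separately for $k=1$ and $k=2$ because of the different tensorial nature of the objects involved. Item 2 is handled identically, with the Bianchi pair interchanged; this is precisely what shifts the factor $k$ from the $\psi_{(2)}$ side in \eqref{div} to the $\psi_{(1)}$ side in \eqref{div2}.
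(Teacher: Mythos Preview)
Your proposal is correct and follows essentially the same approach as the paper: compute $\bdiv e_3=-2\omb+\trchb$ and $\bdiv e_4=-2\om+\trch$, expand $\bdiv(r^p|\psi_{(i)}|^2 e_{3,4})$ by Leibniz, substitute the Bianchi pair, and use the pointwise adjoint identity $-\psi_{(1)}\cdot d_k^*\psi_{(2)}+\psi_{(2)}\cdot d_k\psi_{(1)}=\sdiv(\psi_{(1)}\cdot\psi_{(2)})$ to combine the coupling terms. Your treatment is in fact slightly more explicit than the paper's, since you spell out the contraction $\psi_{(1)}\cdot\psi_{(2)}$ in each case $k=1,2$, whereas the paper simply writes down the final divergence term.
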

\begin{rk}
    Note that the Bianchi equations can be written as systems of equations of the type \eqref{bianchi1} and \eqref{bianchi2}. In particular
    \begin{itemize}
        \item the Bianchi pair $(\a,\b)$ satisfies \eqref{bianchi1} with $k=2$, $a_{(1)}=\frac{1}{2}$, $a_{(2)}=2$,
        \item the Bianchi pair $(\b,(\rhoc,-\si))$ satisfies \eqref{bianchi1} with $k=1$, $a_{(1)}=1$, $a_{(2)}=\frac{3}{2}$,
        \item the Bianchi pair $((\rhoc,\si),\bb)$ satisfies \eqref{bianchi2} with $k=1$, $a_{(1)}=\frac{3}{2}$, $a_{(2)}=1$,
        \item the Bianchi pair $(\bb,\aa)$ satisfies \eqref{bianchi2} with $k=2$, $a_{(1)}=2$, $a_{(2)}=\frac{1}{2}$.
    \end{itemize}
\end{rk}
\begin{proof}
By a direct computation, we have
\begin{align*}
    \bdiv e_4=\D_\ga e_{{4}}^\ga=-\frac{1}{2}\g(\D_4e_4,e_3)-\frac{1}{2}\g(\D_3e_4,e_4)+\g^{AB} \g(\D_A e_4,e_B)=-2\om+\trch,
\end{align*}
and similarly
\begin{align*}
    \bdiv e_3=-2\omb+\trchb.
\end{align*}
For $\psi_{(1)}$ and $\psi_{(2)}$ satisfying \eqref{bianchi1}, we compute
\begin{align*}
    &\bdiv(r^p |\psi_{(1)}|^2 e_3)+k\bdiv(r^p |\psi_{(2)}|^2 e_4)\\
    =&\nab_3(r^p |\psi_{(1)}|^2)+ \bdiv(e_3)r^p|\psi_{(1)}|^2+k\nab_4(r^p |\psi_{(2)}|^2)+k\bdiv(e_4)r^p|\psi_{(2)}|^2\\
    =&pr^{p-1}e_3(r)|\psi_{(1)}|^2+2r^p\psi_{(1)}\cdot\nab_3\psi_{(1)}+r^{p}(\trchb-2\omb) |\psi_{(1)}|^2\\
    +&kpr^{p-1}e_4(r)|\psi_{(2)}|^2+2kr^p\psi_{(2)}\cdot\nab_4\psi_{(2)}+kr^p(\trch-2\om)|\psi_{(2)}|^2\\
    =&pr^{p-1}e_3(r)|\psi_{(1)}|^2+2r^p\psi_{(1)}\cdot\left(-a_{(1)}\trchb\psi_{(1)}-kd_k^*\psi_{(2)}+h_{(1)}\right)+r^p\trchb|\psi_{(1)}|^2-2r^{p}\omb|\psi_{(1)}|^2\\
    +&kpr^{p-1}e_4(r)|\psi_{(2)}|^2+2kr^p\psi_{(2)}\cdot\left(-a_{(2)}\trch\psi_{(2)}+d_k\psi_{(1)}+h_{(2)}\right)+k r^p\trch|\psi_{(2)}|^2-2k r^p\om|\psi_{(2)}|^2\\
    =&pr^{p-1}\left(e_3(r)-\frac{r}{2}\trchb\right)|\psi_{(1)}|^2+2r^p\psi_{(1)}\cdot\left(-kd_k^*\psi_{(2)}+h_{(1)}\right)+\left(1-2a_{(1)}+\frac{p}{2}\right)r^p\trchb|\psi_{(1)}|^2\\
    +&kpr^{p-1}\left(e_4(r)-\frac{r}{2}\trch\right)|\psi_{(2)}|^2+2kr^p\psi_{(2)}\cdot\left(d_k\psi_{(1)}+h_{(2)}\right)+\left(1-2a_{(2)}+\frac{p}{2}\right)kr^p\trch|\psi_{(2)}|^2\\
    -&2r^{p}\omb|\psi_{(1)}|^2-2k r^p\om|\psi_{(2)}|^2\\
    =&2k\sdiv(\psi_{(1)}\cdot\psi_{(2)})+pr^{p-1}\left(e_3(r)-\frac{r}{2}\trchb\right)|\psi_{(1)}|^2 +k pr^{p-1}\left(e_4(r)-\frac{r}{2}\trch\right)|\psi_{(2)}|^2\\
    +&2r^p\psi_{(1)}\cdot h_{(1)}+2kr^p\psi_{(2)}\cdot h_{(2)}+\left(1-2a_{(1)}+\frac{p}{2}\right)r^p\trchb|\psi_{(1)}|^2+\left(1-2a_{(2)}+\frac{p}{2}\right)kr^p\trch|\psi_{(2)}|^2\\
    -&2kr^p\om|\psi_{(2)}|^2-2r^{p}\omb|\psi_{(1)}|^2.
\end{align*}
Then, we obtain
\begin{align*}
    &\bdiv(r^p|\psi_{(1)}|^2e_3)+k\bdiv(r^p|\psi_{(2)}|^2e_4)+\left(2a_{(1)}-1-\frac{p}{2}\right)r^{p}\trchb|\psi_{(1)}|^2+\left(2a_{(2)}-1-\frac{p}{2}\right)k r^{p}\trch|\psi_{(2)}|^2\\
       &=2k r^p\sdiv(\psi_{(1)}\cdot\psi_{(2)})
       +2r^p\psi_{(1)}\cdot h_{(1)}+2kr^p\psi_{(2)}\cdot h_{(2)}-2r^p\omb|\psi_{(1)}|^2-2kr^p\om|\psi_{(2)}|^2\\
       &+pr^{p-1}\left(e_3(r)-\frac{r}{2}\trchb\right)|\psi_{(1)}|^2+k pr^{p-1}\left(e_4(r)-\frac{r}{2}\trch\right)|\psi_{(2)}|^2,
\end{align*}
which implies \eqref{div}. The proof of \eqref{div2} is similar and left to the reader. This concludes the proof of Lemma \ref{keypoint}.
\end{proof}
\begin{prop}\label{keyintegral}
We denote
\begin{equation}
   V:=V(u,\unu),\qquad C_u^V:=C_u\cap V,\qquad\unc_\unu^V:=\unc_\unu\cap V.
\end{equation}
For $\psi_{(1)}$, $\psi_{(2)}$ and $h_{(1)}$, $h_{(2)}$ satisfying \eqref{bianchi1} or \eqref{bianchi2}, we have the following properties for $q=0,1$ and all $(u,\ub)\in\kk$.
\begin{itemize}
\item In the case of $2+p-4a_{(1)}>0$ and $4a_{(2)}-2-p>0$, we have
\begin{align}
\begin{split}\label{caseone}
&\int_{\cuv}r^p |\psi_{(1)}^{(q)}|^2+\int_\ucuv r^p|\psi_{(2)}^{(q)}|^2 +\int_{V}r^{p-1}|\psi_{(1)}^{(q)}|^2+r^{p-1}|\psi_{(2)}^{(q)}|^2 \\ 
\les &\int_{\Sigma_0 \cap V} r^p |\psi_{(1)}^{(q)}|^2 + r^p |\psi_{(2)}^{(q)}|^2+\int_{V} r^p|\psi_{(1)}^{(q)}||h_{(1)}^{(q)}|+r^p|\psi_{(2)}^{(q)}||h_{(2)}^{(q)}|.
\end{split}
\end{align}
\item In the case of $2+p-4a_{(1)}>0$ and $4a_{(2)}-2-p=0$, we have
\begin{align}
\begin{split}\label{casetwo}
&\int_{\cuv}r^p |\psi_{(1)}^{(q)}|^2+\int_\ucuv r^p|\psi_{(2)}^{(q)}|^2 +\int_{V} r^{p-1}|\psi_{(1)}^{(q)}|^2\\
\les &\int_{\Sigma_0 \cap V} r^p |\psi_{(1)}^{(q)}|^2+r^p |\psi_{(2)}^{(q)}|^2+\int_{V} r^p|\psi_{(1)}^{(q)}||h_{(1)}^{(q)}|+r^p|\psi_{(2)}^{(q)}||h_{(2)}^{(q)}|.
\end{split}
\end{align}
\item In the case of $2+p-4a_{(1)}\leq 0$ and $4a_{(2)}-2-p\geq 0$, we have
\begin{align}
\begin{split}\label{casethree}
&\int_{\cuv}r^p|\psi_{(1)}^{(q)}|^2+\int_\ucuv r^p|\psi_{(2)}^{(q)}|^2\\ 
\les&\int_{\Sigma_0\cap V}r^p|\psi_{(1)}^{(q)}|^2+r^p|\psi_{(2)}^{(q)}|^2+\int_{V}r^{p-1}|\psi_{(1)}^{(q)}|^2+r^p|\psi_{(1)}^{(q)}||h_{(1)}^{(q)}|+r^p|\psi_{(2)}^{(q)}||h_{(2)}^{(q)}|.
\end{split}
\end{align}
\item In the case of $2+p-4a_{(1)}> 0$ and $4a_{(2)}-2-p\leq 0$, we have
\begin{align}
\begin{split}\label{casefour}
&\int_{\cuv}r^p|\psi_{(1)}^{(q)}|^2+\int_\ucuv r^p|\psi_{(2)}^{(q)}|^2+\int_{V}r^{p-1}|\psi_{(1)}^{(q)}|^2\\
\les&\int_{\Sigma_0\cap V}r^p|\psi_{(1)}^{(q)}|^2+r^p|\psi_{(2)}^{(q)}|^2+\int_{V}r^{p-1}|\psi_{(2)}^{(q)}|^2+r^p|\psi_{(1)}^{(q)}||h_{(1)}^{(q)}|+r^p|\psi_{(2)}^{(q)}||h_{(2)}^{(q)}|.
\end{split}
\end{align}
\end{itemize}
\end{prop}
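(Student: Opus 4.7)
The plan is to integrate the divergence identities of Lemma \ref{keypoint} over the spacetime region $V=V(u,\unu)$ and apply the divergence theorem. Because $e_3$ is tangent to the incoming cones and transverse to the outgoing ones (and symmetrically for $e_4$), the term $\bdiv(r^p|\psi_{(1)}|^2 e_3)$ produces a flux on $\cuv$ together with a contribution on $\Sigma_0\cap V$, while $\bdiv(r^p|\psi_{(2)}|^2 e_4)$ produces a flux on $\ucuv$ and a contribution on $\Sigma_0\cap V$. The angular term $2k r^p\sdiv(\psi_{(1)}\cdot\psi_{(2)})$ on the right integrates to zero on each sphere $S(u,\unu)$ by Stokes, hence also on $V$. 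Thus integration of \eqref{div} (or \eqref{div2}) yields an identity relating the fluxes on $\cuv$, $\ucuv$, and $\Sigma_0\cap V$ to certain bulk integrals, which I now analyze.

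Under the bootstrap assumption $\mo\leq\ep$, one has $\trch=\tfrac{2}{r}+O(\ep\, r^{-2})$ and $\trchb=-\tfrac{2}{r}+O(\ep\, r^{-2})$. Substituting in the two coercive terms of \eqref{div},
\begin{align*}
\bigl(2a_{(1)}-1-\tfrac{p}{2}\bigr)r^p\trchb|\psi_{(1)}|^2 &= -(2+p-4a_{(1)})\, r^{p-1}|\psi_{(1)}|^2 + O\bigl(\ep\, r^{p-2}|\psi_{(1)}|^2\bigr),\\
k\bigl(2a_{(2)}-1-\tfrac{p}{2}\bigr)r^p\trch|\psi_{(2)}|^2 &= k(4a_{(2)}-2-p)\, r^{p-1}|\psi_{(2)}|^2 + O\bigl(\ep\, r^{p-2}|\psi_{(2)}|^2\bigr).
\end{align*}
The four cases of the proposition match exactly the four sign combinations for $(2+p-4a_{(1)},\, 4a_{(2)}-2-p)$: in case \eqref{caseone} both coefficients are strictly positive and are kept as coercive bulk terms on the left; in the borderline case \eqref{casetwo} the $\psi_{(2)}$ bulk drops out; in cases \eqref{casethree} and \eqref{casefour} the non-positive coefficient is moved to the right-hand side, producing the spacetime integral $\int_V r^{p-1}|\psi|^2$ which appears in the statement.

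It remains to absorb the lower-order contributions on the right of \eqref{div}, namely $-2r^p\omb|\psi_{(1)}|^2$, $-2kr^p\om|\psi_{(2)}|^2$, and the correction terms $pr^{p-1}(e_3(r)-\tfrac{r}{2}\trchb)|\psi_{(1)}|^2$ and $kpr^{p-1}(e_4(r)-\tfrac{r}{2}\trch)|\psi_{(2)}|^2$. By Lemma \ref{dint},
$$e_3(r)-\tfrac{r}{2}\trchb=-\tfrac{r}{2\Om}\,\widecheck{\Om\trchb}\in\Gag, \qquad e_4(r)-\tfrac{r}{2}\trch=-\tfrac{r}{2\Om}\,\widecheck{\Om\trch}\in\Gag,$$
while $\om,\omb\in\Gaa$; by Lemma \ref{decayGagGabGaa} each such factor is $O(\ep\, r^{-2})$. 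Hence all four of these terms are bounded by $\ep\, r^{p-2}|\psi|^2$ pointwise, so their spacetime integral is $\ep\int_V r^{p-1}|\psi|^2$, which is absorbed into the coercive bulk on the left in cases \eqref{caseone}--\eqref{casetwo}--\eqref{casefour}, and into the already-present bulk on the right in case \eqref{casethree}. This establishes the proposition for $q=0$. The bilinear source terms $2r^p\psi_{(1)}\cdot h_{(1)}$ and $2kr^p\psi_{(2)}\cdot h_{(2)}$ remain on the right as stated.

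For $q=1$ one commutes the Bianchi pair \eqref{bianchi1} (resp. \eqref{bianchi2}) with the weighted angular derivative $r\nab$ via Proposition \ref{commutation}. The commutators $[r\nab,\Om\nab_4]$ and $[r\nab,\Om\nab_3]$ produce schematic sources of the form $\Gag\cdot r\nab\psi+\Gag^{(1)}\cdot\psi$ and $\Gab\cdot r\nab\psi+\Gab^{(1)}\cdot\psi$, while the commutators of $r\nab$ with $d_k$ and $d_k^*$ cost only a curvature term (via \eqref{dddd}) of the same schematic type. Consequently $(r\nab\psi_{(1)},r\nab\psi_{(2)})$ satisfies a Bianchi pair with the \emph{same} critical constants $a_{(1)},a_{(2)}$, modulo source terms absorbable just as in the $q=0$ step. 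The $q=0$ analysis then applies verbatim to yield the estimate for $q=1$. The main technical obstacle I foresee is precisely in the borderline subcases where $2+p-4a_{(1)}=0$ or $4a_{(2)}-2-p=0$: there the spacetime coercive control on one of the two components is forfeited, so the absorption of the error $\ep\int_V r^{p-1}|\psi|^2$ must be arranged by splitting against the remaining coercive bulk term only, which requires tracking the numerical constants carefully and choosing $\ep$ small enough that the surviving estimate is not degenerate.
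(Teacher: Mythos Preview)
Your overall strategy---integrate the divergence identity of Lemma \ref{keypoint} over $V$, read off the boundary fluxes, and split into the four sign cases---matches the paper. The gap is in how you dispose of the lower-order errors $r^p\om|\psi_{(2)}|^2$, $r^p\omb|\psi_{(1)}|^2$, and the $e_i(r)-\tfrac{r}{2}\tr\chi_i$ corrections. You propose to bound these by $\ep\int_V r^{p-1}|\psi_{(i)}|^2$ and absorb into the coercive bulk. This fails precisely in the borderline subcases you flag: in case \eqref{casetwo} there is \emph{no} $\psi_{(2)}$ bulk on either side of the estimate, and likewise in case \eqref{casethree} when $4a_{(2)}-2-p=0$. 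Your suggested fix, ``splitting against the remaining coercive bulk term only,'' cannot work: the error $\ep\int_V r^{p-2}|\psi_{(2)}|^2$ involves $\psi_{(2)}$ alone and has no reason to be controlled by a $\psi_{(1)}$ bulk integral.

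The paper absorbs these errors into the \emph{flux} terms instead. Using $|\Gaa|\les\ep r^{-2}$ and foliating $V$ by incoming cones,
\[
\int_V r^p|\Gaa|\,|\psi_{(2)}|^2 \;\les\; \ep\int_{|u_0|}^{\ub}|\ub'|^{-2}\Bigl(\int_{\unc_{\ub'}^V} r^p|\psi_{(2)}|^2\Bigr)d\ub' \;\les\; \ep\sup_{\ub'}\int_{\unc_{\ub'}^V} r^p|\psi_{(2)}|^2,
\]
and symmetrically for $\psi_{(1)}$ by foliating with outgoing cones. One then takes the supremum over $u$ and $\ub$ on the left of the integrated identity and absorbs for $\ep$ small. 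This works uniformly across all four cases, since the flux terms are always present regardless of the signs of the bulk coefficients. The $q=1$ step goes as you sketch, with the commutator errors from Proposition \ref{commutation} handled by the same flux-absorption mechanism.
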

\begin{rk}
In the sequel, For a sum of terms of the form $\Ga\cdot R^{(1)}$, we ignore the terms have same or even better decay. For example, we write
\begin{align*}
    \Gaa\cdot \b^{(1)}+\Gaa\cdot\a^{(1)} =\Gaa\cdot\b^{(1)},
\end{align*}
since $\a^{(1)}$ decays better than $\b^{(1)}$.
\end{rk}
\begin{proof}
Recall that $\om,\,\omb\in\Gaa$. Applying Lemma \ref{dint}, we obtain 
\begin{align*}
    e_3(r)-\frac{r}{2}\trchb=\frac{\ov{\Om\trchb}}{2\Om}r-\frac{r}{2}\trchb=\frac{r}{2\Om}(\ov{\Om\trchb}-\Om\trchb)\in r\Gag,
\end{align*}
and similarly
\begin{align*}
    e_4(r)-\frac{r}{2}\trch\in r\Gag.
\end{align*}
Applying \eqref{estGagba}, we infer
\begin{align}
\begin{split}\label{Gaapsi}
&\int_V r^p|\Gaa|\left(|\psi_{(1)}|^2+|\psi_{(2)}|^2\right)\\
&\les\ep\int_V r^{p-2}\left(|\psi_{(1)}|^2+|\psi_{(2)}|^2\right)\\
&\les\ep \int_{u_0(\ub)}^u\left(|u|^{-2}\int_\cuv r^{p}|\psi_{(1)}|^2\right) du+\ep\int_{|u_0|}^\ub\left(|\ub|^{-2}\int_\ucuv r^{p}|\psi_{(2)}|^2\right)d\ub\\
&\les\ep\sup_u\left(\int_\cuv r^{p}|\psi_{(1)}|^2\right)\int_{u_0(\ub)}^u|u|^{-2}du+\ep\sup_{\ub}\left(\int_\ucuv r^{p}|\psi_{(2)}|^2\right)\int_{|u_0|}^{\ub}|\ub|^{-2}d\ub\\
&\les\ep\sup_u\left(\int_\cuv r^{p}|\psi_{(1)}|^2\right)+\ep\sup_{\ub}\left(\int_\ucuv r^{p}|\psi_{(2)}|^2\right).
\end{split}
\end{align}
Integrating \eqref{div} or \eqref{div2}, reminding that $\trch-\frac{2}{r}\in\Gaa$, $\trchb+\frac{2}{r}\in\Gaa$, and that $\Gag$ decays better than $\Gaa$, we obtain
\begin{align*}
&\int_{\cuv}r^p |\psi_{(1)}|^2+\int_\ucuv r^p|\psi_{(2)}|^2 +\int_{V} (2+p-4a_{(1)})r^{p-1}|\psi_{(1)}|^2+(4a_{(2)}-2-p)r^{p-1}|\psi_{(2)}|^2 \\ 
\les &\int_{\Sigma_0 \cap V} r^p |\psi_{(1)}|^2+r^p|\psi_{(2)}|^2+\int_{V} r^p|\psi_{(1)}||h_{(1)}|+r^p|\psi_{(2)}||h_{(2)}|+r^p|\Gaa||\psi_{(1)}|^2+r^p|\Gaa||\psi_{(2)}|^2. 
\end{align*}
Taking the supremum of $u$ and $\ub$ and applying \eqref{Gaapsi}, we obtain for $\ep$ small enough
\begin{align}
\begin{split}\label{psipsipsi}
&\sup_u\int_{\cuv}r^p |\psi_{(1)}|^2+\sup_\ub\int_\ucuv r^p|\psi_{(2)}|^2\\ +&\int_{V}(2+p-4a_{(1)})r^{p-1}|\psi_{(1)}|^2+(4a_{(2)}-2-p)r^{p-1}|\psi_{(2)}|^2 \\ 
\les &\int_{\Sigma_0 \cap V} r^p |\psi_{(1)}|^2+r^p|\psi_{(2)}|^2+\int_{V}r^p|\psi_{(1)}||h_{(1)}|+r^p|\psi_{(2)}||h_{(2)}|,
\end{split}
\end{align}
which implies \eqref{caseone}--\eqref{casefour} hold in correspond cases when $q=0$. \\ \\
Next, we consider the case $q=1$. Assume that $(\psi_{(1)},\psi_{(2)})$ satisfies \eqref{bianchi1}.\footnote{The case of $(\psi_{(1)},\psi_{(2)})$ satisfies \eqref{bianchi2} is similar.} We multiply \eqref{bianchi1} by $\Om$ and differentiate it by $\dkb$ to obtain
\begin{align*}
&[\dkb,\Om\nab_3]\psi_{(1)}+\Om\nab_3(\dkb\psi_{(1)})+a_{(1)}r\nab(\Om\trchb)\cdot\psi_{(1)}+a_{(1)}\Om\trchb\,\dkb\psi_{(1)}\\
=&-k (r\nab\Om) d_k^*(\psi_{(2)})-k\Om d_k^*(\dkb\psi_{(2)})+(r\nab\Om) h_{(1)}+\Om \dkb h_{(1)},\\
&[\dkb,\Om\nab_4]\psi_{(2)}+\Om\nab_4(\dkb\psi_{(2)})+a_{(2)}r\nab(\Om\trch)\cdot\psi_{(2)}+a_{(2)}\Om\trch\,\dkb\psi_{(2)}\\
=&(r\nab\Om)d_k(\psi_{(1)})+\Om d_k(\dkb\psi_{(1)})+(r\nab\Om)\cdot h_{(2)}+\Om\dkb h_{(2)}.
\end{align*}
Applying Proposition \ref{commutation}, we infer
\begin{align}
\begin{split}\label{psi(1)psi(2)}
\nab_3(\dkb\psi_{(1)})+a_{(1)}\trchb\,\dkb\psi_{(1)}&=-kd_k^*(\dkb\psi_{(2)})+h_{(1)}^{(1)}+(\Gab\cdot\psi_{(1)})^{(1)}+(\Gag\cdot\psi_{(2)})^{(1)},\\
\nab_4(\dkb\psi_{(2)})+a_{(2)}\trch\,\dkb\psi_{(2)}&=d_k^*(\dkb\psi_{(1)})+h_{(2)}^{(1)}+\left(\Gag\cdot(\psi_{(1)},\psi_{(2)})\right)^{(1)}.
\end{split}
\end{align}
Integrating \eqref{psi(1)psi(2)}, and proceeding similarly as in \eqref{Gaapsi}, we deduce
\begin{align}
\begin{split}\label{psipsipsipsi}
&\sup_u\int_{\cuv}r^p |\dkb\psi_{(1)}|^2+\sup_\ub\int_\ucuv r^p|\dkb\psi_{(2)}|^2 \\
+&\int_{V}(2+p-4a_{(1)})r^{p-1}|\dkb\psi_{(1)}|^2+(4a_{(2)}-2-p)r^{p-1}|\dkb\psi_{(2)}|^2\\
\les &\int_{\Sigma_0 \cap V} r^p |\psi_{(1)}^{(1)}|^2+r^p|\psi_{(2)}^{(1)}|^2+\int_{V} r^p|\psi_{(1)}^{(1)}||h_{(1)}^{(1)}|+r^p|\psi_{(2)}^{(1)}||h_{(2)}^{(1)}|.
\end{split}
\end{align}
Combining \eqref{psipsipsi} and \eqref{psipsipsipsi}, this concludes the proof of Proposition \ref{keyintegral}.
\end{proof}
The following lemma allows us to obtain $|u|$--decay of curvature along $\Si_0\cap V(u,\ub_*)$. 
\begin{lem}\label{gainu}
We have the following estimate for $p\leq s$:
\begin{align*}
    \int_{\Si_0\cap V(u,\ub)}r^p\left(|\a^{(1)}|^2+|\b^{(1)}|^2+|(\rhoc^{(1)},\si^{(1)})|^2+|\bb^{(1)}|^2+|\aa^{(1)}|^2\right)\les \frac{\Rk}{|u|^{s-p}}.
\end{align*}
\end{lem}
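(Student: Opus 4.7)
The idea is to exploit the geometric fact that $r \gtrsim |u|$ holds on the integration region $\Sigma_0 \cap V(u,\ub)$. Once this is known, writing $r^p = r^s \cdot r^{p-s}$ and using $p - s \leq 0$ converts the $r^p$ weight into the $r^s$ weight already present in $\Rk$, at the cost of a factor $|u|^{p-s} = |u|^{-(s-p)}$, which is exactly the desired decay.

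The first step is therefore to verify the lower bound $r \gtrsim |u|$ on $\Sigma_0 \cap V(u,\ub)$. By definition $V(u,\ub) = J^-(S(u,\ub)) \cap J^+(\Sigma_0)$, so $\Sigma_0 \cap V(u,\ub) = \Sigma_0 \cap J^-(S(u,\ub))$. This region on $\Sigma_0$ is bounded on the inside by the sphere $C_u \cap \Sigma_0$ and on the outside by $\unc_\ub \cap \Sigma_0$; in particular it is contained in $\Sigma_0 \setminus K$ whenever $u \geq u_0$. Using Lemma \ref{equivalence}, namely $|u_{(0)} - u| \lesssim \ep \log r_{(0)}$ in $\kk_{(0)}$ together with the initial condition $u_{(0)}|_{\Sigma_0 \setminus K} = -w$ and $|r_{(0)} - w| \lesssim \ep_0 \log r_{(0)}$, the area radius of $C_u \cap \Sigma_0$ is comparable to $|u|$, and for $\ep_0$ small enough one has $r \geq c|u|$ throughout $\Sigma_0 \cap V(u,\ub)$ with a uniform constant $c > 0$.

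Since $p \leq s$, the bound $r^{p-s} \leq c^{p-s}\, |u|^{p-s}$ holds pointwise on $\Sigma_0 \cap V(u,\ub)$, so that for any of the curvature quantities $F \in \{\a, \b, (\rhoc,\si), \bb, \aa\}$,
\begin{equation*}
\int_{\Sigma_0 \cap V(u,\ub)} r^p |F^{(1)}|^2 \;\lesssim\; \frac{1}{|u|^{s-p}} \int_{\Sigma_0 \setminus K} r^s |F^{(1)}|^2.
\end{equation*}
It remains to compare $|F^{(1)}|$ with $|\dd^{\leq 1} F|$. Because $\dkb$ acts on each $F$ by some $r$--weighted Hodge operator ($r d_2$ on $\a, \aa$; $r d_1$ on $\b, \bb$; $r d_1^*$ on $(\rhoc,\si)$), each $\dkb F$ is controlled pointwise by $r \nab F$, which is one component of $\dd F$. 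Hence $|F^{(1)}|^2 \lesssim |F|^2 + |r\nab F|^2 \lesssim |\dd^{\leq 1} F|^2$. Summing over all curvature components and applying the definition of $\Rk$ in section \ref{Si0setminusK} gives the claimed bound.

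The proof is elementary; the only nontrivial point is the geometric inequality $r \gtrsim |u|$ on $\Sigma_0 \cap V(u,\ub)$, which ultimately hinges on the smallness of the foliation perturbation and is quantified by Lemma \ref{equivalence}.
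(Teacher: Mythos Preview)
Your proof is correct and follows essentially the same approach as the paper's: both arguments reduce to the geometric fact that $r \gtrsim |u|$ on $\Si_0 \cap V(u,\ub)$ (the paper expresses this via the inclusion $\Si_0 \cap V(u,\ub) \subset \Si_0 \cap \{2|u'| \geq |u|\}$ and $r' \simeq |u'|$ there), then write $r^p = r^s \cdot r^{p-s} \lesssim r^s |u|^{p-s}$ and bound by $\Rk$. Your explicit remark that $|F^{(1)}| \lesssim |\dd^{\le 1}F|$ is a minor clarification the paper leaves implicit.
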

\begin{proof}
For fixed $u$ and $\ub$, we have from Lemma \ref{equivalence}
\begin{align*}
    &\int_{\Si_0\cap V(u,\ub)}r^p\left(|\a^{(1)}|^2+|\b^{(1)}|^2+|(\rhoc^{(1)},\si^{(1)})|^2+|\bb^{(1)}|^2+|\aa^{(1)}|^2\right)\\
    \les&\int_{\Si_0\cap\{2|u'|\geq|u|\}}{r'}^p\left(|\a^{(1)}|^2+|\b^{(1)}|^2+|(\rhoc^{(1)},\si^{(1)})|^2+|\bb^{(1)}|^2+|\aa^{(1)}|^2\right)\\
    \les&\int_{\Si_0\cap\{2|u'|\geq|u|\}}|u'|^{p-s}{r'}^s\left(|\a^{(1)}|^2+|\b^{(1)}|^2+|(\rhoc^{(1)},\si^{(1)})|^2+|\bb^{(1)}|^2+|\aa^{(1)}|^2\right)\\
    \les&|u|^{p-s}\int_{\Si_0\cap\{2|u'|\geq|u|\}}{r'}^s\left(|\a^{(1)}|^2+|\b^{(1)}|^2+|(\rhoc^{(1)},\si^{(1)})|^2+|\bb^{(1)}|^2+|\aa^{(1)}|^2\right)\\
    \les&\frac{\Rk}{|u|^{s-p}}.
\end{align*}
This concludes the proof of Lemma \ref{gainu}.
\end{proof}
\subsection{Estimates for the Bianchi pair \texorpdfstring{$(\alpha,\beta)$}{}}\label{ssec9.1}
\begin{prop}\label{estab}
We have the following estimate:
\begin{equation}
\mr_0[\alpha]^2+\ur_0[\beta]^2+\mr_1[\alpha]^2+\ur_1[\beta]^2\les\Rk+\ep\mr^2.\label{abs}
\end{equation}
\end{prop}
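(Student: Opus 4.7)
The Bianchi pair $(\alpha,\beta)$ fits the first form of Lemma \ref{keypoint} with $k=2$, $a_{(1)}=1/2$, $a_{(2)}=2$, $h_{(1)}=h[\alpha_3]$ and $h_{(2)}=h[\beta_4]$. My plan is to apply Proposition \ref{keyintegral} with the weight $p=s$, which is exactly the weight appearing in $\mr_0[\alpha]$ and $\ur_0[\beta]$. The hypotheses to check are $2+p-4a_{(1)}=s>0$ and $4a_{(2)}-2-p=6-s\geq 0$; both hold for $s\in[4,6]$, placing us in case \eqref{caseone} when $s<6$ and the limiting case \eqref{casetwo} at $s=6$. In either case, for $q=0,1$, Proposition \ref{keyintegral} yields
\begin{align*}
\int_{\cuv}r^s|\alpha^{(q)}|^2+\int_{\ucuv}r^s|\beta^{(q)}|^2 \les \int_{\Si_0\cap V}r^s\bigl(|\alpha^{(q)}|^2+|\beta^{(q)}|^2\bigr)+\int_V r^s\bigl(|\alpha^{(q)}||h_{(1)}^{(q)}|+|\beta^{(q)}||h_{(2)}^{(q)}|\bigr),
\end{align*}
possibly with a nonnegative $\alpha$-bulk term kept on the left for later use.

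The data term on the right-hand side is bounded directly by $\Rk$ via Lemma \ref{gainu} (applied at $p=s$). The nonlinear terms are where the work sits: expanding $h[\alpha_3]=4\omb\alpha-3(\hchb\rho-{}^*\hchb\sigma)+(\zeta-4\ue)\hot\beta$ and $h[\beta_4]=-2\omega\beta+(2\zeta+\ue)\alpha$, each summand has the schematic form $\Gamma\cdot R$ with $\Gamma\in\{\Gag,\Gab,\Gaa\}$ and $R\in\{\alpha,\beta,\rho,\sigma\}$. I would estimate these by Cauchy--Schwarz against the appropriate slicing, for instance
\begin{align*}
\int_V r^s|\alpha||\Gamma\cdot R| \les \Bigl(\sup_u\int_{\cuv}r^s|\alpha|^2\Bigr)^{1/2}\Bigl(\int_{u_0}^u\int_{\cuv}r^s|\Gamma|^2|R|^2\,du'\Bigr)^{1/2},
\end{align*}
and then inserting the pointwise bounds $|\Gag|\les\ep r^{-2}|u|^{-(s-3)/2}$, $|\Gab|\les\ep r^{-1}|u|^{-(s-1)/2}$, $|\Gaa|\les\ep r^{-2}$ from Lemma \ref{decayGagGabGaa}. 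The factor of $\ep$ is always supplied by $\Gamma$, and the remaining weights match the fluxes $\mr_0, \ur_0$ used in $\mr$. Terms containing $\beta$ as the curvature factor are instead sliced along $\unc_\ub$ and paired with $\ur_0[\beta]$, and terms with $(\rho,\sigma)$ are paired with $\ur_0[(\rhoc,\sigma)]$ after inserting $\rho=\rhoc+\ov\rho$ and using the pointwise bound $|\ov\rho|\les \ur_0^S[\ov\rho]/r^3$.

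For the $q=1$ estimate, I would first differentiate the Bianchi system by $\dkb$ and use Proposition \ref{commutation} exactly as in the proof of Proposition \ref{keyintegral}; the resulting inhomogeneities are $(h[\alpha_3], h[\beta_4])^{(1)}$ together with commutator terms of schematic form $(\Gab\cdot\alpha)^{(1)}+(\Gag\cdot\beta)^{(1)}$, which are controlled by the same Cauchy--Schwarz scheme, now against $\mr_1,\ur_1$.

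The main obstacle is bookkeeping at the borderline case $s=6$: the $\beta$-bulk coefficient $4a_{(2)}-2-p$ vanishes in \eqref{casetwo}, so every nonlinear term carrying $|\beta|$ must draw its decay from $\Gamma$ rather than from a spacetime bulk. The $s$-tailored weights in Definition \ref{gammag} are exactly what makes this balance close. A parallel book-keeping concern is that the $|u|$-weights appearing in $\ur_0[(\rhoc,\sigma)]$, $\ur_0[\unb]$, $\ur_0[\una]$ must absorb the $|u|^{-(s-3)/2}$ or $|u|^{-(s-1)/2}$ supplied by $\Gag$ and $\Gab$; a direct integration in $u$ along $\cuv$ (respectively $\ub$ along $\ucuv$) then leaves an integrable tail, producing the claimed $\ep\mr^2$ on the right.
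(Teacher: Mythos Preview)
Your proposal is correct and follows essentially the same route as the paper: apply Proposition \ref{keyintegral} to the pair $(\alpha,\beta)$ with $p=s$, control the data term by Lemma \ref{gainu}, and bound the nonlinear terms $h[\alpha_3]^{(q)}$, $h[\beta_4]^{(q)}$ by Cauchy--Schwarz sliced along $C_u$ or $\unc_\ub$ using the pointwise and $L^4(S)$ bounds of Lemma \ref{decayGagGabGaa}. The paper passes immediately to the schematic forms $h[\alpha_3]=4\omb\alpha+\Gag\cdot\rho$ and $h[\beta_4]=(\Gaa,\Gag)\cdot\beta$, and it also retains the bulk term $\int_V r^{s-1}|\alpha^{(1)}|^2$ on the left (your ``kept for later use'' remark), which is used in Proposition \ref{esta4}; otherwise the arguments coincide.
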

\begin{proof}
We recall the following equations of Corollary \ref{prop7.6}:
\begin{align}
\begin{split}\label{Bianchiequationab}
    \nabla_4\beta+2\tr\chi\beta&=\sld_2\alpha+h[\beta_4],\\
\nabla_3\alpha+\frac{1}{2}\tr\unchi\alpha&=-2\sld_2^*\beta+h[\alpha_3],    
\end{split}
\end{align}
which correspond to $\psi_{(1)}=\a$, $\psi_{(2)}=\b$, $a_{(1)}=\frac{1}{2}$, $a_{(2)}=2$, $h_{(1)}=h[\a_3]$, $h_{(2)}=h[\b_4]$ and $k=2$ in \eqref{bianchi1}. Taking $p=s$ and recalling that $s\in[4,6]$, we have
\begin{equation}\label{pless6}
    2+p-4a_{(1)}=s>0,\qquad 4a_{(2)}-2-p=6-s\geq 0.
\end{equation}
We apply \eqref{caseone} to obtain
\begin{align*}
    &\int_{\cuv} r^s |\alpha^{(1)}|^2 + \int_\ucuv r^s |\beta^{(1)}|^2 + \int_{V}r^{s-1} |\alpha^{(1)}|^2 \\
    \les&\int_{\Si_0 \cap V} r^s |\a^{(1)}|^2 + r^s |\b^{(1)}|^2+\int_{V} r^s|\a^{(1)}||h[\a_3]^{(1)}|+r^s|\b^{(1)}||h[\beta_4]^{(1)}|.
\end{align*}
It remains to estimate $|\a^{(1)}||h[\alpha_3]^{(1)}|$ and $|\b^{(1)}||h[\beta_4]^{(1)}|$. Recalling from Corollary \ref{prop7.6} that\footnote{We used the fact that $\si$, $\b$ decay better than $\rho$.}
\begin{align}\label{ha3hb4}
    h[\a_3]=4\omb\a+\Gag\cdot\rho,\qquad h[\b_4]=(\Gaa,\Gag)\cdot\b,
\end{align}
we have\footnote{We ignore the terms which decay better.}
\begin{align*}
|\a^{(1)}||h[\a_3]^{(1)}|+|\b^{(1)}||h[\b_4]^{(1)}|\les|\Gab^{(1)}||\a^{(1)}||\a|+|\Gaa||\b^{(1)}|^2+|\Gag||\a^{(1)}||\rho^{(1)}|+|\Gag^{(1)}||\a||\rho|.
\end{align*}
Applying \eqref{estGagba} to estimate $\Gab^{(1)}$, we have
\begin{align}
\begin{split}\label{Gabaa}
\int_V r^s |\Gab^{(1)}||\a^{(1)}||\a|&\les\int_{u_0(\ub)}^{u}\left(\int_\cuv r^s|\a^{(1)}|^2\right)^{\frac{1}{2}}\left(\int_\cuv r^s|\Gab^{(1)}|^2|\a|^2\right)^{\frac{1}{2}} du\\
&\les\mr\int_{u_0(\ub)}^{u}\left(\int_{|u|}^\ub\left(\int_{S(u,\ub)}r^s|\Gab^{(1)}|^2|\a|^2\right)d\ub\right)^{\frac{1}{2}}du\\
&\les\mr\int_{u_0(\ub)}^{u}\left(\int_{|u|}^\ub r^{-3}|u|^{1-s}|r^{\frac{1}{2}}|u|^{\frac{s-1}{2}}\Gab^{(1)}|^2_{4,S}|r^{\frac{s+2}{2}}\a|^2_{4,S}d\ub\right)^{\frac{1}{2}}du\\
&\les\ep\mr^2\int_{u_0(\ub)}^{u}\left(\int_{|u|}^\ub r^{-3}|u|^{1-s} d\ub\right)^{\frac{1}{2}}du\\
&\les\ep\mr^2\int_{u_0(\ub)}^{u}|u|^{-1}|u|^{\frac{1-s}{2}} du\les\ep\mr^2.    
\end{split}
\end{align}
Applying \eqref{estGagba} to estimate $\Gaa$, we infer
\begin{align}
\begin{split}\label{Gagab2}
\int_V r^s|\Gaa||\b^{(1)}|^2&\les\ep\int_V r^{s-2}|\b^{(1)}|^2\les\ep\int_{|u_0|}^\ub |\ub|^{-2}d\ub\left(\int_\ucuv r^s|\b^{(1)}|^2\right)\les \ep\mr^2.
\end{split}
\end{align}
Applying \eqref{estGagba} to estimate $\Gag$, we infer
\begin{align}
\begin{split}\label{Gagrhoa}
\int_V r^s|\Gag||\rho^{(1)}||\a^{(1)}|&\les\int_{u_0(\ub)}^{u}\frac{\ep}{|u|^{\frac{s-3}{2}}}\left(\int_\cuv r^s|\a^{(1)}|^2\right)^{\frac{1}{2}}\left(\int_\cuv r^{s-4}|\rho^{(1)}|^2\right)^{\frac{1}{2}} du\\
&\les\int_{u_0(\ub)}^{u}\frac{\ep\mr}{|u|^{\frac{s-3}{2}}}\left(\int_{|u|}^\ub r^{s-8}  d\ub\int_S |r^2\rho^{(1)}|^2\right)^{\frac{1}{2}} du\\
&\les\int_{u_0(\ub)}^{u}\frac{\ep\mr^2}{|u|^{\frac{s-3}{2}}}\left(\int_{|u|}^\ub r^{s-8}  d\ub\right)^{\frac{1}{2}} du\\
&\les\int_{u_0(\ub)}^{u}\frac{\ep\mr^2}{|u|^{\frac{s-3}{2}}}\frac{1}{|u|^{\frac{7-s}{2}}} du\les\ep\mr^2.
\end{split}
\end{align}
We also have
\begin{align}
\begin{split}\label{Gagrhoa'}
\int_V r^s|\Gag^{(1)}||\rho||\a|&\les\int_{u_0(\ub)}^{u}\left(\int_\cuv r^s|\a|^2\right)^{\frac{1}{2}}\left(\int_\cuv r^s|\Gag^{(1)}|^2|\rho|^2\right)^{\frac{1}{2}} du\\
&\les\mr\int_{u_0(\ub)}^{u}\left(\int_{|u|}^\ub\left(\int_{S(u,\ub)}r^s|\Gag^{(1)}|^2|\rho|^2 \right)d\ub\right)^{\frac{1}{2}}du\\
&\les\mr\int_{u_0(\ub)}^{u}\left(\int_{|u|}^\ub r^{s-8}|u|^{3-s}|r^{\frac{3}{2}}|u|^{\frac{s-3}{2}}\Gag^{(1)}|^2_{4,S}|r^{\frac{5}{2}}\rho|^2_{4,S}d\ub\right)^{\frac{1}{2}}du\\
&\les\ep\mr^2\int_{u_0(\ub)}^{u}\left(\int_{|u|}^\ub r^{s-8}|u|^{3-s} d\ub\right)^{\frac{1}{2}}du\\
&\les\ep\mr^2\int_{u_0(\ub)}^{u}|u|^{\frac{s-7}{2}}|u|^{\frac{3-s}{2}} du\les\ep\mr^2.    
\end{split}
\end{align}
Thus, we obtain
\begin{equation}\label{bulkterm}
\int_{\cuv} r^s |\a^{(1)}|^2 + \int_\ucuv r^s |\b^{(1)}|^2 + \int_{V}r^{s-1} |\a^{(1)}|^2\les \Rk+\ep\mr^2.
\end{equation}
This concludes the proof of Proposition \ref{estab}.
\end{proof}
\subsection{Estimates for the Bianchi pair \texorpdfstring{$(\b,(\rhoc,-\si))$}{}}\label{ssec9.2}
\begin{prop}\label{estbr}
We have the following estimate:
\begin{equation}\label{eqbr}
\mr_0[\b]^2+\ur_0[(\rhoc,\si)]^2+\mr_1[\b]^2+\ur_1[(\chr,\si)]^2\les\Rk+\ep\mr^2.
\end{equation}
\end{prop}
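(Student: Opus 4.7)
The plan is to identify $(\b,(\chr,-\si))$ as a Bianchi pair of the form \eqref{bianchi1} with $k=1$, $a_{(1)}=1$, $a_{(2)}=3/2$, and forcings $h_{(1)}=h[\b_3]$, $h_{(2)}=(h[\chr_4],-h[\si_4])$ extracted from Corollary \ref{prop7.6}, and then to invoke Proposition \ref{keyintegral} with the weight $p=s$. With these parameters, one checks $2+s-4a_{(1)}=s-2>0$ and $4a_{(2)}-2-s=4-s\le 0$ for $s\in[4,6]$, so we lie in case \eqref{casetwo} when $s=4$ and in case \eqref{casefour} when $s>4$. For $q=0,1$, this will produce on the left the fluxes $\int_{\cuv}r^s|\b^{(q)}|^2$ and $\int_{\ucuv}r^s|(\chr,\si)^{(q)}|^2$, which, since $r\gtrsim|u|$ in $\kk$ by \eqref{equiuubr}, dominate the norms $\mr_{0,1}[\b]^2+\ur_{0,1}[(\chr,\si)]^2$ to be bounded in \eqref{eqbr}.

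Next, I would dispatch the initial data integral $\int_{\Si_0\cap V}r^s(|\b^{(q)}|^2+|(\chr,\si)^{(q)}|^2)$ by $\Rk$ using the definition in section \ref{Si0setminusK}, and then reproduce the nonlinear analysis of Proposition \ref{estab} to bound the forcing integrals by $\ep\mr^2$. Reading off from Corollary \ref{prop7.6}, the relevant schematic structures are $h[\b_3]=\Gaa\cdot\b+\Gag\cdot(\bb,\rho,\si)$ and $h[(\chr,\si)_4]=\Gag\cdot((\chr,\si),\b)+\Gab\cdot\a$, and at order $q=1$ the commutator identities of Proposition \ref{commutation} only generate further $\Ga$ factors acting on curvature. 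Slicing $V$ by $C_{u'}$ or $\Cb_{\ub'}$, applying Cauchy–Schwarz on each slice, and invoking the pointwise and $L^p$ Ricci bounds of Lemma \ref{decayGagGabGaa} in combination with the bootstrap on curvature, one bounds each contribution by $\ep\mr^2$ via computations completely parallel to \eqref{Gabaa}--\eqref{Gagrhoa'}.

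The main obstacle is the additional bulk term $\int_V r^{s-1}|(\chr,\si)^{(q)}|^2$ on the right-hand side that appears whenever $s>4$ in case \eqref{casefour}. Being of principal order, it can be absorbed neither by $\ep\mr^2$ nor by the favorable left-hand bulk $\int_V r^{s-1}|\b^{(q)}|^2$. The natural remedy is to couple with the adjacent Bianchi pair $((\chr,-\si),\bb)$, which fits the form \eqref{bianchi2} with $a_{(1)}=3/2$, $a_{(2)}=1$: applying Proposition \ref{keyintegral} with $p=s$ to that pair leaves the coefficient $k(2a_{(1)}-1-p/2)=(4-s)/2\le 0$ multiplying $r^p\trchb|(\chr,\si)|^2$, and since $\trchb<0$ the resulting contribution is nonnegative, producing a positive bulk $\sim(s-4)\int_V r^{s-1}|(\chr,\si)^{(q)}|^2$ on the left. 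Summing the two estimates, this favorable bulk absorbs the problematic one, yielding \eqref{eqbr} together with its analog for the next pair simultaneously. For $s=4$, case \eqref{casetwo} applies and the estimate closes on its own with no extra bulk to worry about.
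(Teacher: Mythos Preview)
Your coupling trick does not close: when you sum the $p=s$ identity for $(\b,(\chr,-\si))$ with the one for $((\chr,\si),\bb)$, the $(\chr,\si)$ bulks indeed cancel to leading order (their sum carries $(\trch+\trchb)\in\Gaa$), but the second identity brings in a new bulk $(2a_{(2)}-1-\tfrac{s}{2})r^s\trch|\bb|^2\approx(2-s)r^{s-1}|\bb|^2$ on the left. For $s>2$ this is negative and must be moved to the right as $(s-2)\int_V r^{s-1}|\bb^{(q)}|^2$, which is principal order and is \emph{not} controllable by the available norms. Indeed, using $r\lesssim\ub'$ on $\Cb_{\ub'}$ together with $\ur_q[\bb]$ one only gets
\[
\int_V r^{s-1}|\bb^{(q)}|^2\lesssim\int_{|u|}^{\ub}(\ub')^{s-3}\Big(\int_{\Cb_{\ub'}\cap V}r^2|\bb^{(q)}|^2\Big)d\ub'\lesssim \mr^2\,|u|^{2-s}\ub^{s-2},
\]
which blows up as $\ub\to\infty$ and cannot be absorbed into $\Rk+\ep\mr^2$. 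Pushing the coupling one step further to $(\bb,\aa)$ with $p=s$ does not help either: the $\bb$ bulk coming from that pair has coefficient $2(3-\tfrac{s}{2})\trchb\approx(2s-12)r^{-1}\le 0$ for $s\le 6$, so it adds yet another bad term instead of compensating. In short, the cascade you propose never terminates for $s\in(4,6]$.

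The paper avoids this by taking $p=4$ rather than $p=s$. Then $4a_{(2)}-2-p=0$ and one lands exactly in the borderline case \eqref{casetwo}, with no bulk on $(\chr,\si)$ at all and hence no need to couple. The price is that the initial data term is $\int_{\Si_0\cap V}r^4(|\b^{(q)}|^2+|(\chr,\si)^{(q)}|^2)$ rather than the $r^s$ version; this is where Lemma \ref{gainu} enters, converting the extra $r^{4-s}$ into $|u|^{4-s}$ on $\Si_0\cap V$ and producing precisely the weights in $\mr_{q}[\b]$ and $\ur_q[(\chr,\si)]$. The nonlinear forcing integrals are then estimated with $r^4$ weights and the target bound is $|u|^{4-s}(\Rk+\ep\mr^2)$, not the $O(1)$ bound you were aiming for. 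Your approach effectively tries to prove $\int_{\cuv}r^s|\b^{(q)}|^2\lesssim\Rk+\ep\mr^2$, which is strictly stronger than \eqref{eqbr} and, as the obstruction above shows, is not accessible by the pairwise $r^p$ method at this level of the hierarchy.
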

\begin{proof}
We recall the following Bianchi equations
\begin{align*}
\nabla_4\widecheck\rho+\frac{3}{2}\tr\chi\widecheck\rho&=\sdiv\beta+h[\chr_4],\\
\nabla_4\sigma+\frac{3}{2}\tr\chi\sigma&=-\sdiv{^*\beta}+h[\sigma_4],\\
\nabla_3\beta+\tr\unchi\beta&=\nabla\widecheck\rho+{^*\nabla}\sigma+h[\beta_3],
\end{align*}
which can be written in the form
\begin{align}
\begin{split}
\nabla_4(\chr,-\sigma)+\frac{3}{2}\tr\chi(\chr,-\sigma)&=\sld_1\beta+h[\chr_4,\sigma_4],\\
\nabla_3\beta+\tr\unchi\beta&=-\sld_1^*(\chr,-\sigma)+h[\beta_3],
\end{split}
\end{align}
where $h[\chr_4,\sigma_4]=(h[\chr_4],h[\sigma_4])$. Applying \eqref{casetwo} with $\psi_{(1)}=\b$, $\psi_{(2)}=(\rhoc,\si)$, $a_{(1)}=1$, $a_{(2)}=\frac{3}{2}$, $h_{(1)}=h[\b_3]$, $h_{(2)}=h[\rhoc_4,\si_4]$ and $p=4$, we obtain from Lemma \ref{gainu}
\begin{align}
\begin{split}\label{brint}
&\int_{\cuv} r^4|\b^{(1)}|^2+\int_{\ucuv}r^{4}|(\rhoc^{(1)},\si^{(1)})|^2 \\
\les &\int_{\Sigma_0\cap V}\left(r^4|\beta^{(1)}|^2+r^{4}|(\rhoc^{(1)},\si^{(1)})|^2\right)\\
&+\int_V r^{4}|\beta^{(1)}\cdot h[\beta_3]^{(1)}|+\int_V r^{4} |(\chr^{(1)},\si^{(1)})\cdot h[\rhoc_4,\si_4]^{(1)}|\\
\les &|u|^{4-s}\Rk+\int_V r^{4}|\b^{(1)}\cdot h[\b_3]^{(1)}|+\int_V r^{4} |(\chr^{(1)},\si^{(1)})\cdot h[\rhoc_4,\si_4]^{(1)}|.    
\end{split}
\end{align}
Recall from Corollary \ref{prop7.6} that
\begin{equation*}
    h[\b_3]=\Gaa\cdot\b+\Gag\cdot\bb,\qquad h[\rhoc_4,\si_4]=\Gab\cdot\a+\Gag\cdot\rhoc.
\end{equation*}
Hence, we have
\begin{equation*}
    h[\b_3]^{(1)}=\Gaa\cdot\b^{(1)}+\Gag\cdot\bb^{(1)}+\Gag^{(1)}\cdot\bb,\qquad h[\rhoc_4,\si_4]^{(1)}=\Gab\cdot\a^{(1)}+\Gag\cdot\rhoc^{(1)}+\Gab^{(1)}\cdot\a+\Gag^{(1)}\cdot\rhoc.
\end{equation*}
Hence, we infer
\begin{align}\label{brhocsi}
\begin{split}
&\int_V r^{4}|\b^{(1)}||h[\b_3]^{(1)}|+r^4|(\rhoc,\si)^{(1)}||h[\rhoc_4,\si_4]^{(1)}|\\
\les&\int_V r^4|\Gaa||\b^{(1)}|^2+r^4|\Gag||\b^{(1)}||\bb^{(1)}|+r^4|\Gag^{(1)}||\b||\bb|.
\end{split}
\end{align}
For the first term in \eqref{brhocsi} we have
\begin{align*}
    \int_V r^4|\Gaa||\b^{(1)}|^2&\les\ep\int_{u_0(\ub)}^{u} |u|^{-2}du\left(\int_\cuv r^4|\b^{(1)}|^2\right)\les \ep \mr^2 |u|^{4-s}.
\end{align*}
For the second term in \eqref{brhocsi}, we infer
\begin{align*}
    \int_V r^4|\Gag||\b^{(1)}||\bb^{(1)}|&\les\ep\int_{u_0(\ub)}^u \frac{1}{|u|^{\frac{s-3}{2}}}du\left(\int_\cuv r^4|\b^{(1)}|^2\right)^{\frac{1}{2}}\left(\int_\cuv |\bb^{(1)}|^2\right)^{\frac{1}{2}}\\
    &\les\ep\int_{u_0(\ub)}^u\frac{du}{|u|^{\frac{s-3}{2}}}\frac{\mr}{|u|^{\frac{s-4}{2}}}\frac{\mr}{|u|^{\frac{s}{2}}}\les\frac{\ep\mr^2}{|u|^{s-4}}.
\end{align*}
For the third term in \eqref{brhocsi}, we have
\begin{align*}
\int_V r^4|\Gag^{(1)}||\b||\bb|&\les\int_V r^4\frac{\ep}{|u|^\frac{s-1}{2}}|\b|^2 +\int_V r^4 |u|^\frac{s-1}{2}\ep^{-1}|\Gag^{(1)}|^2|\bb|^2\\
&\les\ep\int_{u_0(\ub)}^u \frac{du}{|u|^{\frac{s-1}{2}}}\int_\cuv r^4|\b|^2 +\int_{|u|}^\ub d\ub \int_\ucuv r^4|u|^\frac{s-1}{2}\ep^{-1}|\Gag^{(1)}|^2|\bb|^2\\
&\les\ep\int_{u_0(\ub)}^u \frac{du}{|u|^{\frac{s-1}{2}}}\frac{\mr^2}{|u|^{s-4}} +\int_{|u|}^\ub d\ub \int_{u_0(\ub)}^u  r^{-2}|u|^\frac{s-1}{2}\ep^{-1}|r^\frac{3}{2}\Gag^{(1)}|^2_{4,S}|r^\frac{3}{2}\bb|^2_{4,S}du\\
&\les \frac{\ep\mr^2}{|u|^\frac{s-3}{2}|u|^{s-4}}+\int_{|u|}^\ub d\ub \int_{u_0(\ub)}^u  r^{-2}|u|^\frac{s-1}{2}\frac{\ep\mr^2}{|u|^{2s-4}}du\\
&\les \frac{\ep\mr^2}{|u|^{s-4}}+\frac{\ep\mr^2}{|u|^{\frac{3}{2}s-\frac{5}{2}}}\int_{|u|}^\ub \frac{d\ub}{r^2} \\
&\les \frac{\ep\mr^2}{|u|^{s-4}}.
\end{align*}
Injecting all the estimates above into \eqref{brhocsi}, we deduce
\begin{equation*}
\int_V r^{4}|\b^{(1)}||h[\b_3]^{(1)}|+r^4|(\rhoc^{(1)},\si^{(1)})||h[\rhoc_4,\si_4]^{(1)}|\les\ep\mr^2|u|^{4-s}.
\end{equation*}
Combining with \eqref{brint}, we obtain \eqref{eqbr}. This concludes the proof of Proposition \ref{estbr}.
\end{proof}
\subsection{Estimates for the Bianchi pair \texorpdfstring{$((\chr,\sigma),\unb)$}{}}\label{ssec9.3}
\begin{prop}\label{estrb}
We have the following estimates:
\begin{equation}
\mr_0[(\chr,\sigma)]^2+\ur_0[\unb]^2+\mr_1[(\chr,\sigma)]^2+\ur_1[\unb]^2\lesssim \Rk+\epsilon\mr^{2}.\label{rb00}
\end{equation}
\end{prop}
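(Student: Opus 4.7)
The Bianchi pair $((\chr,\si),\bb)$ fits the framework of Lemma \ref{keypoint}(2) (i.e.\ \eqref{bianchi2}) with $k=1$, $a_{(1)}=\tfrac32$, $a_{(2)}=1$; signs are irrelevant for the $|\cdot|^2$ identity. Choosing the weight $p=2$, one finds $2+p-4a_{(1)}=-2\leq 0$ and $4a_{(2)}-2-p=0\geq 0$, so Proposition \ref{keyintegral} applies in case \eqref{casethree} and yields, for $q=0,1$ and all $(u,\ub)\in\kk$,
\begin{equation*}
\int_\cuv r^2\bigl|(\chr,\si)^{(q)}\bigr|^2+\int_\ucuv r^2|\bb^{(q)}|^2\les \int_{\Si_0\cap V}r^2\bigl(|(\chr,\si)^{(q)}|^2+|\bb^{(q)}|^2\bigr)+\int_V r|(\chr,\si)^{(q)}|^2+\mathcal{E},
\end{equation*}
where $\mathcal{E}:=\int_V r^2\bigl(|(\chr,\si)^{(q)}||h[\chr_3,\si_3]^{(q)}|+|\bb^{(q)}||h[\bb_4]^{(q)}|\bigr)$ collects the nonlinear terms. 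The goal is to bound the right-hand side by $|u|^{2-s}(\Rk+\ep\mr^2)$; multiplying by $|u|^{s-2}$ then produces \eqref{rb00}.

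\textbf{Initial data and bulk term.} Lemma \ref{gainu} applied with $p=2\leq s$ directly gives the initial-data estimate $\les |u|^{2-s}\Rk$. The bulk term $\int_V r|(\chr,\si)^{(q)}|^2$ has the ``wrong sign'' in \eqref{casethree} and cannot be absorbed into the left-hand side; instead it must be fed in from the already-established Proposition \ref{estbr}, which controls $\ur_q[(\chr,\si)]^2$ and hence $\int_\ucuv r^4|(\chr,\si)^{(q)}|^2\les |u|^{4-s}(\Rk+\ep\mr^2)$. I would slice $\int_V=\int_{|u_0|}^\ub d\ub'\int_{\Cb_{\ub'}\cap V(u,\ub')}$, exploit the pointwise lower bound $r\geq (\ub'+|u|)/2$ on $\Cb_{\ub'}\cap V(u,\ub')$ (which follows from $|u'|\geq|u|$ on that slice together with $r\simeq(\ub'+|u'|)/2$ via Lemma \ref{equivalence}), and write $r=r^{-3}\cdot r^4$ with $r^{-3}\les(\ub'+|u|)^{-3}$ to obtain
\begin{equation*}
\int_V r|(\chr,\si)^{(q)}|^2\les |u|^{4-s}(\Rk+\ep\mr^2)\int_{|u_0|}^\ub(\ub'+|u|)^{-3}d\ub'\les |u|^{4-s}(\Rk+\ep\mr^2)\cdot|u|^{-2}=|u|^{2-s}(\Rk+\ep\mr^2).
\end{equation*}

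\textbf{Nonlinear errors and conclusion.} Inspecting Corollary \ref{prop7.6}, one has schematically $h[\chr_3,\si_3]\in\Gag\c(\aa,\bb)$ and $h[\bb_4]\in\Gaa\c\bb+\Gab\c\a+\Gag\c(\rho,\si)$; the $q=1$ versions carry in addition commutator remainders of the form $(\Gag\c\psi)^{(1)}$ and $(\Gab\c\psi)^{(1)}$ coming from Proposition \ref{commutation}. Each contribution to $\mathcal{E}$ is dominated by $\ep\mr^2|u|^{2-s}$ via Cauchy--Schwarz, the pointwise decay \eqref{estGagba} of $\Gag,\Gab,\Gaa$, the Sobolev control \eqref{estnabGagb} for the top-order factors $\Gag^{(1)},\Gab^{(1)}$, and the $\mr$-fluxes, following exactly the same template as the weighted H\"older computations \eqref{Gabaa}--\eqref{Gagrhoa'} in the proof of Proposition \ref{estab}. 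The main obstacle is the bulk-term analysis: the \emph{sharp} lower bound $r\geq (\ub'+|u|)/2$ is crucial, since $\int(\ub'+|u|)^{-3}d\ub'\simeq|u|^{-2}$ supplies exactly the factor needed to upgrade the available $|u|^{4-s}$ into the target $|u|^{2-s}$---using only the cruder $r\gtrsim\ub'$ would render the $\ub'$-integral bounded by a constant and lose this gain. Assembling these estimates and taking the supremum over $(u,\ub)\in\kk$ yields \eqref{rb00}.
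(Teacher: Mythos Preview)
Your proposal is correct and follows the same route as the paper: apply case \eqref{casethree} of Proposition \ref{keyintegral} with $p=2$, use Lemma \ref{gainu} for the data term, feed the bulk term $\int_V r|(\chr,\si)^{(q)}|^2$ from the already-established Proposition \ref{estbr}, and treat the nonlinear errors by the same weighted H\"older template as in Proposition \ref{estab}. One small correction on the bulk term: the lower limit of the $\ub'$--integral over $V(u,\ub)$ is actually $\simeq|u|$, not $|u_0|$, because $\Cb_{\ub'}$ meets $V(u,\ub)$ only when $\ub'\gtrsim|u|$ (this is where $C_u$ hits $\Si_0$); with that correct limit the paper simply uses $r\gtrsim\ub'$ and computes $\int_{|u|}^{\ub}|\ub'|^{-3}d\ub'\les|u|^{-2}$, so your ``sharp'' lower bound $r\gtrsim(\ub'+|u|)/2$ is valid but not crucial as you claim.
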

\begin{proof}
We recall the following Bianchi equations:
\begin{align*}
\nabla_4\unb+\tr\chi\,\bb&=\sld_1^*(\rhoc,\si)+h[\bb_4],\\
\nab_3(\rhoc,\sigma)+\frac{3}{2}\trchb(\rhoc,\sigma)&=-\sld_1\unb+h[\chr_3,\sigma_3],
\end{align*}
where
\begin{align}
\begin{split}\label{rhosibb}
    h[\rhoc_3,\si_3]:=(h[\rhoc_3],h[\si_3])=\Gag\cdot\aa,\qquad h[\bb_4]=\Gaa\cdot\bb. 
\end{split}
\end{align}
We apply \eqref{casethree} with $\psi_{(1)}=(\rhoc,\si)$, $\psi_{(2)}=\bb$, $a_{(1)}=1$, $a_{(2)}=\frac{3}{2}$, $h_{(1)}=h[\rhoc_3,\si_3]$, $h_{(2)}=h[\bb_4]$ and $p=2$, we obtain from Lemma \ref{gainu} and Proposition \ref{estbr} that
\begin{align}
\begin{split}\label{rbint}
&\int_{\cuv} r^2|(\rhoc^{(1)},\si^{(1)})|^2+\int_{\ucuv}r^2|\bb^{(1)}|^2 \\
\les &\int_{\Si_0\cap V}\left(r^2(\rhoc^{(1)},\si^{(1)})|^2+r^2|\bb^{(1)}|^2\right)+\int_V r|(\rhoc^{(1)},\si^{(1)})|^2\\
&+\int_V r^{2} |(\chr^{(1)},\si^{(1)})\cdot h[\rhoc_3,\si_3]^{(1)}|+r^2|\bb^{(1)}\cdot h[\bb_4]^{(1)}|\\
\les &|u|^{2-s}\Rk+\int_{|u|}^{\ub}|\ub|^{-3}d\ub\int_\ucuv  r^4|(\rhoc^{(1)},\si^{(1)})|^2\\
&+\int_V r^{2} |(\chr^{(1)},\si^{(1)})\cdot h[\rhoc_3,\si_3]^{(1)}|+r^2|\bb^{(1)}\cdot h[\bb_4]^{(1)}|\\
\les &|u|^{2-s}\Rk+(\Rk+\ep\mr^2)\int_{|u|}^{\ub}|u|^{4-s}|\ub|^{-3}d\ub\\
&+\int_V r^{2} |(\chr^{(1)},\si^{(1)})\cdot h[\rhoc_3,\si_3]^{(1)}|+r^2|\bb^{(1)}\cdot h[\bb_4]^{(1)}|\\
\les &|u|^{2-s}(\Rk+\ep\mr^2)+\int_V r^{2} |(\chr^{(1)},\si^{(1)})\cdot h[\rhoc_3,\si_3]^{(1)}|+r^2|\bb^{(1)}\cdot h[\bb_4]^{(1)}|.    
\end{split}
\end{align}
Applying \eqref{rhosibb} we obtain
\begin{align}\label{rhosibbh}
\begin{split}
    &\int_V r^{2} |(\chr^{(1)},\si^{(1)})\cdot h[\rhoc_3,\si_3]^{(1)}|+r^2|\bb^{(1)}\cdot h[\bb_4]^{(1)}|\\
    \les&\int_V r^2|\Gag||\aa^{(1)}||\rhoc^{(1)}|+r^2|\Gaa||\bb^{(1)}|^2+|\Gag^{(1)}||\bb|^2.
\end{split}
\end{align}
For the first term in \eqref{rhosibbh}, we infer
\begin{align*}
    \int_V r^2 |\Gag||\aa^{(1)}||\rhoc^{(1)}|&\les\ep\int_{|u|}^\ub d\ub \int_\ucuv |\aa^{(1)}||\rhoc^{(1)}|\\
    &\les\ep\int_{|u|}^\ub \frac{d\ub}{|\ub|^2} \left(\int_\ucuv |\aa^{(1)}|^2\right)^\frac{1}{2}\left(\int_\ucuv r^4|\rhoc^{(1)}|^2\right)^\frac{1}{2}\\
    &\les\ep\int_{|u|}^\ub \frac{d\ub}{|\ub|^2}\frac{\mr}{|u|^{\frac{s}{2}}}\frac{\mr}{|u|^{\frac{s-4}{2}}}\les \ep\mr^2|u|^{2-s}.
\end{align*}
For the second term in \eqref{rhosibbh}, we have
\begin{align*}
\int_V r^2|\Gaa||\bb^{(1)}|^2\les\ep\int_{|u|}^\ub |\ub|^{-2}d\ub\int_\ucuv\ r^2|\bb^{(1)}|^2\les\ep\mr^2 |u|^{2-s}\int_{|u|}^\ub |\ub|^{-2}d\ub\les\ep\mr^2 |u|^{2-s}.
\end{align*}
For the third term in \eqref{rhosibbh}, we infer
\begin{align*}
    \int_V r^2|r\nab\Gag||\bb|^2&\les\int_{|u|}^\ub d\ub\left(\int_\ucuv r^2|\bb|^2 \right)^\frac{1}{2}\left(\int_\ucuv r^2|\Gag^{(1)}|^2|\bb|^2 \right)^\frac{1}{2}\\
    &\les\int_{|u|}^\ub d\ub\frac{\mr}{|u|^{\frac{s-2}{2}}}\left(\int_{u_0(\ub)}^u \frac{du}{|\ub|^4}|r^{\frac{3}{2}}\Gag^{(1)}|^2_{4,S}|r^{\frac{3}{2}}\bb|^2_{4,S} \right)^\frac{1}{2}\\
    &\les\mr\int_{|u|}^\ub d\ub\frac{1}{|u|^{\frac{s-2}{2}}}\left(\int_{u_0(\ub)}^u \frac{du}{|\ub|^4}\frac{\ep^2}{|u|^{s-3}}\frac{\mr^2}{|u|^{s-1}}\right)^\frac{1}{2}\\
    &\les\ep\mr^2\int_{|u|}^\ub d\ub\frac{1}{|u|^{\frac{s-2}{2}}}\frac{1}{|\ub|^2}\frac{1}{|u|^{s-\frac{5}{2}}}\les \ep\mr^2|u|^{2-s}.
\end{align*}
Thus, we obtain
\begin{align*}
\int_V r^{2} |(\chr^{(1)},\si^{(1)})\cdot h[\rhoc_3,\si_3]^{(1)}|+r^2|\bb^{(1)}\cdot h[\bb_4]^{(1)}|\les \ep\mr^2|u|^{2-s}.
\end{align*}
Combining with \eqref{rbint}, we obtain \eqref{rb00}. This concludes the proof of Proposition \ref{estrb}.
\end{proof}
\subsection{Estimates for the Bianchi pair \texorpdfstring{$(\unb,\una)$}{}}\label{ssec9.4}
\begin{prop}\label{estba}
We have the following estimates:
\begin{equation}\label{bbaa00}
    \mr_0[\unb]^2+\ur_0[\una]^2+\mr_1[\unb]^2+\ur_1[\una]^2\les\Rk+\ep\mr^2.
\end{equation}
\end{prop}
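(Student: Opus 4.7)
The plan mirrors the strategy of Propositions \ref{estab}--\ref{estrb}, with one essential modification. The pair $(\unb,\una)$ satisfies the system \eqref{bianchi2} in Lemma \ref{keypoint} with $k=2$, $a_{(1)}=2$, $a_{(2)}=\tfrac12$. The new feature is that the target norms $\mr_q[\unb]$ and $\ur_q[\una]$ carry pointwise $|u|^{s/2}$-weights rather than $r^{s/2}$-weights, since $\unb$ and $\una$ saturate their peeling through $|u|$. A direct application of Proposition \ref{keyintegral} with $p=0$ produces a residual bulk $\int_V r^{-1}|\unb|^2$ that only yields $\mr^2$ (not $\ep\mr^2$), hence fails to close.

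I would therefore rerun the derivation of Lemma \ref{keypoint} after inserting a pointwise weight $|u|^s$. Since $\nab_4 u=0$ and $\Om\nab_3 u=1$ (so $\nab_3|u|^s = -s|u|^{s-1}/\Om$), the only new term produced is $+\tfrac{2s|u|^{s-1}}{\Om}|\unb|^2$ on the left of the weighted identity. Combined with the original $\trchb$-bulk $6|u|^s\trchb\approx -12|u|^s/r$, the net coefficient becomes $\approx |u|^{s-1}(4s-12|u|/r)$, which is strictly positive in $\kk$ since $|u|\leq r(1+O(\ep))$ and $s\in[4,6]$. Integrating, and commuting with $r\nab$ via Proposition \ref{commutation} for $q=1$, yields for $q=0,1$
\begin{align*}
\int_\cuv |u|^s|\unb^{(q)}|^2 + \int_\ucuv |u|^s|\una^{(q)}|^2 \les \int_{\Si_0\cap V}|u|^s\bigl(|\unb^{(q)}|^2+|\una^{(q)}|^2\bigr) + \text{NL},
\end{align*}
and the initial-data term is $\les\Rk$ using $|u|\les r$ on $\Si_0$ together with the definition of $\Rk$.

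The nonlinear terms, read off from Corollary \ref{prop7.6}, split schematically as $h[\unb_3]\in\Gaa\cdot\unb+\Gag\cdot\una$ and $h[\una_4]\in\Gaa\cdot\una+\Gab\cdot(\rhoc,\si,\ov\rho)+\Gag\cdot\unb$. The $\Gaa$ contributions use $|\Gaa|\les\ep/r^2$, while the $\Gag$ contributions use the stronger decay $|\Gag|\les\ep/(r^2|u|^{(s-3)/2})$; both close analogously to the corresponding terms in Propositions \ref{estab}--\ref{estrb}, using Lemma \ref{decayGagGabGaa}, Proposition \ref{standardsobolev}, and Cauchy--Schwarz along $C_{u'}$ or $\unc_{\ub'}$.

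The main obstacle is the slow-decaying term $\int_V |u|^s|\una^{(1)}||\hchb||\rho^{(1)}|$ arising from $-3(\hchb\rho-{}^*\hchb\si)$ in $h[\una_4]$. I would split $\rho=\rhoc+\ov\rho$: the average piece uses $\sup|r^3\ov\rho|\les\mr$ from $\ur_0^S[\ov\rho]$; for the fluctuation piece, combining $|\hchb|\les\ep r^{-1}|u|^{-(s-1)/2}$ from Lemma \ref{decayGagGabGaa} with the $\ur_0^S[\una]$ and $\ur_0^S[(\rhoc,\si)]$ bounds via Cauchy--Schwarz on spheres and integration along $\unc_{\ub'}$, the $|u|^s$ weight precisely compensates the cumulative $|u|$-losses of the pointwise Sobolev bounds, leaving an integrable $u'$-measure that yields $\ep\mr^2$. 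This careful matching of $|u|$- and $r$-weights across the slowly-decaying $\Gab$ factor is the key analytic step that is absent in the three preceding propositions.
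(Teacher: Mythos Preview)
Your claim that a direct application of \eqref{casethree} with $p=0$ ``fails to close'' is the one genuine misconception here, and it is exactly where you diverge from the paper. The residual bulk $\int_V r^{-1}|\unb^{(1)}|^2$ is not an obstruction: it is controlled by the incoming flux $\ur_q[\unb]$ that was \emph{already established} in Proposition \ref{estrb}. The paper simply writes
\[
\int_V r^{-1}|\unb^{(1)}|^2 \les \int_{|u_0|}^{\ub} r^{-3}\,d\ub'\int_{\unc_{\ub'}^V} r^{2}|\unb^{(1)}|^2 \les |u|^{2-s}(\Rk+\ep\mr^2)\int_{|u_0|}^{\ub} r^{-3}\,d\ub' \les |u|^{-s}(\Rk+\ep\mr^2),
\]
invoking \eqref{rb00}. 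This is the hierarchy built into the $r^p$-method: the bad bulk of each Bianchi pair is fed by the \emph{previous} pair's good incoming flux, exactly as Proposition \ref{estrb} used $\ur_q[(\rhoc,\si)]$ from Proposition \ref{estbr}. With this in hand, the entire proof is a verbatim application of \eqref{casethree} at $p=0$, and the nonlinear terms collapse schematically to $|\Gaa||\una^{(1)}|^2+|\Gag^{(1)}||\una|^2$; in particular your ``main obstacle'' $\hchb\cdot\rho$ is pointwise dominated by $\Gaa\cdot\una$ and never needs to be isolated.

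Your $|u|^s$-weighted alternative is nonetheless sound at the level of the bulk sign --- the computation $4s-12|u|/r\geq 4-O(\ep)>0$ for $s\in[4,6]$ is correct --- and can be completed. What it buys is self-containment (no appeal to Proposition \ref{estrb}); what it costs is that $|u'|$ varies along $\unc_\ub$, so the weighted flux $\int_\ucuv|u'|^s|\una^{(1)}|^2$ only dominates (and is not equal to) $\ur_q[\una]^2$, and every nonlinear integral must carry the pointwise $|u'|^s$ through the Cauchy--Schwarz splittings. The paper's hierarchy route is both shorter and more systematic, reusing Proposition \ref{keyintegral} without modification.
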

\begin{proof}
We recall the following Bianchi equations:
\begin{align}
\begin{split}
   \nabla_4\una+\frac{1}{2}\tr\chi\,\una&=-\nabla\widehat{\otimes}\unb+h[\una_4],\\
\nabla_3\unb+2\tr\unchi\,\unb&=-\sdiv\una+h[\unb_3],\label{babian}
\end{split}
\end{align}
where
\begin{align}\label{bbaa}
    h[\aa_4]=\Gaa\cdot\aa,\qquad h[\bb_3]=\Gaa\cdot\bb+\Gag\cdot\aa.
\end{align}
Applying \eqref{casethree} with $\psi_{(1)}=\bb$, $\psi_{(2)}=\aa$, $a_{(1)}=2$, $a_{(2)}=\frac{1}{2}$, $h_{(1)}=h[\bb_3]$, $h_{(2)}=h[\aa_4]$ and $p=0$, we obtain from Lemma \ref{gainu}
\begin{align}
\begin{split}
&\int_{\cuv}|\bb^{(1)}|^2+\int_{\ucuv} |\aa^{(1)}|^2 \\
\les&\int_{\Si_0\cap V}|\unb^{(1)}|^2+|\una^{(1)}|^2+\int_V r^{-1}|\unb^{(1)}|^2+\int_V |\unb^{(1)}||h[\unb_3]^{(1)}|+|\una^{(1)}||h[\una_4]^{(1)}|\\
\les& |u|^{-s}\Rk+\int_{|u_0|}^\ub r^{-3}d\ub\int_\ucuv r^2|\bb^{(1)}|^2+\int_V |\unb^{(1)}||h[\unb_3]^{(1)}|+|\una^{(1)}||h[\una_4]^{(1)}|\\
\les&|u|^{-s}\Rk+(\Rk+\ep\mr^2)\int_{|u_0|}^\ub r^{-3}d\ub|u|^{2-s}+\int_V |\unb^{(1)}||h[\unb_3]^{(1)}|+|\una^{(1)}||h[\una_4]^{(1)}|\\
\les&|u|^{-s}(\Rk+\ep\mr^2)+\int_V |\unb^{(1)}||h[\unb_3]^{(1)}|+|\una^{(1)}||h[\una_4]^{(1)}|.\label{bap}
\end{split}
\end{align}
Applying \eqref{bbaa}, we infer\footnote{Recall that the $\Gaa$ before $\aa$ only contains $\om$.}
\begin{align}\label{bbaah}
\int_V |\unb^{(1)}||h[\unb_3]^{(1)}|+|\una^{(1)}||h[\una_4]^{(1)}|\les\int_V |\Gaa||\aa^{(1)}|^2+|\Gag^{(1)}||\aa|^2.
\end{align}
For the first term in \eqref{bbaah}, we have
\begin{align*}
    \int_V|\Gaa||\aa^{(1)}|^2\les\ep\int_{|u|}^\ub |\ub|^{-2}d\ub \int_\ucuv|\aa|^2\les\ep\mr^2|u|^{-s}.
\end{align*}
For the second term in \eqref{bbaah}, we infer
\begin{align*}
    \int_V|\Gag^{(1)}||\aa|^2&\les\int_{|u|}^\ub d\ub\left(\int_\ucuv |\aa|^2\right)^\frac{1}{2}\left(\int_\ucuv|\Gag^{(1)}|^2|\aa|^2\right)^\frac{1}{2}\\
    &\les\mr\int_{|u|}^\ub d\ub |u|^{-\frac{s}{2}}\left(\int_{u_0(\ub)}^{u}\frac{du}{|\ub|^4}|r^{\frac{3}{2}}\Gag^{(1)}|_{4,S}^2
    |r^{\frac{1}{2}}\aa|_{4,S}^2\right)^\frac{1}{2}\\
    &\les\ep\mr^2\int_{|u|}^\ub d\ub |\ub|^{-2} |u|^{-\frac{s}{2}}\left(\int_{u_0(\ub)}^{u}\frac{du}{|u|^{2s-2}}\right)^\frac{1}{2}\\
    &\les\ep\mr^2\int_{|u|}^\ub d\ub |u|^{-\frac{s}{2}}|\ub|^{-2} |u|^{\frac{3-2s}{2}}\les \ep\mr^2|u|^{-s}.
\end{align*}
Thus, we obtain
\begin{equation*}
\int_V |\unb^{(1)}||h[\unb_3]^{(1)}|+|\una^{(1)}||h[\una_4]^{(1)}|\les\ep\mr^2|u|^{-s}.
\end{equation*}
Combining with \eqref{bap}, we obtain \eqref{bbaa00}. This concludes the proof of Proposition \ref{estba}.
\end{proof}
\subsection{Estimate for \texorpdfstring{$\nab_4\a$}{}}
\begin{prop}\label{esta4}
We have the following estimates:
\begin{equation}\label{a4}
    \mr_1[\a_4]^2\les\Rk+\ep\mr^2.
\end{equation}
\end{prop}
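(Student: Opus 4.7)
The plan is to estimate $\nab_4\a$ by passing to the renormalized quantity $\ac := \frac{1}{r^4}\nab_4(r^5\a)\in\sk_2$ introduced in Lemma \ref{teulm}, and then applying the $r^p$--weighted machinery of Proposition \ref{keyintegral} to the Teukolsky pair $(\ac,\as)$. From the definition, $\ac=r\nab_4\a+5e_4(r)\a$, and by Lemma \ref{dint} together with the bootstrap assumption $\mo\leq\ep$, $e_4(r)=1+O(\ep)$. Hence
\begin{equation*}
\mr_1[\a_4]^2=\int_{\cuv}r^{s+2}|\nab_4\a|^2\les \int_{\cuv}r^s|\ac|^2+\int_{\cuv}r^s|\a|^2,
\end{equation*}
and the second term is already bounded by $\Rk+\ep\mr^2$ in the course of the proof of Proposition \ref{estab}. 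The task thus reduces to controlling the flux of $\ac$ on $\cuv$.

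The Teukolsky system \eqref{teu} fits exactly the Bianchi--pair structure \eqref{bianchi1} with $\psi_{(1)}=\ac$, $\psi_{(2)}=\as$, $k=2$, $a_{(1)}=0$, $a_{(2)}=\tfrac52$, the inhomogeneities being
\begin{align*}
h_{(1)}&=\tfrac{4\a}{r}+\Gaa\cdot(\b,\a)^{(1)}+\Gag^{(1)}\cdot(\b,\a),\\
h_{(2)}&=\Gaa\cdot(\b,\a)^{(1)}+\Gag\cdot\ac+\Gag^{(1)}\cdot(\b,\a).
\end{align*}
Choosing $p=s\in[4,6]$, one checks $2+p-4a_{(1)}=2+s>0$ and $4a_{(2)}-2-p=8-s>0$, so case \eqref{caseone} of Proposition \ref{keyintegral} applies (with $q=0$) and produces a flux bound on $\ac$ along $\cuv$, a flux bound on $\as$ along $\ucuv$, and a positive bulk term $\int_V r^{s-1}(|\ac|^2+|\as|^2)$, modulo the initial data on $\Si_0\cap V$ and the spacetime error $\int_V r^s(|\ac||h_{(1)}|+|\as||h_{(2)}|)$.

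The initial data is harmless: $|\ac|\les |r\nab_4\a|+|\a|$ and $|\as|\les |r\nab\a|$, and both $r\nab_4\a$ and $r\nab\a$ enter the definition of $\dk\a$ in $\Rk$, so $\int_{\Si_0\cap V}r^s(|\ac|^2+|\as|^2)\les\Rk$. For the bulk errors, the only genuinely new term is the linear coupling $\tfrac{4\a}{r}$ in $h_{(1)}$, which contributes $\int_V r^{s-1}|\ac||\a|$; Young's inequality absorbs an arbitrarily small multiple of $\int_V r^{s-1}|\ac|^2$ into the bulk term on the left, while the remainder $\int_V r^{s-1}|\a|^2$ is exactly the bulk control \eqref{bulkterm} already obtained in the proof of Proposition \ref{estab}, hence bounded by $\Rk+\ep\mr^2$. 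All remaining quadratic terms are of the shape $\Gaa\cdot(\b,\a)^{(1)}$, $\Gag\cdot\ac$, or $\Gag^{(1)}\cdot(\b,\a)$ and are treated by the same $L^2$--flux $\times$ $L^4_S$--Sobolev $\times$ Lemma \ref{decayGagGabGaa} pattern used throughout Sections \ref{ssec9.1}--\ref{ssec9.4}; each contributes $\ep\mr^2$. The borderline case is $\Gag\cdot\ac$: writing $|\Gag|\les\ep r^{-2}$ and applying Young gives $\int_V r^s|\as||\Gag||\ac|\les \eta\int_V r^{s-1}|\ac|^2+\ep^2\eta^{-1}\int_V r^{s-3}|\as|^2$, both of which are absorbed into the LHS bulk terms for $\ep,\eta$ small.

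The main technical point, and the one that requires genuine care, is precisely this absorption step for the $\frac{4\a}{r}$ and $\Gag\cdot\ac$ sources, where one must verify that the weights and powers of $r$ line up so that the error absorbs into the positive bulk terms produced by Proposition \ref{keyintegral} rather than into the already-consumed smallness budget. Once this is carried out, the estimate $\int_{\cuv}r^s|\ac|^2\les \Rk+\ep\mr^2$ combines with the identity $r\nab_4\a=\ac-5e_4(r)\a$ and the $\mr_0[\a]$ bound from Proposition \ref{estab} to yield \eqref{a4}.
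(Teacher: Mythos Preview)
Your proposal is correct and follows essentially the same approach as the paper: apply Proposition \ref{keyintegral} (case \eqref{caseone}) to the Teukolsky pair $(\ac,\as)$ with $p=s$, absorb the linear source $\tfrac{4\a}{r}$ via Young's inequality using the bulk term $\int_V r^{s-1}|\a|^2$ from \eqref{bulkterm}, handle the remaining nonlinear errors via the $\Gaa$/$\Gag^{(1)}$ decay, and recover $\nab_4\a$ from $\ac$ and $\a$. You are in fact slightly more careful than the paper in explicitly tracking and absorbing the $\Gag\cdot\ac$ contribution in $h_{(2)}$.
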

\begin{proof}
We recall from \eqref{teu}
\begin{align*}
    \nab_3\ac&=-2d_2^*\as+\frac{4\a}{r}+\Gaa\cdot\b^{(1)}+\Gag^{(1)}\cdot\b,\\
    \nab_4\as+\frac{5}{2}\trch \,\as&=d_2\ac+\Gaa\cdot\b^{(1)}+\Gag^{(1)}\cdot\b.
\end{align*}
Applying \eqref{caseone} with $\psi_{(1)}=\ac$, $\psi_{(2)}=\as$, $a_{(1)}=0$, $a_{(2)}=\frac{5}{2}$, $h_{(1)}=\frac{4\a}{r}+\Gaa\cdot\rg+\Gag^{(1)}\c\b$, $h_{(2)}=\Gaa\cdot\rg+\Gag^{(1)}\c\b$, $k=2$ and $p=s$, we obtain
\begin{align}
\begin{split}\label{esta4eq}
       &\int_{C_u}r^{s}|\ac|^2 +\int_{\Cb_\ub}r^{s}{|\as|^2}+\int_V r^{s-1}\big(|\ac|^2+|\as|^2\big)\\
    \les &\Rk+\int_V r^{s-1}|\ac||\a|+r^{s}|(\ac,\as)||\Gaa||\rg|+r^{s}|(\ac,\as)||\Gag^{(1)}||\b|.
\end{split}
\end{align}
First, we have for all $\de>0$
\begin{align*}
    \int_V r^{s-1}|\ac||\a|\leq \de \int_V r^{s-1}|\ac|^2 +\frac{1}{4\de}\int_V r^{s-1}|\a|^2.
\end{align*}
Combining with \eqref{bulkterm}, we deduce from \eqref{esta4eq}, for $\de$ small enough
\begin{align*}
    &\int_{C_u}r^{s}|\ac|^2 +\int_{\Cb_\ub}r^{s}{|\as|^2}+\int_V r^{s-1}\big(|\ac|^2+|\as|^2\big)\\
    \les& \Rk+\ep\mr^2+\int_V r^{s}|(\ac,\as)||\Gaa||\rg|+r^{s}|(\ac,\as)||\Gag^{(1)}||\b|.
\end{align*}
We have
\begin{align*}
    \int_V r^{s}|(\ac,\as)||\Gaa||\rg|&\les\ep\int_{V}r^{s-2}|\ac,\as||\rg|\\
    &\les \ep\int_{-\ub}^u du \frac{1}{|u|^\frac{8-s}{2}}\left(\int_{\cuv} r^{s}|\ac,\as|^2 \right)^\frac{1}{2}\left(\int_{\cuv} r^{4}|\rg|^2 \right)^\frac{1}{2}\\
    &\les\int_{-\ub}^u du \frac{\ep\mr^2}{|u|^{\frac{8-s}{2}}|u|^\frac{s-4}{2}}\\
    &\les\ep\mr^2.
\end{align*}
We also have
\begin{align*}
\int_V r^{s} |(\ac,\as)||\Gag^{(1)}||\b|&\les \int_{V}r^{s}|\ac||\Gag^{(1)}||\b|+\int_V r^{s}|\as||\Gag^{(1)}||\b|\\
        &\les\int_{-\ub}^u du \left(\int_{\cuv} r^{s}|\ac,\as|^2 \right)^\frac{1}{2}\left(\int_{\cuv} r^{s}|\Gag^{(1)}|^2|\b|^2 \right)^\frac{1}{2}\\
        &\les\mr\int_{-\ub}^u du\left(\int_{|u|}^\ub d\ub\, r^{s}\int_S |\Gag^{(1)}|^2|\b|^2 \right)^\frac{1}{2}\\
        &\les\mr\int_{-\ub}^u du\left(\int_{|u|}^\ub d\ub\, r^{s-10}|r^\frac{3}{2}\Gag^{(1)}|^2_{4,S}|r^\frac{7}{2}\b|_{4,S}^2 \right)^\frac{1}{2}\\
        &\les\ep\mr^2\int_{-\ub}^u du\left(\int_{|u|}^\ub r^{s-10}|u|^{8-2s} d\ub\right)^\frac{1}{2}\\
        &\les \ep\mr^2\int_{-\ub}^u du\, |u|^\frac{s-9}{2} |u|^{4-s}\\
        &\les \ep\mr^2.
    \end{align*}
Combining the above estimates, we obtain
    \begin{align*}
        \int_{\cuv}r^s|\ac|^2 +\int_{\ucuv}r^s|\as|^2+\int_V r^{s-1}\left(|\ac|^2 +|\as|^2\right)\les \Rk+\ep\mr^2.
    \end{align*}
    Recalling that
\begin{align*}
    \ac=r^{-4}\nab_4(r^5\a)=r\nab_4\a+5e_4(r)\a,
\end{align*}
we deduce from Proposition \ref{estab}
\begin{align*}
    \int_{\cuv} r^{s+2}|\nab_4\a|^2\les\Rk+\ep\mr^2.
\end{align*}
This concludes the proof of Proposition \ref{esta4}.
\end{proof}
\subsection{Estimate for \texorpdfstring{$\nab_3\aa$}{}}
\begin{prop}\label{estaa3}
We have the following estimates:
\begin{equation}\label{aa3}
    \mr_1[\aa_3]^2\les\mr(\Rk+\ep\mr^2)^\frac{1}{2}+\Rk+\ep\mr^2.
\end{equation}
\end{prop}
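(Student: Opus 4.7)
The plan is to mirror the proof of Proposition \ref{esta4}, now working with the Teukolsky pair for $\aa$ from Lemma \ref{teulm}, namely
\begin{align*}
\nab_4\aac &= -2d_2^*\aas + \tfrac{4\aa}{r} + \Gaa\cdot(\bb,\aa)^{(1)} + (\Gab\cdot(\bb,\aa))^{(1)},\\
\nab_3\aas + \tfrac{5}{2}\trchb\,\aas &= d_2\aac + \Gaa\cdot(\bb,\aa)^{(1)} + \Gab\cdot\aac + (\Gab\cdot(\bb,\aa))^{(1)}.
\end{align*}
This is a Bianchi pair of the form \eqref{bianchi2} with $\psi_{(1)} = \aas \in \sk_1$, $\psi_{(2)} = \aac \in \sk_2$, $k = 2$, $a_{(1)} = 5/2$, $a_{(2)} = 0$. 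The target is to bound $\int_\ucuv r^s|\aac|^2$: writing $\aac = r\nab_3\aa + 5e_3(r)\aa$ and using $|u|\le r$, one has $\mr_1[\aa_3]^2 \les \int_\ucuv r^s|\aac|^2 + (\Rk+\ep\mr^2)$, the last term coming from the $\aa$-residual absorbed via $\ur_0[\aa]^2 \les \Rk + \ep\mr^2$ (Proposition \ref{estba}).

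The main obstacle is that, for $p = s \in [4,6]$, both sign conditions of Proposition \ref{keyintegral} fail: $2+p-4a_{(1)} = s-8 < 0$ and $4a_{(2)}-2-p = -2-s < 0$. Consequently both bulk terms in the divergence identity \eqref{div2} of Lemma \ref{keypoint} carry the unfavorable sign, and none of the four regimes of Proposition \ref{keyintegral} is applicable off the shelf. My plan is to integrate \eqref{div2} over $V(u,\ub)$ with $p = s$ directly, keep the boundary fluxes $\int_\cuv r^s|\aas|^2$ and $\int_\ucuv r^s|\aac|^2$ on the left, and control the wrong-sign spacetime bulk terms $\int_V r^{s-1}(|\aas|^2+|\aac|^2)$ on the right by recycling previously established estimates.

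The initial data piece on $\Si_0 \cap V$ is controlled by $\Rk$ via Lemma \ref{gainu}. The nonlinear source terms containing $\Gaa$, $\Gab$ factors (including $\Gab\cdot\aac$ in $h_{(1)}$) are estimated using \eqref{estGagba}--\eqref{estnabGagb} and Cauchy--Schwarz exactly as in the analogous treatments in Propositions \ref{estba} and \ref{esta4}, each contributing at most $\ep\mr^2$. The wrong-sign bulk $\int_V r^{s-1}|\aas|^2 \sim \int_V r^{s+1}|\nab\aa|^2$ is reduced by Fubini to $\ub$-integrals of $\unc_{\ub'}$-fluxes of $r\nab\aa$ already controlled by $\ur_1[\aa]^2 \les \Rk+\ep\mr^2$, while its companion $\int_V r^{s-1}|\aac|^2$ is absorbed back into the left-hand flux $\int_\ucuv r^s|\aac|^2$ via a small-parameter Cauchy--Schwarz after slicing in $\ub$.

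The distinctive cross-term $\mr(\Rk+\ep\mr^2)^{1/2}$ in the target bound is forced by the linear source $\frac{4\aa}{r}$ in $h_{(2)}$, which contributes $\int_V r^{s-1}|\aac||\aa|$. Splitting this by Cauchy--Schwarz as $\left(\int_V r^{s-1}|\aac|^2\right)^{1/2}\left(\int_V r^{s-1}|\aa|^2\right)^{1/2}$, the first factor is bounded by $\mr$ through the bootstrap on $\mr_1[\aa_3]$ (after slicing in $\ub$), and the second by $\ur_0[\aa]\les(\Rk+\ep\mr^2)^{1/2}$. Assembling all the pieces yields $\int_\ucuv r^s|\aac|^2 \les \Rk + \ep\mr^2 + \mr(\Rk+\ep\mr^2)^{1/2}$, and translating back from $\aac$ to $\nab_3\aa$ as above delivers \eqref{aa3}.
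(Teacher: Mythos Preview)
Your choice of weight $p=s$ is the core problem, and both ``recycling'' steps you propose for the wrong-sign bulk terms fail. For the $\aas$-bulk, slicing in $\ub'$ and invoking $\ur_1[\aa]^2$ gives only $\int_{\ucuv_{\ub'}}|\aas|^2\les (\Rk+\ep\mr^2)|u|^{-s}$ with \emph{no} $r$-weight; since $r$ on $\unc_{\ub'}\cap V(u,\ub)$ ranges up to $\sim\ub'$, you are left with $\int_{|u|}^{\ub}(\ub')^{s-1}\,d\ub'$, which diverges for every $s>0$. For the $\aac$-bulk, writing $r^{s-1}=r^{-1}\cdot r^s$ and slicing in $\ub'$ produces $\int_{|u|}^{\ub}(\ub')^{-1}\,d\ub'\sim\log(\ub/|u|)$ in front of $\sup_{\ub'}\int_{\unc_{\ub'}}r^s|\aac|^2$; there is no small parameter here (the coefficient of the bulk in \eqref{div2} is $|4a_{(2)}-2-p|=s+2$, not $O(\ep)$), so no absorption or Gr\"onwall argument closes. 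The same $r$-growth obstructs your treatment of the cross term $\int_V r^{s-1}|\aac||\aa|$ via the bootstrap on $\mr_1[\aa_3]$.

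The paper's fix is simply to take $p=-2$. Then $4a_{(2)}-2-p=0$ and $2+p-4a_{(1)}=-10\le 0$, so case \eqref{casethree} applies: only the $\aas$-bulk $\int_V r^{-3}|\aas|^2$ lands on the right, and its negative $r$-weight makes it integrable against $\ur_1[\aa]$, yielding $(\Rk+\ep\mr^2)/|u|^{s+2}$. The cross term becomes $\int_V r^{-3}|\aac||\aa^{(1)}|$, which after slicing in $\ub'$ is bounded by $\int_{|u|}^{\ub}r^{-2}\bigl(\int_{\unc_{\ub'}}r^{-2}|\aac|^2\bigr)^{1/2}\bigl(\int_{\unc_{\ub'}}|\aa^{(1)}|^2\bigr)^{1/2}d\ub'\les \mr(\Rk+\ep\mr^2)^{1/2}/|u|^{s+2}$, using the bootstrap and Proposition \ref{estba}. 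This directly gives $\int_{\ucuv}r^{-2}|\aac|^2\les |u|^{-s-2}\bigl(\mr(\Rk+\ep\mr^2)^{1/2}+\Rk+\ep\mr^2\bigr)$, which is exactly what $\mr_1[\aa_3]$ needs; your detour through $\int_{\ucuv}r^s|\aac|^2$ is unnecessary and, as shown, unclosable.
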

\begin{proof}
We recall from \eqref{teuaa}
\begin{align*}
\nab_4\aac&=-2d_2^*\aas+\frac{4\aa}{r}+(\Gaa,\Gab)\cdot\aa^{(1)}+\Gab^{(1)}\cdot\aa,\\
\nab_3\aas+\frac{5}{2}\trchb\,\aas&=d_2\aac+(\Gaa,\Gab)\cdot\aa^{(1)}+\Gab\c\aac+\Gab^{(1)}\cdot\aa.
\end{align*}
    Applying \eqref{casethree} with $\psi_{(1)}=\aas$, $\psi_{(2)}=\aac$, $a_{(1)}=\frac{5}{2}$, $a_{(2)}=0$, $h_{(1)}=\Gab^{(1)}\cdot\aa+\Gab\c\aac+(\Gaa,\Gab)\cdot\aa^{(1)}$, $h_{(2)}=\frac{4\aa}{r}+\Gab^{(1)}\cdot\aa+(\Gaa,\Gab)\cdot\aa^{(1)}$, $k=2$ and $p=-2$, we obtain
    \begin{align*}
        &\int_{\cuv}r^{-2}|\aas|^2+\int_{\ucuv}r^{-2}|\aac|^2\\
        \les&\int_{\Si_0\cap V} r^{-2}(|\aas|^2+|\aac|^2)+\int_V  r^{-3}|\aas|^2+r^{-3}|\aac||\aa^{(1)}|+r^{-2}|\Gab^{(1)}||\aa|(|\aac|+|\aas|).
    \end{align*}
    First, we have from Lemma \ref{gainu}
    \begin{align*}
        \int_{\Si_0\cap V} r^{-2}(|\aas|^2+|\aac|^2)\les\frac{\Rk}{|u|^{s+2}}.
    \end{align*}
    Notice that we have from Proposition \ref{estba}
    \begin{align*}
        \int_V r^{-3}|\aas|^2 \les \int_{|u|}^{\ub}r^{-3} d\ub\int_{\ucuv}|\aas|^2\les \frac{\Rk+\ep\mr^2}{|u|^{s+2}},
    \end{align*}
    and
    \begin{align*}
        \int_V r^{-3}|\aac||\aa^{(1)}|&\les\int_{|u|}^\ub r^{-2}\left(\int_{\ucuv}r^{-2}|\aac|^2\right)^\frac{1}{2}\left(\int_\ucuv|\aa^{(1)}|^2\right)^\frac{1}{2}d\ub\\ 
        &\les\int_{|u|}^\ub r^{-2}\left(\frac{\mr^2}{|u|^{s+2}}\right)^\frac{1}{2}\left(\frac{\Rk+\ep\mr^2}{|u|^s}\right)^\frac{1}{2}d\ub \\
        &\les\int_{|u|}^\ub\frac{\mr(\Rk+\ep\mr^2)^\frac{1}{2}}{r^2|u|^{s+1}}d\ub\\
        &\les\frac{\mr(\Rk+\ep\mr^2)^\frac{1}{2}}{|u|^{s+2}}.
    \end{align*}
    We also have
    \begin{align*}
        \int_V r^{-2}|\Gab^{(1)}||\aa|(|\aac|+|\aas|)&\les\int_{|u|}^\ub r^{-2} d\ub \int_\ucuv (|\aac|+|\aa^{(1)}|)|\aa||\Gab^{(1)}|\\
        &\les \int_{|u|}^\ub d\ub \left(\int_\ucuv |\aac|^2+|\aa^{(1)}|^2\right)^\frac{1}{2} \left(\int_{\ucuv}|\aa|^2|\Gab^{(1)}|^2\right)^\frac{1}{2}\\
        &\les\frac{\mr}{|u|^\frac{s}{2}}\int_{|u|}^\ub r^{-3}d\ub \left(\int_{|u|}^\ub du|r^\frac{1}{2}\aa|^2_{4,S}|r^\frac{1}{2}\Gab^{(1)}|^2_{4,S}\right)^\frac{1}{2}\\
        &\les \frac{\mr}{|u|^\frac{s}{2}}\int_{|u|}^\ub r^{-3}d\ub\left(\int_{|u|}^\ub du\frac{\ep^2\mr^2}{|u|^{2s}}\right)^\frac{1}{2}\\
        &\les \frac{\ep\mr^2}{|u|^\frac{3s+3}{2}}.
    \end{align*}
    Combining the above estimates, we deduce
    \begin{align*}
        \int_{\cuv}r^{-2}|\aas|^2+\int_{\ucuv}r^{-2}|\aac|^2\les \frac{\mr(\Rk+\ep\mr^2)^\frac{1}{2}+\Rk+\ep\mr^2}{|u|^{s+2}}.
    \end{align*}
    Combining with Proposition \ref{estba}, we infer
    \begin{align*}
\int_{\ucuv}|\nab_3\aa|^2\les\int_{\ucuv}r^{-10}|\nab_3(r^5\aa)|^2+r^{-2}|\aa|^2\les \int_{\ucuv}r^{-2}|\aac|^2+r^{-2}|\aa|^2\les\frac{\mr(\Rk+\ep\mr^2)^\frac{1}{2}+\Rk+\ep\mr^2}{|u|^{s+2}}.
    \end{align*}
    This concludes the proof of Proposition \ref{estaa3}.
\end{proof}
\subsection{Estimate for \texorpdfstring{$\ov{\rho}$}{}}
\begin{prop}\label{barrho}
We have the following estimate for $p\in[2,4]$:
\begin{equation}
    |r^{3}\ov{\rho}|_{\infty,S}(u,\ub)\les\mathfrak{R}_0+\ep\mr.
\end{equation}
\end{prop}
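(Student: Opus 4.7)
My plan is to integrate the transport equation for $\ov\rho$ along an incoming null cone $\unc_\ub$, starting from $\Si_0$. From Lemma~\ref{prop7.5},
\[\nab_3\ov\rho+\tfrac{3}{2}\trchb\,\ov\rho=h[\ov\rho_3],\qquad h[\ov\rho_3]=\Gag\cdot\bb+\ov{h[\rho_3]},\]
with $h[\rho_3]=-\tfrac{1}{2}\hch\cdot\aa+(\ze-2\eta)\cdot\bb$. Since $\ov\rho$ is constant on each sphere $S(u,\ub)$, applying Lemma~\ref{evolutionlemma}(2) with $\la_0=\tfrac{3}{2}$ and $p=\infty$ (hence $\la_1=3$) yields
\[|r^3\ov\rho|(u,\ub)\les |r^3\ov\rho|(u_0(\ub),\ub)+\int_{u_0(\ub)}^{u} r^3\big|h[\ov\rho_3]\big|(u',\ub)\,du'.\]

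The initial-data contribution lives on $\Si_0\setminus K$. The definition of $\Rk$ in section~\ref{Si0setminusK} explicitly contains $\sup_{\Si_0\setminus K}|r^3\ov\rho|^2$; combining this with Lemma~\ref{equivalence} (to compare $r$ and $r_{(0)}$) and Propositions~\ref{estab}--\ref{estba}, which yield $\Rk\les \mathfrak{R}_0^2+\ep\mr^2$ once one takes into account that $\Rk$ appears on the right-hand side of those propositions, the initial term is controlled by $\mathfrak{R}_0+\ep\mr$.

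For the bulk integrand, since $\ov\rho$ is a scalar on $(u,\ub)$, it suffices to bound the $S$-average of $h[\ov\rho_3]$. Each summand has the form $\Ga\cdot R$, and Cauchy--Schwarz on $S$ with $|S|\sim r^2$ gives $|\ov{\Ga\cdot R}|\les r^{-2}|\Ga|_{2,S}|R|_{2,S}$. From $\mo\leq\ep$ one gets $|(\hch,\eta,\ze,\Gag)|_{2,S}\les\ep/(r|u|^{(s-3)/2})$, and from the bootstrap on $\ur_0^S$ one gets $|\aa|_{2,S}\les\mr/|u|^{(s+1)/2}$ and $|\bb|_{2,S}\les\mr/(r^{s/2}|u|^{1/2})$. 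This produces the pointwise bound
\[r^3\big|h[\ov\rho_3]\big|(u',\ub)\les\frac{\ep\mr}{|u'|^{s-1}}+\frac{\ep\mr}{r(u',\ub)^{s/2}|u'|^{s/2-1}}.\]
On $\unc_\ub$, Lemma~\ref{equivalence} gives $r\sim\ub\geq|u'|\geq w_0$; for $s\in[4,6]$, integration in $u'\in[u_0(\ub),u]$ bounds both contributions by $\ep\mr$.

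The main subtlety is the borderline case $s=4$ in the second term, where $|u'|^{s/2-1}=|u'|$ produces a logarithm $\log(\ub/|u|)$ upon integration; this is absorbed by the prefactor $\ub^{-s/2}=\ub^{-2}$, which remains uniformly bounded since $\ub\geq w_0\gtrsim 1$. Summing the initial-data and bulk contributions then yields $|r^3\ov\rho|(u,\ub)\les\mathfrak{R}_0+\ep\mr$, which, in view of the fact that $\ov\rho$ is constant on $S$, is equivalent to the stated $L^\infty$-bound $|r^3\ov\rho|_{\infty,S}\les\mathfrak{R}_0+\ep\mr$, completing the proof.
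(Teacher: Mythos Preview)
Your approach is the same as the paper's: integrate the $\nab_3$--transport equation for $\ov\rho$ via Lemma~\ref{evolutionlemma} with $\la_0=\tfrac32$, bound the initial value by $\mathfrak{R}_0$, and absorb the nonlinear source into $\ep\mr$. Two minor slips: first, $\Rk=\mathfrak{R}_0^2$ by definition (see section~\ref{Si0setminusK}), so the initial-data term is bounded directly by $\mathfrak{R}_0$ without invoking Propositions~\ref{estab}--\ref{estba}. Second, your quoted decay $|\bb|_{2,S}\les\mr/(r^{s/2}|u|^{1/2})$ is the rate for $\b$, not $\bb$; the correct bound from $\ur_0^S[\bb]$ is $|\bb|_{2,S}\les\mr/(r|u|^{(s-1)/2})$. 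With this correction the $\bb$--contribution integrates to $\ep\mr$ without any borderline behaviour at $s=4$, so that discussion becomes unnecessary. The paper also retains a term of type $\Gag\cdot\ov\rho$ in $h[\ov\rho_3]$ (coming from the passage from $\ov{\Om\trchb\,\rho}$ to $\trchb\,\ov\rho$), which contributes $\ep\mr/(r^2|u|^{(s-3)/2})$ to the integrand and is harmless.
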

\begin{proof}
We recall the Bianchi equation
\begin{equation*}
    \nab_3\ov{\rho}+\frac{3}{2}\trchb\,\ov{\rho}=h[\ov{\rho}_3].
\end{equation*}
Applying Lemma \ref{evolutionlemma}, we infer for $p\in[2,4]$
\begin{align*}
    |r^{3-\frac{2}{p}}\ov{\rho}|_{p,S}(u,\ub)&\les \mathfrak{R}_0+\int_{u_0(\ub)}^u|r^{3-\frac{2}{p}}h[\ov{\rho}_3]|_{p,S}du\\
    &\les\mathfrak{R}_0+\int_{u_0(\ub)}^u|r^{3-\frac{2}{p}}\Gag\cdot(\aa,\ov{\rho})|_{p,S}du\\
    &\les\mathfrak{R}_0+\int_{u_0(\ub)}^u\left(\frac{\ep}{|u|^{\frac{s-3}{2}}}\frac{\mr}{|u|^{\frac{s+1}{2}}}+\frac{\ep\mr}{r^2|u|^{\frac{s-3}{2}}}\right)du\\
    &\les\mathfrak{R}_0+\ep\mr.
\end{align*}
Since $\ov{\rho}$ is constant on $S(u,\ub)$, this concludes the proof of Proposition \ref{barrho}.
\end{proof}
\subsection{End of the proof of Theorem M1}
\begin{prop}\label{prop9.1}
We have the following estimate:
\begin{equation}
    \mr_0^{S}+\ur_0^{S}\lesssim \mr_0+\mr_1+\ur_0+\ur_1.
\end{equation}
\end{prop}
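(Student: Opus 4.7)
The plan is to bound each component of $\mr_0^S$ and $\ur_0^S$ separately by using the trace Sobolev inequalities of Proposition \ref{prop7.8}, supplementing them with the Bianchi equations of Corollary \ref{prop7.6} to eliminate longitudinal derivatives of curvature not included in the flux norms. First, since $|S(u,\ub)|\sim r^2$, H\"older's inequality yields $|F|_{p,S}\les r^{2/p-1/2}|F|_{4,S}$ for $p\in[2,4]$; consequently every weight $r^{a-2/p}$ in the sphere norms collapses to $r^{a-1/2}$ at $p=4$, and it suffices to prove the proposition at $p=4$. The scalar $\ov\rho$ is already controlled by Proposition \ref{barrho}.

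For each remaining curvature component $w$ I apply Proposition \ref{prop7.8} to an auxiliary quantity $F:=r^{a}|u|^{b}w$ with $(a,b)$ tuned so that $|rF|_{4,S}$ (outgoing version) or $|r^{1/2}|u|^{1/2}F|_{4,S}$ (incoming version) equals the target sphere norm. Natural choices are $F=r^{s/2}\a$ on $C_u$, $F=r^{2}|u|^{(s-4)/2}\b$ on $C_u$, $F=r^{s/2}\b$ on $\Cb_\ub$ (producing the two separate $\b$-norms), $F=r|u|^{(s-3)/2}(\chr,\si)$ on $\Cb_\ub$, $F=|u|^{(s-2)/2}\bb$ on $\Cb_\ub$, and $F=|u|^{s/2}\aa$ on $\Cb_\ub$. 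Since $r$ is constant on each $S$ and $|u|$ is constant along $C_u$, and since $e_3(r),e_4(r)=O(1)$ by Lemma \ref{dint}, the two flux terms $\|F\|_{2,C}$ and $\|r\nab F\|_{2,C}$ reduce by the Leibniz rule to the norms $\mr_{0,1}[w]$ or $\ur_{0,1}[w]$.

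The third Sobolev ingredient, $\|r\nab_4 F\|_{2,C_u}$ or $|u|\|\nab_3 F\|_{2,\Cb_\ub}$, involves a longitudinal derivative of $w$. For $\a$ and $\aa$ this is exactly $\mr_1[\a_4]$ and $\ur_1[\aa_3]$. For the other components no such norm exists, and I substitute using Corollary \ref{prop7.6}, schematically $\nab_4\b=\sld_2\a+\lot$, $\nab_4(\chr,\si)=\sld_1\b+\lot$, $\nab_4\bb=\sld_1^*(\chr,\si)+\lot$, $\nab_3\b=\sld_1^*(\chr,\si)+\lot$, and $\nab_3(\chr,\si)=\sld_1\bb+\lot$, where the linear terms $\lot$ are of the form $\tr\chi\cdot w$ or $\tr\unchi\cdot w$ (absorbed into the $\mr_0$ or $\ur_0$ norms with matching weights) together with the quadratic remainders $h[\cdot]$ of Corollary \ref{prop7.6}. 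The latter are of schematic form $\Gamma\cdot R$, and the bootstrap assumption $\mo\le\ep$ combined with Lemma \ref{decayGagGabGaa} gives $\|h[\cdot]\|_{L^2(C)}\les\ep\mr$, which is absorbable in the final bound.

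The main bookkeeping issue is weight-matching: the $r,|u|$ weights produced by the Bianchi substitution must coincide with those of $\mr_1[w']$ or $\ur_1[w']$ for the partner component $w'$. For instance, on $C_u$ the $(\a,\b)$ pair requires $r^3|u|^{(s-4)/2}\les r^{s/2+1}$, which follows from $|u|\le r$ on $C_u$ together with $s\ge 4$ — one place where the restriction $s\in[4,6]$ enters. Each Bianchi pair admits an analogous short verification, so I expect no conceptual obstacle beyond careful casework.
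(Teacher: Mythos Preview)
Your approach coincides with the paper's: apply the trace Sobolev inequalities of Proposition~\ref{prop7.8} on the relevant cone and eliminate the longitudinal derivative via the Bianchi equations of Corollary~\ref{prop7.6}. The paper's proof says exactly this in one sentence.

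Two bookkeeping slips to fix. Your choices of $F$ on $\Cb_\ub$ for $(\chr,\si)$ and $\bb$ are off: with $F=r|u|^{(s-3)/2}(\chr,\si)$ one gets $|r^{1/2}|u|^{1/2}F|_{4,S}=|r^{3/2}|u|^{(s-2)/2}(\chr,\si)|_{4,S}$, which is weaker than the target $|r^{5/2}|u|^{(s-3)/2}(\chr,\si)|_{4,S}$ by a factor $r/|u|^{1/2}$. The correct choices (taking $|u|$ to be the fixed endpoint value, constant on the cone) are $F=r^2|u|^{(s-4)/2}(\chr,\si)$ and $F=r|u|^{(s-2)/2}\bb$; then $r^{1/2}|u|^{1/2}F$ reproduces the target $L^4(S)$ norm, the first two cone terms match $\ur_{0,1}$ exactly, and the $\nab_3$ substitution lands on $\ur_1[\bb]$ and $\ur_1[\aa]$ with matching weights (using $|u|\lesssim r$ on $\Cb_\ub\cap V$ and $s\le 6$). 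Also, $\ur_0^S[\ov\rho]$ is not literally covered by the Sobolev argument since there is no $\ur_q$ flux for $\ov\rho$; deferring to Proposition~\ref{barrho} is the right workaround, and it is how the paper closes that term in the proof of Theorem~M1.
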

\begin{proof}
It is sufficient to prove that
\begin{equation*}
\mr_0^{S}\lesssim\mr_0+\mr_1,\qquad\ur_0^{S}\lesssim\ur_0+\ur_1,
\end{equation*}
which is a direct consequence of Propositions \ref{prop7.6} and \ref{prop7.8}.
\end{proof}
Finally, we deduce from Proposition \ref{estab}-\ref{prop9.1} that
\begin{equation*}
\mr^2\les\mr(\Rk+\ep\mr^2)^\frac{1}{2}+\Rk+\ep\mr^2.
\end{equation*}
Applying Cauchy-Schwarz inequality, we obtain for $\ep$ small enough,
\begin{equation*}
    \mr \les \mathfrak{R}_0,
\end{equation*}
this concludes the proof of Theorem M1.
\section{Ricci coefficients estimates (Theorem M3)}\label{sec10}
The goal of this section is to prove Theorem M3, which we recall below for convenience.
\begin{m3}
Assume that
\begin{align}
        \mo\leq\ep,\qquad \mr_0^S+\ur_0^S\leq\Delta_0,\qquad\mo^*(\unc_*)\leq \mathcal{I}_*, \qquad\uobr(\Si_0\setminus K)\leq\IIbr,
\end{align}
then, we have
\begin{equation}\label{old5.2}
\mobr \les\IIbr+\II_*+\De_0+\ep^2.
\end{equation}
If we assume in addition that
\begin{equation}
    \uo(\Si_0\setminus K)\leq\II_0,
\end{equation}
then, we have
\begin{equation}\label{old5.4}
    \mo\lesssim \mathcal{I}_0+\mathcal{I}_*+\Delta_0+\epsilon^2.
\end{equation}
\end{m3}
In this section, we always assume $p\in [2,4]$.
\subsection{Estimates for \texorpdfstring{$\mo_0(\Om\om)$}{} and \texorpdfstring{$\mo_0(\Om\omb)$}{}}\label{ssec10.3}
\begin{prop}\label{prop10.3}
We have the following estimates:
\begin{align}
    \begin{split}
        |r^{2-\frac{2}{p}}\Omega\omega|_{p,S}(u,\ub) &\les \IIbr+\Delta_0+\epsilon^2\\
        |r^{2-\frac{2}{p}}\Omega\uome|_{p,S}(u,\ub) &\les\mathcal{I}_*+\Delta_0+\epsilon^2.
    \end{split}
\end{align}
\end{prop}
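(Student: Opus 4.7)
\medskip

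\noindent\textbf{Proof plan for Proposition \ref{prop10.3}.} The natural transport equations to use are the two identities at the end of Proposition \ref{nulles}, namely the $\nab_3\om$ equation (integrated from $\Si_0\setminus K$ along $\Cb_\ub$ in the direction of increasing $u$) for the $\Om\om$ bound, and the $\nab_4\uome$ equation (integrated from $\Cb_*$ along $C_u$ in the direction of decreasing $\ub$) for the $\Om\uome$ bound. The choice of integration direction matches the hypotheses: initial data for $\Om\om$ on $\Si_0\setminus K$ comes from the norm $\uo_0(\Si_0\setminus K)(\Om\om)\leq\IIbr$, while initial data for $\Om\uome$ on $\Cb_*$ is trivial, since the last-slice foliation is geodesic with $\Om\equiv\tfrac12$ on $\Cb_*$, hence $\uome|_{\Cb_*}=-\tfrac12 e_3(\log\Om)|_{\Cb_*}=0$.

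The crucial algebraic point is that the products $\om\,\uome$ on the right-hand sides of the $\nab_3\om$ and $\nab_4\uome$ equations cancel once we multiply by $\Om$. Using $e_3(\Om)=-2\Om\uome$ and $e_4(\Om)=-2\Om\om$ from \eqref{6.6}, we compute
\begin{align*}
\nab_3(\Om\om)&=\Om\left(\tfrac{3}{4}|\eta-\ue|^2+\tfrac{1}{4}(\eta-\ue)\cdot(\eta+\ue)-\tfrac{1}{8}|\eta+\ue|^2+\tfrac{1}{2}\rho\right),\\
\nab_4(\Om\uome)&=\Om\left(\tfrac{3}{4}|\eta-\ue|^2-\tfrac{1}{4}(\eta-\ue)\cdot(\eta+\ue)-\tfrac{1}{8}|\eta+\ue|^2+\tfrac{1}{2}\rho\right).
\end{align*}
Both right-hand sides are a sum of quadratic expressions in $\eta,\ue\in\Gag$ and a linear term in $\rho=\rhoc+\ov\rho$. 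The plan is to apply Lemma \ref{evolutionlemma} with $\la_0=0$ (so $\la_1=-2/p$) to $V=\Om\om$ along $\Cb_\ub$ and to $V=\Om\uome$ along $C_u$, and then multiply by $r^2=r(u,\ub)^2$. Since $e_3(r)<0$ and $e_4(r)>0$ to leading order, along each integration path $r$ at the endpoint $(u,\ub)$ is bounded above by $r$ at every earlier point, so the factor $r^2(u,\ub)/r^2(u',\ub')$ can be absorbed, yielding bounds of the form $|r^{2-2/p}\Om\om|_{p,S}(u,\ub)\les|r^{2-2/p}\Om\om|_{p,S(u_0(\ub),\ub)}+\int |r^{2-2/p}\Om F|_{p,S}\,du'$ and analogously for $\Om\uome$.

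It remains to estimate each term on the right. The quadratic $\Gag$-terms are bounded using $|\Gag|\les\ep r^{-2}|u|^{-(s-3)/2}$ from Lemma \ref{decayGagGabGaa}, producing contributions of size $\ep^2$ after integration. The curvature term splits: for $\rhoc$ one uses $\ur_0^S[\chr,\si]\leq\De_0$ to get $|r^{2-2/p}\rhoc|_{p,S}\les \De_0\,r^{-1}|u|^{-(s-3)/2}$, which is integrable in $u'$ (resp. $\ub'$) with total contribution $\lesssim\De_0$; for $\ov\rho$ one uses $\ur_0^S[\ov\rho]\leq\De_0$ to get $|r^{2-2/p}\ov\rho|_{p,S}\les \De_0/r$, and since the $u'$-interval (resp.\ $\ub'$-interval) has length comparable to $r$ on $\Cb_\ub$ (resp.\ $C_u$), the integral is again $\lesssim\De_0$. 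Combining the initial-data estimates ($\IIbr$ for $\Om\om$, zero for $\Om\uome$) with these bulk bounds gives the two claimed inequalities.

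The only delicate points I anticipate are (i) verifying in detail the cancellation above and ensuring all the Ricci/curvature terms generated really live in $\Gag$ and the $L^p$ scales of $\ur_0^S$, and (ii) carefully tracking the $r$-weights through Lemma \ref{evolutionlemma}: applying it with $\la_0=0$ and then re-inserting the factor $r^2$ by hand is cleaner than trying to use $\la_0=1$, because the latter would introduce a spurious $\trchb\cdot\Om\om$ term and force a Gr\"onwall argument. The main conceptual obstacle — the possibility that the $\om\uome$ product on the right-hand side would prevent a linear closure of the estimate — is precisely what the $\Om$-weighted cancellation resolves.
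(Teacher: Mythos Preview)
Your proposal is correct and matches the paper's argument essentially line for line: the paper derives the same pair of transport equations $\nabla_3(\Omega\omega)=\widehat F+\tfrac12\Omega\rho$ and $\nabla_4(\Omega\uome)=\widehat{\underline F}+\tfrac12\Omega\rho$ with $\widehat F,\widehat{\underline F}\in\Gag\cdot\Gag$, applies Lemma~\ref{evolutionlemma} with $\lambda_0=0$, and integrates from $S(u_0(\ub),\ub)$ and $S(u,\ub_*)$ respectively. Two cosmetic differences: the paper does not split $\rho=\rhoc+\ov\rho$ but simply uses $|r^{3-2/p}\rho|_{p,S}\lesssim\Delta_0$ (which follows from your two separate bounds), and the paper bounds the $\Omega\uome$ initial term on $\Cb_*$ by $\mathcal I_*$ rather than observing it vanishes---your remark that $\uome|_{\Cb_*}=0$ since $\Omega\equiv\tfrac12$ there is sharper but yields the same conclusion.
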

\begin{proof}
By the null structure equations \eqref{omom}, we have
\begin{align}
\D_3(\Omega\omega)=\widehat{F}+\frac{1}{2}\Omega\rho, \qquad \D_4(\Omega\uome)=\widehat{\uf}+\frac{1}{2}\Omega\rho,\label{34omega}
\end{align}
where
\begin{align*}
\widehat{F}&:=-\frac{1}{2}\Omega\zeta\cdot(\eta+\etab)-\frac{1}{2}\Omega(\ue\cdot\eta-2\zeta^2)=\Gag\cdot\Gag,\\
\widehat{\uf}&:=\frac{1}{2}\Omega\zeta\cdot(\eta+\etab)-\frac{1}{2}\Omega(\ue\cdot\eta-2\zeta^2)=\Gag\cdot\Gag.
\end{align*}
Apply Lemma \ref{evolutionlemma}, we obtain
\begin{align*}
|r^{-\frac{2}{p}}\Omega\uome|_{p,S}(u,\unu) &\lesssim |r^{-\frac{2}{p}}\Omega\uome|_{p,S}(u,\unu_*)+\int_{\unu}^{\unu_*} |r^{-\frac{2}{p}}\widehat{\underline{F}}|_{p,S} + \int_{\unu}^{\unu_*} |r^{-\frac{2}{p}}\Omega\rho|_{p,S},\\
|r^{-\frac{2}{p}}\Omega\omega|_{p,S}(u,\unu) &\lesssim |r^{-\frac{2}{p}}\Omega\omega|_{p,S}(u_0(\unu),\unu)+\int_{u_0(\unu)}^{u} |r^{-\frac{2}{p}}\widehat{F}|_{p,S} +\int_{u_0(\unu)}^{u} |r^{-\frac{2}{p}}\Omega\rho|_{p,S}.
\end{align*}
Notice that we have
\begin{equation*}
    |r^{-\frac{2}{p}}(\widehat{F},\widehat\uf)|_{p,S} \les\frac{\epsilon^2}{r^4|u|^{s-3}} ,\qquad |r^{-\frac{2}{p}}\Omega\rho|_{p,S} \lesssim r^{-3}|r^{3-\frac{2}{p}}\rho|_{p,S}\lesssim \frac{\Delta_0}{r^{3}}.
\end{equation*}
Hence, we obtain
\begin{align*}
|r^{2-\frac{2}{p}}\Omega\uome|_{p,S}(u,\unu) &\lesssim |r^{2-\frac{2}{p}}\Omega\uome|_{p,S}(u,\unu_*)+\ep^2+\Delta_0\lesssim \mathcal{I}_*+ \Delta_0+\epsilon^2, \\
|r^{2-\frac{2}{p}}\Omega\omega|_{p,S}(u,\unu) &\lesssim |r^{2-\frac{2}{p}}\Omega \omega|_{p,S}(u_0(\unu),\unu)+\epsilon^2+\Delta_0\les\IIbr+\Delta_0+\epsilon^2.
\end{align*}
which concludes the proof.
\end{proof}
\subsection{Estimates for \texorpdfstring{$\mo_{0,1}(\widecheck{\Om\om})$}{} and \texorpdfstring{$\mo_{0,1}(\widecheck{\Om\omb})$}{}}\label{ssec10.4}
\begin{prop}\label{prop10.4}
We have the following estimates:
\begin{align}
\begin{split}\label{estombom}
|r^{2+\frac{s-3}{6}-\frac{2}{p}}|u|^{\frac{s-3}{3}}(r\nab)^q\widecheck{\Omega\omega}|_{p,S}(u,\ub)&\les \IIbr+\Delta_0+\epsilon^2,\qquad q=0,1,\\
|r^{1-\frac{2}{p}}|u|^{\frac{s-1}{2}}(r\nab)^q\widecheck{\Om\omb}|_{p,S}(u,\ub) &\les \mathcal{I}_*+\Delta_0+\epsilon^2,\qquad q=0,1.
\end{split}
\end{align}
\end{prop}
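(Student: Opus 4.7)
\medskip\noindent The strategy is to derive transport equations for $\widecheck{\Om\om}$ and $\widecheck{\Om\omb}$ by subtracting the averaged versions of \eqref{34omega} (following Corollary \ref{dav}), and then to repeat the integration scheme of Proposition \ref{prop10.3}. The essential new gain is that on the right-hand side one now sees $\rhoc=\rho-\ov{\rho}$ rather than $\rho$, and $\rhoc$ carries the extra $|u|$--decay provided by $\ur_0^S[\chr,\si]\leq\De_0$.

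Starting from $\Om e_3(\Om\om)=\Om\widehat{F}+\tfrac{1}{2}\Om^2\rho$ and applying Corollary \ref{dav} to the scalar $f=\Om\om$, I would obtain
\begin{equation*}
\Om e_3(\widecheck{\Om\om})=\big(\Om\widehat{F}-\overline{\Om\widehat{F}}\big)+\tfrac{1}{2}\big(\Om^2\rho-\overline{\Om^2\rho}\big)-\overline{\omtrchbc\,\widecheck{\Om\om}},
\end{equation*}
and an analogous $\nab_4$ equation for $\widecheck{\Om\omb}$. Using Lemma \ref{chav} one rewrites $\Om^2\rho-\overline{\Om^2\rho}=\overline{\Om^2}\,\rhoc+\Gag\cdot(\ov{\rho},\rhoc)$, so the principal source is $\tfrac{1}{2}\overline{\Om^2}\,\rhoc\sim\tfrac{1}{8}\rhoc$; the remaining sources are of schematic type $\Gag\cdot\Gag$, $\Gag\cdot\ov{\rho}$, or the coupling $\overline{\omtrchbc\,\widecheck{\Om\om}}$, all yielding at most $\ep^2+\ep\,\mo$ contributions (the coupling being absorbed by Gr\"onwall under $\mo\leq\ep$).

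Next, I would apply Lemma \ref{evolutionlemma} with $\la_0=0$ (so $\la_1=-2/p$): the $\widecheck{\Om\om}$--equation is integrated along $\unc_\unu$ from $\Si_0\setminus K$ (initial data bounded by $\uobr(\Si_0\setminus K)\leq\IIbr$) up to $S(u,\ub)$, while the $\widecheck{\Om\omb}$--equation is integrated along $C_u$ from $S(u,\ub)$ up to $\unc_*$ (where $\widecheck{\Om\omb}$ is trivially $\les\II_*$, since $\Om=\tfrac{1}{2}$ and hence $\omb=-\tfrac{1}{2\Om}e_3(\Om)=0$ hold identically on the geodesic last slice). The central estimate is
\begin{equation*}
\int\big|r^{-2/p}\rhoc\big|_{p,S}\,du'\les\De_0\int\frac{du'}{r(u',\ub)^{3-2/p}\,|u'|^{(s-3)/2}},
\end{equation*}
and one uses $r(u',\ub)\sim\ub-u'$ on $\unc_\unu$ together with Lemma \ref{equivalence} to match the declared weight $r^{2+(s-3)/6-2/p}|u|^{(s-3)/3}$ for $\widecheck{\Om\om}$; the weight $r^{1-2/p}|u|^{(s-1)/2}$ for $\widecheck{\Om\omb}$ drops out more directly, since $|u'|$ is constant along $C_u$ and only the $r$--weight has to be tracked during the $e_4$--integration.

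For $q=1$, I would commute $r\nab$ through the transport equation using Proposition \ref{commutation}: the commutator generates terms of schematic type $\Gag\cdot r\nab(\widecheck{\Om\om})$ (absorbed by Gr\"onwall under $\mo\leq\ep$), while the new principal source $r\nab\rhoc$ is controlled by the $(r\nab)$--part of $\ur_0^S[\chr,\si]\leq\De_0$. The most delicate point is producing the \emph{exotic} split $r^{(s-3)/6}|u|^{(s-3)/3}$ for $\widecheck{\Om\om}$: this requires separating the regimes $|u|\ll\ub$ and $|u|\sim\ub$ inside the $du'$--integral above, and distributing the $r$--gain from $r^{-3+2/p}$ between the $r$ and $|u|$ factors of the target weight in a way that closes uniformly in $s\in[4,6]$.
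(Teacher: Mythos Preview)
Your $q=0$ argument is plausible, but the $q=1$ step has a genuine gap. After commuting $r\nab$ through the transport equation for $\widecheck{\Om\om}$, the principal source becomes $r\nab\rhoc$, and you assert this is ``controlled by the $(r\nab)$--part of $\ur_0^S[\chr,\si]\leq\De_0$''. There is no such part: by definition
\[
\ur_0^S[\chr,\si]=\sup_{\kk}\sup_{p\in[2,4]}\big|r^{3-\frac{2}{p}}|u|^{\frac{s-3}{2}}(\rhoc,\si)\big|_{p,S},
\]
which carries no angular derivative. The hypotheses of Theorem~M3 give only $\mr_0^S+\ur_0^S\leq\De_0$, i.e.\ $L^p(S)$ control of the curvature components themselves; first angular derivatives of curvature are available solely as $L^2$ fluxes along null cones (the $\mr_1,\ur_1$ norms), and those are \emph{not} assumed here. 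The same obstruction hits the $q=1$ estimate for $\widecheck{\Om\omb}$.

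The paper confronts exactly this point (``Since we cannot control $\nab\rho$ in $L^p(S)$, we use a renormalization method'') and proceeds quite differently. It works directly with $\nab(\Om\om)$, introduces an auxiliary scalar $\om^\dagger$ via $\nab_3\om^\dagger=\tfrac{1}{2}\Om\si$, and forms the combination
\[
\varkappa:=\nab(\Om\om)+{^*\nab}\om^\dagger-\tfrac{1}{2}\Om\b.
\]
Subtracting the Bianchi equation $\nab_3\b+\trchb\,\b=\nab\rho+{^*\nab}\si+\ldots$ cancels $\nab\rho$ and ${^*\nab}\si$ from the source, leaving $\nab_3\varkappa+\tfrac{1}{2}\trchb\,\varkappa=\tfrac{1}{4}\Om\trchb\,\b+\lot$, whose right-hand side \emph{is} controlled in $L^p(S)$ by $\mr_0^S[\b]$. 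One then integrates this transport equation, applies the elliptic estimate for $d_1^*(-\Om\om,\om^\dagger)=\varkappa+\tfrac{1}{2}\Om\b$ to recover $\nab(\Om\om)$, and finally obtains the $q=0$ estimate by Poincar\'e. The exotic weight $r^{2+(s-3)/6}|u|^{(s-3)/3}$ emerges from the inequality $2+\tfrac{s-3}{6}\leq\min\{3,\tfrac{s+1}{2}\}$ applied to the resulting integral, not from a regime-splitting of a $\rhoc$--integral.
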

\begin{proof}
First, we derive an evolution equation for $\nab(\Om\om)$. Applying \eqref{omom} and Corollary \ref{commcor}, we obtain
\begin{align*}
    \Om\nab_3\nab(\Om\om)&=[\Om\nab_3,\nab](\Om\om)+\nab(\Om\nab_3(\Om\om))\\
    &=-\Om\chib\cdot\nab(\Om\om)+\nab(\Om\nab_3(\Om\om))\\
    &=-\Om\chib\cdot\nab(\Om\om)+\nab\left(\frac{1}{2}\Om^2\rho+\Gag\cdot\Gag\right)\\
    &=-\frac{1}{2}\Om\trchb\nab(\Om\om)+\frac{1}{2}\Om^2\nab\rho+(\Gaa,\Gab)\cdot\nab\Gag,
\end{align*}
which implies
\begin{equation}
    \nab_3\nab(\Om\om)+\frac{1}{2}\trchb\nab(\Om\om)=\frac{1}{2}\nab(\Om\rho)+r^{-1}(\Gaa,\Gab)\cdot\Gag^{(1)}.
\end{equation}
Since we cannot control $\nab\rho$ in $L^p(S)$, we use a renormalization method. We define $\om^\dagger$ by
\begin{align}\label{omdagger}
    \nab_3\om^\dagger=\frac{1}{2}\Om\sigma \quad \mbox{in }\kk,\qquad \om^\dagger=0\quad \mbox{on }S(u_0(\ub),\ub).
\end{align}
Applying Corollary \ref{commcor}, we have
\begin{align*}
    \Om\nab_3{^*\nabla\om^\dagger}={^*\nab}(\Om\nab_3\om^\dagger)+[\Om\nab_3,{^*\nab}]\om^\dagger=\frac{1}{2}{^*\nab(\Om^2\si)}-\Om\chib\cdot{^*\nab\om^\dagger},
\end{align*}
which implies
\begin{equation}
    \nab_3{^*\nab\om^\dagger}+\frac{1}{2}\trchb({^*\nab\om^\dagger})=\frac{1}{2}{^*\nab}(\Om\si)+\Gab\cdot(\si,{^*\nab}\om^\dagger).
\end{equation}
We recall the following Bianchi equation from Proposition \ref{prop7.4}:
\begin{align*}
\nab_3 \b +\trchb\b &=\nab\rho+{^*\nab}\si+2\omb\b+2\hch\cdot\bb+3(\eta\rho+{^*\eta} \si)\\
&=\nab \rho+ {^*\nab}\si +\Gaa\cdot\b+\Gag\cdot \bb,
\end{align*}
which implies\footnote{Notice that $\b\nab_3\Om=-2\Om\omb\b=\Gaa\cdot\b$ and $(\rho,\si)\nab\Om=\Gaa\cdot\nab\Gag$ decay better than the R.H.S. of \eqref{Ombeta}. }
\begin{equation}\label{Ombeta}
    \nab_3(\Om\b)+\trchb (\Om\b)=\nab(\Om\rho)+{^*\nab}(\Om\si) +r^{-1}\Gaa\cdot\Gag^{(1)}+r^{-1}\Gag\cdot\Gab^{(1)}.
\end{equation}
We introduce a new quantity:
\begin{equation}
{\varkappa}:=\nab(\Om\om)+{^*\nabla\om^\dagger}-\frac{1}{2}\Om\b.
\end{equation}
Thus, we have the following equation:
\begin{align}
    \begin{split}\label{old5.56}
        \nab_3{\varkappa}&=\nab_3\nab(\Om\om)+\nab_3{^*\nab}\om^\dagger-\frac{1}{2}\nab_3(\Om\b)\\
        &=-\frac{1}{2}\trchb\nab(\Om\om)-\frac{1}{2}\trchb({^*\nab}\om^\dagger)+\frac{1}{2}\Om\trchb\,\b+r^{-1}(\Gaa,\Gab)\cdot\Gag^{(1)}+\Gab\cdot{^*\nab}\om^\dagger\\
        &=-\frac{1}{2}\trchb\,\varkappa+\frac{1}{4}\Om\trchb\,\b+r^{-1}(\Gaa,\Gab)\cdot\Gag^{(1)}+\Gab\cdot{^*\nab}\om^\dagger.
    \end{split}
\end{align}
Denoting $\langle\om\rangle:=(-\Om\om,\om^\dagger)$, then\footnote{Recall that $d_1^*(f,g)=-\nabla f+{^*\nabla}g.$}
\begin{align}
    \begin{split}
    d_1^*\langle\om\rangle&={\varkappa}+\frac{1}{2}\Om\b.\label{hoho}
    \end{split}
\end{align}
Applying Proposition \ref{prop7.3} to \eqref{hoho}, we obtain
\begin{align}
|r^{1-\frac{2}{p}}\nab(\Om\om)|_{p,S}+|r^{1-\frac{2}{p}}\nab\om^\dagger|_{p,S}\lesssim|r^{1-\frac{2}{p}}{\varkappa}|_{p,S}+|r^{1-\frac{2}{p}}\b|_{p,S}.\label{varkappa}
\end{align}
Also, applying Lemma \ref{evolutionlemma} to \eqref{old5.56}, we have
\begin{align*}
    &\;\;\;\,\,\,|r^{1-\frac{2}{p}}{\varkappa}|_{p,S}(u,\ub)\\
    &\les|r^{1-\frac{2}{p}}{\varkappa}|_{p,S}(u_0(\ub),\ub)+\int_{u_0(\ub)}^{u}|r^{-\frac{2}{p}}\b|_{p,S}+|r^{-\frac{2}{p}}(\Gaa,\Gab)\cdot\Gag^{(1)}|_{p,S}+ |r^{1-\frac{2}{p}}\Gab\cdot{^*\nab}\om^\dagger|_{p,S}\\
    &\les\frac{\IIbr+\De_0}{r^{\frac{s+1}{2}}}+\int_{u_0(\ub)}^{u}\frac{\De_0}{r^\frac{7}{2}|u|^\frac{s-4}{2}}+\frac{\ep^2}{r^4|u|^{\frac{s-3}{2}}}+\frac{\ep^2}{r^3|u|^{s-2}}+\frac{\ep}{r|u|^{\frac{s-1}{2}}}|r^{1-\frac{2}{p}}({^*\nab}\om^\dagger)|_{p,S}\\
    &\les \frac{\IIbr+\De_0+\ep^2}{r^{2+\frac{s-3}{6}}|u|^{\frac{s-3}{3}}}+\int_{u_0(\ub)}^{u}\frac{\ep}{r|u|^{\frac{s-1}{2}}}|r^{1-\frac{2}{p}}{\varkappa}|_{p,S},
\end{align*}
where at the last step we used for all $s>3$
\begin{align}\label{sdayu3}
    2+\frac{s-3}{6}\leq \min \left\{3, \frac{s+1}{2}\right\}.
\end{align}
Applying Gronwall inequality, we infer
\begin{align*}
    |r^{3+\frac{s-3}{6}-\frac{2}{p}}|u|^{\frac{s-3}{3}}{\varkappa}|_{p,S}(u,\unu) \lesssim\IIbr+\Delta_0+\epsilon^2.
\end{align*}
Injecting it into \eqref{varkappa} and recall that
\begin{align*}
   |r^{3+\frac{s-3}{6}-\frac{2}{p}}|u|^\frac{s-3}{3}\b|_{p,S}\les |r^{\frac{7}{2}-\frac{2}{p}}|u|^\frac{s-4}{2}\b|_{p,S}\les \De_0,
\end{align*}
we infer
\begin{align}
\begin{split}
    |r^{3+\frac{s-3}{6}-\frac{2}{p}}|u|^{\frac{s-3}{3}}\nab(\Om\om)|_{p,S}(u,\unu) \lesssim\IIbr+\Delta_0+\epsilon^2.
\end{split}
\end{align}
Applying Poincar\'e inequality, we conclude the first estimate of \eqref{estombom}. The second estimate is similar and left to the reader. This concludes the proof of Proposition \ref{prop10.4}.
\end{proof}
\subsection{Estimates for \texorpdfstring{$\mo_{0,1}(\widecheck{\Om})$}{} and \texorpdfstring{$r\left|\Om-\frac{1}{2}\right|$}{}}\label{ssecOmc}
\begin{prop}\label{propOmc}
We have the following estimates:
\begin{align}\label{estOmc}
    \left|r^{1-\frac{2}{p}}|u|^{\frac{s-3}{2}}(r\nab)^q\Omc\right|_{p,S}(u,\unu) &\les \IIbr+\mathcal{I}_*+ \Delta_0+\epsilon^2,\qquad q=0,1,\\
     \sup r\left|\Om-\frac{1}{2}\right| &\les \IIbr+\mathcal{I}_*+ \Delta_0+\epsilon^2.\label{Om1/2}
\end{align}
\end{prop}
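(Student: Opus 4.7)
The strategy is to integrate transport equations in the $e_4$-direction from the last slice $\unc_*$, where $\Om=\frac{1}{2}$ by construction, so that both $\Om-\frac{1}{2}$ and $\Omc$ vanish as initial data there.

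\emph{Step 1 (estimate \eqref{Om1/2}).} From the identity $\om=-\frac{1}{2}\nab_4\log\Om$ in \eqref{6.6}, I have $\nab_4(\Om-\frac{1}{2})=-2\Om\om$. Applying Lemma \ref{evolutionlemma} with $\la_0=0$ along $C_u$ from $\ub_*$ down to $\ub$, together with the bound $|r^{2-\frac{2}{p}}\Om\om|_{p,S}\lesssim\IIbr+\De_0+\ep^2$ from Proposition \ref{prop10.3}, yields
\[
\left|r^{-\frac{2}{p}}\left(\Om-\tfrac{1}{2}\right)\right|_{p,S}(u,\ub)\lesssim\int_{\ub}^{\ub_*}r^{-2}(\IIbr+\De_0+\ep^2)\,d\ub'\lesssim r^{-1}(\IIbr+\II_*+\De_0+\ep^2).
\]
Taking $p=4$ and applying the Sobolev inequality of Proposition \ref{standardsobolev} gives \eqref{Om1/2}.

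\emph{Step 2 (estimate \eqref{estOmc}, case $q=0$).} Combining $\Om\nab_4\Om=-2\Om^2\om$ with Corollary \ref{dav} applied to $\ov\Om$, I derive
\[
\Om\nab_4\Omc=-2\widecheck{\Om^2\om}-\ov{\omtrchc\,\Omc}.
\]
Writing $\Om^2\om=\Om\cdot(\Om\om)$ and expanding via Lemma \ref{chav}, the source is linear in $\widecheck{\Om\om}$ and $\Omc$, with coefficients already controlled by Step 1. Since $\Omc|_{\unc_*}=0$, Lemma \ref{evolutionlemma} gives a transport estimate whose source is bounded by Proposition \ref{prop10.4} (for $\widecheck{\Om\om}$) and by the bootstrap bound on $\omtrchc\in\Gag$; the self-interaction terms proportional to $\Omc$ are absorbed by a Gronwall argument thanks to the smallness of $\ep$.

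\emph{Step 3 (case $q=1$).} Since $\ov\Om$ is constant on each $S$, we have $r\nab\Omc=r\nab\Om$. Commuting $r\nab$ with $\Om\nab_4$ via Proposition \ref{commutation} and differentiating the $\nab_4$ equation for $\Om$, I obtain a transport equation for $r\nab\Om$ with source $-2r\Om\nab\widecheck{\Om\om}+\Gag\cdot r\nab\Om+\lot$, and with vanishing initial data on $\unc_*$. Lemma \ref{evolutionlemma}, the $q=1$ bound of Proposition \ref{prop10.4}, and Gronwall absorption of the commutator error then yield the $q=1$ case of \eqref{estOmc}.

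\emph{Main obstacle.} The target weight $r^{1-\frac{2}{p}}|u|^{\frac{s-3}{2}}$ for $\Omc$ carries more $|u|$-decay than is directly supplied by the source $\widecheck{\Om\om}$, whose weight $r^{2+\frac{s-3}{6}-\frac{2}{p}}|u|^{\frac{s-3}{3}}$ only produces a factor $|u|^{\frac{s-3}{3}}$. After integration in $\ub$, one first obtains $\Omc$ controlled in a weight of the form $r^{1-\frac{2}{p}+\frac{s-3}{6}}|u|^{\frac{s-3}{3}}$. The crucial trade-off uses $|u|\leq r$ in $\kk$ from \eqref{equiuubr}, which converts the surplus $r^{\frac{s-3}{6}}$ into $|u|^{\frac{s-3}{6}}$, precisely what is needed to upgrade $|u|^{\frac{s-3}{3}}$ to $|u|^{\frac{s-3}{2}}$. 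This delicate $r$--$|u|$ balancing, already built into the weight choice of Proposition \ref{prop10.4} via \eqref{sdayu3}, is what makes the estimate close.
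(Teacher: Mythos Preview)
Your proposal is correct and follows the same core mechanism as the paper: transport in the $e_4$-direction from $\unc_*$ (where $\Om=\tfrac12$), source control via Proposition~\ref{prop10.4}, and the $r$--$|u|$ trade-off you identify in the ``Main obstacle'' paragraph is exactly what makes the weights close. Two small differences worth noting. First, the paper does not run a separate transport argument for $\Omc$ as in your Step~2: it proves the $q=1$ case first (your Step~3) and then obtains $q=0$ in one line by Poincar\'e, since $r\nab\Omc=r\nab\Om$. Second, for \eqref{Om1/2} the paper avoids applying Sobolev to $\Om-\tfrac12$ (which in your ordering would require Step~3 already); instead it applies Sobolev to $\Om\om$ using Propositions~\ref{prop10.3} and~\ref{prop10.4} to get $|\Om\om|_{\infty,S}\lesssim r^{-2}(\IIbr+\II_*+\De_0+\ep^2)$, and then integrates the transport equation for $\Om-\tfrac12$ directly in $L^\infty$. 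Both shortcuts make the paper's proof shorter, but your route works as well; just reorder so that Step~3 precedes Step~1, and note that in Step~3 the lower-order coefficient multiplying $r\nab\Om$ is $\Gaa$ (from the $\Om\om$ factor) rather than $\Gag$, which is still harmless since $|\Gaa|\lesssim\ep r^{-2}$ is integrable in $\ub$.
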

\begin{proof}
We recall that
\begin{equation*}
    \Om\nab_4\Om=\Om^2\nab_4\log\Om=-2\Om^2\om.
\end{equation*}
Differentiating by $r\nab$ and applying Proposition \ref{commutation}, we deduce
\begin{align*}
    \Om\nab_4 (r\nab\Om)&=[\Om\nab_4,r\nab]\Om-2\Om r\nab(\Om\om)-2r\nab\Om (\Om\om)\\
    &=O(r\nab(\Om\om))+r\Gaa\cdot\Gag.
\end{align*}
Applying Lemma \ref{evolutionlemma} and Proposition \ref{prop10.4}, we infer
\begin{align*}
    |r^{1-\frac{2}{p}}\nab\Om|_{p,S}(u,\ub)&\les|r^{1-\frac{2}{p}}\nab\Om|_{p,S}(u,\ub_*)+\int_{\ub}^{\ub_*} \left(|r^{1-\frac{2}{p}}\nab({\Om\om})|_{p,S} +\frac{\ep^2}{r^3|u|^{\frac{s-3}{2}}}\right)\\
    &\les \frac{\IIbr+\mathcal{I}_*+ \Delta_0+\epsilon^2}{r|u|^{\frac{s-3}{2}}},
\end{align*}
where we used the fact that $\Om=\frac{1}{2}$ on $\Cb_*$. Combining with Poincar\'e inequality, we deduce \eqref{estOmc}. On the other hand, we have
\begin{align*}
    \Om\nab_4\left(\Om-\frac{1}{2}\right)=-2\Om^2\om.
\end{align*}
Recall that Propositions \ref{prop10.3}, \ref{prop10.4} and \ref{standardsobolev} implies
\begin{align*}
    |\Om \om|_{\infty,S}\les \frac{\IIbr+\II_*+\De_0+\ep^2}{r^2}.
\end{align*}
Applying Lemma \ref{evolutionlemma}, we obtain
\begin{align*}
    \left|\Om-\frac{1}{2}\right| \les \int_{\ub}^{\ub_*} |\Om \om|_{\infty,S} \les \frac{\IIbr+\II_*+\De_0+\ep^2}{r},
\end{align*}
which implies \eqref{Om1/2}. This concludes the proof of Proposition \ref{propOmc}.
\end{proof}
\subsection{Estimate for \texorpdfstring{$\mo_{0,1}(\widecheck{\Om\trch})$}{} }\label{ssec10.5}
\begin{prop}\label{prop10.5}
We have the following estimate:
\begin{align}
    \begin{split}\label{omtrch}
    \left|r^{2-\frac{2}{p}}|u|^{\frac{s-3}{2}}(r\nab)^q\widecheck{\Om\trch}\right|_{p,S}(u,\ub) &\les\IIbr+\mathcal{I}_*+ \Delta_0+\epsilon^2,\qquad q=0,1.
    \end{split}
\end{align}
\end{prop}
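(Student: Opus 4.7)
The plan is to derive an outgoing transport equation for $\widecheck{\Om\trch}$ along $C_u$ and apply the evolution lemma (Lemma \ref{evolutionlemma}), integrating backward from the last slice $\Cb_*$. Starting from the null structure equation
$$\nab_4\trch+\frac{1}{2}(\trch)^2=-|\hch|^2-2\om\trch$$
of Proposition \ref{nulles} together with $\nab_4\Om=-2\Om\om$, I multiply by $\Om$ to get
$$\Om\nab_4(\Om\trch)+\frac{1}{2}(\Om\trch)^2=-\Om^2|\hch|^2-4(\Om\om)(\Om\trch).$$
Taking the $S$-average using Corollary \ref{dav}, subtracting, and decomposing products via Lemma \ref{chav} (in particular $(\Om\trch)^2-\overline{(\Om\trch)^2}=2\overline{\Om\trch}\,\widecheck{\Om\trch}+\widecheck{\Om\trch}^{\,2}-\overline{\widecheck{\Om\trch}^{\,2}}$) yields an evolution equation of the form
$$\nab_4\widecheck{\Om\trch}+\trch\,\widecheck{\Om\trch}=F,$$
where the coefficient $\la_0=1$ comes from $\overline{\Om\trch}/\Om=\trch-\widecheck{\Om\trch}/\Om$, and $F$ gathers quadratic contributions of type $\Gag\cdot\Gag$ (from $\widecheck{\Om^2|\hch|^2}$, $\widecheck{\Om\trch}^{\,2}/\Om$, $\overline{\widecheck{\Om\trch}^{\,2}}$), the mixed term $\overline{\Om\trch}\,\widecheck{\Om\om}$ to be controlled by Proposition \ref{prop10.4}, and a linear term $\overline{\Om\om}\,\widecheck{\Om\trch}$ of size $O(\ep/r^2)$ to be absorbed by Gronwall for $\ep$ small.

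Next I would apply Lemma \ref{evolutionlemma} with $\la_0=1$ and $p\in[2,4]$, so that $\la_1=2-\frac{2}{p}$, integrating from $\ub$ to $\ub_*$ along $C_u$. Since $\Om=\frac{1}{2}$ on $\Cb_*$, one has $\widecheck{\Om\trch}|_{\Cb_*}=\frac{1}{2}\trchc$, so the boundary term is controlled by $\mo^*(\Cb_*)\leq\II_*$. Each piece of $\int_{\ub}^{\ub_*}|r^{\la_1}F|_{p,S}\,d\ub'$ is then estimated term by term using the bootstrap hypothesis $\mo\leq\ep$ (with H\"older for the quadratic terms) together with Propositions \ref{prop10.3}--\ref{propOmc}. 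Multiplying the resulting bound by the $\ub'$-independent factor $|u|^{(s-3)/2}$ and using $r\simeq\ub'$ gives the estimate for $q=0$.

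For $q=1$, I commute $r\nab$ with $\nab_4$ using Proposition \ref{commutation} to obtain
$$\nab_4(r\nab\widecheck{\Om\trch})+\trch\,(r\nab\widecheck{\Om\trch})=r\nab F+\Gag\cdot(r\nab)^{\leq 1}\widecheck{\Om\trch},$$
reapply Lemma \ref{evolutionlemma}, bound the boundary term at $\Cb_*$ by $\mo_1^*(\trchc)\leq\II_*$, and estimate the new source as in the $q=0$ case.

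The main obstacle is controlling the mixed term $\overline{\Om\trch}\,\widecheck{\Om\om}\sim r^{-1}\widecheck{\Om\om}$: the nonstandard weight $r^{2+(s-3)/6}|u|^{(s-3)/3}$ for $\widecheck{\Om\om}$ supplied by Proposition \ref{prop10.4} must combine with the target weight $r^{\la_1}|u|^{(s-3)/2}$ so that the resulting integrand is $L^1_{\ub'}$ for all $s>3$. This hinges on the decomposition $\tfrac{s-3}{2}=\tfrac{s-3}{3}+\tfrac{s-3}{6}$ together with the inequality $2+\tfrac{s-3}{6}\leq\tfrac{s+1}{2}$ from \eqref{sdayu3}, which is precisely the reason the intermediate weight $r^{(s-3)/6}$ was introduced in Proposition \ref{prop10.4}.
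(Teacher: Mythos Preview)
Your approach is correct and yields the stated bound, but it differs from the paper's in organization. The paper does not average the transport equation for $\Om\trch$; instead it \emph{differentiates} \eqref{4trchi} by $r\nab$ to obtain the evolution equation \eqref{ovchi} for $r\nab(\Om\trch)$ with source $O(\nab(\Om\om))+\Gaa\cdot\Gag^{(1)}$, applies the evolution lemma once to bound $|r^{3-\frac{2}{p}}\nab(\Om\trch)|_{p,S}$, and then invokes Poincar\'e to obtain the $q=0$ case. Since $\nab(\Om\trch)=\nab\widecheck{\Om\trch}$, this single transport argument delivers both $q=0$ and $q=1$ simultaneously. Your route---average, subtract, transport $\widecheck{\Om\trch}$ for $q=0$, then commute $r\nab$ for $q=1$---is equally valid but requires two applications of Lemma~\ref{evolutionlemma} plus the bookkeeping of Lemma~\ref{chav} and Corollary~\ref{dav}. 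The paper's route is slightly more economical here; your route has the advantage of making the $q=0$ structure explicit without passing through derivatives.

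Your identification of the key source term $\overline{\Om\trch}\,\widecheck{\Om\om}\sim r^{-1}\widecheck{\Om\om}$ and its handling via the weight splitting $\tfrac{s-3}{2}=\tfrac{s-3}{3}+\tfrac{s-3}{6}$ is exactly right; this is precisely how the paper uses Proposition~\ref{prop10.4}. The reference to \eqref{sdayu3} is slightly misplaced (that inequality was used for the initial-data comparison in the proof of Proposition~\ref{prop10.4}, not here), but the mechanism you describe---the extra $r^{(s-3)/6}$ providing $L^1_{\ub'}$ integrability---is the correct one.
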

\begin{proof}
We recall the following equation from Proposition \ref{nulles}:
\begin{equation}
    \Om\nab_4(\Om\tr\chi)+\frac{1}{2}\Om\trch(\Om\trch)=-4\Om\trch(\Om\om)-\Om^2|\hch|^2 .\label{4trchi}
\end{equation}
We derive an evolution equation for $r\nab(\Omega\tr\chi)$. Differentiating \eqref{4trchi} by $r\nab$ and applying Proposition \ref{commutation} to obtain
\begin{align}
\begin{split}
&\Om\nab_4(r\nab(\Om\trch))+\Om\trch(r\nab\Om\trch)\\
=&[\Om\nab_4,r\nab](\Om\trch)-4r\nab(\Om\trch)\Om\om-4\Om\trch\, r\nab(\Om\om)+\Gag\cdot \Gag^{(1)}\\
=&\Gaa\cdot \Gag^{(1)}+O(\nab(\Om\om)).\label{ovchi}
\end{split}
\end{align}
Applying Lemma \ref{evolutionlemma} and Proposition \ref{prop10.4}, we obtain
\begin{align*}
|r^{3-\frac{2}{p}}\nab(\Om\trch)|_{p,S}(u,\ub) &\les |r^{3-\frac{2}{p}}\nab(\Om\trch)|_{p,S}(u,\ub_*)+\int_{\ub}^{\ub_*}|r^{2-\frac{2}{p}}\nab(\Om\om)|_{p,S}+|r^{2-\frac{2}{p}}\Gag^{(1)}\cdot\Gaa|_{p,S}\\
&\les\frac{\II_*}{|u|^{\frac{s-3}{2}}}+\int_\ub^{\ub_*}\frac{\De_0+\IIbr+\ep^2}{r^{1+\frac{s-3}{6}}|u|^{\frac{s-3}{3}}}\\
&\les \frac{\IIbr+\II_*+\De_0+\ep^2}{|u|^{\frac{s-3}{2}}}.
\end{align*}
Applying Poincar\'e inequality, we obtain \eqref{omtrch}. This concludes the proof of Proposition \ref{prop10.5}.
\end{proof}
\subsection{Estimate for \texorpdfstring{$\left|r^2\left(\overline{\Omega\tr\chi}-\frac{1}{r}\right)\right|$}{}}
\begin{prop}\label{prop10.9}
We have the following estimate:
\begin{align}
\sup\left|r^2\left(\overline{\Omega\tr\chi}-\frac{1}{r}\right)\right|\lesssim\mathcal{I}_0+\mathcal{I}_*+\Delta_0+\epsilon^2.
\end{align}
\end{prop}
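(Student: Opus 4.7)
The plan is to derive a transport equation along $e_4$ for the scalar $f := \overline{\Om\trch}-\frac{1}{r}$ and integrate it backwards from the last slice $\Cb_*$, where the $\mo^*$--norm provides initial data. Taking the $S$--average of the null structure equation \eqref{4trchi}, commuting $\Om\nab_4$ with $\overline{\,\cdot\,}$ via Corollary \ref{dav}, and using the splitting $\overline{(\Om\trch)^2}=(\overline{\Om\trch})^2+\overline{(\widecheck{\Om\trch})^2}$, I obtain
\begin{equation*}
\Om\nab_4(\overline{\Om\trch})+\frac{1}{2}(\overline{\Om\trch})^2 = \tfrac{1}{2}\overline{(\widecheck{\Om\trch})^2}-\overline{\Om^2|\hch|^2}-4\,\overline{(\Om\om)(\Om\trch)}.
\end{equation*}
Subtracting the identity $\Om\nab_4(1/r)=-\overline{\Om\trch}/(2r)$, which is a direct consequence of Lemma \ref{dint}, yields
\begin{equation*}
\Om\nab_4 f+\frac{1}{2}\overline{\Om\trch}\, f = \mathfrak{F},
\end{equation*}
with $\mathfrak{F}$ denoting the right-hand side above. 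Using $\Om e_4(r)=\frac{1}{2}\overline{\Om\trch}\,r$ once more, the transport coefficient is exactly absorbed into an $r$--weight, giving the compact form $\Om\nab_4(rf)=r\mathfrak{F}$.

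Since $\Om e_4=\pr_\ub$ in the double null coordinates of section \ref{sssec7.1.2}, this last identity integrates directly along $C_u$ from $\ub$ to $\ub_*$. On $\Cb_*$ the foliation is geodesic so $\Om=\frac{1}{2}$, hence $|r^2 f|_{\Cb_*}=\frac{1}{2}|r^2(\overline{\trch}-2/r)|_{\Cb_*}\lesssim\mathcal{I}_*$, which gives
\begin{equation*}
|rf|(u,\ub)\lesssim \frac{\mathcal{I}_*}{r(u,\ub_*)}+\int_{\ub}^{\ub_*} r\,|\mathfrak{F}|(u,\ub')\,d\ub'.
\end{equation*}
The error terms are then controlled pointwise: the bootstrap $\mo\leq\ep$ combined with Proposition \ref{standardsobolev} gives $|\hch|_{\infty,S}\lesssim \ep\, r^{-2}|u|^{-(s-3)/2}$; Proposition \ref{prop10.5} plus Proposition \ref{standardsobolev} gives the same bound for $|\widecheck{\Om\trch}|_{\infty,S}$; Propositions \ref{prop10.3} and \ref{prop10.4} combined with Proposition \ref{standardsobolev} give $|\Om\om|_{\infty,S}\lesssim(\IIbr+\De_0+\ep^2)r^{-2}$. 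Together with $|\Om\trch|\lesssim r^{-1}$, these produce $r|\mathfrak{F}|\lesssim \ep^2 r^{-3}|u|^{-(s-3)}+(\IIbr+\De_0+\ep^2)r^{-2}$, so that, after multiplying the display above by $r(u,\ub)\leq r(u,\ub_*)$,
\begin{equation*}
r^2|f|(u,\ub)\lesssim \mathcal{I}_*+\IIbr+\De_0+\ep^2.
\end{equation*}
The hypothesis $\uo(\Si_0\setminus K)\leq\mathcal{I}_0$ of the second part of Theorem M3 gives $\IIbr\lesssim\mathcal{I}_0$ via the embedding $\uobr\lesssim\uo$, which yields the claimed estimate.

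The delicate point is that one error term, $\overline{(\Om\om)(\Om\trch)}$, decays only like $r^{-3}$, which is precisely the borderline rate at which a naive Gronwall argument against the linear coefficient $\frac{1}{2}\overline{\Om\trch}\sim\frac{1}{2r}$ would produce a $\log r$ loss. The algebraic trick of working with $rf$ rather than $f$ -- which absorbs the linear coefficient exactly into the weight and turns the equation into $\Om\nab_4(rf)=r\mathfrak{F}$ -- is what prevents this loss and produces the sharp $r^{-2}$ bound. The remaining quadratic terms $(\widecheck{\Om\trch})^2$ and $\Om^2|\hch|^2$ are strictly subcritical and contribute only $O(\ep^2)$.
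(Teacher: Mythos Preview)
Your proof is correct and follows essentially the same route as the paper: derive a transport equation along $e_4$ for $W=\overline{\Om\trch}-\frac{1}{r}$, integrate backwards from $\Cb_*$ using the $\mo^*$--data, and control the borderline error $\overline{\Om\trch\cdot\Om\om}$ via Proposition~\ref{prop10.3}. The only cosmetic differences are that the paper works in $L^p(S)$ and applies Sobolev at the end (exploiting $\nab W=0$), while you go pointwise from the start; and the paper feeds the equation into the Evolution Lemma with coefficient $\Om\trch$ (absorbing the $W\cdot\widecheck{\Om\trch}$ discrepancy as $W\cdot\Gag$), whereas you use the exact identity $\Om\nab_4(rf)=r\mathfrak{F}$ --- these are equivalent manipulations.
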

\begin{proof}
Applying Lemma \ref{dint}, we obtain
\begin{equation}
    \frac{d}{d\unu}\frac{1}{r}=-\frac{1}{r^2}\frac{\pa r}{\pa \unu} =-\frac{1}{2r} \overline{\Omega\tr\chi} = -\frac{1}{2r}\Omega\tr\chi+\frac{\widecheck{\Om\trch}}{2r}.
\end{equation}
Recalling \eqref{4trchi} and denoting 
\begin{equation}\label{defW}
    W:=\overline{\Omega\tr\chi}-\frac{1}{r},
\end{equation}
we deduce
\begin{align}
\begin{split}\label{dubW}
    \frac{d}{d\unu}W+\frac{1}{2}\Omega\tr\chi W &=\frac{1}{2}W(\widecheck{\Om\trch})+\frac{1}{2}\overline{(\widecheck{\Om\trch})^2}-\overline{4\Omega\tr\chi(\Omega\omega)+\Omega^2|\hch|^2}\\
    &=-4\ov{\Om\trch\Om\om} +W\cdot\Gag+\Gag\cdot\Gag.
\end{split}
\end{align}
Applying Proposition \ref{prop10.3} and $W\in\Gaa$, we infer
\begin{align*}
|r^{1-\frac{2}{p}}\Om\trch\,\Om\om|_{p,S}(u,\unu)&\les \frac{\mathcal{I}_0 + \mathcal{I}_*+ \Delta_0+\epsilon^2}{r^2},\\
|r^{1-\frac{2}{p}}W\cdot\Gag|_{p,S}(u,\unu)+|r^{1-\frac{2}{p}}\Gag\cdot\Gag|_{p,S}(u,\unu)&\les \frac{\mathcal{I}_0 + \mathcal{I}_*+ \Delta_0+\epsilon^2}{r^3|u|^{\frac{s-3}{2}}} .
\end{align*}
Using Lemma \ref{evolutionlemma}, we have
\begin{align*}
    |r^{1-\frac{2}{p}}W|_{p,S}(u,\unu)\les \frac{\mathcal{I}_0 + \mathcal{I}_*+ \Delta_0+\epsilon^2}{r}.
\end{align*}
Recalling that $\nab W=0$, applying Proposition \ref{standardsobolev}, we deduce
\begin{align*}
    |r^2 W|_{\infty,S}\les \mathcal{I}_0 + \mathcal{I}_*+ \Delta_0+\epsilon^2.
\end{align*}
In view of \eqref{defW}, this concludes the proof of Proposition \ref{prop10.9}.
\end{proof}
\subsection{Estimate for \texorpdfstring{$\mo_{0,1}(\widecheck{\Om\trchb})$}{}}\label{ssec10.6}
\begin{prop}\label{prop10.6}
We have the following estimate:
\begin{equation}\label{esttrchbc}
    \left|r^{2-\frac{2}{p}}|u|^{\frac{s-3}{2}}(r\nab)^q\widecheck{\Om\trchb}\right|_{p,S}(u,\unu) \les \IIbr+\mathcal{I}_*+ \Delta_0+\epsilon^2,\qquad q=0,1.
\end{equation}
\end{prop}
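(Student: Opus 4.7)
The strategy parallels that of Proposition \ref{prop10.5}, but propagating along the incoming direction $\nab_3$ and using initial data on $\Si_0\setminus K$ in place of data on the last slice $\Cb_*$. Starting from the null structure equation $\nab_3\trchb + \frac{1}{2}\trchb^2 = -|\hchb|^2 - 2\omb\trchb$ of Proposition \ref{nulles} together with $\nab_3\Om = -2\Om\omb$, I would first derive the $\nab_3$ analog of \eqref{4trchi}:
\begin{equation*}
\Om\nab_3(\Om\trchb) + \frac{1}{2}\Om\trchb\,(\Om\trchb) = -4\Om\trchb\,(\Om\omb) - \Om^2|\hchb|^2.
\end{equation*}

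Following the derivation of \eqref{ovchi}, I would then apply $r\nab$ and commute using $[r\nab,\Om\nab_3] = \Gab\cdot r\nab + \Gab^{(1)}$ from Proposition \ref{commutation}, placing the small self-coefficients (from $\Gab\cdot r\nab(\Om\trchb)$ and $\Om\omb\cdot r\nab(\Om\trchb)$) on the left-hand side, to arrive at the schematic transport equation
\begin{equation*}
\Om\nab_3(r\nab(\Om\trchb)) + \Om\trchb\, r\nab(\Om\trchb) = O(r\nab(\Om\omb)) + (\Gab,\Gaa)\cdot\Gag^{(1)}.
\end{equation*}
I would next apply Lemma \ref{evolutionlemma} part 2 with $\lambda_0 = 1$ (hence $\lambda_1 = 2 - 2/p$), integrating from the sphere $S(u_0(\ub),\ub) \subset \Si_0\setminus K$ to $S(u,\ub)$. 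The initial term is bounded by $\IIbr$ via the norm $\uo_1(\Si_0\setminus K)(\widecheck{\Om^{-1}\trchb}) \les \IIbr$ in the definition of $\uobr(\Si_0\setminus K)$, after converting from $\Om^{-1}\trchb$ to $\Om\trchb$ by writing $\Om\trchb = \Om^2\cdot\Om^{-1}\trchb$ and invoking Proposition \ref{propOmc} for the control of $\Om-\tfrac{1}{2}$ and $\nab\Om$.

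For the bulk term, Proposition \ref{prop10.4} gives $|r\nab(\Om\omb)|_{p,S}\les (\II_*+\De_0+\ep^2)/(r^{1-2/p}|u|^{(s-1)/2})$, so that using $|\Om\trchb|\les 1/r$ the main contribution is
\begin{equation*}
\int_{u_0(\ub)}^u |r^{2-2/p}\Om\trchb\cdot r\nab(\Om\omb)|_{p,S}\,du' \les \int_{u_0(\ub)}^u \frac{\II_*+\De_0+\ep^2}{|u'|^{(s-1)/2}}\,du' \les \frac{\II_*+\De_0+\ep^2}{|u|^{(s-3)/2}},
\end{equation*}
which is precisely the target decay for any $s > 3$. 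The remaining quadratic terms of type $(\Gab,\Gaa)\cdot\Gag^{(1)}$ enjoy strictly better decay and contribute at most $\ep^2$ with the same $|u|$-weight. Finally, the $q=0$ case follows from $q=1$ by Poincar\'e's inequality on $S(u,\ub)$, as at the end of the proof of Proposition \ref{prop10.5}.

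The step I expect to be the main obstacle is the transfer of the initial data from $\widecheck{\Om^{-1}\trchb}$ (natural in $\uobr$) to $\widecheck{\Om\trchb}$ on $S(u_0(\ub),\ub)$; this forces me to use Proposition \ref{propOmc}, which is itself bounded by $\IIbr+\II_*+\De_0+\ep^2$, so the chain of dependencies between Propositions \ref{prop10.3}--\ref{prop10.6} must be respected in the order they are stated. A secondary technical point is the self-feedback term $\Gab\cdot r\nab(\Om\trchb)$ on the right-hand side: because $|\Gab|\les \ep/(r|u|^{(s-1)/2})$ is integrable in $u'$, this is harmless and can be absorbed by Gronwall without changing the final constant.
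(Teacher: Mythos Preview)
Your proposal is correct and follows essentially the same route as the paper: derive the $\nab_3$ transport equation for $\Om\trchb$, commute with $r\nab$, integrate via Lemma~\ref{evolutionlemma} from $S(u_0(\ub),\ub)$, convert the initial datum from $\widecheck{\Om^{-1}\trchb}$ to $\widecheck{\Om\trchb}$ using Proposition~\ref{propOmc}, and control the principal source $r\nab(\Om\omb)$ by Proposition~\ref{prop10.4}. One minor bookkeeping slip: the differentiated term $r\nab(\Om^2|\hchb|^2)$ produces $\Gab\cdot\Gab^{(1)}$ rather than $\Gab\cdot\Gag^{(1)}$, but this is harmless since $|r^{2-\frac{2}{p}}\Gab\cdot\Gab^{(1)}|_{p,S}\les\ep^2|u|^{1-s}$ integrates to $\ep^2|u|^{2-s}$, which is stronger than the target decay.
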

\begin{proof}
We recall the following equation from Proposition \ref{nulles}:
\begin{equation}\label{nab3Om-1}
    \Om\nab_3(\Om\trchb)+\frac{1}{2}\Om\trchb(\Om\trchb)=-4\Om\trchb(\Om\omb)-\Om^2|\hchb|^2.
\end{equation}
As in \eqref{ovchi}, we derive an evolution equation for $r\nab(\Om\trchb)$. Differentiating \eqref{nab3Om-1} by $r\nab$ and applying Proposition \ref{commutation} to obtain
\begin{align}
\begin{split}
&\Om\nab_3(r\nab(\Om\trchb))+\Om\trchb(r\nab\Om\trchb)\\
=&[\Om\nab_3,r\nab](\Om\trchb)-4r\nab(\Om\trchb)\Om\omb-4\Om\trchb\, r\nab(\Om\omb)+\Gab\cdot\Gab^{(1)}\\
=&\Gaa\cdot \Gag^{(1)}+\Gab\cdot \Gab^{(1)}+O(\nab(\Om\omb)).\label{ovchib}
\end{split}
\end{align}
Applying Lemma \ref{evolutionlemma}, we obtain
\begin{align*}
    |r^{3-\frac{2}{p}}\nab{(\Om\trchb)}|_{p,S}(u,\ub)&\les|r^{3-\frac{2}{p}}\nab{(\Om\trchb)}|_{p,S}(u_0(\ub),\ub)\\
    &+\int_{u_0(\ub)}^u\left(|r^{2-\frac{2}{p}}\nab{(\Om\omb)}|_{p,S}+|r^{2-\frac{2}{p}}\Gab\cdot\Gab^{(1)}|+|r^{2-\frac{2}{p}}\Gaa\cdot \Gag^{(1)}|_{p,S}\right)\\
    &\les |r^{3-\frac{2}{p}}\nab{(\Om^{-1}\trchb)}|_{p,S}(u_0(\ub),\ub)+|r^{2-\frac{2}{p}}\nab\Om|_{p,S}\\
    &+\int_{u_0(\ub)}^u\left(|r^{2-\frac{2}{p}}\nab{(\Om\omb)}|_{p,S}+\frac{\ep^2}{r^2|u|^\frac{s-3}{2}}+\frac{\ep^2}{|u|^{s-1}}\right)\\
    &\les \frac{\IIbr+\II_*+\De_0+\ep^2}{|u|^\frac{s-3}{2}},
\end{align*}
where we used Propositions \ref{prop10.4}, \ref{propOmc} and the fact that $\uobr\leq\IIbr$ at the last step. Combining with Poincar\'e inequality, this concludes the proof of Proposition \ref{prop10.6}.
\end{proof}
\subsection{Estimates for \texorpdfstring{$\mo_{0,1}(\eta)$}{} and \texorpdfstring{$\mo_{0,1}(\etab)$}{}}\label{ssec10.7}
The following proposition plays an essential role in section \ref{sec10}.
\begin{prop}\label{prop10.7}
We have the following estimates:
\begin{align}
    \begin{split}\label{esteta}
        |r^{2-\frac{2}{p}}|u|^{\frac{s-3}{2}}(r\nab)^q\eta|_{p,S}(u,\unu)&\lesssim \IIbr+\mathcal{I}_*+\Delta_0+\epsilon^2,\qquad q=0,1,\\
        |r^{2-\frac{2}{p}}|u|^{\frac{s-3}{2}}(r\nab)^q\ue|_{p,S}(u,\unu)&\lesssim \mathcal{I}_0+\mathcal{I}_*+\Delta_0+\epsilon^2,\qquad q=0,1.
    \end{split}
\end{align}
\end{prop}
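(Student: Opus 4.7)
The strategy is to integrate transport equations for $\eta$ and $\ue$ along null directions with appropriate initial data, using the identity $\eta+\ue=2\nab\log\Om$ from \eqref{6.6} to resolve the mutual coupling $\chi\cdot\ue$, $\chib\cdot\eta$ appearing in the raw structure equations of Proposition \ref{nulles}. Substituting $\ue = 2\nab\log\Om - \eta$ into $\nab_4\eta = -\chi\cdot(\eta-\ue) - \b$ and splitting $\chi = \hch + \tfrac{1}{2}\trch\,\ga$ yields the reorganized equation
\begin{align*}
\nab_4\eta + \trch\,\eta &= -2\hch\cdot\eta + 2\chi\cdot\nab\log\Om - \b,
\end{align*}
and the symmetric manipulation gives
\begin{align*}
\nab_3\ue + \trchb\,\ue &= -2\hchb\cdot\ue + 2\chib\cdot\nab\log\Om + \bb.
\end{align*}
Both equations now fit the framework of Lemma \ref{evolutionlemma} with $\la_0 = 1$, and the mutual coupling has been converted into sources involving $\nab\log\Om$, controlled by Proposition \ref{propOmc}.

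For $\eta$, I apply Lemma \ref{evolutionlemma} along $C_u$ integrating backward from $\unc_*$. Since $\Om \equiv 1/2$ on $\unc_*$, we have $\nab\log\Om = 0$ there and hence $\eta|_{\unc_*} = \ze|_{\unc_*}$, bounded by $\II_*$ via $\mo^*(\unc_*)\leq\II_*$, and similarly $\nab\eta|_{\unc_*} = \nab\ze|_{\unc_*}$. For $\ue$, I apply the incoming version along $\unc_\ub$, integrating forward from the sphere $S(u_0(\ub),\ub)\subset\Si_0\setminus K$, where $\ue$ and $r\nab\ue$ are bounded by $\II_0$ directly from $\uo(\Si_0\setminus K) \leq \II_0$. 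Higher order derivatives are obtained by commuting $r\nab$ through the transport equations via Proposition \ref{commutation}; the commutator terms $[\nab_4,r\nab]\eta = \Ga_g\cdot r\nab\eta + \Ga_g^{(1)}$ and $[\nab_3,r\nab]\ue = \Ga_b\cdot r\nab\ue + \Ga_b^{(1)}$ contribute only at the quadratic level and can be absorbed into the nonlinear bookkeeping.

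The source estimates are then routine: the curvature terms $\b,\bb$ contribute $\De_0$ via $\mr_0^S, \ur_0^S \leq \De_0$; all schematic $\Ga\cdot\Ga$ contributions ($\hch\cdot\eta$, $\hchb\cdot\ue$, $\hch\cdot\nab\log\Om$, and the commutator corrections) contribute $\ep^2$ via the bootstrap $\mo\leq\ep$; and the main linear source $\trch\,\nab\log\Om$ (respectively $\trchb\,\nab\log\Om$) is estimated by Proposition \ref{propOmc}, producing the $\IIbr + \II_* + \De_0 + \ep^2$ term on the right-hand side of the $\eta$ bound, and its $\II_0$ counterpart for $\ue$ (absorbing $\IIbr$ since $\IIbr \leq \II_0$).

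The main obstacle is the estimation of the leading linear source $\trch\,\nab\log\Om \sim (2/r)\nab\log\Om$ in the $\eta$ equation. A crude bound based on the stated decay $|r^{1-2/p}\nab\log\Om|_{p,S} \lesssim (\IIbr+\II_*+\De_0+\ep^2)/(r|u|^{(s-3)/2})$ would yield $\int_\ub^{\ub_*} d\ub'/r'(u,\ub') \sim \log((\ub_*+|u|)/(\ub+|u|))$, a logarithmic factor that can be arbitrarily large as $\ub_*\to\infty$. The resolution is to keep track of the \emph{sharper} decay hidden within the proof of Proposition \ref{propOmc}: the improved weighted estimate on $\nab(\Om\om)$ from Proposition \ref{prop10.4}, obtained via the $\om^\dagger$-renormalization of the Bianchi pair, decays like $r^{-(3+(s-3)/6-2/p)}|u|^{-(s-3)/3}$, strictly faster than what appears in the clean statement of Proposition \ref{propOmc}; integrating this decay in $\ub'$ along $C_u$ produces $r^{-(s-3)/6}|u|^{-(s-3)/3}$, which when multiplied by $|u|^{(s-3)/2}$ gives $(|u|/r)^{(s-3)/6} \leq 1$, avoiding the log. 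For the $\ue$ equation this delicacy does not arise, since the range of $r$ between $u_0(\ub)$ and the current $u$ along $\unc_\ub$ is bounded within a universal factor, making $\int du'/r'$ uniformly bounded.
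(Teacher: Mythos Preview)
Your transport-equation approach is genuinely different from the paper's and, at the level $q=0$, it is correct: rewriting the $\nab_4\eta$ equation so that the cross-coupling becomes $\chi\cdot\nab\log\Om$, and then exploiting the extra $r^{(s-3)/6}$ of decay on $\nab(\Om\om)$ hidden in the proof of Proposition~\ref{prop10.4}, does kill the logarithm and closes the estimate. This is a nice observation.

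The gap is at $q=1$. When you commute $r\nab$ through, the commutator is indeed quadratic, but the \emph{differentiated source} is not: $r\nab\big(2\chi\cdot\nab\log\Om\big)$ contains the linear term $\trch\cdot r\nab\nab\log\Om$. Since $\nab\log\Om=\tfrac12(\eta+\ue)$, this is exactly $\tfrac12\trch\cdot r\nab(\eta+\ue)$, so the mutual coupling you removed at order zero reappears at order one. To kill it by your mechanism you would need a sharp (i.e.\ $\IIbr+\II_*+\De_0+\ep^2$--sized, with the extra $r^{(s-3)/6}$ decay) bound on $(r\nab)^2\log\Om$, which in turn would require a second-derivative version of Proposition~\ref{prop10.4}, hence $L^p(S)$ control of $r\nab\b$; but Theorem~M3 only assumes $\mr_0^S+\ur_0^S\le\De_0$, so no such control is available. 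Falling back on the bootstrap $\mo\le\ep$ for $r\nab\nab\log\Om$ gives only an $\ep$-sized source with $r^{-1}$ decay, producing the same logarithmic divergence you identified at $q=0$ (for $\eta$) and, even for $\ue$ where the log is absent, an $\ep$ rather than $\ep^2$ contribution, so the bootstrap does not improve.

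The paper sidesteps this entirely by working with the \emph{mass aspect function}. One derives a $\nab_4$-transport equation for the scalar $\widecheck{[\mu]}:=\big(\mu+\tfrac14\trch\trchb\big)\widecheck{\ }$, whose source is $r^{-1}O(\rhoc)+\Gab\cdot\nab\Gag$ (the dangerous $\ov{\rho}$ having been removed by taking the average-free part). This gives $|r^{3-2/p}\widecheck{[\mu]}|_{p,S}\lesssim(\II_*+\De_0+\ep^2)/|u|^{(s-3)/2}$ by a single application of Lemma~\ref{evolutionlemma}. Then the div--curl Hodge system $\sdiv\eta=-\widecheck{[\mu]}-\rhoc+\Gag\cdot\Gab$, $\curl\eta=\si+\Gag\cdot\Gab$ yields \emph{both} $\eta$ and $r\nab\eta$ in one shot via the elliptic estimate of Proposition~\ref{prop7.3}. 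Thus the paper never differentiates a transport equation for $\eta$ itself; the derivative gain comes for free from ellipticity on the sphere, which is why no second derivative of $\Om$ (and hence no circularity) is needed.
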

\begin{proof}
We recall the following null structure equations from Proposition \ref{nulles}:
\begin{align*}
    \nabla_4\eta&= -\chi\cdot\eta + \chi\cdot\ue-\beta, \\
    \nabla_3\ue&= -\unchi\cdot\ue +\unchi\cdot\eta+\unb.
\end{align*}
We introduce the mass aspect functions
\begin{align}
    \begin{split}
        \mu&:=-\sdiv \eta +\frac{1}{2} \widehat\chi\cdot\widehat\unchi-\rho, \\
        \uu&:= -\sdiv\ue+\frac{1}{2}\widehat\unchi\cdot\widehat\chi-\rho.\label{defmu}
    \end{split}
\end{align}
By a direct computation, we obtain\footnote{See Lemma 4.3.1 of \cite{kn}.}
\begin{align}
    \begin{split}\label{computationlongmu}
\nab_4\mu +\tr\chi\,\mu&=G+\frac{1}{2}\tr\chi\,\uu, \\
\nab_3\uu +\tr\unchi\,\uu&=\underline{G}+\frac{1}{2}\tr\unchi \,\mu,
    \end{split}
\end{align}
where
\begin{align*}
    G=\trch\,\rho+\Gab\cdot\nab\Gag,\qquad
    \underline{G}=\trchb\,\rho+\Gab\cdot\nab\Gab.
\end{align*} 
We introduce the following modifications of $\mu$ and $\uu$:
\begin{equation}\label{mumc}
    [\mu]:=\mu+\frac{1}{4}\trch\,\trchb,\qquad [\mub]:=\mub+\frac{1}{4}\trch\,\trchb.
\end{equation}
We also define
\begin{align}
\begin{split}
    \widecheck{[\mu]}&:=[\mu]-\ov{[\mu]}=-\sdiv \eta +\frac{1}{2} (\widehat\chi\cdot\widehat\unchi-\overline{\widehat\chi\cdot\widehat\unchi})+\frac{1}{4}\widecheck{\trch\trchb}-\rhoc, \\
    \widecheck{[\mub]}&:=[\mub]-\ov{[\mub]}=-\sdiv \ue + \frac{1}{2} (\widehat\chi\cdot\widehat\unchi-\overline{\widehat\chi\cdot\widehat\unchi})+\frac{1}{4}\widecheck{\trch\trchb}-\rhoc,\label{tildemu}
\end{split}
\end{align}
and remark that $\widecheck{[\mu]},\widecheck{[\mub]}\in r^{-1}\Gag^{(1)}$.
To simplify the notations, we denote
\begin{equation*}
    I:=\frac{1}{2} (\widehat\chi\cdot\widehat\unchi-\overline{\widehat\chi\cdot\widehat\unchi})+\frac{1}{4}\widecheck{\trch\trchb}.
\end{equation*}
By the torsion equation \eqref{torsion} and \eqref{tildemu}, we obtain
\begin{align}
\begin{split}
\sdiv\eta&=-\widecheck{[\mu]}+I-\rhoc,\\
\curl\eta&=\sigma+ \frac{1}{2}\widehat\unchi\wedge\widehat\chi,\label{4.3.37}
\end{split}
\end{align}
and
\begin{align}
\begin{split}
    \sdiv\etab&=-\widecheck{[\mub]}+I-\rhoc,\\
    \curl\etab&= -\sigma- \frac{1}{2}\widehat\unchi\wedge\widehat\chi. \label{4.3.38}
\end{split}
\end{align}
According to \eqref{defmu} and applying Proposition \ref{nulles}, we infer
\begin{align*}
    \nab_4[\mu]&=\nabla_4\left(\mu+\frac{1}{4}\tr\chi\tr\unchi\right)\\
    &=\nab_4\mu+\frac{1}{4}\Om^{-1}\trch\nabla_4(\Om\trchb)+\frac{1}{4}\nab_4(\Om^{-1}\tr\chi)\Om\tr\unchi \\
    &=-\tr\chi\mu+G+\frac{1}{2}\tr\chi\left(\frac{1}{2}\widehat\unchi\cdot\widehat\chi-\rho-\sdiv\ue\right)\\
    &+\frac{1}{4}\tr\chi\left[-\frac{1}{2}\tr\chi\tr\unchi-\widehat\chi\cdot\widehat\unchi+2\sdiv\ue+2|\etab|^2+2\rho\right]+\frac{1}{4}\trchb\left[-\frac{1}{2}(\tr\chi)^2-|\widehat\chi|^2\right]\\
    &=-\trch [\mu]+G+\frac{1}{2}\trch|\etab|^2-\frac{1}{4}\trchb|\hch|^2.
\end{align*}
Thus, we obtain
\begin{align}\label{keymuc}
    \nab_4[\mu]+\tr\chi[\mu]=\trch\,\rho+\Gab\cdot\nab\Gag.
\end{align}
By Corollary \ref{dav}, we infer
\begin{align*}
\Om\nab_4\overline{[\mu]}=\ov{\widecheck{\Om\trch}\widecheck{[\mu]}}+\ov{\Om\nab_4[\mu]}=-\ov{\Om\trch [\mu]}+\ov{\Om\trch\,\rho}+\Gab\cdot\nab\Gag.
\end{align*}
Then, we obtain
\begin{align}\label{keymucc}
    \nab_4\widecheck{[\mu]}+\trch\widecheck{[\mu]}=r^{-1}O(\rhoc)+\Gab\cdot\nab\Gag.
\end{align}
Applying Lemma \ref{evolutionlemma}, we deduce
\begin{align}
\begin{split}
    |r^{2-\frac{2}{p}}\widecheck{[\mu]}|_{p,S}(u,\unu)\les& |r^{2-\frac{2}{p}}\widecheck{[\mu]}|_{p,S}(u,\unu_*)+\int_{\unu}^{\unu_*}|r^{1-\frac{2}{p}}\rhoc|_{p,S}+\int_{\unu}^{\unu_*}|r^{2-\frac{2}{p}}\Gab\cdot\nab\Gag|_{p,S}\\
    \les&\frac{\II_*+\De_0}{r|u|^{\frac{s-3}{2}}}+\int_{\unu}^{\unu_*}\frac{\De_0}{r^2|u|^{\frac{s-3}{2}}}+\int_{\unu}^{\unu_*}\frac{\ep^2}{r^2|u|^{s-2}}\\
    \les&\frac{\II_*+\De_0+\ep^2}{r|u|^{\frac{s-3}{2}}}.
\end{split}
\end{align}
Applying Proposition \ref{prop7.3} to \eqref{4.3.37}, recalling that 
\begin{align*}
I=\frac{1}{4}\widecheck{\trch\trchb}+\Gag\cdot\Gab=r^{-1}O(\trchc)+r^{-1}O(\trchbc)+\Gag\cdot\Gab,    
\end{align*}
we have for $q=0,1$
\begin{align*}
|r^{2-\frac{2}{p}}(r\nab)^q\eta|_{p,S}(u,\ub)&\les|r^{3-\frac{2}{p}}\widecheck{[\mu]}|_{p,S}(u,\ub)+|r^{2-\frac{2}{p}}(\widecheck{\trch},\widecheck{\trchb})|_{p,S}(u,\ub)\\
&+|r^{2-\frac{2}{p}}\Gag\cdot\Gab|_{p,S}(u,\ub)+|r^{3-\frac{2}{p}}(\rhoc,\si)|_{p,S}(u,\ub)\\
&\les\frac{\IIbr+\mathcal{I}_*+\Delta_0+\epsilon^2}{|u|^{\frac{s-3}{2}}},
\end{align*}
where we used Propositions \ref{propOmc}, \ref{prop10.5} and \ref{prop10.6}. This concludes the first estimate of \eqref{esteta}. The second estimate is similar and left to the reader. This concludes the proof of Proposition \ref{prop10.7}.
\end{proof}
Remark that Propositions \ref{prop10.5} and \ref{prop10.7} implies \eqref{old5.2}, which concludes the first part of Theorem M3. In the sequel, we focus on the second part and hence we assume $\uo(\Si_0\setminus K)\leq\II_0$.
\subsection{Estimates for \texorpdfstring{$\mo_{0,1}(\widehat\chi)$}{} and \texorpdfstring{$\mo_{0,1}(\hchb)$}{}}\label{ssec10.1}
\begin{prop}\label{prop10.1}
We have the following estimate:
\begin{align}
    \begin{split}\label{hchhchbest}
        |r^{2-\frac{2}{p}}|u|^{\frac{s-3}{2}}(r\nab)^q\hch|_{p,S}(u,\unu) &\les\II_0+\II_* + \De_0+\ep^2,\qquad q=0,1,\\
        |r^{1-\frac{2}{p}}|u|^{\frac{s-1}{2}}(r\nab)^q\hchb|_{p,S}(u,\unu) &\les\II_0+\II_*+\De_0+\ep^2,\qquad q=0,1.
    \end{split}
\end{align}
\end{prop}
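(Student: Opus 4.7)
The plan is to obtain \eqref{hchhchbest} by purely elliptic means, using the Codazzi equations of Proposition \ref{nulles} as Hodge systems on each sphere $S(u,\ub)$ and applying the $L^p$--elliptic estimate of Proposition \ref{prop7.3}(3). Since $\hat\chi,\hat{\unchi}\in\sfr_2$, the Codazzi identities read schematically
\begin{align*}
  d_2\hat\chi &=\tfrac{1}{2}\nab\trchc-\b-\ze\cdot\hat\chi+\tfrac{1}{2}\trch\,\ze,\\
  d_2\hat{\unchi}&=\tfrac{1}{2}\nab\trchbc+\unb+\ze\cdot\hat{\unchi}-\tfrac{1}{2}\trchb\,\ze,
\end{align*}
and Proposition \ref{prop7.3}(3) applied to each yields the combined bound
\begin{equation*}
  |\hat\chi|_{p,S}+r|\nab\hat\chi|_{p,S}\les r|d_2\hat\chi|_{p,S},\qquad |\hat{\unchi}|_{p,S}+r|\nab\hat{\unchi}|_{p,S}\les r|d_2\hat{\unchi}|_{p,S},
\end{equation*}
which already treats both $q=0$ and $q=1$ simultaneously after multiplication by the appropriate weight.

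For $\hat\chi$, I would multiply by $r^{2-2/p}|u|^{(s-3)/2}$ to reduce the estimate to controlling $|r^{3-2/p}|u|^{(s-3)/2}d_2\hat\chi|_{p,S}$. The linear terms are handled as follows: the $\nab\trchc$ term is bounded by Proposition \ref{prop10.5} (after passing from $\Om\trch$ to $\trch$ using $\nab\Om\cdot\trch\in r^{-1}\Gag$ estimated by Proposition \ref{propOmc}), giving the full $\II_0+\II_*+\De_0+\ep^2$; the $\b$ term is bounded via the bootstrap $\mr_0^S[\b]\leq\De_0$, using the weight comparison $r^{3}|u|^{(s-3)/2}\leq r^{7/2}|u|^{(s-4)/2}$ valid because $|u|\leq r$ in $\kk$ by \eqref{equiuubr}. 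The quadratic terms $\ze\cdot\hat\chi$ and $\ze\cdot\trch$ are of type $\Gag\cdot\Gag$ and $r^{-1}\Gag$ respectively, and Lemma \ref{decayGagGabGaa} combined with the bootstrap $\mo\leq\ep$ absorbs them into $\ep^2$.

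For $\hat{\unchi}$, multiplying the elliptic estimate by $r^{1-2/p}|u|^{(s-1)/2}$ reduces matters to $|r^{2-2/p}|u|^{(s-1)/2}d_2\hat{\unchi}|_{p,S}$. The $\nab\trchbc$ term is controlled by Proposition \ref{prop10.6}, with weight comparison $r^{2}|u|^{(s-1)/2}\leq r^{3}|u|^{(s-3)/2}$ again valid from $|u|\leq r$. The $\unb$ term exactly matches the $\ur_0^S[\unb]\leq\De_0$ norm. The nonlinear terms $\ze\cdot\hat{\unchi}$ and $\ze\cdot\trchb$ are again of type $\Gag\cdot\Gab$ and $r^{-1}\Gag$, absorbed into $\ep^2$ via Lemma \ref{decayGagGabGaa}.

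No transport estimate is required for this proposition, which makes it rather direct; the only subtle point is the \emph{weight bookkeeping}, namely verifying that the right-hand sides produced by the Codazzi decomposition, estimated in the natural norms from $\mr_0^S$, $\ur_0^S$, and Propositions \ref{prop10.5}--\ref{prop10.6}, can be absorbed into the stronger target weights $r^{2-2/p}|u|^{(s-3)/2}$ for $\hat\chi$ and $r^{1-2/p}|u|^{(s-1)/2}$ for $\hat{\unchi}$. This is precisely where the relation $|u|\leq r$ in the bootstrap region $\kk$ is used, and it is this reliance that explains why these $\hat\chi,\hat{\unchi}$ estimates sit in the second part of Theorem M3 (requiring the full $\II_0$) rather than the first (which only has $\IIbr$ available).
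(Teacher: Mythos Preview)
Your overall strategy---apply the $L^p$ elliptic estimate of Proposition~\ref{prop7.3}(3) to the Codazzi systems for $\hat\chi,\hat{\unchi}$---matches the paper exactly, and your treatment of $\nab\trchc$, $\nab\trchbc$, $\b$, $\bb$, and the genuinely quadratic pieces $\ze\cdot\hat\chi$, $\ze\cdot\hat{\unchi}$ is correct.

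There is, however, a genuine gap in the handling of the term $\tfrac{1}{2}\trch\,\ze$ (and analogously $\tfrac{1}{2}\trchb\,\ze$). You correctly identify it as $r^{-1}\Gag$, but then claim it can be ``absorbed into $\ep^2$'' via the bootstrap $\mo\leq\ep$. This is false: the bootstrap only gives
\[
  |r^{2-\frac{2}{p}}|u|^{\frac{s-3}{2}}\ze|_{p,S}\leq \ep,
\]
and since $\ep=\ep_0^{2/3}\gg \II_0+\II_*+\De_0+\ep^2$, this does not close the estimate. The paper instead invokes Proposition~\ref{prop10.7} and the identity $\ze=\tfrac{1}{2}(\eta-\ue)$ to obtain the \emph{improved} bound
\[
  |r^{2-\frac{2}{p}}|u|^{\frac{s-3}{2}}\ze|_{p,S}\les \II_0+\II_*+\De_0+\ep^2.
\]
This is not a cosmetic point: the dependence on $\II_0$ enters precisely here, through the $\ue$ estimate in Proposition~\ref{prop10.7}. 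So your final paragraph misattributes the reason Proposition~\ref{prop10.1} belongs to the second part of Theorem~M3: it is not the weight comparison $|u|\leq r$ (which you use for $\b$, $\bb$), but the need for the full $\ue$ control (hence $\II_0$) to bound $\ze$.
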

\begin{proof}
We recall the following Codazzi equations from Proposition \ref{nulles}
\begin{align*}
    \sdiv \hch &=\frac{1}{2}\nab\trch-\ze\cdot \left(\hch-\frac{1}{2}\trch\right)-\b,\\
    \sdiv \hchb&=\frac{1}{2}\nab\trchb+\ze\cdot \left(\hchb-\frac{1}{2}\trchb\right)+\bb.
\end{align*}
Applying Propositions \ref{propOmc}, \ref{prop10.5}, \ref{prop10.7} and the fact that $\ze=\frac{\eta-\etab}{2}$, we obtain
\begin{align*}
    |r^{3-\frac{2}{p}}\sdiv\hch|_{p,S}&\les |r^{3-\frac{2}{p}}\nab\trch|_{p,S}+|r^{2-\frac{2}{p}}\ze|_{p,S}+|r^{3-\frac{2}{p}}\b|_{p,S}+|r^{3-\frac{2}{p}}\Gag\cdot\Gag|_{p,S}\\
    &\les \frac{\II_0+\II_*+\De_0+\ep^2}{|u|^\frac{s-3}{2}}+\frac{\ep^2}{r|u|^{s-3}}\\
    &\les \frac{\II_0+\II_*+\De_0+\ep^2}{|u|^\frac{s-3}{2}},
\end{align*}
and respectively
\begin{align*}
    |r^{2-\frac{2}{p}}\sdiv\hchb|_{p,S}&\les |r^{2-\frac{2}{p}}\nab\trch|_{p,S}+|r^{1-\frac{2}{p}}\ze|_{p,S}+|r^{2-\frac{2}{p}}\bb|_{p,S}+|r^{2-\frac{2}{p}}\Gag\cdot\Gab|_{p,S}\\
    &\les \frac{\II_0+\II_*+\De_0+\ep^2}{|u|^\frac{s-1}{2}}+\frac{\ep^2}{r|u|^{s-2}}\\
    &\les \frac{\II_0+\II_*+\De_0+\ep^2}{|u|^\frac{s-1}{2}}.
\end{align*}
Combining with Proposition \ref{prop7.3}, these conclude the proof of Proposition \ref{prop10.1}.
\end{proof}
\subsection{Estimate for \texorpdfstring{$\mo_{2}(\Omc)$}{}}\label{ssec10.8}
\begin{prop}\label{prop10.8}
We have the following estimate for $q=0,1,2$:
\begin{align}
\begin{split}\label{estimationOm}   
|r^{1+q-\frac{2}{p}}|u|^\frac{s-3}{2}\nab^q\Omc|_{p,S}(u,\ub)&\les\mathcal{I}_0+ \mathcal{I}_*+ \Delta_0+\epsilon^2.
\end{split}
\end{align}
\end{prop}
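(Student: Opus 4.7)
For $q=0,1$ the bound is already contained in Proposition \ref{propOmc}, since the RHS there is $\IIbr+\II_*+\De_0+\ep^2$ and $\IIbr\les \II_0$ by the definition of $\uo(\Si_0\setminus K)$. The only new content is therefore the case $q=2$. The difficulty is that pushing the transport argument of Proposition \ref{propOmc} one derivative further would require controlling $\nab^2(\Om\om)$, which is not available in the hypotheses of Theorem M3. The plan is to bypass this by exploiting the elliptic identity \eqref{6.6}, which allows us to trade two angular derivatives on $\Om$ for one angular derivative on $\eta+\etab$.

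Concretely, since $\Omc=\Om-\ov{\Om}$ and $\ov{\Om}$ is constant on $S$, we have $\nab^{q}\Omc=\nab^{q}\Om$ for $q\geq 1$. The identity $\nab\log\Om=\frac{1}{2}(\eta+\etab)$ of \eqref{6.6} yields
\begin{equation*}
\nab\Om=\frac{\Om}{2}(\eta+\etab),\qquad \nab^{2}\Om=\frac{\Om}{4}(\eta+\etab)\otimes(\eta+\etab)+\frac{\Om}{2}\nab(\eta+\etab).
\end{equation*}
The first step is to estimate the linear term on the RHS: using that $\Om$ is bounded (Proposition \ref{propOmc} together with \eqref{Om1/2}) and applying Proposition \ref{prop10.7} with $q=1$, one immediately obtains
\begin{equation*}
\left|r^{3-\frac{2}{p}}\Om\,\nab(\eta+\etab)\right|_{p,S}=\left|r^{2-\frac{2}{p}}\Om\,(r\nab)(\eta+\etab)\right|_{p,S}\les \frac{\II_0+\II_*+\De_0+\ep^2}{|u|^{\frac{s-3}{2}}}.
\end{equation*}

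The second step is the quadratic term. Here one uses H\"older together with the bootstrap assumption $\mo\leq\ep$: since $\eta,\etab\in\Gag$, Lemma \ref{decayGagGabGaa} gives the pointwise bound $|r(\eta+\etab)|_{\infty,S}\les \ep\,r^{-1}|u|^{-\frac{s-3}{2}}$, so pairing with Proposition \ref{prop10.7} for $q=0$ gives
\begin{equation*}
\left|r^{3-\frac{2}{p}}\,\Om\,(\eta+\etab)\otimes(\eta+\etab)\right|_{p,S}\les \left|r(\eta+\etab)\right|_{\infty,S}\cdot\left|r^{2-\frac{2}{p}}(\eta+\etab)\right|_{p,S}\les \frac{\ep\,(\II_0+\II_*+\De_0+\ep^2)}{r\,|u|^{s-3}},
\end{equation*}
which is strictly better than the target bound since $\ep r^{-1}|u|^{-(s-3)/2}\ll 1$. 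Combining the two contributions yields
\begin{equation*}
\left|r^{3-\frac{2}{p}}|u|^{\frac{s-3}{2}}\nab^{2}\Omc\right|_{p,S}(u,\ub)\les \II_0+\II_*+\De_0+\ep^2,
\end{equation*}
which together with Proposition \ref{propOmc} proves the claim for all $q=0,1,2$. The argument is short and routine; no single step is an obstacle, the only conceptual point is to use the elliptic relation \eqref{6.6} rather than a transport equation at second order.
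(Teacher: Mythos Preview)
Your proof is correct and rests on the same key identity as the paper's, namely $\nab\log\Om=\frac{1}{2}(\eta+\etab)$ from \eqref{6.6}, combined with Proposition \ref{prop10.7}. The only difference in execution is that the paper takes the divergence to write $\De\log\Om=\frac{1}{2}\sdiv(\eta+\etab)$ and then invokes the elliptic estimate of Proposition \ref{prop7.3} to control $(r\nab)^{\leq 2}\log\Om$ all at once, whereas you differentiate the first-order relation $\nab\Om=\frac{\Om}{2}(\eta+\etab)$ directly and estimate the linear and quadratic pieces by hand. Your route is slightly more elementary since it bypasses the elliptic estimate; the paper's route is a touch cleaner in that it handles all three values of $q$ in one line. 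Either way the substance is identical.
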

\begin{proof}
Recall that $\eta+\etab=2\nabla\log\Omega$, which implies
\begin{equation*}
    \Delta \log\Omega =\frac{1}{2}\sdiv (\eta+\ue).
\end{equation*}
Applying Propositions \ref{prop7.3} and \ref{prop10.7}, we obtain
\begin{align}
\begin{split}
|r^{{1}-\frac{2}{p}}(r\nab)^q({\log\Omega})|_{p,S}\les |r^{3-\frac{2}{p}}\sdiv(\eta+\ue)|_{p,S} \lesssim\frac{\mathcal{I}_0+ \mathcal{I}_*+ \Delta_0+\epsilon^2}{|u|^{\frac{s-3}{2}}},\qquad q=0,1,2. \label{4.3.64}
\end{split}
\end{align}
Combining with \eqref{estOmc}, this concludes the proof of Proposition \ref{prop10.8}.
\end{proof}
\subsection{Estimate for \texorpdfstring{$\left|r^2\left(\overline{\Omega\tr\unchi}+\frac{1}{r}\right)\right|$}{}}
\begin{prop}\label{prop10.10}
We have the following estimate:
\begin{align}
\sup\left|r^2\left(\overline{\Omega\tr\unchi}+\frac{1}{r}\right)\right|\lesssim \mathcal{I}_0+ \mathcal{I}_*+\De_0+\ep^2.
\end{align}
\end{prop}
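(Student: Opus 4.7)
The plan is to mirror the proof of Proposition \ref{prop10.9}, interchanging the roles of $e_4$ and $e_3$ and integrating along the incoming cones $\unc_\ub$ from the initial data on $\Sigma_0\setminus K$ (so that the initial term is controlled by $\II_0$ rather than $\II_*$). I set
$$\underline{W}:=\overline{\Om\trchb}+\frac{1}{r},$$
which depends only on $(u,\ub)$ and is constant on each sphere $S(u,\ub)$. Using Lemma \ref{dint} one has $\Om e_3(r^{-1})=-\frac{1}{2r}\overline{\Om\trchb}$. Taking the $S$-average of the null structure equation \eqref{nab3Om-1} via Corollary \ref{dav}, and splitting $\overline{(\Om\trchb)^2}=(\overline{\Om\trchb})^2+\overline{(\widecheck{\Om\trchb})^2}$, a short computation entirely parallel to \eqref{dubW} yields
\begin{align*}
\Om\nab_3\underline{W}+\frac{1}{2}\Om\trchb\,\underline{W}=\frac{1}{2}\underline{W}\,\widecheck{\Om\trchb}+\frac{1}{2}\overline{(\widecheck{\Om\trchb})^2}-4\,\overline{(\Om\trchb)(\Om\omb)}-\overline{\Om^2|\hchb|^2}.
\end{align*}

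\textbf{Source estimates and integration.} Using $\Om\trchb=-1/r+O(r^{-2})$ together with Lemma \ref{chav}, the dominant piece of $\overline{(\Om\trchb)(\Om\omb)}$ is $-r^{-1}\overline{\Om\omb}$, which by Proposition \ref{prop10.3} is $O\bigl((\II_*+\De_0+\ep^2)/r^3\bigr)$. The terms involving $\overline{(\widecheck{\Om\trchb})^2}$ and $\overline{|\hchb|^2}$ are estimated by Propositions \ref{prop10.6} and \ref{prop10.1} (together with $\overline{|h|^2}\lesssim r^{-2}\|h\|_{L^2(S)}^2$) to decay strictly faster, while $\underline{W}\,\widecheck{\Om\trchb}$ is negligible under the bootstrap $\underline{W}\in\Gaa$. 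Dividing by $\Om=\tfrac12+O(\ep/r)$ brings the equation into the form $\nab_3\underline{W}+\tfrac12\trchb\,\underline{W}=\underline{F}$ required by Lemma \ref{evolutionlemma}(2) with $\la_0=\tfrac12$, $\la_1=1-\tfrac{2}{p}$. Integrating from $u_0(\ub)$ to $u$, the initial contribution at $S(u_0(\ub),\ub)\subset\Sigma_0\setminus K$ is bounded via the term $\sup_\ub|r^2(\Om\trchb+r^{-1})|\leq\II_0$ in $\uo(\Si_0\setminus K)$, which after averaging gives $|\underline{W}|_{\Sigma_0}\les \II_0/r^2$. Using the equivalence $r\simeq(\ub-u)/2$ from Lemma \ref{equivalence}, one has $\int_{u_0(\ub)}^u r^{-2}\,du'\lesssim r^{-1}$, so the bulk contributes $(\II_0+\II_*+\De_0+\ep^2)/r$. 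Altogether,
$$|r^{1-2/p}\underline{W}|_{p,S}(u,\ub)\lesssim \frac{\II_0+\II_*+\De_0+\ep^2}{r}.$$
Since $\underline{W}$ is constant on $S$, the left-hand side equals $r\,|\underline{W}|$ up to a universal constant, so $|r^2\underline{W}|\lesssim \II_0+\II_*+\De_0+\ep^2$, which is the claim.

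\textbf{Main obstacle.} The argument is a mechanical adaptation of Proposition \ref{prop10.9}, so no genuinely new difficulty arises; the one point that requires care is bookkeeping of the initial data. The norm $\uo(\Si_0\setminus K)$ is defined with respect to the foliation $(u,\ub)$ of $\kk$ (not the initial layer foliation), so $\sup_\ub|r^2(\Om\trchb+r^{-1})|\leq\II_0$ is precisely the pointwise initial bound needed at $S(u_0(\ub),\ub)$; this is built into the definition in section \ref{Si0setminusK}. Once this is checked, the $r$-decay is gained from the incoming integration exactly as in Proposition \ref{prop10.9}.
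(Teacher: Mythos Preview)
Your proposal is correct and follows essentially the same approach as the paper: both define $\underline{W}=\overline{\Om\trchb}+\tfrac{1}{r}$, derive the transport equation $\Om\nab_3\underline{W}+\tfrac12\Om\trchb\,\underline{W}=-4\overline{\Om\trchb\cdot\Om\omb}+\Gag\cdot\underline{W}+\Gab\cdot\Gab$, and integrate via Lemma \ref{evolutionlemma} from $\Sigma_0\setminus K$ using the $\II_0$--bound on the initial data. The only cosmetic difference is that you read off the sup bound directly from the fact that $\underline{W}$ is constant on $S$, whereas the paper takes $p=4$ and invokes Proposition \ref{standardsobolev}; these are equivalent since $|r^{1-2/p}\underline{W}|_{p,S}=c_p\,r|\underline{W}|$ for a scalar constant on $S$.
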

\begin{proof}
We introduce the following notation:
\begin{align*}
    \underline{W} := \overline{\Omega\tr\unchi}+\frac{1}{r}.
\end{align*}
As in \eqref{dubW}, we have
\begin{equation}
\Om\nab_3\underline{W}+\frac{1}{2}\Om\tr\chib\,\underline{W} =-4\overline{{\Om\trchb(\Omega\omb)}}+\Gag\cdot\underline{W}+\Gab\cdot\Gab,
\end{equation}
Using Lemma \ref{evolutionlemma} and $\underline{W}\in\Gaa$, we have
\begin{align*}
    \left|r^{1-\frac{2}{p}}\underline{W}\right|_{p,S}(u,\ub)&\les\left|r^{1-\frac{2}{p}}\underline{W}\right|_{p,S}(u_0(\ub),\ub)+ \int_{u_0(\ub)}^u \frac{\II_0+\II_*+\De_0+\ep^2}{r^2}+\frac{\II_0+\II_*+\De_0+\ep^2}{r|u|^{s-1}}\\
    &\les\frac{\mathcal{I}_0+\mathcal{I}_*+\Delta_0+\epsilon^2}{r}.
\end{align*}
Noticing that $\nab\underline{W}=0$, taking $p=4$ and applying Proposition \ref{standardsobolev}, we conclude the proof of Proposition \ref{prop10.10}.
\end{proof}
In view of Propositions \ref{prop10.3}-\ref{prop10.10}, we obtain \eqref{old5.4}. This concludes the proof of Theorem M3.
\section{Initialization and extension (Theorems M0, M2 and M4)}\label{sec11}
\subsection{Preliminaries}
\subsubsection{Null frame transformations}\label{nullframetrans}
\begin{lem}\label{lemchange}
Let two null frames $(e_3,e_4,e_1,e_2)$ and $(e'_3,e'_4,e'_1,e'_2)$ associated to double null foliations $(u,\ub)$ and $(u',\ub)$, and assume that they have the same generator $\unl$ for the incoming direction. Then, a null frame transformation from the null frame $(e_3,e_4,e_1,e_2)$ to $(e'_3,e'_4,e'_1,e'_2)$ can be written in the form:
\begin{align}
    \begin{split}\label{change}
        e'_4&=\lambda\left(e_4+f_Be_B+\frac{1}{4}|f|^2e_3\right),\\
        e'_3&=\lambda^{-1}e_3,\\
        e'_A&=e_A+\frac{1}{2}f_A e_3,
    \end{split}
\end{align}
where 
\begin{align}\label{lambdaOmOm'}
    \la=\frac{\Om}{\Om'}
\end{align}
is a scalar function and $f$ is a $1$--form. Moreover, the inverse transform of \eqref{change} is given by
\begin{align}
\begin{split}
e_4&=\la^{-1}e'_4-f^A e_A'+\frac{\la}{4}|f|^2 e_3',\\
e_3&=\la e'_3,\\
e_A&=e'_A-\frac{\la}{2}f_A e'_3. \label{change'}
\end{split}
\end{align} 
\end{lem}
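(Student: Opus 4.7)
The argument is purely algebraic and rests on the single geometric input that both frames share the incoming null generator $\unl = -\grad \ub$, which is common to the two foliations since they share the same optical function $\ub$. I would organize the proof into three steps, handling the three lines of \eqref{change} separately.

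First, since $e_3 = 2\Om\,\unl$ and $e_3' = 2\Om'\,\unl$ by definition, one reads off immediately $e_3' = (\Om'/\Om)\,e_3$, which establishes both the second line of \eqref{change} and the identity \eqref{lambdaOmOm'} with $\la = \Om/\Om'$.

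Second, for the tangential vectors, I would use that the tangent space of $\unc_\ub$ at every point coincides with the $\g$-orthogonal complement of $\grad\ub$, and hence is spanned by $(e_3, e_1, e_2)$ as well as by $(e_3', e_1', e_2')$. Therefore one may decompose
\begin{equation*}
e_A' = c_A{}^B e_B + \tfrac{1}{2} f_A e_3
\end{equation*}
for some matrix $c_A{}^B$ and some $1$-form $f_A$. Using $\g(e_3,e_3) = \g(e_3,e_B) = 0$, the orthonormality $\g(e_A', e_B') = \de_{AB}$ reduces to $c_A{}^C c_B{}^D \de_{CD} = \de_{AB}$, i.e.\ $c \in O(2)$. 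Absorbing this rotation into the choice of orthonormal frame on $S(u',\ub)$ (a standard gauge fixing in the null-frame-transformation formalism of \cite{kl-sz1}), one may take $c_A{}^B = \de_A{}^B$, producing the third line of \eqref{change} and defining $f$.

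Third, for $e_4'$ I would expand $e_4' = a\,e_3 + b^B e_B + c\,e_4$ in the basis $(e_3,e_1,e_2,e_4)$ and determine the three unknowns $a,b,c$ from the three conditions characterizing $e_4'$ within a null frame. The normalization $\g(e_3', e_4') = -2$ gives $c = \la$; orthogonality $\g(e_A', e_4') = 0$, computed using the form of $e_A'$ just derived, yields $b_A = c f_A = \la f_A$; and nullity $\g(e_4', e_4') = 0$ becomes $|b|^2 - 4ac = 0$, so $a = \la |f|^2 / 4$. Substituting back gives exactly the first line of \eqref{change}. The inverse formulas \eqref{change'} then follow by straightforward linear algebra, inverting the $4\times 4$ system, or by directly checking that the composition with \eqref{change} is the identity. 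I do not anticipate a serious obstacle: the only subtlety is the $O(2)$ gauge choice in step two, which is conventional, and every other manipulation is an explicit computation using the null-frame relations $\g(e_3,e_4) = -2$, $\g(e_3,e_3) = \g(e_4,e_4) = 0$, $\g(e_A,e_B) = \de_{AB}$, and $\g(e_3,e_A) = \g(e_4,e_A) = 0$.
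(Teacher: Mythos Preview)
Your proposal is correct. The paper, however, takes a much shorter route: it simply invokes the general null-frame transformation formula of Lemma~3.1 in \cite{kl-sz1}, specialized to $\fb=0$ (which is precisely the case where the incoming generator $\unl$ is shared, so $e_3'$ is a pure rescaling of $e_3$), and then reads off $\la=\Om/\Om'$ from $e_3=2\Om\unl$, $e_3'=2\Om'\unl$. Your approach is a direct, self-contained derivation of that same formula from the null-frame orthonormality relations, which is exactly what the cited lemma does in greater generality. The trade-off is clear: your argument is elementary and requires no external reference, while the paper's is one line but depends on \cite{kl-sz1}. The only point worth flagging is that in your step two the $O(2)$ gauge fixing is indeed conventional---this is the same normalization implicit in the $\fb=0$ specialization of the Klainerman--Szeftel formula.
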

\begin{proof}
Applying Lemma 3.1 in \cite{kl-sz1} with $\fb=0$, we obtain \eqref{change}. Notice that \eqref{change'} is a direct consequence of \eqref{change}. Recalling
\begin{align*}
    e'_3=2\Om' \Lb,\qquad e_3=2\Om \Lb,
\end{align*}
we obtain \eqref{lambdaOmOm'} immediately.
\end{proof}
\begin{prop}\label{transformation}
Under the null frame transform \eqref{change}, some of the Ricci coefficients transform as follows:
\begin{align*}
\la^{-1}\trch'&=\trch + \sdiv'f +\lot,\\
0&=\curl'f+\lot,\\
\la \chib'&=\chib,\\
\eta'&=\eta+\frac{1}{2}\la\nab_{e'_3}'f-\omb\, f+\lot,\\
\etab'&=\etab+\frac{1}{4}\trchb\, f+\lot,\\
\la^{-2}\xi'&=\xi+\frac{1}{2}\nab_{\la^{-1}e'_4}'f+\frac{1}{4}\trch\, f+\om f+\lot,\\
\la^{-1}\om'&=\om+\lot,
\end{align*}
where the terms denoted by $\lot$ have the following schematic structure
\begin{align*}
    \lot:= r^{-1}O(f^2)+\Gab\cdot f.
\end{align*}
The curvature components transform as follows:
\begin{align*}
\lambda^{-2}\a'&=\a+O(f)\b+O(f^2)\rho+O(f^3)(\si,\bb)+O(f^4)\aa,\\
\lambda^{-1}\b'&=\b+O(f)(\rho,\si)+O(f^2)\bb+O(f^3)\aa,\\
\rho'&=\rho+O(f)\bb+O(f^2)\aa,\\
\si'&=\si+O(f)\bb+O(f^2)\aa,\\
\lambda\unb'&=\bb+O(f)\aa,\\
\lambda^{2}\aa'&=\aa.
\end{align*}
\end{prop}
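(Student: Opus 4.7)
The strategy is the same for every formula: substitute the expression \eqref{change} for the primed frame into the definition \eqref{defga} (resp. \eqref{defr}) of the primed Ricci coefficient (resp. curvature component), expand using multilinearity of $\g$ and $\R$, apply the Ricci formulas \eqref{ricciformulas} to evaluate every $\D$ that appears, and collect the result in the schematic form ``principal term $+$ $\lot$''. I treat the curvature identities first, since they are purely algebraic, and then tackle the Ricci coefficients.

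\textbf{Curvature components.} These follow at once from multilinearity of $\R$ and $^*\R$: writing, for instance,
\begin{equation*}
  \a'_{AB}=\R(e'_A,e'_4,e'_B,e'_4)
\end{equation*}
and inserting \eqref{change}, every resulting term is a curvature component contracted with powers of $f$ and $\la$. Collecting terms by weight in $f$ and using
\begin{equation*}
  \la=1+\ovla,\qquad |\ovla|\les\osc\ll 1
\end{equation*}
to absorb the $\la$--factors into the displayed powers, one reads off the listed formulas. A systematic way is to introduce the null rotation as the composition of the ``$f$--rotation'' (set $\la=1$) and the ``boost'' (set $f=0$); the boost only contributes $\la^{\pm}$ prefactors, and the $f$--rotation generates precisely the polynomial structure in $f$ with the Hodge duality already accounted for by the appearance of $^*$ in the definitions. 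This is by now a textbook computation (see e.g.\ Lemma 3.1 of \cite{kl-sz1}).

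\textbf{Ricci coefficients.} Here the derivatives of $\la$ and $f$ produce the nontrivial ``principal'' terms on the right-hand side. The key input is that, using \eqref{change'}, the primed horizontal covariant derivative along $e'_A$ satisfies
\begin{equation*}
  \nab'_A=\nab_A+\frac{1}{2}f_A\nab_3+O(f,\ovla)\cdot (\text{frame terms}).
\end{equation*}
Starting with $\chi'_{AB}=\g(\D_{e'_A}e'_4,e'_B)$, insert \eqref{change} and use \eqref{ricciformulas}; the derivative falling on $\la$ and on $f_Be_B$ produces $\nab_A(\la f_B)+\la\chi_{AB}+\tfrac14\la|f|^2\chib_{AB}$, while the projection onto $e'_B$ brings in extra $\frac12 f_B e_3$ contributions. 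Tracing against $\de^{AB}$, all $|f|^2$ terms are absorbed into $\lot=r^{-1}O(f^2)+\Gab\cdot f$, and one gets $\la^{-1}\trch'=\trch+\sdiv' f+\lot$; antisymmetrizing yields $\curl' f=0+\lot$. For $\chib'$ the computation is immediate since $e'_3=\la^{-1}e_3$ and $e'_A=e_A+\tfrac12 f_Ae_3$, with the $\tfrac12 f_Ae_3$ piece producing only $\lot$ when paired against $\chib$. For $\eta',\etab',\xi'$ one applies the transformation to $\tfrac12\g(\D_4 e_3,e_A)$, $\tfrac12\g(\D_3 e_4,e_A)$ and $\tfrac12\g(\D_4 e_4,e_A)$ respectively, and the derivative terms $\tfrac12\nab'_{e'_3}f$ (resp.\ $\tfrac12\nab'_{\la^{-1}e'_4}f$) in the formulas are precisely what come from letting $\D_{e'_3}$ (resp.\ $\D_{e'_4}$) act on the $f_Be_B$ component of $e'_4$ (resp.\ $e'_A$). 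The coefficients $\om$, $\omb$, $\tfrac14\trch$ appearing in these formulas come from the corresponding $\D$-derivatives of $e_3,e_4$ given by the last four lines of \eqref{ricciformulas}. Finally, $\om'=-\tfrac14\g(\D_{e'_4}e'_4,e'_3)$ combined with $\D_{e_4}e_4=-2\om e_4+2\xi_Be_B$ and $\xi=0$ gives $\la^{-1}\om'=\om+\lot$.

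\textbf{Main obstacle.} The algebra is routine but voluminous; the principal difficulty is bookkeeping, namely verifying that every term generated by the expansion that is not part of the displayed principal expression actually lies in the schematic class $\lot=r^{-1}O(f^2)+\Gab\cdot f$. Two checks have to be done carefully: (i) mixed terms involving $\ovla\cdot\Ga$ must be absorbed, which uses $|\ovla|\les\osc/r$ together with the decay $\Gab\in O(r^{-1}|u|^{-(s-1)/2})$; and (ii) terms quadratic in $f$ generated at the very bottom (through the $\tfrac14|f|^2 e_3$ piece of $e'_4$) must combine correctly with the quadratic contributions coming from the cross-terms in $e'_A=e_A+\tfrac12f_A e_3$. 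Once these two classes of remainders are organized, every formula follows by inspection.
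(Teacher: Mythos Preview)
Your approach---direct substitution of \eqref{change} into the definitions and expansion via \eqref{ricciformulas}---is correct for all of the Ricci coefficient formulas except the one for $\om'$, and it coincides with what the paper does (the paper simply cites Proposition 3.3 of \cite{kl-sz1} with $\fb=0$ for those formulas). The curvature identities are indeed purely algebraic and your description is fine. One small imprecision: the identity $\la\chib'=\chib$ is exact, not merely up to $\lot$; your ``$\tfrac12 f_A e_3$ piece producing only $\lot$'' should in fact produce zero, since $\D_{e_3}e_3=-2\omb e_3$ (as $\xib=0$) and $\g(e_3,e'_B)=0$.

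The genuine gap is in your treatment of $\om'$. Writing $\om'=\tfrac14\g(\D_{e'_4}e'_4,e'_3)$ (note the sign in \eqref{defga}) and expanding $e'_4=\la X$ with $X=e_4+f^Be_B+\tfrac14|f|^2e_3$, the derivative $\D_{e'_4}$ hitting $\la$ produces a term $e'_4(\log\la)\,e'_4$, whose pairing with $e'_3$ does \emph{not} vanish: one gets
\[
\la^{-1}\om'=\om-\tfrac12\la^{-1}e'_4(\log\la)+\err,
\]
exactly the formula the paper quotes from \cite{kl-sz1}. The term $e'_4(\log\la)$ is a derivative of $\la$, and the $\osc$--norms in this paper control only $r|\ovla|$, not $e_4(\la)$; so this term is not a priori in $\lot$, and your claim that ``$\xi=0$ gives $\la^{-1}\om'=\om+\lot$'' skips over it. The paper singles out precisely this formula and eliminates the offending term by using the structural relation $\la=\Om/\Om'$ from \eqref{lambdaOmOm'}, specific to the present setting of two double null foliations sharing the same incoming generator: then $e'_4(\log\la)=e'_4(\log\Om)-e'_4(\log\Om')$, the second piece is $2\om'$ by \eqref{6.6} in the primed frame, and the first expands via \eqref{change} into $-2\om$ plus terms in $\lot$, yielding $2\la^{-1}\om'=2\om+\lot$ after moving $\om'$ back to the left. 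Your sketch would need this ingredient to close.
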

\begin{proof}
We only prove the transform formula of $\om$ since the others are direct consequences of Proposition 3.3 in \cite{kl-sz1} for $\fb=0$. Applying (3.13) in \cite{kl-sz1}, we have
\begin{equation*}
    \la^{-1}\om'=\om -\frac{1}{2}\la^{-1}e'_4(\log\la)+\err,
\end{equation*}
where
\begin{equation*}
    \err=r^{-1}O(f^2)+\Gab\cdot f.
\end{equation*}
Applying \eqref{lambdaOmOm'}, we infer
\begin{align*}
    \la^{-1}\om'&=\om-\frac{1}{2\la}e'_4(\log \Om-\log\Om')+\err\\
    &=\om-\frac{1}{2}\left(e_4+f^Be_B+\frac{1}{4}|f|^2e_3\right)\log\Om-\frac{1}{2\la}(2\om')\\
    &=2\om-\la^{-1}\om'+\err,
\end{align*}
which implies the transformation formula of $\om$.
\end{proof}
\subsubsection{Deformations of spheres}
\begin{lem}\label{deforsp}
Given two double null foliations $(u,\ub)$ and $(u',\ub)$ and we denote their leaves
\begin{equation*}
    S:=S(u,\ub),\qquad S':=S'(u',\ub).
\end{equation*}
Assume that $S\cap S'\ne\emptyset$ and the oscillation of $u$ on $S'$ satisfies
\begin{equation}\label{oscu}
    \sup_{S'}|u-\ov{u}'|\leq \de_1, \qquad\quad \ov{u}':=\frac{1}{|S'|} \int_{S'} u .
\end{equation}
Let $V$ be a tensor field satisfying
\begin{equation}\label{eqpruVf}
    \nab_{\pr_u} V = F,
\end{equation}
where for all $S$
\begin{equation}\label{estSF}
    |r^{-\frac{2}{p}}F|_{L^p(S)} \leq \de,\qquad p\in [1,\infty].
\end{equation}
Then, we have
\begin{equation}
    |r^{-\frac{2}{p}}V|_{L^p(S')}\les |r^{-\frac{2}{p}}V|_{L^p(S)} + \de_1\de.
\end{equation}
\end{lem}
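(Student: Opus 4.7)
The strategy is to identify $S'$ with $S$ via the $\pa_u$ flow, integrate the transport equation \eqref{eqpruVf}, and exploit the small oscillation \eqref{oscu} to control the resulting error.

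Working in coordinates $(u,\ub,\phi^A)$ adapted to the $(u,\ub)$ foliation, $\pa_u$ preserves $\ub$ and $\phi^A$, so one may describe $S'$ as a graph $\{u=\psi(\phi^A),\,\ub=\ub_0\}$ over the angular sphere, with $|\psi-\ov{u}'|\leq \de_1$ by \eqref{oscu}. Up to an adjustment absorbed in the constant of $\les$, one may take $S=\{u=\ov{u}',\,\ub=\ub_0\}$; the time-$(\ov{u}'-\psi)$ flow of $\pa_u$ then provides a diffeomorphism $\Phi:S'\to S$ whose Jacobian (with respect to the induced metrics $\ga$) is uniformly bounded, because $S$ and $S'$ lie within a $\de_1$-strip of each other in $u$, over which the background foliation quantities $r$ and $\ga_{AB}$ vary by $O(1)$ factors.

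Integrating \eqref{eqpruVf} along this flow, one obtains for each $p'=(\psi(\phi^A),\ub_0,\phi^A)\in S'$
\begin{equation*}
V(\Phi(p'))-V(p')=\int_{\psi(\phi^A)}^{\ov{u}'}F(\tilde u,\ub_0,\phi^A)\,d\tilde u,
\end{equation*}
so that pointwise $|V(p')|\les|V\circ\Phi(p')|+\int_{|\tilde u-\ov{u}'|\leq \de_1}|F|(\tilde u,\ub_0,\phi^A)\,d\tilde u$. Taking the weighted $L^p(S')$-norm of both sides, the first term is bounded by $|r^{-2/p}V|_{L^p(S)}$ (up to a universal constant) after changing variables through $\Phi$ and using the comparability of $r$ on $S$ and $S'$, while Minkowski's integral inequality in $\tilde u$ combined with the hypothesis \eqref{estSF} on each sphere $\{u=\tilde u,\ub=\ub_0\}$ in the $\de_1$-strip produces a bound of order $\de_1\de$ for the second term. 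Combining the two estimates yields the desired inequality.

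The main obstacle is the change-of-variables step: one must verify that the induced metrics $\ga$, area elements $d\ga$, and area radii $r$ on $S'$ and $S$ are comparable up to universal constants, uniformly in the $\de_1$-strip. This follows from the background smoothness and boundedness of the foliation (e.g.\ $\Om$ close to $\frac{1}{2}$, $\trch$ close to $\frac{2}{r}$, as provided by the bootstrap assumptions in the context where this lemma is applied), but must be invoked explicitly. Everything else reduces to the fundamental theorem of calculus along $\pa_u$ and Minkowski's inequality.
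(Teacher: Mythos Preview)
Your proposal is correct and follows essentially the same approach as the paper: parametrize $S'$ as a graph in the $(u,\ub,\phi^A)$ coordinates, integrate the transport equation \eqref{eqpruVf} along $\pa_u$ over the $\de_1$-strip, and conclude via Minkowski's integral inequality together with \eqref{estSF}. The paper's only cosmetic difference is that it first passes to the scalar $|V|$ (using $|\pa_u|V||\le|F|$) and picks a point $p\in S\cap S'$ rather than $\ov{u}'$, yielding the harmless $2\de_1$ in place of your $\de_1$; your explicit remark on the comparability of $r$ and the area elements is exactly what the paper uses implicitly when writing $r^{-2}\sqrt{\det g|_{S'}}\lesssim 1\lesssim r^{-2}\sqrt{\det g|_{S}}$.
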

\begin{proof}
The proof is largely analogous to Lemma 4.1.7 in \cite{kn}. \\ \\
Notice from \eqref{eqpruVf} that
\begin{align*}
        |\pr_u |V|^2|= |V\cdot (\nab_{\pr_u}F)|\leq |V| |F|,
\end{align*}
which implies
\begin{equation*}
|\pr_u|V||\leq |F|.
\end{equation*}
Taking $p\in S\cap S'$, we have from \eqref{oscu} that
\begin{equation}\label{2de1}
    \sup_{S'}|u-u(p)|\leq \sup_{S'}|u-\ov{u}'|+|u(p)-\ov{u}'|\leq 2\de_1.
\end{equation}
In a sphere coordinates $\phi=(\phi^1,\phi^2)$, we have
\begin{align}\label{Vu'Vu}
    |V|(u',\ub,\phi)-|V|(u,\ub,\phi)=\int_{0}^1 \pr_u(|V|)(u+t(u'-u),\ub,\phi)(u'-u)dt
\end{align}
Hence, we infer
\begin{align*}
|r^{-\frac{2}{p}}V|_{L^p(S')}^p &= \int_{\mathbb{S}^2}r^{-2}|V|^p (u',\ub,\phi)\sqrt{\det(g)|_{S'}}\,d\phi^1d\phi^2 \\
&\les\int_{\mathbb{S}^2}|V|^p(u',\ub,\phi) d\phi^1d\phi^2\\
&\les\int_{\mathbb{S}^2}|V|^p(u,\ub,\phi) d\phi^1d\phi^2+\int_{\mathbb{S}^2}\left(\int_{0}^1 |F|(u+t(u'-u),\ub,\phi)|u'-u|dt\right)^pd\phi^1d\phi^2\\
&\les\int_{\mathbb{S}^2}|r^{-\frac{2}{p}}V|^p\sqrt{\det(g)|_{S}}\,d\phi^1d\phi^2+\de_1^p\int_{\mathbb{S}^2}\left(\int_{0}^1 |F|(u+t(u'-u),\ub,\phi)dt\right)^pd\phi^1d\phi^2\\
&\les |r^{-\frac{2}{p}}V|_{L^p(S)}^p+\de_1^p |F(u+t(u'-u),\ub,\phi)|_{L^p(\mathbb{S}^2) L^1_t(0,1)}^p\\
&\les |r^{-\frac{2}{p}}V|_{L^p(S)}^p+\de_1^p |r^{-\frac{2}{p}}F|_{L^1_t(0,1)L^p(S(u+t(u'-u),\ub))}^p\\
&\les |r^{-\frac{2}{p}}V|_{L^p(S)}^p+\de_1^p\de^p,
\end{align*}
where we used \eqref{estSF}, \eqref{2de1}, \eqref{Vu'Vu} and General Minkowski inequality. This concludes the proof of Lemma \ref{deforsp}.
\end{proof}
\begin{rk}\label{notationprime}
    In the remainder of this paper, for any quantity $X$, we denote the quantity $X_{(0)}$ associated to the initial data layer region $\kk_{(0)}$ by
\begin{align*}
    X':=X_{(0)},
\end{align*}
to simplify the notations. For example:
\begin{align*}
    u':=u_{(0)},\qquad \eta':=\eta_{(0)},\qquad \mo':=\mo_{(0)},\qquad \mathfrak{R}':=\mathfrak{R}_{(0)},\qquad S':=S_{(0)},\qquad \kk':=\kk_{(0)}.
\end{align*}
\end{rk}
\subsection{The initial hypersurface (Theorem M0)}\label{ssec11.1}
The goal of this section is to prove Theorem M0 which we recall below for convenience.
\begin{m0}
Under the assumptions
\begin{align}\label{assumptionM0}
    \mo'\leq \ep_0,\qquad \mathfrak{R}'\leq\ep_0, \qquad \mo\leq\ep,\qquad \osc\leq\ep,
\end{align}
we have
\begin{equation}\label{estfrakR}
    \mathfrak{R}_0\les \ep_0, \qquad \uobr(\Si_0\setminus K)\les\ep_0.
\end{equation}
If in addition we assume that
\begin{equation}\label{additionassumption}
    \mobr\les\ep_0,
\end{equation}
then, we have
\begin{equation}\label{estoscuo}
    \osc\les\ep_0,\qquad \uo(\Si_0\setminus K)\les \ep_0.
\end{equation}
\end{m0}
Recall that we denoted by $(f,\la)$ the frame transformation from $(u,\ub)$ to $(u',\ub)$, and let $(f',\la')$ its inverse transformation. Notice from \eqref{change} and \eqref{change'} that we have
\begin{equation}\label{inversetrans}
    \la'=\la^{-1},\qquad f'=-\la f.
\end{equation}
\begin{rk}\label{GagGabsame}
In $\kk'$, we have $|u|\simeq r$. Hence, $\Gab$ has the same decay as $\Gag$ in $\kk'$. The assumption $\osc\leq\ep$ implies that $r\simeq r'$ and $f\in r\Gag$.
\end{rk}
We first prove \eqref{estfrakR} under the assumptions in \eqref{assumptionM0}.
\begin{lem}\label{a1}
Under the assumptions
\begin{align*}
    \mo'\leq \ep_0,\qquad \mathfrak{R}' \leq\ep_0, \qquad \mo\leq\ep,\qquad \osc\leq\ep,
\end{align*}
we have
\begin{equation*}
    \mathfrak{R}_0\les \ep_0.
\end{equation*}
\end{lem}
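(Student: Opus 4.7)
\medskip

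\noindent\textbf{Proof proposal for Lemma \ref{a1}.}

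The plan is to compare the curvature flux $\mathfrak{R}_0$ on $\Sigma_0 \setminus K$ in the $(u,\ub)$ foliation with the curvature flux $\mathfrak{R}'$ in the $(u',\ub)$ foliation via the null frame transformation formulas of Proposition \ref{transformation}. Recall from \eqref{inversetrans} that the inverse change of frame is governed by $\la' = \la^{-1}$ and $f' = -\la f$, so Proposition \ref{transformation} yields schematic formulae of the shape
\begin{align*}
\a &= \la^{-2}\a' + O(f)\b' + O(f^2)\rho' + O(f^3)(\si',\bb') + O(f^4)\aa',\\
\b &= \la^{-1}\b' + O(f)(\rho',\si') + O(f^2)\bb' + O(f^3)\aa',\\
(\rhoc,\si) &= (\rhoc',\si') + O(f)\bb' + O(f^2)\aa',\\
\bb &= \la\,\bb' + O(f)\aa',\qquad \aa = \la^2\,\aa',
\end{align*}
together with the pointwise bounds $|f| \lesssim \ep r^{-(s-1)/2}$, $|\ovla| \lesssim \ep r^{-1}$, and $|r - r'| \lesssim \ep$ supplied by $\osc \leq \ep$.

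First I would establish the undifferentiated part, i.e.\ the piece of $\mathfrak{R}_0$ with $l = 0$. Squaring the above identities, multiplying by $r^s$, integrating over $\Sigma_0 \setminus K$, and using $r \simeq r'$, each principal term is pointwise bounded by the analogous primed quantity, while each correction is of the form $|f|^{2k}\,r^s |R'|^2 \lesssim \ep^{2k} (r')^{s-k(s-1)}\,|R'|^2$. Since $s > 3$, these correction powers of $r'$ are at most the unprimed ones, so every correction integrates to $\lesssim \ep^2 (\mathfrak{R}')^2 \lesssim \ep_0^2$. Combined with $\mathfrak{R}' \leq \ep_0$ this gives the $l = 0$ part of $\mathfrak{R}_0 \lesssim \ep_0$.

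For the derivative part ($l = 1$), I would apply $\dk = (r\nab_3, r\nab_4, r\nab)$ to the transformation formulas, commute through using Proposition \ref{commutation} and Corollary \ref{commcor}, and reduce to the sum of (i) $\dk$-derivatives of primed curvatures, controlled by $\mathfrak{R}'$ which has up to two derivatives and thus provides one derivative of margin, (ii) terms where $\dk$ hits the transformation coefficients $f$ and $\ovla$, controlled by $\osc$ together with its $\dk^{\leq 1}f$ component, and (iii) commutator contributions involving Ricci coefficients acting on primed curvature, controlled by $\mo + \mo' \lesssim \ep_0 + \ep$ multiplied against $\mathfrak{R}'$. Each bulk term is absorbed into $\mathfrak{R}' + \ep_0\, \mathfrak{R}' \lesssim \ep_0$. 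The pointwise bound $\sup|r^3 \ov\rho|$ is handled last, by writing $\rho = \rho' + O(f)\bb' + O(f^2)\aa'$ and using Lemma \ref{deforsp} with $F = \nab_{\pr_u}\rho'$ (controlled via the Bianchi equations and $\mathfrak{R}'$, $\mo'$) to relate the average $\ov\rho$ on $S(u,\ub)$ to the initial-layer average of $\rho'$ on $S'(u',\ub)$, with oscillation bound $|u - u'| \lesssim \ep \log r$ from Lemma \ref{equivalence}; both ingredients are of size $\lesssim \ep_0$.

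The main obstacle is the bookkeeping in step (ii)–(iii): one must verify that when $\dk$ hits a factor $f^k$ the resulting decay in $r'$ is still integrable against $r^s |R'|^2$ for all $s > 3$ simultaneously, and that the commutation terms, which involve the non-integrated Ricci coefficients, do not lose any power of $r$. This is where the quantitative form of $\osc \leq \ep$ (giving the sharp weight $r^{-(s-1)/2}$ on $f$) and the fact that $\mathfrak{R}'$ controls two derivatives are both crucial; with weaker inputs on either side the transition between foliations would fail precisely at the top-derivative level.
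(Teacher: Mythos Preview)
Your approach is correct and essentially the same as the paper's: both invoke the curvature transformation formulas of Proposition~\ref{transformation} (with the inverse parameters $\la'=\la^{-1}$, $f'=-\la f$), square, weight by $r^s$, integrate over $\Sigma_0\setminus K$, and absorb the correction terms using $\osc\leq\ep$ and $\mathfrak{R}'\leq\ep_0$; the $l=1$ case proceeds by differentiating the transformation identities and exploiting that $\mathfrak{R}'$ carries an extra derivative.

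The only substantive difference is your handling of $\sup|r^3\ov\rho|$. You route through Lemma~\ref{deforsp} to compare sphere averages across foliations, invoking the Bianchi equation for $\nab_{\pr_u}\rho'$ and the bound $|u-u'|\lesssim\ep\log r$. The paper bypasses this entirely: since $\mathfrak{R}'$ contains $\sup_{\Sigma_0\setminus K}|r'^3\rho'|$ as a pointwise term, one simply bounds $|r^3\ov\rho|\leq|r^3\rho|_\infty$ and applies the transformation $\rho=\rho'+O(f)\bb'+O(f^2)\aa'$ pointwise, giving $|r^3\rho|_\infty\lesssim|r'^3\rho'|_\infty+|r^4\Gag\cdot\bb'|_\infty\lesssim\ep_0$ in one line. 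Your argument is not wrong, but it introduces a sphere-deformation step that is unnecessary here because the primed control on $\rho'$ is already in $L^\infty$, not merely averaged.
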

\begin{proof}
Recall that
\begin{align*}
\Rk=\int_{\Sigma_0\setminus K}\sum_{l=0}^1 {r}^{s}\left(|{\mathfrak{d}}^l\alpha|^2+|{\mathfrak{d}}^l\beta|^2+|{\mathfrak{d}}^l (\chr,\sigma)|^2+|{\mathfrak{d}}^l\unb|^2+|{\mathfrak{d}}^l \aa|^2\right)+\sup_{\Sigma_0\setminus K} |{r}^3\overline{\rho}|^2,
\end{align*}
Applying Proposition \ref{transformation} with \eqref{inversetrans}, we infer\footnote{Recall that $\dk^{\leq 1}f=O(\ep)r^{-\frac{s-1}{2}}$ and $\rho'=O(\ep)r^{-3}$.}
\begin{align*}
    \int_{\Si_0\setminus K} r^s|\dk^{\leq 1}\a|^2 &\les \int_{\Si_0\setminus K} r^s|\dk^{\leq 1}(\la^{-2}\a')|^2 + r^s \left|\dk^{\leq 1}(\la^{-1} f\cdot \b')\right|^2+r^s\left|\dk^{\leq 1}(f^2\cdot\rho')\right|^2+\lot\\
    &\les {\Rfk'}^2+\int_{\Si_0\setminus K}r^s|\dk^{\leq 1}(f^2)\rho'|^2+r^s|f^2\dk^{\leq 1}\rho'|^2\\
    &\les {\Rfk'}^2+\int_{\Si_0\setminus K}r^s \left|\frac{\ep^3}{r^{s+2}}\right|^2\\
    &\les \ep_0^2.
\end{align*}
Similarly, we have
\begin{align*}
    \int_{\Si_0\setminus K} r^s|\dk^{\leq 1}\b|^2 &\les \int_{\Si_0\setminus K} r^s|\dk^{\leq 1}(\la^{-1}\b')|^2 +r^s|\dk^{\leq 1}f\cdot(\rho',\si')|^2 \\
    &\les {\Rfk'}^2 + \int_{\Si_0\setminus K} r^s \left|\frac{\ep^2}{r^{\frac{s+5}{2}}} \right|^2\\
    &\les \ep_0^2.
\end{align*}
By the same method, we obtain:
\begin{align*}
    \int_{\Si_0\setminus K}r^s|\dk^{\leq 1}\rhoc|^2+r^s|\dk^{\leq 1}\si|^2+r^s|\dk^{\leq 1}\bb|^2+r^s|\dk^{\leq 1}\aa|^2\les\ep_0^2.
\end{align*}
Finally, we estimate
\begin{align*}
    |r^3 \ov{\rho}| \les |r^3\rho|_\infty \les |r^3\rho'|_\infty +|r^4\Gag\cdot\bb'|_\infty\les \ep_0.
\end{align*}
This concludes the proof of Lemma \ref{a1}.
\end{proof}
\begin{lem}\label{a1.5}
Under the assumptions
\begin{align*}
    \mo'\leq \ep_0,\qquad \mo\leq\ep,\qquad \osc\leq\ep,
\end{align*}
we have
\begin{equation*}
    \uobr(\Si_0\setminus K)\les \ep_0.
\end{equation*}
\end{lem}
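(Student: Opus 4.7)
The three quantities comprising $\uobr(\Si_0\setminus K)$, namely $\widecheck{\Om^{-1}\trchb}$, $\widecheck{\Om\om}$ and $\Om\om$ on $S(u_0(\ub),\ub)\subset\kk_{(0)}$, will be estimated by pulling them back to the initial layer foliation using the null frame transformation of Lemma \ref{lemchange} and Proposition \ref{transformation}. The key algebraic input is that $\Om = \la\Om'$ (from \eqref{lambdaOmOm'}) combined with $\trchb = \la\trchb'$ (obtained by tracing the tensor identity $\chib = \la\chib'$) yields the remarkable pointwise identity
\begin{equation*}
\Om^{-1}\trchb = {\Om'}^{-1}\trchb',
\end{equation*}
while $\la^{-1}\om' = \om + \lot$, with $\lot = r^{-1}O(f^2) + \Gab\cdot f$, gives $\Om\om = \Om'\om' - \la\Om'\cdot\lot$.

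To turn the pointwise identity into an $S$-check bound, I use the elementary fact $|\widecheck{h}^S|_{p,S}\leq 2|h - c|_{p,S}$ for any constant $c$. For each $\ub$ I select a reference sphere $S' = S'(u'_*,\ub)\subset\unc_\ub$ with $u'_* = \ov{u'}^S$, and let $c$ be the $S'$-average of the relevant quantity. This reduces the task to estimating $|\widecheck{{\Om'}^{-1}\trchb'}^{S'}|_{p,S}$ and $|\widecheck{\Om'\om'}^{S'}|_{p,S}$. By Lemma \ref{equivalence} one has $\sup_{S'}|u - \ov{u}^{S'}|\les\ep\log r$, so Lemma \ref{deforsp} applied along $\pr_u$ (tangent to $\unc_\ub$) transfers the $L^p(S)$-norms to $L^p(S')$-norms modulo a small deformation term whose bulk derivative is controlled by the primed $\nab_3$-equations for $\trchb'$ and $\om'$, all of which sit inside $\mo'$.

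The $L^p(S')$-norms then unfold via Lemma \ref{chav} into products of $S'$-averages and $S'$-checks. The checks $\widecheck{\trchb'}^{S'}$ and $\widecheck{\om'}^{S'}$ are directly controlled by $\mo'(\Ga_{(0)})\leq\ep_0$. The check $\widecheck{\Om'}^{S'}$ is \emph{not} in $\mo'$, and the pointwise bound $|\Om' - 1/2|\les \ep_0/r'$ supplied by $\mo'$ has the wrong scaling. I recover the correct scaling by applying Poincar\'e inequality on $S'$ together with $\nab'\log\Om' = (\eta'+\etab')/2$ (cf.\ \eqref{6.6}), which extracts $|r'^{(s-1)/2 - 2/p}\widecheck{\Om'}^{S'}|_{p,S'}\les\ep_0$ from $\mo'(\eta',\etab')\leq\ep_0$. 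Combining these yields the required $\ep_0 r^{-((s+1)/2 - 2/p)}$-bound.

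The remaining error $\la\Om'\cdot\lot$ is negligible: using $|f|\les\ep r^{-(s-1)/2}$ (from $\osc(f)\leq\ep$) and $|\Gab|\les\ep r^{-(s+1)/2}$ on $\Si_0\setminus K$ (where $|u|\simeq r$ by Lemma \ref{decayGagGabGaa} together with $\mo\leq\ep$), its size is $\les\ep^2 r^{-s+2/p}$, far smaller than the target. The $q=1$ version proceeds identically after $r\nab$-differentiation of the pointwise identity, invoking the $q=1$ component of $\mo'(\Ga_{(0)})$; the commutator between the $r\nab$-operator of the $(u,\ub)$-frame and the $(u',\ub)$-frame contributes only lower-order perturbations of the form $f\cdot\nab_3$. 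The main obstacle is the joint control of (i) the sphere deformation between $S$ and $S'$, which requires Lemma \ref{deforsp} with a carefully estimated bulk derivative obtained from the primed structure equations, and (ii) the delicate $L^p$-scaling of $\widecheck{\Om'}^{S'}$, which is recovered only through the combination of Poincar\'e and the identity $\nab'\log\Om' = (\eta'+\etab')/2$.
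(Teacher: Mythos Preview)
Your approach is more involved than necessary and contains a concrete gap in the sphere-deformation step for $\widecheck{\Om^{-1}\trchb}$.

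The paper avoids Lemma~\ref{deforsp} entirely by using $L^\infty$ control. Since $\mo'$ controls \emph{two} angular derivatives of the primed Ricci coefficients in $L^p(S')$, Proposition~\ref{standardsobolev} furnishes pointwise bounds such as $|r^{(s+1)/2}\nab'(\Om'\om')|_\infty\les\ep_0$ and $|r^2\Om'\om'|_\infty\les\ep_0$. These are sphere-independent and can be inserted directly into the $L^p(S)$-norms on the unprimed spheres. The angular derivative in the unprimed frame is handled via $e_A=e'_A-\tfrac{\la}{2}f_A e'_3$, which for $\Om\om$ produces the correction $\tfrac{\la}{2}f_A e'_3(\Om'\om')=O(\rho')f$ (using $\nab'_3\om'=\tfrac12\rho'+\Gag'\cdot\Gag'$ from \eqref{omom}); this sits in $\Gag^{(1)}\cdot\Gaa$ and is quadratically small. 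Poincar\'e on $S$ then recovers the $q=0$ checks from the $q=1$ bounds. No $S$-to-$S'$ comparison, no Lemma~\ref{chav} expansion, and no separate treatment of $\widecheck{\Om'}^{S'}$ is needed.

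Your gap is the assertion that the bulk derivative in Lemma~\ref{deforsp} is ``controlled by the primed $\nab_3$-equations for $\trchb'$ and $\om'$, all of which sit inside $\mo'$''. This fails for $\trchb'$: with $V=h-c$, $h={\Om'}^{-1}\trchb'$ and $c$ a \emph{fixed} constant as you specify, one has $F=\nab_{\pr_u}h=\Om\la\,e'_3 h$, and Proposition~\ref{nulles} gives
\[
e'_3({\Om'}^{-1}\trchb')={\Om'}^{-1}\Bigl[-\tfrac12(\trchb')^2-|\hchb'|^2\Bigr]\sim -\frac{4}{r^2},
\]
which is of unit size, not $O(\ep_0)$. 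With the oscillation you quote, $\de_1\les\ep\log r$, the deformation error $\de_1\de\sim\ep r^{-2}\log r$ swamps the target $\ep_0 r^{-(s+1)/2}$; even the sharper $\de_1\les\ep r^{-(s-3)/2}$ (obtained from $|\nab' u|\les|f|$ on $S'$) yields only $\ep r^{-(s+1)/2}$, losing the crucial $\ep_0$-smallness. One could repair the argument by taking $V$ to be the \emph{running} $S'$-check $h-\ov{h}^{S'(u',\ub)}$, so that $\nab_{\pr_{u'}}V$ becomes the $S'$-check of $\Om' e'_3 h$ and the main term $(\trchb')^2$ contributes only $2\,\ov{\trchb'}\cdot\widecheck{\trchb'}\in r^{-1}\mo'$; but this is not what you wrote, and the resulting argument would still be substantially heavier than the paper's direct $L^\infty$ route.
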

\begin{proof}
We recall from Proposition \ref{transformation} that\footnote{Recall that $\Gab=\Gag$ in $\kk_{(0)}$ and $f\in r\Gag$.}
\begin{align*}
    {\Om'}^{-1}\trchb'&=\Om^{-1}\trchb+r\Gag\cdot \Gag,\\
    \Om'\om'&=\Om\om+r\Gag\cdot \Gag.
\end{align*}
Hence, we obtain
\begin{align*}
    |r^{2-\frac{2}{p}}\Om\om|_{p,S(u_0(\ub),\ub)}&\les |r^{2-\frac{2}{p}}\Om'\om'|_{p,S(u_0(\ub),\ub)}+ |r^{3-\frac{2}{p}}\Gag\cdot \Gag|_{p,S(u_0(\ub),\ub)}\\
    &\les |r^{2}\Om'\om'|_{\infty,S(u_0(\ub),\ub)}+ \frac{\ep^2}{r^{s-2}}\\
    &\les \ep_0.
\end{align*}
Notice from \eqref{change} and Proposition \ref{nulles} that
\begin{align*}
    re_A(\Om\om)&=re_A'(\Om'\om')-\frac{\la}{2} rf_A e'_3 (\Om'\om')+r\Gag\cdot \Gag^{(1)}\\
    &=re_A'(\Om'\om')+O(r\rho')f+r\Gag\cdot \Gag^{(1)}.
\end{align*}
We deduce
\begin{align*}
    |r^{1-\frac{2}{p}}\nab(\Om\om)|_{p,S(u_0(\ub),\ub)}&\les |r^{1-\frac{2}{p}}\nab'\Om'\om'|_{p,S(u_0(\ub),\ub)}+|r^{2-\frac{2}{p}}\Gag^{(1)}\cdot \Gaa|_{p,S(u_0(\ub),\ub)}\\
    &\les  |r\nab'\Om'\om'|_{\infty,S(u_0(\ub),\ub)}+\frac{\ep^2}{r^{\frac{s+1}{2}}}\\
    &\les \frac{\ep_0}{r^{\frac{s+1}{2}}}.
\end{align*}
Similarly, we have
\begin{align*}
    |r^{1-\frac{2}{p}}\nab(\Om^{-1}\trchb)|_{p,S(u_0(\ub),\ub)}\les \frac{\ep_0}{r^\frac{s+1}{2}}.
\end{align*}
Applying Poincar\'e inequality and recalling the definition of $\uobr(\Si_0\setminus K)$, we obtain
\begin{equation*}
    \uobr(\Si_0\setminus K)=\sum_{q=0}^1 \left(\uo_q(\Si_0\setminus K)(\widecheck{\Om^{-1}\trchb})+\uo_q(\Si_0\setminus K)(\widecheck{\Om\om})\right)+\uo_0(\Si_0\setminus K)(\Om\om)\les \ep_0.
\end{equation*}
This concludes the proof of Lemma \ref{a1.5}.
\end{proof}
Remark that Lemmas \ref{a1} and \ref{a1.5} imply \eqref{estfrakR}, which concludes the first part of Theorem M0. We now focus on the second part of Theorem M0 under the additional assumption \eqref{additionassumption}.
\begin{lem}[Oscillation lemma]\label{oscillation}
Under the assumptions:
\begin{equation}\label{mobrass}
    \OO_{(0)}\leq \ep_0,\qquad \mo\leq\ep,\qquad \osc\leq \ep,\qquad \mobr\les \ep_0,
\end{equation}
we have
\begin{equation}
    \osc\les\ep_0.
\end{equation}
\end{lem}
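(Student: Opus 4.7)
The plan is to derive transport equations in the incoming null direction $e_3$ for the triple $(f, \ovla, r_{(0)} - r)$ and integrate them in the initial layer $\kk_{(0)}$. The improvement from $\osc \leq \ep$ (bootstrap) to $\osc \les \ep_0$ will emerge because the source terms of these transport equations are controlled by the refined hypothesis $\mobr \les \ep_0$ together with the initial layer bound $\OO_{(0)} \leq \ep_0$, while the quadratic error terms are absorbed via the bootstrap $\osc \leq \ep$ together with the relation $\ep = \ep_0^{2/3}$ so that $\ep \cdot \ep_0 \leq \ep_0$.

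To derive the equation for $\ovla = \la - 1$, combine the identity $\la = \Om/\Om'$ from Lemma \ref{lemchange} with the null structure formula $\nab_3 \log \Om = -2\omb$ and the analogous relation in the primed frame. Using $e_3 = \la\, e'_3$ yields
\begin{equation*}
    \nab_3 \ovla = 2\la\,\Om\,(\omb' - \omb),
\end{equation*}
and algebraic rearrangement of the same computation gives $\omb(1-\Om) = (\la - \Om)\omb'$, which together with $|\Om - \tfrac{1}{2}| \les \ep_0$ (from $\mobr$) expresses $\omb - \omb'$ linearly in terms of $\ovla$, with $\omb'$ controlled by $\OO_{(0)}$. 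For $f$, apply Proposition \ref{transformation} to the transformation formula for $\eta'$,
\begin{equation*}
    \eta' = \eta + \tfrac{1}{2}\la\,\nab'_{e'_3} f - \omb f + \lot,
\end{equation*}
and rearrange, using $\la e'_3 = e_3$ and Corollary \ref{commcor} to replace the primed connection by the unprimed one modulo further $\lot$ terms, into
\begin{equation*}
    \nab_3 f + 2\omb\, f = 2(\eta' - \eta) + r^{-1}O(f^2) + \Gab \cdot f.
\end{equation*}
For $r_{(0)} - r$, combine Lemma \ref{dint} in both frames with $\trchb = \la\,\trchb'$ from Proposition \ref{transformation}, obtaining a transport equation of the form $e_3(r_{(0)} - r) + \tfrac{1}{r}(r_{(0)} - r) = O(\ep_0)$.

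Initial data for these transport equations are obtained on the reference section $\Si_0 \cap \kk_{(0)}$: since both foliations share the incoming null generator $\unl$, the identity $\la = \Om/\Om'$ together with $|\Om - \tfrac{1}{2}| \les \ep_0/r$ (from $\mobr \les \ep_0$) and $|\Om' - \tfrac{1}{2}| \les \ep_0/r$ (from $\OO_{(0)} \leq \ep_0$) gives $|\ovla|_{\Si_0} \les \ep_0/r$, with similar initial bounds for $f$ (by comparing the two outgoing null directions $L'$ and $L$ on $\Si_0$) and for $r_{(0)} - r$ (which vanishes on $\Si_0 \cap \pr K$ by the construction of $u_0$ in Section \ref{sssec7.1.2}). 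One then applies Lemma \ref{evolutionlemma} to each transport equation, exploiting the fact that integration across $\kk_{(0)}$ is over a $u$-interval of length $\les \de_0$. For the angular-derivative part of $\osc(f)$, commute the transport equations with $r\nab$, $r\nab_3$, $r\nab_4$ via Proposition \ref{commutation}, integrate, and apply the Sobolev inequality of Proposition \ref{standardsobolev} to pass from $L^p$ to $L^\infty$, yielding $\osc \les \ep_0$.

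The main obstacle is the tight coupling among the three unknowns: the right-hand side for $\ovla$ depends on $\omb - \omb'$ whose control requires the $\ovla$-equation itself, the right-hand side for $f$ depends on $\eta - \eta'$ (controlled modulo $f$ via Proposition \ref{transformation}), and the $(r_{(0)} - r)$-equation depends on matching $\trchb$ with $\trchb'$ through $\la$. A simultaneous Grönwall-style bootstrap must be organized so that at each stage the quadratic terms $r^{-1}O(f^2)$ and the interaction terms of type $\Gab \cdot f$ are controlled by $\osc \cdot \ep_0 \leq \ep \cdot \ep_0 \leq \ep_0$. An additional delicate point is the estimate for $r\nab_4 f$, which does not follow directly from the formulas listed in Proposition \ref{transformation} and requires a complementary identity, extracted either from the $\xi$-transformation formula (which involves $\nab'_{e'_4} f$) or by commuting the $e_3$-transport equation for $f$ with $r\nab_4$ and closing against the other components of $\dk^{\leq 1}f$.
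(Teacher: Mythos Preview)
Your transport approach has a genuine gap at the level of initial data for $f$. You assert that $f\big|_{\Si_0}$ can be bounded by $\ep_0$ ``by comparing the two outgoing null directions $L'$ and $L$ on $\Si_0$,'' but this comparison \emph{is} precisely what $f$ encodes: $e'_4 = \la(e_4 + f^B e_B + \tfrac14|f|^2 e_3)$. There is no independent expression for $L$ on $\Si_0$ in terms of the data of $\Si_0$ alone, since $u$ is constructed from the geodesic foliation on $\Cb_*$ and propagated backward; so $L\big|_{\Si_0}$ carries information from the full bootstrap region, and the difference $e'_4 - \la^{-1}e_4$ is only known to be $O(\ep)$ from the bootstrap $\osc\le\ep$. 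No transformation formula in Proposition~\ref{transformation} gives $f$ algebraically in terms of quantities controlled by $\mobr$ and $\OO_{(0)}$: the $\eta'$ formula involves $\nab_3 f$, the $\xi'$ formula involves $\nab_4 f$, and the $\etab'$ formula $\etab' = \etab + \tfrac14\trchb f + \lot$ would do the job if $\etab$ were in $\mobr$, but it is not (only $\eta$ is). Your $e_3$-transport for $\ovla$ is also problematic: the source $\la\omb' - \omb$ requires $\omb = O(\ep_0)$, yet $\mobr$ contains no $\omb$-control, and the algebraic identity $\omb(1-\Om) = (\la-\Om)\omb'$ you quote does not hold.

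The paper avoids all of this by working \emph{elliptically on each sphere} rather than by transport. From $\la^{-1}\trch' = \trch + \sdiv' f + \lot$ and $0 = \curl' f + \lot$ one differentiates once more to extract a second-order equation,
\[
-\De_1 f = -\Om^{-1}\nab'(\Om'\trch') + \Om^{-1}\nab(\Om\trch) + \Gaa\cdot\Gag^{(1)},
\]
whose right-hand side is $O(\ep_0)$ by $\mobr$ (which controls $\nab(\Om\trch)$) and $\OO_{(0)}$ (which controls $\nab'(\Om'\trch')$). Since $\De_1$ on $1$-forms over $S^2$ is invertible, Proposition~\ref{prop7.3} yields $|r^{\frac{s-1}{2}}(r\nab)^{\le 1}f|_\infty \les \ep_0$ with no initial data needed. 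The $\nab_3 f$ and $\nab_4 f$ bounds then follow \emph{algebraically} from the $\eta'$ and $\xi'$ formulae using the just-obtained $f$-bound. For $\ovla$ the paper simply reads off $|\ovla| \les |\Om - \tfrac12| + |\Om' - \tfrac12| \les \ep_0/r$ from $\la = \Om/\Om'$; for $r-r'$ one compares $\Om\trch - 1/r$ with $\Om'\trch' - 1/r'$ via the $\sdiv f$ formula. All three steps are pointwise-algebraic or elliptic; no evolution lemma is invoked.
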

\begin{proof} {\bf Step 1.} Estimate for $\osc(f)$. \\ \\
We have from Proposition \ref{transformation} and Remark \ref{GagGabsame}
\begin{align*}
    \la^{-1}\trch'&=\trch+\sdiv' f+f\c(\Gag,r^{-1}f),\\
    0&=\curl'f+f\c(\Gag,r^{-1}f),
\end{align*}
which implies from \eqref{lambdaOmOm'} and the fact that $f\in r\Gag$\footnote{We have from \eqref{change} and Proposition \ref{transformation} that $\nab'f-\nab f=O(f)\c\nab_3 f=f\c\Gag$.}
\begin{align}
\begin{split}\label{ellipticf}
    \Om'\trch'&=\Om\trch+\Om\sdiv f+f\cdot\Gag,\\
    0&=\curl f+f\cdot \Gag.
\end{split}
\end{align}
Differentiating it by $e_A$ and applying \eqref{change}, we obtain
\begin{align*}
    e_A'(\Om'\trch')-\frac{\la}{2}f_A e'_3(\Om'\trch')=e_A(\Om\trch)+\Om e_A(\sdiv f)+\Gag\cdot \Gag^{(1)}.
\end{align*} 
Recall that we have from Proposition \ref{nulles}
\begin{align*}
    e_3'(\Om'\trch')&=-\frac{1}{2}\Om'\trch'\trchb'+r^{-1}\Gaa+r^{-1}\Gag^{(1)}+\Gag\cdot\Gag\\
    &=\frac{1}{r^2}+r^{-1}\Gaa+\Gag\cdot\Gag.
\end{align*}
Hence, we have
\begin{align*}
    \Om e_A(\sdiv f)+\frac{1}{2r^2}f_A=e'_A(\Om'\trch')-e_A(\Om\trch)+\Gaa\cdot\Gag^{(1)},
\end{align*}
which implies
\begin{align*}
    e_A(\sdiv f)+\frac{1}{r^2}f_A=\Om^{-1}(e'_A(\Om'\trch')-e_A(\Om\trch))+\Gaa\cdot\Gag^{(1)}.
\end{align*}
We also have from \eqref{ellipticf}
\begin{align*}
    e_A(\curl f)=\Gag\c\Gag^{(1)}.
\end{align*}
Hence, we infer
\begin{align}
    \begin{split}\label{ellipticsystemf}
        \left(d_1^*d_1-\frac{1}{r^2}\right)f&=-\nab\sdiv f+{^*\nab}\curl f-\frac{1}{r^2}f\\
        &=-\Om^{-1}\nab'(\Om'\trch')+\Om^{-1}\nab(\Om\trch)+\Gaa\c\Gag^{(1)}.
    \end{split}
\end{align}
Recall from \eqref{dddd} and Corollary \ref{prop7.6} that
\begin{align*}
    d_1^*d_1-\frac{1}{r^2}&=-\De_1+\mathbf{K}-\frac{1}{r^2}\\
    &=-\De_1-\frac{1}{4}\trch\,\trchb+\Gag\c\Gab-\rho-\frac{1}{r^2}\\
    &=-\De_1+r^{-1}\Gaa+r^{-1}\Gag^{(1)}.
\end{align*}
Injecting it into \eqref{ellipticsystemf}, we deduce
\begin{align*}
     -\De_1f=-\Om^{-1}\nab'(\Om'\trch')+\Om^{-1}\nab(\Om\trch)+\Gaa\c\Gag^{(1)}.
\end{align*}
We have from Proposition \ref{prop7.3}
\begin{align*}
    |r^{\frac{s-1}{2}-\frac{2}{p}}(r\nab)^{\leq 2} f|_{p,S}\les |r^{\frac{s+3}{2}-\frac{2}{p}}(\nab'\Om'\trch',\nab\Om\trch)|_{p,S}+|r^{\frac{s+3}{2}-\frac{2}{p}}\Gaa\c\Gag^{(1)}|_{p,S}\les \ep_0.
\end{align*}
Applying Proposition \ref{standardsobolev}, we deduce
\begin{equation}\label{nabfcontrol}
    |r^{\frac{s-1}{2}}(r\nab)^{\leq 1}f|_{\infty,S} \les\ep_0.
\end{equation}
We have from Proposition \ref{transformation} that
\begin{align}
\begin{split}\label{nab34f}
    \sup_{\kk'}|\nab_3 f| &\les \sup_{\kk'}|\eta'-\eta|+\sup_{\kk'}|f\cdot \Gaa|\les\frac{\ep_0}{r^{\frac{s+1}{2}}}+ \frac{\ep^2}{r^{\frac{s+3}{2}}}\les \frac{\ep_0}{r^{\frac{s+1}{2}}},\\
    \sup_{\kk'}|\nab_4 f| &\les \sup_{\kk'}|r^{-1}f| +\sup_{\kk'}|f\cdot \Gaa|\les\frac{\ep_0}{r^{\frac{s+1}{2}}}+ \frac{\ep^2}{r^{\frac{s+3}{2}}}\les  \frac{\ep_0}{r^{\frac{s+1}{2}}}.
\end{split}
\end{align}
Combining \eqref{nabfcontrol} and \eqref{nab34f}, we deduce
\begin{align*}
    \sup_{\kk'}(|f|+|\dk f|)\les \frac{\ep_0}{r^\frac{s-1}{2}},
\end{align*}
which implies
\begin{equation}\label{oscf}
    \osc(f)\les\ep_0.
\end{equation}
{\bf Step 2.} Estimate for $\osc(\la)$.\\ \\
We recall from \eqref{lambdaOmOm'} that
\begin{align*}
    |r\ovla|=|r(\la-1)|\les r|\Om'-\Om|\les r\left|\Om'-\frac{1}{2}+\frac{1}{2}-\Om\right|\les \ep_0,
\end{align*}
where we used \eqref{mobrass} in the last step. Hence, we obtain
\begin{equation}\label{oscla}
    \osc(\la)\les\ep_0.
\end{equation}
{\bf Step 3.} Estimate for $\osc(r)$.\\ \\
We have
\begin{align}
\begin{split}\label{1r1r'}
    \frac{1}{r}-\frac{1}{r'}&=\Om'\trch'-\frac{1}{r'}-\Om\trch+\frac{1}{r}+(\Om\trch-\Om'\trch')\\
    &=\widecheck{\Om'\trch'}-\widecheck{\Om\trch}+\ov{\Om'\trch'}-\frac{1}{r'}+\ov{\Om\trch}-\frac{1}{r}+(\Om\trch-\Om'\trch').
\end{split}
\end{align}
Applying \eqref{ellipticf} and \eqref{nabfcontrol}, we deduce
\begin{align}\label{difOmtrch}
    |\Om\trch-\Om'\trch'|\les |\sdiv f|+ |r\Gag\cdot\Gag|\les \frac{\ep_0}{r^{\frac{s+1}{2}}}.
\end{align}
Moreover, we have from \eqref{mobrass} that
\begin{align}\label{difOmtrch1r}
    r^\frac{s+1}{2}\left(|\widecheck{\Om'\trch'}|+|\widecheck{\Om\trch}|\right)+r^2\left(\left|\ov{\Om'\trch'}-\frac{1}{r'}\right|+\left|\ov{\Om\trch}-\frac{1}{r}\right|\right)\les \ep_0.
\end{align}
Hence, we obtain from \eqref{1r1r'}, \eqref{difOmtrch} and \eqref{difOmtrch1r} that
\begin{align*}
    \left|\frac{1}{r}-\frac{1}{r'}\right|\les \frac{\ep_0}{r^2},
\end{align*}
which implies
\begin{equation}\label{oscr}
    \osc(r)\les \ep_0.
\end{equation}
Combining \eqref{oscf}, \eqref{oscla} and \eqref{oscr}, we infer
\begin{equation}
    \osc\les\ep_0.
\end{equation}
This concludes the proof of Lemma \ref{oscillation}.
\end{proof}
\begin{proof}[Proof of Theorem M0]
We have from Lemma \ref{oscillation} that 
\begin{equation}\label{osccontroldone}
    \osc\les\ep_0.
\end{equation}
We recall from Proposition \ref{transformation} that
\begin{align*}
    \etab'=\etab+\frac{1}{4}\trchb f+r\Gag\cdot \Gag.
\end{align*}
Applying \eqref{change}, we have
\begin{align*}
    \nab'\etab'=\nab\etab+r^{-1}O(\nab f)+ r\Gag\cdot\Gag^{(1)}.
\end{align*}
Thus, we obtain from \eqref{assumptionM0} and \eqref{osccontroldone} that
\begin{align}\label{etabinfty}
    \sup_{\kk'}|\etab|\les \sup_{\kk'}|\etab'|+\sup_{\kk'}|r^{-1}f|+\sup_{\kk'}|r\Gag\cdot\Gag|\les \frac{\ep_0}{r^\frac{s+1}{2}},
\end{align}
and for $p\in [2,4]$
\begin{align}
\begin{split}\label{etabnab24}
        |r^{1-\frac{2}{p}}\nab\etab|_{p,S(u_0(\ub),\ub)}&\les |r^{1-\frac{2}{p}}\nab'\etab'|_{p,S(u_0(\ub),\ub)}+ |r^{-\frac{2}{p}}\nab f|_{p,S(u_0(\ub),\ub)}+|r^{2-\frac{2}{p}}\Gag\cdot\Gag^{(1)}|_{p,S(u_0(\ub),\ub)}\\
        &\les |r\nab'\etab'|_{\infty,S(u_0(\ub),\ub)}+ |\nab f|_{\infty,S(u_0(\ub),\ub)}+\frac{\ep^2}{r^{s-1}}\\
        &\les \frac{\ep_0}{r^\frac{s+1}{2}}.
\end{split}
\end{align}
Finally, we recall from Proposition \ref{transformation} that
\begin{align*}
    \Om\trchb=\frac{\Om^2}{\Om'}\trchb'.
\end{align*}
Hence, we infer
\begin{align*}
    \Om\trchb+\frac{1}{r}&=\frac{\Om^2}{\Om'}\trchb'+\frac{1}{r}\\
    &=\left(1-\frac{\Om^2}{{\Om'}^2}\right)\trchb'+\Om'\trchb'+\frac{1}{r'}+\frac{1}{r}-\frac{1}{r'}.
\end{align*}
Thus, we have from \eqref{assumptionM0} and \eqref{osccontroldone}
\begin{align}\label{Omtrchb1r}
    r^2\left|\Om\trchb+\frac{1}{r}\right|&\les r^2\left|\Om'\trchb'+\frac{1}{r'}\right|+r|\ovla|+|r'-r|\les \ep_0.
\end{align}
Combining \eqref{etabinfty}, \eqref{etabnab24}, \eqref{Omtrchb1r} and Lemma \ref{a1.5} and recalling the definition of $\uo(\Si_0\setminus K)$, we obtain
\begin{equation}
    \uo(\Si_0\setminus K)\les \ep_0.
\end{equation}
This concludes the proof of Theorem M0.
\end{proof}
\subsection{The last slice (Theorem M2)}\label{ssec11.5}
We now assume that the last slice $\unc_*$ is endowed with a geodesic foliation. Notice that $\Om=\frac{1}{2}$ and $\omb=0$ on $\Cb_*$. We write the null structure equations along the last slice in the following form:
\begin{align}
    \begin{split}\label{nab3geo}
        \nabla_3 \zeta +\trchb\,\zeta+2\hchb\cdot\ze&=-\bb,\\
        \nabla_3 \tr\unchi +\frac{1}{2}(\tr\unchi)^2 +|\widehat\unchi|^2&=0,\\
        \nabla_3\widehat\chi+\frac{1}{2}\tr\unchi\,\widehat\chi+\frac{1}{2}\tr\chi\,\widehat\unchi-\nabla\widehat\otimes\zeta-\zeta\widehat\otimes\zeta&=0,\\
        \nabla_3\tr\chi+\frac{1}{2}\tr\unchi\tr\chi+\widehat\unchi\cdot\widehat\chi-2\sdiv\zeta-2|\zeta|^2&=2\rho.
    \end{split}
\end{align}
We also have the Codazzi equations:
\begin{align}
    \begin{split}\label{nab3codazzi}
        \sdiv\widehat\chi&=\frac{1}{2}\nabla\tr\chi-\zeta\left(\widehat\chi-\frac{1}{2}\tr\chi\right)-\beta,\\
        \sdiv\widehat\unchi&=\frac{1}{2}\nabla\tr\unchi+\zeta\left(\widehat\unchi-\frac{1}{2}\tr\unchi\right)+\unb.
    \end{split}
\end{align}
For convenience, we recall the statement of Theorem M2.
\begin{m2}
Let $\Cb_*$ endowed with a geodesic foliation. We assume that
\begin{equation}
    \mo^*(\unc_*)\leq\ep,\qquad\ur_0^S\leq\De_0,\qquad\mo'\leq \mathcal{I}_0.
\end{equation}
Then, we have
\begin{equation}
    \mo^*(\unc_*)\lesssim \Delta_0+\mathcal{I}_0+\epsilon^2.
\end{equation}
\end{m2}
In the remainder of this section, we always assume $p\in [2,4]$ and we denote
\begin{equation}
    S:=S(u,\ub_*),\qquad S_*:=\Cb_*\cap \Si_0.
\end{equation}
\begin{proof}[Proof of Theorem M2]
Notice that the assumption $\OO'\leq \II_0$ controls the initial data on the last sphere $S_*:=\Cb_*\cap \Si_0$. The idea of the proof is to transport the estimates in the direction of $\nab_3$ along $\Cb_*$ using \eqref{nab3geo}. The proof is similar to section \ref{sec10}, and in fact easier since $\Om=\frac{1}{2}$ on $\Cb_*$. We only provide a sketch.\\ \\
{\bf Step 1.} Estimate for $\mo^*_{0,1,2}(\trchbc)$.\\ \\
The estimates for $\mo_0^*(\trchb)$ and $\mo_1^*(\trchb)$ follow directly from the results of section \ref{ssec10.6} in the particular case $\Om=\frac{1}{2}$ and $\omb=0$. \\ \\
We then estimate $\nab^2\trchb$. According to the assumption $\mo^*(\Cb_*)\leq\ep$, we have on $\Cb_*$:
\begin{equation}\label{eq6.44}
    r\nab\trchb\in\Gag ,\qquad (r\nab)^2\trchb\in \Gag^{(1)}.
\end{equation}
We recall from \eqref{nab3geo} that
\begin{align*}
    \nab_3 \trchb +\frac{1}{2}(\trchb)^2=-|\hchb|^2.
\end{align*}
Differentiating it by $r^2\De$ and applying Proposition \ref{commutation}, we deduce
\begin{align}\label{nab3nab2trchb}
\nab_3 (r^2\De\trchb)+\trchb(r^2\De\trchb)=-r^2\De|\hchb|^2+\Gab\cdot\Gab^{(1)}.
\end{align}
We recall from \eqref{nab3codazzi} that
\begin{equation}\label{cosch}
    \sdiv\hchb=\frac{1}{2}\nab\trchb-\frac{1}{2}\trchb\,\ze+\bb+\Gag\cdot\Gab=\bb+r^{-1}\Gag.
\end{equation}
Applying Proposition \ref{prop7.3}, we obtain\footnote{Here, $\bb^{(0)}$ denotes a quantity which has same or even better decay and regularity than $\bb$.}
\begin{align*}
    \nab\hchb=\bb^{(0)}+r^{-1}\Gag.
\end{align*}
Hence, we have
\begin{equation*}
    |\nab\hchb|^2=\bb^{(0)}\c\bb^{(0)}+r^{-2}\Gag\c\Gab^{(1)}.
\end{equation*}
Moreover, we have from \eqref{dddd} and \eqref{cosch}
\begin{align*}
    2\De_2\hchb\c\hchb=4((\mathbf{K}-d_2^*d_2)\hchb)\c\hchb=-4d_2^*d_2\hchb\c\hchb+r^{-2}\Gab\c\Gab^{(1)}=-4(d_2^*\bb)\c\hchb+r^{-2}\Gab\c\Gab^{(1)}.
\end{align*}
We then compute
\begin{align*}
    \De |\hchb|^2=2\nab^C \left((\nab_C\hchb_{AB})\hchb^{AB}\right)=2(\De_2\hchb)\c\hchb+2|\nab\hchb|^2=-4(d_2^*\bb)\c\hchb+\bb^{(0)}\c\bb^{(0)}+r^{-2}\Gab\c\Gab^{(1)}.
\end{align*}
Injecting it into \eqref{nab3nab2trchb}, we deduce
\begin{equation}\label{nab3r2nab2trchb}
\nab_3(r^2\De\trchb)+\trchb(r^2\De\trchb)=4r^2(d_2^*\bb)\c\hchb+r^2\bb^{(0)}\c\bb^{(0)}+\Gab\cdot\Gab^{(1)}.
\end{equation}
Applying Proposition \ref{prop7.3}, we have
\begin{align*}
    |d_2^*\bb|_{p,S}\les |d_1\bb|_{p,S},
\end{align*}
which implies that there exists a bounded elliptic operator $A:L^p(S)\to L^p(S)$ of order $0$ such that
\begin{align*}
    d_2^*\bb = A (d_1\bb).
\end{align*}
We have from Proposition \ref{comm}
\begin{align}
\begin{split}\label{Acomm}
    [\Om\nab_3,A]&=[\Om\nab_3,rd_2^*\circ (rd_1)^{-1}]=[\Om\nab_3,rd_2^*]\circ (rd_1)^{-1}+rd_2^*\circ [\Om\nab_3,(rd_1)^{-1}]\\
    &=\Gab\c rd_2^* \circ (rd_1)^{-1}+r\bb^{(0)}\c (rd_1)^{-1}-rd_2^*\circ(rd_1)^{-1}\circ[\Om\nab_3,rd_1]\circ(rd_1)^{-1}\\
    &=\Gab\c A+r\bb^{(0)}\c (rd_1)^{-1}-A\circ\left(\Gab\c rd_1+r\bb^{(0)}\right)\circ (rd_1)^{-1}\\
    &=\Gab+r\bb^{(0)}\c(rd_1)^{-1}.    
\end{split}
\end{align}
We recall from Corollary \ref{prop7.6} and Lemma \ref{dav} that
\begin{align*}
    \nab_3(\rhoc,\sic)+\frac{3}{2}\trchb(\rhoc,\sic)=-d_1\bb+\Gag\cdot\aa.
\end{align*}
Combining with \eqref{Acomm}, we deduce
\begin{align*}
    \nab_3(r^2 A(\rhoc,\sic))=r(\rhoc,\sic)^{(0)}-A(r^2 d_1\bb)+r^3\bb^{(0)}\c(\rhoc,\sic)^{(0)}+r^2\Gag\cdot\aa.
\end{align*}
Hence, we have
\begin{align*}
4\hchb\cdot(r^2d_2^*\bb)=-4\nab_3(r^2A(\rhoc,\sic)\hchb)+r^2\aa^{(0)}\c(\rhoc^{(0)},\sic^{(0)})+\Gab\cdot \Gab^{(1)}.
\end{align*}
Injecting it into \eqref{nab3r2nab2trchb}, we infer\footnote{Note that $\aa^{(0)}\c(\rhoc,\sic)^{(0)}$ has the same decay as $\bb^{(0)}\c\bb^{(0)}$.}
\begin{align}
\begin{split}\label{renornon}
\nab_3\left(r^2\De\trchb+4r^2A(\rhoc,\sic)\hchb\right)+\trchb\left(r^2\De\trchb+4r^2A(\rhoc,\sic)\hchb\right)=r^2\bb^{(0)}\c\bb^{(0)}+\Gab\cdot\Gab^{(1)}.    
\end{split}
\end{align}
Applying Lemma \ref{evolutionlemma}, we have
\begin{align*}
    &\;\;\;\,\,\,\left|r^{4-\frac{2}{p}}(\De\trchb+4A(\rhoc,\sic)\hchb)\right|_{p,S}\\
    &\les\left|r^{4-\frac{2}{p}}(\De\trchb+4A(\rhoc,\sic)\hchb)\right|_{p,S_*}+\int^{u}_{u_0(\ub_*)}|r^{4-\frac{2}{p}}\bb^{(0)}\c\bb^{(0)}|_{p,S}+|r^{2-\frac{2}{p}}\Gab\cdot \Gab^{(1)}|_{p,S}\\
    &\les\frac{\II_0+\De_0}{|u|^\frac{s-3}{2}}+\int^{u}_{u_0(\ub_*)}|r^{2-\frac{2}{8}}\bb^{(0)}|_{8,S}^2+\frac{\ep^2}{|u|^{s-1}}\\
    &\les\frac{\II_0+\De_0+\ep^2}{|u|^\frac{s-3}{2}}+\int_{u_0(\ub_*)}^u|r^{2-\frac{2}{2}}\bb^{(1)}|_{2,S}^2\\
    &\les\frac{\II_0+\De_0+\ep^2}{|u|^\frac{s-3}{2}},
\end{align*}
where we used the Sobolev inequality
\begin{align*}
    |r^{-\frac{2}{8}}\bb^{(0)}|_{8,S}\les |r^{-\frac{2}{2}}\bb^{(1)}|_{2,S}.
\end{align*}
Hence, we obtain from Proposition \ref{prop7.3}
\begin{equation}
    \left|r^{4-\frac{2}{p}}\nab^2\trchb\right|_{p,S}\les\left|r^{4-\frac{2}{p}}\De\trchb\right|_{p,S}\les\frac{\II_0+\De_0+\ep^2}{|u|^\frac{s-3}{2}}+\left|r^{4-\frac{2}{p}}A(\rhoc,\sic)\hchb\right|_{p,S}\les\frac{\II_0+\De_0+\ep^2}{|u|^\frac{s-3}{2}}.
\end{equation}
Hence, we obtain
\begin{equation}\label{esttrchbcstar}
    \mo^*_0(\trchbc)+\mo^*_1(\trchbc)+\mo^*_2(\trchbc)\les \II_0+\De_0+\ep^2.
\end{equation}
{\bf Step 2.} Estimate for $\widecheck{[\mub]}$. \\ \\
We introduce the mass aspect function $\mub$ as in \eqref{defmu}:
\begin{align}\label{defmubb}
    \mub:=\sdiv\ze+\frac{1}{2}\hchb\cdot\hch-\rho.
\end{align}
We recall from Lemma 4.3.1 in \cite{kn} that
\begin{align*}
    \nab_3\mub+\trchb\,\mub =\underline{G}+\frac{1}{2}\trchb \mub,
\end{align*}
where
\begin{align*}
    \underline{G}=\trchb \,\rho-\frac{1}{2}\hchb\cdot(\nab\hot\ze)-\ze\cdot\bb+\Gag\c\nab\trchb+r^{-1}\Gab\cdot\Gab.
\end{align*}
Applying \eqref{eq6.44}, we have
\begin{equation*}
    \underline{G}=\trchb\,\rho-\frac{1}{2}\hchb\cdot(\nab\hot\ze)-\ze\cdot\bb+r^{-1}\Gab\cdot\Gab.
\end{equation*}
As in \eqref{mumc}, we define
\begin{align}\label{mubkk}
    [\mub]:=\mub+\frac{1}{4}\trch\,\trchb,
\end{align}
and deduce the following analog of \eqref{keymuc}:
\begin{equation}\label{keymubc}
    \nab_3{[\mub]}+\trchb\,{[\mub]}=\trchb\,\rho-\frac{1}{2}\hchb\cdot(\nab\hot\ze)-\ze\cdot\bb+r^{-1}\Gab\cdot\Gab.
\end{equation}
Differentiating \eqref{keymubc} by $r\nab$ and applying Corollary \ref{commcor}, we obtain
\begin{align}
\begin{split}\label{mubnab3}
\nab_3(r\nab{[\mub]})+\trchb(r\nab{[\mub]})&=\trchb\,(r\nab\rho)-\frac{1}{2}\hchb\cdot (r\nab\nab\hot\ze)-\ze\cdot(r\nab\bb)\\
&+r^{-1}\Gaa\cdot\Gag^{(1)}+r^{-1}\Gab\cdot\Gab^{(1)}.
\end{split}
\end{align}
Recall from \eqref{nab3geo} and Corollary \ref{prop7.6} that, we have
\begin{align*}
    \nab_3\hch+\frac{1}{2}\trchb\,\hch+\frac{1}{2}\trch\,\hchb&=\nab\hot\ze+\Gag\cdot\Gag,\\
    \nab_3(\rhoc,\si)+\frac{3}{2}\trchb(\rhoc,\si)&=-d_1\bb+\Gag\cdot\aa.
\end{align*}
We apply a renormalization method similar to the one in \eqref{renornon}, and leave the details to the reader. We obtain
\begin{equation}\label{nab3Xi}
\nab_3\Xi+\trchb\,\Xi=O(\nab\rho)+r^{-1}\Gaa\cdot\Gag^{(1)}+r^{-1}\Gab\cdot\Gab^{(1)},
\end{equation}
where
\begin{equation}\label{defXi}
    \Xi:=r\nab[\mub]+\frac{1}{2}\hchb\cdot(r\nab\hch)+r\ze\cdot\rhoc.
\end{equation}
Next, we use a renormalization method similar to the one in section \ref{ssec10.4}. We define $\mub^\dagger$ by:
\begin{align}\label{mubdagger}
    \nab_3\mub^\dagger+\trchb \,\mub^\dagger=\trchb\,\si \quad \mbox{ on }\Cb_*,\qquad \mub^\dagger=0\quad \mbox{ on }S_*.
\end{align}
We assume that
\begin{equation}\label{bootmubdagger}
    \mub^\dagger \in r^{-1}\Gag,
\end{equation}
which will be improved in \eqref{mubmubdagger} and \eqref{mubtrans}. Differentiating \eqref{mubdagger} by $r{^*\nab}$ and applying Corollary \ref{commcor}, we infer
\begin{align}\label{mubdaggernab3}
\nab_3(r{^*\nab}\mub^\dagger)+\trchb(r{^*\nab}\mub^\dagger)=r\trchb{^*\nab}\si+r^{-1}\Gag\cdot\Gag^{(1)}.
\end{align}
Recall from Corollary \ref{prop7.6} that 
\begin{equation*}
    \nab_3\b+\trchb \b=\nab\rhoc+{^*\nab\si}+\Gaa\cdot\b+\Gag\cdot \bb,
\end{equation*}
which implies
\begin{equation}\label{rtrchbbnab3}
    \nab_3(r\trchb \b)+\trchb(r\trchb\,\b)=\nab_3(r\trchb)\b+\trchb(r\nab\rhoc+r{^*\nab\si})+r^{-1}\Gaa\cdot\Gag^{(1)}+r^{-1}\Gag\cdot\Gab^{(1)}. 
\end{equation}
Combining \eqref{nab3Xi}, \eqref{mubdaggernab3} and \eqref{rtrchbbnab3}, we obtain
\begin{align}
\begin{split}\label{keynor}
&\nab_3(\Xi+r{^*\nab}\mub^\dagger-r\trchb\b)+\trchb(\Xi+r{^*\nab}\mub^\dagger-r\trchb\b)\\
=&r^{-1}O(\b)+r^{-1}\Gaa\cdot\Gag^{(1)}+r^{-1}\Gab\cdot\Gab^{(1)}.
\end{split}
\end{align}
Applying Lemma \ref{evolutionlemma}, we deduce
\begin{equation}
    |r^{2-\frac{2}{p}}(\Xi+r{^*\nab}\mub^\dagger-r\trchb\b)|_{p,S}\les \frac{\II_0+\De_0+\ep^2}{r|u|^{\frac{s-3}{2}}}.
\end{equation}
Recall from \eqref{defXi} that
\begin{align*}
    \Xi-r\nab\widecheck{[\mub]}=\Gab\cdot\Gag^{(1)},
\end{align*}
we obtain
\begin{equation*}
    |r^{2-\frac{2}{p}}(r\nab\widecheck{[\mub]}+r{^*\nab}\mub^\dagger)|_{p,S}\les |r^{2-\frac{2}{p}}\b|_{p,S}+|r^{2-\frac{2}{p}}\Gab\cdot\Gag^{(1)}|_{p,S}+\frac{\II_0+\De_0+\ep^2}{r|u|^{\frac{s-3}{2}}} \les\frac{\II_0+\De_0+\ep^2}{r|u|^{\frac{s-3}{2}}}.
\end{equation*}
Then, we have from Proposition \ref{prop7.3} that
\begin{align}\label{mubmubdagger}
    \left|r^{4-\frac{2}{p}}\nab\left([\mub],\mub^\dagger\right)\right|_{p,S}\les\frac{\II_0+\De_0+\ep^2}{|u|^\frac{s-3}{2}}.
\end{align}
Finally, applying Lemma \ref{evolutionlemma} to \eqref{mubdagger}, we obtain
\begin{equation}\label{mubtrans}
    |r^{2-\frac{2}{p}}\mub^\dagger|_{p,S}\les |r^{2-\frac{2}{p}}\mub^\dagger|_{p,S_*}+\int_{u_0(\ub_*)}^u|r^{1-\frac{2}{p}}\si|_{p,S}\les \frac{\II_0+\De_0}{r|u|^{\frac{s-3}{2}}}.
\end{equation}
Remark that \eqref{mubmubdagger} and \eqref{mubtrans} improves \eqref{bootmubdagger}.\\ \\
{\bf Step 3.} Estimates for $\mo_{0,1}(\trchc)$ and $\mo_{0,1}(\ze)$. \\ \\
We recall from \eqref{nab3geo} that
\begin{align*}
    \nabla_3\tr\chi+\frac{1}{2}\tr\unchi\tr\chi-2\sdiv\zeta=2\rho+\Gab\cdot\Gag.
\end{align*}
Applying \eqref{defmubb} and \eqref{mubkk}, we obtain
\begin{align*}
    \nab_3\trch+\trchb\trch&=2\sdiv\ze+2\rho+\frac{1}{2}\trch\trchb+\Gab\cdot\Gag\\
    &=2\mub+4\rho+\frac{1}{2}\trch\trchb+\Gab\cdot\Gag\\
    &=2[\mub]+4\rho+\Gab\cdot\Gag.
\end{align*}
Differentiating it by $r\nab$, we deduce
\begin{align*}
    \nab_3(r\nab\trch)+\trchb(r\nab\trch)&=-\trch(r\nab\trchb)+2r\nab[\mub]+4r\nab\rho+\Gab\cdot\Gag^{(1)}.
\end{align*}
We use a renormalization argument as in Step 2. For this, we define $\chi^\dagger$ by:
\begin{align}\label{defchidagger}
    \nab_3 \chi^\dagger+\trchb\,\chi^\dagger =4\si,\quad\mbox{ on }\Cb_*,\qquad \chi^\dagger=0 ,\quad\mbox{ on }S_*.
\end{align}
We add the bootstrap assumption
\begin{equation}\label{bootchidagger}
    \chi^\dagger\in\Gag,
\end{equation}
which will be improved by \eqref{trchchidagger} and \eqref{chidaggerimproved}. Differentiating it by $r{^*\nab}$ and applying Corollary \ref{commcor}, we infer
\begin{align*}
    \nab_3(r{^*\nab}\chi^\dagger)=4r{^*\nab}\si+\Gag\cdot\Gag^{(1)}.
\end{align*}
We recall from Corollary \ref{prop7.6} that 
\begin{equation*}
    \nab_3\b+\trchb\, \b=\nab\rhoc+{^*\nab\si}+\Gaa\cdot\b+\Gag\cdot \bb,
\end{equation*}
which implies
\begin{equation*}
    \nab_3(r\b)+\trchb\,(r\b)=-e_3(r)\b+r\nab\rhoc+r{^*\nab\si}+\Gaa\cdot\Gag^{(1)}+\Gag\cdot\Gab^{(1)}.
\end{equation*}
Hence, we obtain the following analog of \eqref{keynor}:
\begin{align*}
    &\nab_3(r\nab\trch+r{^*\nab}\chi^\dagger-4r\b)+\trchb(r\nab\trch+r{^*\nab}\chi^\dagger-4r\b)\\
    =&-e_3(r)\b-\trch(r\nab\trchb)+2r\nab[\mub]+\Gab\cdot\Gag^{(1)}+\Gaa\cdot\Gag^{(1)}.
\end{align*}
Applying Lemma \ref{evolutionlemma}, \eqref{esttrchbcstar} and \eqref{mubmubdagger}, we deduce 
\begin{align*}
|r^{2-\frac{2}{p}}(r\nab\trch+r{^*\nab}\chi^\dagger-4r\b)|_{p,S}&\les |r^{2-\frac{2}{p}}(r\nab\trch+r{^*\nab}\chi^\dagger-4r\b)|_{p,S_*}\\
&+\int_{u_0(\ub_*)}^{u}|r^{2-\frac{2}{p}}\b|_{p,S}+|r^{2-\frac{2}{p}}\nab\trchb|_{p,S}+|r^{3-\frac{2}{p}}\nab[\mub]|_{p,S}\\
&+\int_{u_0(\ub_*)}^{u}  |r^{2-\frac{2}{p}}\Gab\cdot\Gag^{(1)}|_{p,S} + |r^{2-\frac{2}{p}}\Gaa\cdot\Gag^{(1)}|_{p,S}\\
&\les \frac{\II_0+\De_0}{|u|^\frac{s-3}{2}}+\int_{u_0(\ub_*)}^{u} \frac{\II_0+\De_0+\ep^2}{r|u|^\frac{s-3}{2}}+\frac{\ep^2}{r|u|^{s-2}}+\frac{\ep^2}{r^2|u|^\frac{s-3}{2}}\\
&\les \frac{\II_0+\De_0+\ep^2}{|u|^\frac{s-3}{2}}.
\end{align*}
Hence, we obtain from Proposition \ref{prop7.3} that
\begin{align}\label{trchchidagger}
    |r^{3-\frac{2}{p}}\nab(\trch,\chi^\dagger)|_{p,S}\les |r^{3-\frac{2}{p}}\b|_{p,S}+ \frac{\II_0+\De_0+\ep^2}{|u|^\frac{s-3}{2}}\les \frac{\II_0+\De_0+\ep^2}{|u|^\frac{s-3}{2}}.
\end{align}
Applying Lemma \ref{evolutionlemma} to \eqref{defchidagger}, we easily obtain
\begin{equation}\label{chidaggerimproved}
    |r^{2-\frac{2}{p}}\chi^\dagger|_{p,S}\les |r^{2-\frac{2}{p}}\chi^\dagger|_{p,S_*}+\int_{u_0(\ub_*)}^u |r^{2-\frac{2}{p}}\si|_{p,S}\les \frac{\II_0+\De_0}{|u|^\frac{s-3}{2}},
\end{equation}
which improved \eqref{bootchidagger}. By Poincar\'e inequality, we infer
\begin{equation}\label{trchcstar}
    \mo_0(\trchc)+\mo_1(\trchc)\les \II_0+\De_0+\ep^2.
\end{equation}
Next, we recall from \eqref{4.3.38} that
\begin{align*}
    \sdiv \ze&=\widecheck{[\mub]}+\rhoc+\frac{1}{4}\widecheck{\trch\,\trchb}+\Gab\cdot\Gag,\\
    \curl \ze &=\si +\Gab\cdot\Gag.
\end{align*}
Hence, we have from Proposition \ref{prop7.3} that
\begin{align*}
    |r^{2-\frac{2}{p}}(r\nab)^{\leq 1}\ze|_{p,S}&\les |r^{3-\frac{2}{p}}\widecheck{[\mub]}|_{p,S}+|r^{3-\frac{2}{p}}(\rhoc,\si)|_{p,S}+|r^{1-\frac{2}{p}}(\trchc,\trchbc)|_{p,S}\\
    &\les \frac{\II_0+\De_0+\ep^2}{|u|^\frac{s-3}{2}},
\end{align*}
where we used \eqref{esttrchbcstar}, \eqref{mubmubdagger} and \eqref{trchcstar} in the last step. Hence, we obtain
\begin{equation}\label{estzestar}
   \mo_0(\ze)+\mo_1(\ze)\les \II_0+\De_0+\ep^2.
\end{equation}
{\bf Step 4.} Estimates for $\mo_{0,1}(\hch)$ and $\mo_{0,1}(\hchb)$. \\ \\
Applying  Proposition \ref{prop7.3} to \eqref{nab3codazzi} and recalling \eqref{esttrchbcstar}, \eqref{trchcstar} and \eqref{estzestar}, we obtain
\begin{align*}
    |r^{2-\frac{2}{p}}(r\nab)^{\leq 1}\hch|_{p,S}\les |r^{3-\frac{2}{p}}\nab\trchc|_{p,S}+|r^{2-\frac{2}{p}}\ze|_{p,S}+|r^{3-\frac{2}{p}}\b|+|r^{2-\frac{2}{p}}\Gag\cdot\Gag|_{p,S}\les \frac{\II_0+\De_0+\ep^2}{|u|^\frac{s-3}{2}},\\
    |r^{1-\frac{2}{p}}(r\nab)^{\leq 1}\hchb|_{p,S}\les |r^{2-\frac{2}{p}}\nab\trchbc|_{p,S}+|r^{2-\frac{2}{p}}\ze|_{p,S}+|r^{2-\frac{2}{p}}\bb|+|r^{1-\frac{2}{p}}\Gab\cdot\Gag|_{p,S}\les \frac{\II_0+\De_0+\ep^2}{r|u|^\frac{s-1}{2}}.
\end{align*}
Hence, we have
\begin{equation}\label{hchhchbstar}
    \mo_0(\hch)+\mo_1(\hch)+\mo_0(\hchb)+\mo_1(\hchb) \les \II_0+\De_0+\ep^2.
\end{equation}
{\bf Step 5.} Estimate for $r^2\left|\ov{\trch}-\frac{2}{r}\right|$. \\ \\
We have from Corollary \ref{dav} and \eqref{nab3geo} that\footnote{Recall that $\Om=\frac{1}{2}$ on $\Cb_*$.}
\begin{align*}
    \nab_3 (\ov{\trch})=\ov{\nab_3(\trch)}+\ov{\trchbc\,\trchc}=-\frac{1}{2}\ov{\trch\,\trchb}+2\ov{\rho}+\Gag\cdot\Gab.
\end{align*}
Notice that
\begin{equation*}
    \nabla_3\left(\frac{2}{r}\right)=-\frac{1}{r}\overline{\tr\unchi}.
\end{equation*}
Hence, we infer from Lemma \ref{chav} that
\begin{align*}
\nab_3\left(\ov{\trch}-\frac{2}{r}\right)+\frac{1}{2}\trchb\left(\ov{\trch}-\frac{2}{r}\right)&=\frac{1}{2}\trchb\left(\ov{\trch}-\frac{2}{r}\right)-\frac{1}{2}\ov{\trch\,\trchb}+\frac{1}{r}\ov{\trchb}+2\ov{\rho}+\Gag\cdot\Gab\\
&=\frac{1}{2}\ov{\trchb}\left(\ov{\trch}-\frac{2}{r}\right)-\frac{1}{2}\ov{\trch}\,\ov{\trchb}+\frac{1}{r}\ov{\trchb}+2\ov{\rho}+\Gag\cdot(\Gaa,\Gab)\\
&=2\ov{\rho}+\Gag\cdot(\Gaa,\Gab).
\end{align*}
Applying Lemma \ref{evolutionlemma}, we obtain
\begin{align*}
    \left|r\left(\ov{\trch}-\frac{2}{r}\right)\right|_{\infty,S}&\les \left|r\left(\ov{\trch}-\frac{2}{r}\right)\right|_{\infty,S_*}+\int_{u_0(\ub_*)}^u|r\ov{\rho}|_{\infty,S}+|r\Gag\cdot(\Gaa,\Gab)|_{\infty,S} \\
    &\les\frac{\mathcal{I}_0}{r}+\int_{u_0}^u \frac{\De_0+\II_0+\ep^2}{r^2}\\
    &\les\frac{\De_0+\II_0+\ep^2}{r}.
\end{align*}
Hence, we deduce
\begin{equation}\label{trch2rstar}
    \sup_{\Cb_*}\left|r^2\left(\ov{\trch}-\frac{2}{r}\right)\right|\les \De_0+\II_0+\ep^2.
\end{equation}
In view of \eqref{esttrchbcstar}, \eqref{trchcstar}--\eqref{trch2rstar}, we obtain 
\begin{align*}
    \mo^*(\Cb_*)\les \II_0+\De_0+\ep^2.
\end{align*}
This concludes the proof of Theorem M2.
\end{proof}
\subsection{Extension argument (Theorem M4)}\label{ssec12.1}
In this section, we prove Theorem M4, which we recall below for convenience.
\begin{m4}
We consider the spacetime $\kk$ and its double null foliation $(u,\unu)$, which satisfy the assumptions:
\begin{equation}\label{kkinitial}
\mo\les\epsilon_0,\qquad \mr\les \epsilon_0,\qquad \osc\les\ep_0.
\end{equation}
Then, we can extend the spacetime $\kk=V(u_0,\unu_*)$ and the double null foliation $(u,\unu)$ to a new spacetime $\wideparen{\kk}=\wideparen{V}(u_0,\unu_*+\nu)$, where $\nu$ is sufficiently small, and an associated double null foliation $(\upa,\unu)$. Moreover, the new foliation $(\upa,\ub)$ is geodesic on the new last slice $\Cb_{**}:=\Cb_{\ub_*+\nu}$ and
the new norms satisfy
\begin{align}
\wideparen{\mo}\les\ep_0,\qquad\wideparen{\mr}\les\ep_0,\qquad\wideparen{\osc}\les\ep_0.
\end{align}
\end{m4}
Recall that we have introduced three different double null foliations in different regions:
\begin{itemize}
    \item $(u,\ub)$ in the bootstrap region $\kk$;
    \item $(u',\ub)$ in the initial layer region $\kk'$;
    \item $(\upa,\ub)$ in the extended bootstrap region $\wideparen{\kk}$.
\end{itemize}
We introduce the following definition.
\begin{df}\label{generalchange}
    For the change of frame $(f,\la)$ from a double null foliation $(u^{(1)},\ub)$ to another double null foliation $(u^{(2)},\ub)$ in their common domain $\MM$, we define the following norms:
    \begin{equation*}
        \osc[u^{(1)},u^{(2)}](\MM):=\osc[u^{(1)},u^{(2)}](\MM)(f)+\osc[u^{(1)},u^{(2)}](\MM)(\la)+\osc[u^{(1)},u^{(2)}](\MM)(r),
    \end{equation*}
    where
    \begin{align*}
        \osc[u^{(1)},u^{(2)}](f)&=\sup_\MM\left|r^{(1)}|u^{(1)}|^\frac{s-3}{2}\dk^{\leq 1}f\right|,\\
        \osc[u^{(1)},u^{(2)}](\la)&=\sup_\MM\left|r^{(1)}\ovla\right|,\\
        \osc[u^{(1)},u^{(2)}](r)&=\sup_\MM\left|r^{(1)}-r^{(2)}\right|.\\
    \end{align*}
\end{df}
\begin{rk}
Definition \ref{generalchange} immediately implies
    \begin{align}
    \begin{split}\label{osctriangle}
        \osc[u^{(1)},u^{(3)}](\MM)&\les \osc[u^{(1)},u^{(2)}](\MM)+\osc[u^{(2)},u^{(3)}](\MM),\\
        \osc[u^{(1)},u^{(2)}](\MM_1\cup\MM_2)&\les\osc[u^{(1)},u^{(2)}](\MM_1)+\osc[u^{(1)},u^{(2)}](\MM_2).
    \end{split}
    \end{align}
To simplify the notations, we ignore the domain $\MM$ and the optical functions $[u^{(1)},u^{(2)}]$ when they are clear in the context. For example, we denoted
    \begin{equation*}
        \osc=\osc[u,u'](\kk\cap\kk'),\qquad \wideparen{\osc}=\osc[\upa,u'](\wideparen{\kk}\cap\kk'),
    \end{equation*}
    in the statements of Theorems M0 and M4.
\end{rk}
\begin{figure}
  \centering
  \includegraphics[width=1\textwidth]{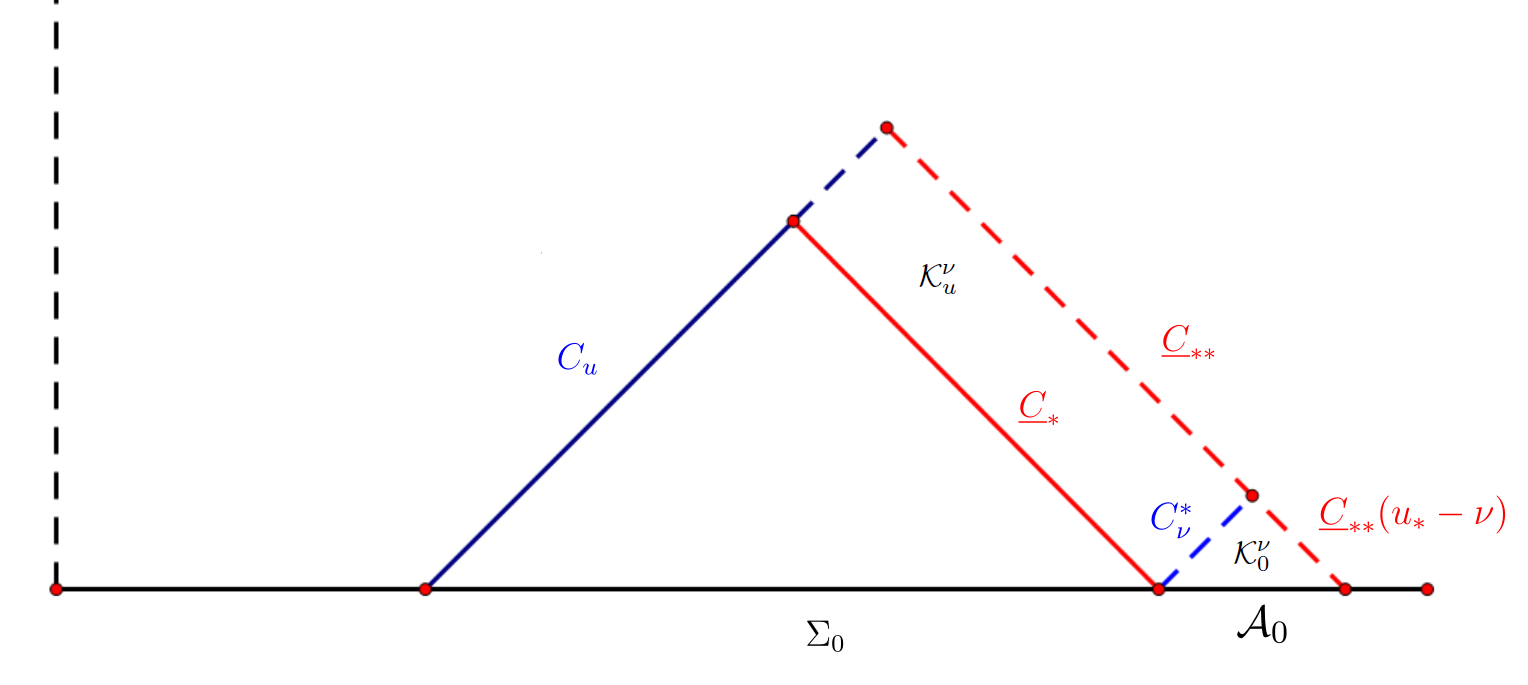}
  \caption{Extension argument}\label{fig5}
\end{figure}
\begin{proof}[Proof of Theorem M4]
\noindent{\bf Step 1.} Extension of the domain.\\ \\
We define the region
\begin{align*}
    \mathcal{A}_0:=\{p\in\Sigma_0/\,w(p)\in[\unu_*,\unu_*+\nu]\}.
\end{align*}
For $\nu$ small enough, we consider the future dependence domain $\kk_0^\nu$ of $\mathcal{A}_0$, with
\begin{equation*}
    \kk_0^\nu:=V'(u_*,\unu_*+\nu)\subset \kk'.
\end{equation*}
The boundary of $\kk_0^\nu$ contain \\
(1) the part $\mathcal{A}_0$; \\
(2) a part of an incoming cone denoted by
\begin{equation*}
    \unc_{**}(u_*-\nu):=\Cb_{\ub_*+\nu}\cap \{u_*-\nu \leq u' \leq u_*\},
\end{equation*}
which is a part of the new last slice $\unc_{**}:=\unc_{\unu_*+\nu}$, where
\begin{equation*}
    u_*-\nu:=u'\big|_{\unc_{\unu_*+\nu}\cap \Sigma_0},\qquad u_*=u'\big|_{\unc_{\unu_*}\cap\Sigma_0};
\end{equation*}
(3) a part of the outgoing cone emanating from $S_{0}(\ub_*)=\unc_{\unu_*}\cap\Si_0$ and in $V'(u_*,\ub_*+\nu)$, denoted by $C^*_\nu$, see Figure \ref{fig5}.\\ \\
For the double null foliation $(u',\unu)$ in $\kk_0^\nu$, we have:
\begin{equation}\label{kk0v}
{\mo}'(\kk_0^\nu)\les\ep_0,\qquad{\mr}'(\kk_0^\nu)\les\ep_0,
\end{equation}
for $\nu$ small enough by local existence. By the local existence theorem of the characteristic Cauchy problem introduced in \cite{luk}, we can extend the spacetime $\kk\cup\kk_0^\nu$ to the future domain of dependence of $C^*_\nu\cup\unc_*$, denoted by $\kk_u^\nu$. Moreover, we extend the outgoing cones of $\kk$ to this new domain, construct the incoming cones from $C^*_\nu$ and denote $(\uti,\ub)$ the global foliation in $\widetilde{\kk}:=\kk\cup\kk_u^\nu\cup\kk_0^\nu$. As a consequence of Theorem 1 of \cite{luk}, we have
\begin{equation}\label{kkuv}
    \tio(\kk_u^\nu)\les\ep_0,\qquad \tir(\kk_u^\nu)\les\epsilon_0,\qquad \osc[\uti,u'](\kk_u^\nu\cap\kk')\les\ep_0.
\end{equation}
Combining \eqref{kkinitial}, \eqref{kk0v} and \eqref{kkuv}, we obtain
\begin{equation}\label{kktilde}
    \tio(\widetilde{\kk})\les\ep_0,\qquad \tir(\widetilde{\kk})\les\epsilon_0,\qquad \osc[\uti,u'](\widetilde{\kk}\cap\kk')\les\ep_0.
\end{equation}
\noindent{\bf Step 2.} Estimates on $\Cb_{**}$. \\ \\
Notice that the double null foliation $(\widetilde{u},\ub)$ is in general not geodesic on $\Cb_{**}$ and only continuous across $C_\nu^*$. Next, we construct from $S'(u_*-\nu,\ub_*+\nu)$ as in section \ref{sssec7.1.2} another new double null foliation $(\upa,\ub)$ which is a geodesic foliation on $\Cb_{**}$.  We add the following bootstrap assumptions on $\Cb_*$, which will be improved at the end of this step:
\begin{equation}\label{bootfbr}
\osc[\uti,\upa](\Cb_{**})\leq\ep,\qquad \wideparen{\mo}^*(\Cb_{**})\leq\ep.
\end{equation}
Applying \eqref{bootfbr} and Proposition \ref{transformation} and recalling that two foliations have the same incoming cones, we deduce by proceeding similarly to the proof of Lemma \ref{a1} that\footnote{The estimate for $\ov{\rho}$ can be done similarly as in Proposition \ref{barrho}.}
\begin{equation}
    \wideparen{\ur}_{[1]}(\Cb_{**})\les \widetilde{\ur}_{[1]}(\Cb_{**})\les\ep_0.
\end{equation}
As a direct consequence of Theorem M2, we obtain
\begin{equation}\label{Ostarparen}
    \wideparen{\mo}^*(\Cb_{**})\les\ep_0.
\end{equation}
Then, by proceeding similarly to the proof of Lemma \ref{oscillation}, we obtain
\begin{equation}\label{osccbss}
    \osc[\uti,\upa](\Cb_{**})\les\ep_0,
\end{equation}
which improves \eqref{bootfbr}.\\ \\
\noindent{\bf Step 3.} Estimates in $\wideparen{\kk}$.\\ \\
Let $U_1$ the set of $|u_0|\leq \ub_1\leq\ub_{**}+\nu$ such that the following holds:
\begin{align}
\begin{split}\label{bootparenmo}
    \wideparen{\mo}(\wideparen{V}_1)\leq\ep,\qquad\qquad \osc[\uti,\upa](\wideparen{V}_1)\leq\frac{\ep}{2},
\end{split}
\end{align}
where
\begin{equation}
    \wideparen{V}_1:=\wideparen{V}(u_0,\ub_*+\nu)\cap \{\ub_1\leq \ub\leq \ub_{**}+\nu\}.
\end{equation}
Notice that \eqref{Ostarparen} and \eqref{osccbss} imply that $U_1\ne\emptyset$. We apply Proposition \ref{transformation} and proceed similarly to the proof of Lemma \ref{a1} to deduce\footnote{Recall that the incoming cones are invariant and that $\ov{\rho}$ can be estimated as in Proposition \ref{barrho}.}
\begin{equation}
    \wideparen{\ur}_0^S \les\widetilde{\ur}_0^S\les\ep_0.
\end{equation}
Next, we estimate $\wideparen{\mr}_0^S[\b]$. For this, recall that
\begin{equation*}
    |r^{-\frac{2}{p}}\nab_3 \b|_{p,\widetilde{S}} = |r^{-2-\frac{2}{p}}\Gag^{(1)}|_{p,\widetilde{S}}\les \frac{\ep}{r^4|u|^\frac{s-3}{2}}.
\end{equation*}
For any sphere $\wideparen{S}=\wideparen{S}(\upa,\ub)$, we take a sphere $\widetilde{S}=\widetilde{S}(\widetilde{u},\ub)$ satisfying $\wideparen{S}\cap\widetilde{S}\ne\emptyset$. The assumption \eqref{bootparenmo} implies
\begin{equation*}
    \sup_{\wideparen{S}}\left|\widetilde{u}-\frac{1}{|\wideparen{S}|}\int_{\wideparen{S}}\widetilde{u}\right|\les\frac{\ep}{r|u|^\frac{s-3}{2}}.
\end{equation*}
Applying Proposition \ref{transformation} and Lemma \ref{deforsp}, we deduce
\begin{align*}
    |r^{-\frac{2}{p}}\wideparen{\b}|_{p,\wideparen{S}}\les |r^{-\frac{2}{p}}\widetilde{\b}|_{p,\wideparen{S}}+\lot\les \frac{\widetilde{\mr}_0^S[\b]}{r^\frac{7}{2}|u|^\frac{s-4}{2}}+\frac{\ep^2}{r^5|u|^{s-3}}\les \frac{\ep_0}{r^\frac{7}{2}|u|^\frac{s-4}{2}}.
\end{align*}
Hence, we deduce
\begin{equation}\label{Rparen}
    \wideparen{\mr}_0^S +\wideparen{\ur}_0^S \les\ep_0.
\end{equation}
Combining \eqref{bootparenmo} and \eqref{kktilde} and applying \eqref{osctriangle}, we infer
\begin{equation*}
    \osc[\upa,u'](\wideparen{V}_1\cap\kk')\leq\frac{\ep}{2}+C\ep_0\leq\ep.
\end{equation*}
We now apply Theorems M0 and M3 to conclude\footnote{Remark that Theorem M3 also applied in the region $\wideparen{V}_1$.}
\begin{equation}\label{oscregion}
    \osc[\upa,u'](\wideparen{V}_1\cap\kk')\les\ep_0,\qquad \quad\wideparen{\mo}(\wideparen{V}_1)\les\ep_0.
\end{equation}
Proceeding similarly to the proof of Lemma \ref{oscillation}, we obtain
\begin{equation*}
    \osc[\uti,\upa](\wideparen{V}_1)\les \ep_0,
\end{equation*}
which improves \eqref{bootparenmo}. Thus, we have $\inf U_1=|u_0|$ and hence,
\begin{equation}\label{oscall}
    \osc[\upa,\uti](\wideparen{\kk})\les\ep_0,\qquad \quad\wideparen{\mo}(\wideparen{\kk})\les\ep_0.
\end{equation}
Notice that \eqref{kktilde} and \eqref{oscall} implies
\begin{equation}\label{oscupa}
    \osc[\upa,u'](\wideparen{\kk}\cap\kk')\les \ep_0.
\end{equation}
Finally, we apply Theorem M1 with \eqref{Rparen}, \eqref{oscall} and \eqref{oscupa} to conclude
\begin{equation}\label{mrparen}
    \wideparen{\mr}\les\ep_0.
\end{equation}
Combining \eqref{oscall}, \eqref{oscupa} and \eqref{mrparen}, this concludes the proof of Theorem M4.
\end{proof}
\appendix
\renewcommand{\appendixname}{Appendix~\Alph{section}}
\section{Proof of Lemma \ref{equivalence}}\label{secb}
In this appendix, we use the notations introduced in Remark \ref{notationprime}.\\ \\
First, we compare $r'$ and $w$. By construction, we have
\begin{align*}
    r'=w=w_0 \quad \mbox{ on }\pr K.
\end{align*} 
We add the bootstrap assumption:
\begin{equation}\label{bootwr'}
    |r'-w|\leq\ep\log r',\quad \mbox{ on }\Si_0\setminus K,
\end{equation}
to ensure the equivalence $r'\simeq w$. Applying \eqref{change} and Lemma \ref{dint}, we deduce
\begin{equation*}
    (\Om'e'_4-\Om'e'_3)(r'-w)=\frac{\ov{\Om'\trch'}}{2}r'-\frac{\ov{\Om'\trchb'}}{2}r'-1+\frac{|f|^2}{4}=r'\Gaa'+O(f^2).
\end{equation*}
Integrating from $w_0$ along $(\Om'e'_4-\Om'e'_3)$ and using $\osc(f)\leq\ep$, we infer
\begin{equation}\label{r'wSi0}
|r'-w|\les\ep+\int_{w_0}^{r'}\left(\frac{\ep^2}{{r'}^{s-1}}+\frac{\ep_0}{r'}\right)dr'\les \ep_0 \log r',
\end{equation}
which improves \eqref{bootwr'}.\\ \\
Next, we compare $u$ and $u'$. By construction, we have
\begin{align*}
    u'=u_0 \quad \mbox{ on }\pr K,
\end{align*}
and $u=u_0$ at a point $p\in \pr K$. Hence, we have from \eqref{change} and $\osc(f)\leq\ep$ that
\begin{align*}
    \sup_{\pr K}|u'-u|&\les |r\nab'(u'-u)|_{\infty,\pr K}+|u'-u|(p)\les \left|r\nab(u)+\frac{1}{2}e_3(u) f\right|_{\infty,\pr K}\les |f|_{\infty,\pr K}\les \ep_0.
\end{align*}
Applying \eqref{change}, we obtain
\begin{align*}
    \Om'e'_4(u)-\Om'e'_3(u)=\frac{\la\Om'}{4}|f|^2e_3(u)-\frac{\Om'}{\la\Om}=\frac{\Om}{4}|f|^2-\frac{{\Om'}^2}{\Om^2}.
\end{align*}
On the other hand, we have
\begin{equation*}
    (\Om'e'_4-\Om'e'_3)(u')=-1.
\end{equation*}
Then, we obtain
\begin{equation*}
    (\Om'e'_4-\Om'e'_3)(u-u')=\frac{\Om}{4}|f|^2+1-\frac{{\Om'}^2}{\Om^2}.
\end{equation*}
Integrating it along $(\Om'e'_4-\Om'e'_3)$, we infer from $\osc\leq\ep$ that
\begin{equation}\label{uu'Si0}
    |u'-u|\les \ep + \int_{w_0}^{r'}\left(\frac{\ep^2}{{r'}^{s-1}}+\frac{\ep}{r'}\right)dr'\les\ep\log r',\quad \mbox{ on }\Si_0\setminus K.
\end{equation}
Notice that
\begin{equation}\label{e4uu'}
    e'_4(u-u')=\frac{|f|^2}{4\Om'}=O\left(\frac{\ep^2}{{r'}^{s-1}}\right).
\end{equation}
Integrating \eqref{e4uu'} from $\Si_0\setminus K$ to $\kk'$ and recalling that the height of $\kk'$ is finite\footnote{Recall that we defined $\kk':=\{p/\, 0\leq u'(p)+\ub(p)\leq 2\de_0\}$ and $\de_0<1$.}
we obtain immediately from \eqref{uu'Si0} that
\begin{equation}\label{uu'kk}
    |u'-u|\les\ep\log r',\quad \mbox{ in }\kk_{(0)}.
\end{equation}
Finally, we compare $r$ and $\frac{\ub-u}{2}$. Recalling that $w=\frac{\ub-u'}{2}$ and applying \eqref{uu'kk}, we obtain
\begin{equation*}
    \left|w-\frac{\unu-u}{2}\right|\les|u'-u|\les \ep\log r' ,\quad \mbox{ in }\kk'.
\end{equation*}
Then, we have, using also \eqref{r'wSi0} and $\osc(r)\leq\ep$,
\begin{equation*}
    \left|r-\frac{\unu-u}{2}\right|\les|r'-r|+|r'-w|+\left|w-\frac{\ub-u}{2}\right|\les \ep\log r'\quad \mbox{ in }\kk'.
\end{equation*}
Next, we have from Lemma \ref{dint}
\begin{equation*}
    \frac{d}{d\unu}\left(r-\frac{\ub-u}{2}\right)=\frac{r}{2}\left(\overline{\Omega\tr\chi}-\frac{1}{r}\right)=r\Gaa.
\end{equation*}
We integrate it along $C_u$ from a sphere $(u,v_0)\in\kk_{(0)}$ to obtain on $\kk$
\begin{equation*}
    \left|r(u,\ub)-\frac{\ub-u}{2}\right|\les\ep\log r'(u,v_0)+\ep\log r(u,\ub)\les\ep \log r(u,\ub).
\end{equation*}
This concludes the proof of Lemma \ref{equivalence}.
\section{Proof of the case \texorpdfstring{$s\in (3,4)$}{}}\label{secc}
In sections \ref{sec8}-\ref{sec11}, we have provided the proof of Theorem \ref{th8.1} in the case of $s\in[4,6]$. In this appendix, we outline the proof in the case $s\in(3,4)$, which is similar to the case of $s\in[4,6]$.
\subsection{Fundamental norms}\label{secc1}
The definitions of some of the norms are different from section \ref{ssec8.1}. In the sequel, we only mention the norms that differ from the ones in section \ref{ssec8.1}.\\ \\
We define $\RR_0^{S}[\b]$ and $\RRb_0^S[\rhoc,\si]$ as follows: 
\begin{align*}
    \mr_0^{S}[\b]&:=\sup_{\mathcal{K}}\sup_{p\in [2,4]}|r^{\frac{s+3}{2}-\frac{2}{p}}\beta |_{p,S},\\
    \ur_0^{p,S}[\rhoc,\si]&:=\sup_{\mathcal{K}}\sup_{p\in [2,4]}|r^{\frac{s+2}{2}-\frac{2}{p}}|u|^{\frac{1}{2}} (\chr,\sigma) |_{p,S}.
\end{align*}
The following flux of curvature components are different from the case $s\in[4,6]$ for $q=0,1$:
\begin{align*}
         \RR_{q}[\beta](u,\ub)&:= \Vert r^{\frac{s}{2}}(r\nab)^q\b\Vert_{2,\cuv},\\
         \RRb_{q}[(\rhoc,\sigma)](u,\ub)&:=\Vert r^{\frac{s}{2}}(r\nab)^q(\rhoc,\si)\Vert_{2,\ucuv}.
\end{align*}
The definition of $\mo_q^{p,S}(\Om\omb)(u,\unu)$ is different from the case $s\in[4,6]$:
\begin{align*}
\mo_0^{p,S}(\Om\underline{\omega})(u,\unu):=|r^{\frac{s}{2}-\frac{2}{p}}|u|^\frac{4-s}{2}\omb|_{p,S(u,\unu)}.
\end{align*}
The other $\mo$ norms are the same as in the case $s\in[4,6]$.
\subsection{Estimates for Ricci coefficients and curvature components}\label{ssecc2}
We discuss the curvature estimates of section \ref{sec9} in this case. Recall that we have four Bianchi pairs: $(\alpha,\beta)$, $(\beta,(\chr,-\sigma))$, $((\chr,\sigma),\unb)$ and $(\unb,\una)$. In section \ref{sec9}, we took respectively $p=s,4,2,0$ to estimate the Bianchi pairs. In the case $s\in(3,4)$, we take respectively $p=s,s,2,0$ to estimate the Bianchi pairs. The method is then exactly the same as in section \ref{sec9}. \\ \\
Concerning the control of Ricci coefficients, we can proceed by the same method as in section \ref{sec10}. Recall that Proposition \ref{prop10.4} used the fact that there exists a constant $\de>0$ such that
\begin{align*}
    \sup_{p\in [2,4]}|r^{3+\de-\frac{2}{p}}\b|_{p,S}\les \De_0.
\end{align*}
This still holds true since we have $\frac{s+3}{2}>3$ for $s>3$. The other propositions still hold true since we only used $s>3$ in their proofs. 
\subsection{Conclusion}\label{ssecc3}
The proof of Theorems M0, M2, M4 and Lemma \ref{equivalence} remain exactly the same since all the arguments also applied to the case $s>3$. Consequently, we have Theorem \ref{th8.1} in this case. Hence, we deduce that Theorem \ref{th8.1} holds true for $s\in(3,6]$.
\section{Proof of the case \texorpdfstring{$s>6$}{}}\label{secd}
In this appendix, we prove Theorem \ref{th8.1} in the case $s>6$. Then, we compare the result to the peeling decay for curvature components obtained in \cite{kncqg}.
\subsection{Fundamental norms}\label{ssecd1}
The definitions of $\mr$--norms for $\a$ and $\b$ are different from section \ref{ssec8.1}. We denote
\begin{align}
    s_0:=\min\left\{s,{\frac{29}{4}}\right\}.
\end{align}
We define for $q=0,1$:
\begin{align}
    \begin{split}\label{newbeta}
    \ur_q[\b]&:=\|r^{3}|u|^{\frac{s-6}{2}}(r\nab)^q\b\|_{2,\ucuv},\\
    \mr_q[\a]&:=\|r^3|u|^\frac{s-6}{2}(r\nab)^q\a\|_{2,\cuv},\\
    \mr_1[\a_4]&:=|u|^\frac{s-s_0}{2}\|r^\frac{s_0}{2}\ac\|_{2,\cuv}.
    \end{split}
\end{align}
We also define:
\begin{equation}
    \ur_0^S[\b]:=\sup_\kk\sup_{p\in [2,4]}|r^{4-\frac{2}{p}}|u|^\frac{s-5}{2}\b|_{p,S}.
\end{equation}
The norms $\mr_0^S[\a]$ are defined as follows:
\begin{align}
\begin{split}\label{peelinga}
    \mr_0^S[\a]&:=\sup_\kk|r^{\frac{s+1}{2}}\a|_{2,S},\qquad\qquad\quad\, s\in(6,7),\\
    \mr_0^S[\a]&:=\sup_\kk|r^{4}(\log r)^{-\frac{1}{2}}\a|_{2,S},\qquad\,\, s=7, \\
    \mr_0^S[\a]&:=\sup_\kk|r^{4}|u|^\frac{s-7}{2}\a|_{2,S},\qquad\quad\;\;\, s>7.
\end{split}
\end{align}
All the other norms are defined as in section \ref{ssec8.1}.
\subsection{Optimal constant for Poincar\'e inequality}
\begin{prop}\label{poincare}
    For any $\a\in\sk_2$, we have the following Poincar\'e inequality:
    \begin{align*}
        |rd_2\a|^2_{{2,S}} \geq c_2 |\a|^2_{{2,S}},
    \end{align*}
    for sphere $S=S(u,\ub)$, where $c_2$ is a constant satisfying:
    \begin{equation}\label{poinc2}
        c_2={2}-O(\ep).
    \end{equation}
\end{prop}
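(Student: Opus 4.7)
The plan is to reduce the claim to the sharp Poincar\'e inequality for the rough Laplacian acting on symmetric traceless $2$--tensors on a round sphere, and then account for the $O(\epsilon)$ perturbation coming from the bootstrap assumption $\mo\leq\epsilon$. Starting from the identity $d_2^\ast d_2 = -\tfrac{1}{2}\Delta_2 + \mathbf{K}$ of \eqref{dddd}, integration by parts over $S$ yields
\begin{equation*}
    |d_2\alpha|^2_{2,S} \;=\; \int_S \alpha\cdot d_2^\ast d_2\alpha \;=\; \frac{1}{2}\int_S |\nabla\alpha|^2 + \int_S \mathbf{K}\,|\alpha|^2,
\end{equation*}
so that $|rd_2\alpha|^2_{2,S}=\tfrac{r^2}{2}\int_S|\nabla\alpha|^2+\int_S r^2\mathbf{K}\,|\alpha|^2$. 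The smallness assumptions in $\mo$, combined with the Gauss equation \eqref{gauss} and the estimates already derived in Section \ref{sec10}, imply the pointwise control $r^2\mathbf{K}=1+O(\epsilon)$ on every leaf $S=S(u,\ub)$.

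The heart of the proof is then the sharp lower bound
\begin{equation*}
    \int_S |\nabla\alpha|^2 \;\geq\; \frac{2-O(\epsilon)}{r^2}\,|\alpha|^2_{2,S},\qquad \alpha\in\sfr_2.
\end{equation*}
On the standard round sphere of radius $r$, this is the statement that the first eigenvalue of $-\Delta$ on symmetric traceless $2$--tensors equals $2/r^2$; equivalently, in the spin-weighted spherical harmonic decomposition the lowest admissible mode is $l=2$, with eigenvalue $l(l+1)-s^2=2$ for spin $s=2$ on the unit sphere. I would prove the perturbed version by Hodge-decomposing $\alpha=d_2^\ast\xi$ with $\xi\in\sfr_1$ orthogonal to the (approximate) $6$-dimensional space of conformal Killing vectorfields of $S$, so that standard elliptic theory for $d_2^\ast d_2$ on $\sfr_2$ together with the $C^k$-closeness of $S$ to a round sphere---quantified by the bounds on $\trchc$, $\hch$, $\hchb$, $\eta$, $\etab$ and the Gauss curvature---yields the eigenvalue stability estimate above. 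Alternatively, one may invoke the sharp Poincar\'e inequality of Corollary~2.3.1.1 of \cite{Ch-Kl} on a nearly round $2$--surface and track the dependence of the constant on the geometric quantities in $\mo$.

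Combining the two ingredients gives
\begin{equation*}
    |rd_2\alpha|^2_{2,S}\;\geq\;\frac{r^2}{2}\cdot\frac{2-O(\epsilon)}{r^2}|\alpha|^2_{2,S}+(1-O(\epsilon))|\alpha|^2_{2,S}\;=\;(2-O(\epsilon))|\alpha|^2_{2,S},
\end{equation*}
which is the desired conclusion with $c_2=2-O(\epsilon)$. The main obstacle will be the eigenvalue perturbation step: producing the constant $2-O(\epsilon)$ (rather than the naive $1-O(\epsilon)$ that one obtains from the crude bound $\int|\nabla\alpha|^2\geq 0$) requires a quantitative argument that genuinely uses the first non-trivial eigenvalue of $-\Delta$ on $\sfr_2$, together with the bootstrap control of the geometry of $S$. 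Once this is in hand the rest of the proof is a direct computation.
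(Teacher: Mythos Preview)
The paper does not give a proof at all: it simply cites Proposition~9.3.2 of \cite{DHRT}. Your sketch is therefore more informative than what the paper provides, and the strategy is the standard one behind that cited result: use $d_2^\ast d_2=-\tfrac{1}{2}\Delta_2+\mathbf{K}$, identify the bottom of the spectrum of $d_2^\ast d_2$ on $\sfr_2(\mathbb{S}^2_r)$ as $2/r^2$ (equivalently, first eigenvalue $2/r^2$ for $-\Delta_2$, since $\sfr_2(\mathbb{S}^2)$ carries no transverse--traceless tensors and the lowest spin--$2$ mode is $l=2$), and then perturb using $r^2\mathbf{K}=1+O(\ep)$ from the Gauss equation and the bootstrap bounds. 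Your computation combining the two pieces to get $c_2=2-O(\ep)$ is correct.

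One small correction: the alternative you mention, Corollary~2.3.1.1 of \cite{Ch-Kl} (which is the paper's Proposition~\ref{prop7.3}), gives $L^p$ elliptic estimates for Hodge systems with \emph{some} constant, not the sharp Poincar\'e constant; it will not by itself produce $c_2=2-O(\ep)$, only $c_2\gtrsim 1$. The sharp constant genuinely requires the spectral identification you outline, so your primary route via eigenvalue perturbation is the right one to pursue.
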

\begin{proof}
    {See for example Proposition 9.3.2 in \cite{DHRT}.}
\end{proof}
\subsection{Curvature estimates}\label{secd3}
In section \ref{sec9}, we took respectively $p=s,4,2,0$ to estimate the Bianchi pairs $(\a,\b)$, $(\b,(\rhoc,\si))$, $((\rhoc,\si),\bb)$ and $(\bb,\aa)$. In the case $s>6$, we take respectively $p=6,4,2,0$ for the Bianchi pairs. Proceeding as in section \ref{sec9}, we deduce from Propositions \ref{estab}-\ref{barrho} and \ref{prop7.8}
\begin{align}
\begin{split}\label{oldest}
&\sum_{q=0}^1\left(\mr_q[\a]+\mr_q[\b]+\ur_q[\b]+\mr_q[\rhoc,\si]+\ur_q[\rhoc,\si]+\mr_q[\bb]+\ur_q[\bb]+\ur_q[\aa]\right)\\
&+\ur_1[\aa_3]+\mr_0^S[\b]+\ur_0^S[\b]+\ur_0^S[\rhoc,\si]+\ur_0^S[\bb]+\ur_0^S[\aa]+\ur_0^S[\ov{\rho}]\les\ep_0.
\end{split}
\end{align}
It remains to estimate $\a$ and $\ac$. To this end, we prove the following divergence identity.
\begin{lem}\label{keyidentity}
We have the following identity for any real number $p$:
\begin{align}
\begin{split}\label{estkeypointteu}
&{\bdiv(r^p|\ac|^2e_3)+2\bdiv(r^p|\as|^2e_4)+\left(p+2\right)r^{p-1}|\ac|^2+2\left(8-p\right)r^{p-1}|\as|^2-8r^{p-1}\ac\cdot\a}\\
=&{4r^pd_1(\ac\cdot\as)+r^p(\ac,\as)\cdot(\Gaa\cdot\rg+\Gag^{(1)}\c\b)+r^p\Gaa\cdot(|\ac|^2,|\as|^2).}
\end{split}
\end{align}
\end{lem}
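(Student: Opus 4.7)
\noindent\textbf{Proof plan for Lemma \ref{keyidentity}.}

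The idea is to simply apply Lemma \ref{keypoint} (part 1) to the Teukolsky system \eqref{teu} from Lemma \ref{teulm}, and then rewrite the coefficients in terms of $r^{-1}$ using the asymptotic expansions $\trch = 2/r + \Gaa$ and $\trchb = -2/r + \Gaa$. The system \eqref{teu} fits the template \eqref{bianchi1} with
\[
\psi_{(1)}=\ac,\quad \psi_{(2)}=\as,\quad k=2,\quad a_{(1)}=0,\quad a_{(2)}=\tfrac{5}{2},
\]
and inhomogeneous terms
\[
h_{(1)}=\tfrac{4\a}{r}+\Gaa\cdot(\b,\a)^{(1)}+\Gag^{(1)}\c(\b,\a),\qquad h_{(2)}=\Gaa\cdot(\b,\a)^{(1)}+\Gag\c\ac+\Gag^{(1)}\c(\b,\a).
\]

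Plugging these values into \eqref{div} gives a divergence identity with coefficients
$(2a_{(1)}-1-\tfrac{p}{2})\trchb=-(1+\tfrac{p}{2})\trchb$ in front of $r^p|\ac|^2$ and $2(2a_{(2)}-1-\tfrac{p}{2})\trch=(8-p)\trch$ in front of $r^p|\as|^2$. The next step is to substitute $\trchb=-\tfrac{2}{r}+\Gaa$ and $\trch=\tfrac{2}{r}+\Gaa$, which converts these two coefficients into the clean leading terms $(p+2)r^{p-1}|\ac|^2$ and $2(8-p)r^{p-1}|\as|^2$, plus error terms of the form $r^p\Gaa\cdot(|\ac|^2,|\as|^2)$ to be absorbed on the right.

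For the inhomogeneous contribution, the only term that must be extracted explicitly is $\tfrac{4\a}{r}$ inside $h_{(1)}$: from $2r^p\ac\cdot h_{(1)}$ this produces $8r^{p-1}\ac\cdot\a$, which I then move to the left-hand side to produce the $-8r^{p-1}\ac\cdot\a$ term in \eqref{estkeypointteu}. All remaining pieces of $h_{(1)}$ and $h_{(2)}$ — namely $\Gaa\cdot(\b,\a)^{(1)}$, $\Gag^{(1)}\c(\b,\a)$, and (for $h_{(2)}$) $\Gag\c\ac$ — pair with $r^p\ac$ and $r^p\as$ to produce exactly the error structure $r^p(\ac,\as)\cdot(\Gaa\cdot\rg+\Gag^{(1)}\c\b)+r^p\Gaa\cdot(|\ac|^2,|\as|^2)$ (up to terms with strictly better decay that get absorbed into one of the two displayed error types). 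Finally, the ``geometric error'' terms $-2r^p\omb|\ac|^2$, $-4r^p\om|\as|^2$ and the terms $pr^{p-1}(e_3(r)-\tfrac{r}{2}\trchb)|\ac|^2$, $2pr^{p-1}(e_4(r)-\tfrac{r}{2}\trch)|\as|^2$ at the end of \eqref{div} fall into $r^p\Gaa\cdot(|\ac|^2,|\as|^2)$ since $\om,\omb\in\Gaa$ and, as computed in the proof of Proposition \ref{keyintegral}, $e_3(r)-\tfrac{r}{2}\trchb,\,e_4(r)-\tfrac{r}{2}\trch\in r\Gag$.

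The computation is therefore essentially bookkeeping: the only nontrivial step is making sure that the factor $k=2$ in \eqref{div} combines correctly with $a_{(2)}=5/2$ to yield the coefficient $2(8-p)$ rather than $(8-p)$, and that the factor $2$ in $\bdiv(r^p|\as|^2e_4)$ is preserved. I expect no genuine obstacle; the lemma is purely algebraic, obtained from Lemma \ref{keypoint} by plugging in the Teukolsky parameters and normalising $\trch$ and $\trchb$.
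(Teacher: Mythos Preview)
Your proposal is correct and takes essentially the same approach as the paper: the paper's proof simply recalls the Teukolsky system \eqref{teu} and applies Lemma \ref{keypoint} with exactly the parameters $\psi_{(1)}=\ac$, $\psi_{(2)}=\as$, $k=2$, $a_{(1)}=0$, $a_{(2)}=\tfrac{5}{2}$ that you identified, then absorbs the lower-order terms schematically. Your more detailed bookkeeping of how each term of \eqref{div} lands in \eqref{estkeypointteu} is accurate and matches what the paper leaves implicit.
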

\begin{proof}
We recall from \eqref{teu}
\begin{align*}
    \nab_3\ac&=-2d_2^*\as+\frac{4\a}{r}+\Gaa\cdot\b^{(1)}+\Gag^{(1)}\c\b,\\
    \nab_4\as+\frac{5}{2}\trch \,\as&=d_2\ac+\Gaa\cdot\b^{(1)}+\Gag^{(1)}\cdot\b.
\end{align*}
Applying Lemma \ref{keypoint} with $\psi_{(1)}=\ac$, $\psi_{(2)}=\as$, $a_{(1)}=0$, $a_{(2)}=\frac{5}{2}$, $h_{(1)}=\frac{4\a}{r}+\Gaa\cdot\rg+\Gag^{(1)}\c\b$, $h_{(2)}=\Gaa\cdot\rg+\Gag^{(1)}\c\b$ and $k=2$, we obtain \eqref{estkeypointteu} as stated.
\end{proof}
Next, we prove the following analog of Proposition \ref{esta4}.
\begin{prop}\label{newalpha}
We have the following estimate:
\begin{align*}
    \mr_1[\a_4]\les\ep_0.
\end{align*}
\end{prop}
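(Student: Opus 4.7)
The strategy is to apply the divergence identity of Lemma \ref{keyidentity} with weight $p=s_0$ and integrate it over the causal diamond $V=V(u,\ub)$ using Stokes' theorem. The left hand side will produce the flux $\int_\cuv r^{s_0}|\ac|^2$ and $\int_\ucuv r^{s_0}|\as|^2$ together with the positive bulk quantities $(s_0+2)r^{s_0-1}|\ac|^2$ and $2(8-s_0)r^{s_0-1}|\as|^2$. These coefficients are both strictly positive precisely because the cap $s_0 \le 29/4$ keeps us safely in the range $-2 < p < 8$. The critical difficulty is the coupling term $-8r^{s_0-1}\ac\cdot\a$, which arises from the damping $\frac{4\a}{r}$ in the first Teukolsky equation of \eqref{teu}, and it is this term that dictates the choice of $s_0$.

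The main ingredient to handle the coupling is the sharp Poincar\'e inequality of Proposition \ref{poincare}, applied pointwise on each sphere $S(u,\ub)$, which yields $\int_S |\as|^2 \ge c_2 \int_S |\a|^2$ with $c_2 = 2 - O(\ep)$. Integrating this estimate over $V$ gives $\int_V r^{s_0-1}|\as|^2 \ge c_2 \int_V r^{s_0-1}|\a|^2$. Cauchy--Schwarz with a parameter $\delta>0$ provides $8|\ac||\a| \le \delta|\ac|^2 + \frac{16}{\delta}|\a|^2$, and both contributions can be absorbed into the bulk terms on the left provided $\delta < s_0+2$ and $\frac{16}{\delta c_2} < 2(8-s_0)$. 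Eliminating $\delta$ yields the constraint $c_2(s_0+2)(8-s_0) > 8$, which (taking $c_2 \approx 2$) reduces to $(s_0+2)(8-s_0) > 4$, i.e. $s_0 < 3+\sqrt{21} \approx 7.58$. Since $29/4 = 7.25$, the choice $s_0 = \min(s,29/4)$ satisfies this inequality with a uniform margin, so a suitable $\delta$ exists and the coupling term is absorbed.

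For the initial data, Lemma \ref{gainu} gives $\int_{\Si_0 \cap V} r^{s_0}(|\ac|^2 + |\as|^2) \lesssim \Rk/|u|^{s-s_0} \lesssim \ep_0^2/|u|^{s-s_0}$, which is exactly the right size. Among the error terms in \eqref{estkeypointteu}, the contribution $r^{s_0}\Gaa \cdot (|\ac|^2,|\as|^2)$ is bounded by $\ep r^{s_0-2}|\ac,\as|^2$ by Lemma \ref{decayGagGabGaa} and is absorbed into the positive bulk terms for $\ep$ small. The bilinear errors $r^{s_0}(\ac,\as)\cdot(\Gaa \cdot \rg + \Gag^{(1)}\cdot \b)$ are treated by Cauchy--Schwarz: a small portion is absorbed into $\int_V r^{s_0-1}|\ac,\as|^2$, and the remainder is estimated using the curvature bounds collected in \eqref{oldest} together with the decay rates of $\Gaa$ and $\Gag^{(1)}$, yielding a contribution of size $\ep_0^2/|u|^{s-s_0}$.

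Collecting everything and multiplying through by $|u|^{s-s_0}$ gives $|u|^{s-s_0}\int_\cuv r^{s_0}|\ac|^2 \lesssim \ep_0^2$ uniformly in $(u,\ub) \in \kk$, which is precisely $\mr_1[\a_4] \lesssim \ep_0$. The main obstacle is the sharp Poincar\'e constant: if $c_2$ were only slightly smaller than $2$, the admissible range of $s_0$ would shrink, and for $c_2 \le 8/25$ no positive $s_0$ would work at all. Thus the argument is an instance where the fine geometric fact $c_2 \to 2$ on round spheres, combined with the numerology of the Teukolsky damping coefficient, forces the otherwise artificial-looking cutoff $s_0 \le 29/4$ and explains why the peeling derived here saturates at a finite rate despite assuming $s>6$ arbitrarily large.
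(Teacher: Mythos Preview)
Your approach is correct and essentially the same as the paper's: both integrate the divergence identity of Lemma~\ref{keyidentity} with $p=s_0$, invoke the sharp Poincar\'e constant of Proposition~\ref{poincare} to control $|\a|^2$ by $|\as|^2$, and verify that the quadratic form $(s_0+2)|\ac|^2+2(8-s_0)|\as|^2-8\ac\cdot\a$ is coercive precisely in the range permitted by $s_0\le 29/4$. The only cosmetic difference is that the paper completes the square exactly, writing $8|\ac|^2-8\ac\cdot\a+2|\a|^2=2|\a-2\ac|^2$ and reserving an explicit $\frac{8-s_0}{4}|\as|^2$ bulk, whereas you run Cauchy--Schwarz with a parameter $\delta$; your optimized constraint $c_2(s_0+2)(8-s_0)>8$ actually gives a slightly wider admissible range for $s_0$ than the paper's particular splitting, but both comfortably accommodate $29/4$.
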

\begin{proof}
Integrating \eqref{estkeypointteu} with $p=s_0$ and proceeding as in Proposition \ref{estab}, we obtain
\begin{align*}
    &\int_{C_u}r^{s_0}|\ac|^2 +\int_{\Cb_\ub}r^{s_0}{|\as|^2}+\int_V r^{s_0-1}\big((s_0+2)|\ac|^2+2(8-s_0){|\as|^2-8\ac\cdot\a}\big)\\
    \les &\int_{\Si_0\cap V} r^{s_0}(|\ac|^2+|\as|^2)+\int_V r^{s_0}|(\ac,\as)||\Gaa||\rg|+r^{s_0}|(\ac,\as)||\Gag^{(1)}||\b|+ r^{s_0}|\Gaa|(|\ac|^2+|\as|^2).
\end{align*}
First, we proceed as in Lemma \ref{gainu} to obtain
\begin{align*}
    \int_{\Si_0\cap V} r^{s_0}(|\ac|^2+|\as|^2)\les |u|^{s_0-s}\int_{\Si_0\cap V} r^s (|\ac|^2+|\as|^2)\les \frac{\ep_0^2}{|u|^{s-s_0}}.
\end{align*}
Next, we have
\begin{align*}
    \int_V r^{s_0}|(\ac,\as)||\Gaa||\rg|&\les\ep\int_{V}r^{s_0-2}|\ac||\rg|+\ep\int_V r^{s_0-2}|\as||\rg|\\
    &\les \ep\int_{-\ub}^u du \frac{1}{|u|^\frac{8-s_0}{2}}\left(\int_{\cuv} r^{s_0}|\ac|^2 \right)^\frac{1}{2}\left(\int_{\cuv} r^{4}|\rg|^2 \right)^\frac{1}{2}\\
    &+\ep\int_{|u|}^\ub d\ub\frac{1}{r^{\frac{10-s_0}{2}}}\left(\int_{\ucuv} r^{s_0}|\as|^2 \right)^\frac{1}{2}\left(\int_{\ucuv} r^6|\rg|^2 \right)^\frac{1}{2}\\
    &\les\int_{-\ub}^u du \frac{\ep^3}{|u|^{\frac{8-s_0}{2}}|u|^\frac{s-s_0}{2}|u|^\frac{s-4}{2}}+\int_{-\ub}^u d\ub \frac{\ep^3}{r^{\frac{10-s_0}{2}}|u|^\frac{s-s_0}{2}|u|^{\frac{s-6}{2}}}\\
    &\les \int_{-\ub}^u \frac{\ep^3}{|u|^{2+s-s_0}}du+\int_{-\ub}^u d\ub \frac{\ep^3}{r^{\frac{10-s_0}{2}}|u|^\frac{2s-s_0-6}{2}}\\
    &\les\frac{\ep_0^2}{|u|^{1+s-s_0}},
\end{align*}
where we used $s_0<8$. Then, we compute
\begin{align*}
    \int_V r^{s_0} |(\ac,\as)||\Gag^{(1)}||\b|&\les \int_{V}r^{s_0}|\ac||\Gag^{(1)}||\b|+\int_V r^{s_0}|\as||\Gag^{(1)}||\b|\\
        &\les\int_{-\ub}^u du \left(\int_{\cuv} r^{s_0}|\ac|^2 \right)^\frac{1}{2}\left(\int_{\cuv} r^{s_0}|\Gag^{(1)}|^2|\b|^2 \right)^\frac{1}{2}\\
        &+\int_{|u|}^\ub d\ub \left(\int_{\ucuv} r^{s_0}|\as|^2 \right)^\frac{1}{2}\left(\int_{\ucuv} r^{s_0}|\Gag^{(1)}|^2|\b|^2 \right)^\frac{1}{2}\\
        &\les\ep\int_{-\ub}^u \frac{du}{|u|^\frac{s-s_0}{2}} \left(\int_{|u|}^\ub d\ub\, r^{s_0}\int_S |\Gag^{(1)}|^2|\b|^2 \right)^\frac{1}{2}\\
        &+\ep\int_{|u|}^\ub\frac{d\ub}{|u|^\frac{s-s_0}{2}}\left(\int_{-\ub}^u du\, r^{s_0}\int_S |\Gag^{(1)}|^2|\b|^2 \right)^\frac{1}{2}\\
        &\les\ep\int_{-\ub}^u \frac{du}{|u|^\frac{s-s_0}{2}} \left(\int_{|u|}^\ub d\ub\, r^{s_0-10}|r^\frac{3}{2}\Gag^{(1)}|^2_{4,S}|r^\frac{7}{2}\b|_{4,S}^2 \right)^\frac{1}{2}\\
        &+\ep\int_{|u|}^\ub\frac{d\ub}{|u|^\frac{s-s_0}{2}}\left(\int_{-\ub}^u du\, r^{s_0-10}|r^\frac{3}{2}\Gag^{(1)}|^2_{4,S}|r^\frac{7}{2}\b|_{4,S}^2 \right)^\frac{1}{2}\\
        &\les\ep^3\int_{-\ub}^u \frac{du}{|u|^\frac{s-s_0}{2}}\left(\int_{|u|}^\ub r^{s_0-10}|u|^{8-2s} d\ub\right)^\frac{1}{2}\\
        &+\ep^3\int_{|u|}^\ub \frac{d\ub}{|u|^\frac{s-s_0}{2}}\left(\int_{-\ub}^u r^{s_0-10} |u|^{8-2s}du\right)^\frac{1}{2}\\
        &\les \ep^3\int_{-\ub}^u \frac{du}{|u|^\frac{s-s_0}{2}}r^\frac{s_0-9}{2} |u|^{4-s}+\ep^3\int_{|u|}^\ub \frac{d\ub}{|u|^\frac{s-s_0}{2}}r^{\frac{s_0}{2}-5}|u|^{\frac{9}{2}-s}\\
        &\les \frac{\ep^3}{|u|^\frac{s-s_0}{2}|u|^{s-5}|u|^\frac{9-s_0}{2}}+\frac{\ep^3}{r^{4-\frac{s_0}{2}}|u|^\frac{s-s_0}{2}|u|^\frac{2s-9}{2}}\\
        &\les \frac{\ep_0^2}{|u|^{\frac{s-1}{2}+s-s_0}}.
    \end{align*}
    We also have
    \begin{align*}
        \int_V r^{s_0}|\Gaa|(|\ac|^2+|\as|^2)\les\ep\int_V r^{s_0-2}(|\ac|^2+|\as|^2).
    \end{align*}
    Moreover, applying Proposition \ref{poincare}, we infer
    \begin{align*}
       &{\int_V r^{s_0-1}\big((s_0+2)|\ac|^2+2(8-s_0){|\as|^2-8\ac\cdot\a}\big)}\\
       {\geq}&{\int_V r^{s_0-1}\left((s_0-6)|\ac|^2+\frac{8-s_0}{4}|\as|^2+\left(\frac{7}{4}(8-s_0)c_2-2\right)|\a|^2+8|\ac|^2-8\ac\cdot\a+2|\a|^2\right)}\\
       {\geq}&{\int_V r^{s_0-1}\left((s_0-6)|\ac|^2+\frac{3}{16}|\as|^2+\frac{19}{64}|\a|^2+2|\a-2\ac|^2\right)}\\
       {\gtrsim}&{\int_V r^{s_0-1}\left(|\ac|^2+|\as|^2+|\a|^2\right),}
    \end{align*}
    where we used {$6<s_0\leq\frac{29}{4}$ and $c_2\geq\frac{7}{4}$} for $\ep$ small enough. Combining the above estimates, we obtain
    \begin{align*}
        \int_{\cuv}r^{s_0}|\ac|^2 +\int_{\ucuv}r^{s_0}|\as|^2+\int_V r^{s_0-1}\left(|\ac|^2 +|\as|^2+|\a|^2\right)\les \frac{\ep_0^2}{|u|^{s-s_0}}+\ep \int_V r^{s_0-2}(|\ac|^2+|\as|^2).
    \end{align*}
    For $\ep$ small enough, we deduce
    \begin{align*}
        \int_{\cuv}r^{s_0}|\ac|^2 +\int_{\ucuv}r^{s_0}|\as|^2+\int_V r^{s_0-1}\left(|\ac|^2 +|\as|^2+|\a|^2\right)\les \frac{\ep_0^2}{|u|^{s-s_0}}.
    \end{align*}
    This concludes the proof of Proposition \ref{newalpha}.
\end{proof}
\begin{prop}\label{finalpeeling}
    We have the following estimate:
    \begin{equation*}
    \mr_0^S[\a]\les \ep_0.
    \end{equation*}
\end{prop}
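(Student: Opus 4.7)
The plan is to derive the sphere-level peeling for $\a$ by combining the identity $\nab_4(r^5\a)=r^4\ac$ of Lemma~\ref{teulm} with the curvature flux $\mr_1[\a_4]\les\ep_0$ furnished by Proposition~\ref{newalpha}. Define $\Psi_a(u,\ub):=|r^a\a|_{2,S}^2=r^{2a}\int_S|\a|^2\,d\ga$. Applying Lemma~\ref{dint} and substituting $\nab_4\a=\ac/r-5e_4(r)\a/r$ gives
\begin{equation*}
\Om e_4\Psi_a+(4-a)\overline{\Om\trch}\,\Psi_a=2\int_S r^{2a-1}\Om\,\a\cdot\ac\,d\ga+\int_S r^{2a}\,\widecheck{\Om\trch}\,|\a|^2\,d\ga.
\end{equation*}
The sign of $(4-a)$ and the weight $2a-1$ select the strategy depending on $s$.

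For $s\in(6,7)$ I would take $a=(s+1)/2<4$ so that $(4-a)\overline{\Om\trch}\sim (7-s)/(2r)$ is coercive on the LHS. Bounding the cross-term by $2r^{a-1}\Psi_a^{1/2}|\ac|_{2,S}$ and applying Young's inequality with $\de<7-s$, a $\de\Psi_a/r$ contribution is absorbed into the coercive term while the remainder is $r^s|\ac|_{2,S}^2/\de$. Multiplying by the integrating factor $r^{(7-s)-\de}$ (derived from $\pr_{\ub}r\approx 1/2$ and $\overline{\Om\trch}\approx 1/r$) and integrating forward in $\ub$ from the initial sphere $S(u,|u|)\subset\Si_0$ produces
\begin{equation*}
\Psi_a(u,\ub)\les\left(\frac{|u|}{r}\right)^{(7-s)-\de}\Psi_a(u,|u|)+\frac{1}{\de\,r^{(7-s)-\de}}\int_{|u|}^{\ub}r^{(7-s)-\de+s}|\ac|_{2,S}^2\,d\ub'.
\end{equation*}
Since $|u|/r\leq 1$ and $\sup_{C_u}r^{(7-s)-\de}$ is attained at $\ub$, the flux bound $\int_{C_u}r^s|\ac|^2\les\ep_0^2$ (Proposition~\ref{newalpha} with $s_0=s$) collapses the second term to $\ep_0^2/\de$. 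The first is controlled by the initial sphere bound $|r^{(s+1)/2}\a|_{2,S_0}\les\ep_0$, which follows from $\mo_{(0)}+\Rfk_{(0)}\les\ep_0$ via a three-dimensional Sobolev embedding on $\Si_0\setminus K$ applied to $r^{s/2}\a_{(0)}$ (two derivatives are available in $\Rfk_{(0)}$), together with the change-of-frame formula of Proposition~\ref{transformation}, whose non-trivial contribution is of order $\osc\les\ep_0$.

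For $s\geq 7$, the sign $(4-a)$ is no longer favorable when $a=(s+1)/2\geq 4$; I would instead take $a=4$ and apply Lemma~\ref{evolutionlemma} directly to $\nab_4\a+(5/2)\trch\a=\ac/r+\Ga\cdot\a$ with $p=2$, $\la_1=4$. The driving term is estimated via Cauchy--Schwarz using the flux:
\begin{equation*}
\int_{|u|}^{\ub}|r^3\ac|_{2,S}\,d\ub'\leq\left(\int_{|u|}^{\ub}r^{6-s_0}\,d\ub'\right)^{1/2}\left(\int_{C_u}r^{s_0}|\ac|^2\right)^{1/2}\les\ep_0\,|u|^{-(s-s_0)/2}\cdot w(r,|u|),
\end{equation*}
where $w(r,|u|)$ denotes the weighted time integral. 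For $s=7$ ($s_0=7$), $w$ produces $\log^{1/2}(r/|u|)$, yielding the logarithmic loss $|r^4(\log r)^{-1/2}\a|_{2,S}\les\ep_0$. For $s>7$ ($s_0=29/4$), $w\les|u|^{-1/8}$ is convergent, and summed with the $|u|^{(s-s_0)/2}$-improvement of the flux plus the initial sphere bound $|r^4\a|_{2,S_0}\les\ep_0|u|^{(7-s)/2}$, this yields $|r^4|u|^{(s-7)/2}\a|_{2,S}\les\ep_0$.

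The main obstacle is the delicate absorption argument for $s\in(6,7)$: one must choose $\de<7-s$ strictly and control the residual $\widecheck{\Om\trch}$-term, which by Sobolev and the $\mo$-bounds satisfies $|\widecheck{\Om\trch}|_\infty\les\ep/(r^2|u|^{(s-3)/2})$ so that $|r^{2a}\widecheck{\Om\trch}||\a|_{2,S}^2/\Psi_a\les\ep/(r|u|^{(s-3)/2})$, which is negligible compared to $(7-s)/(2r)$. A secondary difficulty is the sphere-level initial datum $|r^{(s+1)/2}\a|_{2,S_0}\les\ep_0$, which is not directly one of the norms in $\Rfk_{(0)}$ but is recovered by the Sobolev--frame-transformation argument outlined above.
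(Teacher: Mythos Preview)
Your argument for $s\geq 7$ (weight $a=4$, integrate $e_4(|r^4\a|_{2,S})$, Cauchy--Schwarz against the $\ac$--flux from Proposition~\ref{newalpha}, then evaluate $\int_{|u|}^{\ub}r^{6-s_0}d\ub$) is exactly the paper's proof. One harmless slip: $s_0=\min\{s,29/4\}$ equals $s$ on $(7,29/4)$, not $29/4$; in that range the flux carries no extra $|u|$--gain but $\int r^{6-s}d\ub\les|u|^{7-s}$ directly, so the conclusion is unchanged.

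For $s\in(6,7)$ you take a genuinely different route: you pick $a=(s+1)/2<4$ to manufacture a damping term $(4-a)\overline{\Om\trch}\,\Psi_a\approx\tfrac{7-s}{2r}\Psi_a$, then absorb the cross term by Young's inequality and propagate with an integrating factor. This is correct (modulo an inessential factor of $2$ in the exponent of the integrating factor). The paper, however, avoids the absorption argument altogether by keeping $a=4$ uniformly across all three regimes. With $a=4$ the damping disappears and one simply integrates
\[
e_4(|r^4\a|_{2,S})\les |r^3\ac|_{2,S}+\ep|r^2\a|_{2,S},
\]
applies Cauchy--Schwarz $\int_{|u|}^{\ub}|r^3\ac|_{2,S}\les(\int r^{6-s})^{1/2}\ep_0$, and uses that $\int_{|u|}^{\ub}r^{6-s}d\ub\les r^{7-s}$ grows to obtain $|r^4\a|_{2,S}\les\ep_0 r^{(7-s)/2}$, which is your bound. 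The advantage of the paper's approach is uniformity: the only case distinction is the elementary evaluation of $\int r^{6-s_0}d\ub$, rather than a structurally different argument for $s<7$. Your approach buys nothing extra here, though the damping idea would be the right one if the target weight exceeded $4$.
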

\begin{proof}
We have from Lemma \ref{dint} that
\begin{align*}
    \Om e_4\left( \int_S |r^4\a|^2 \right)&=\int_S 2\Om\nab_4(r^4\a)\cdot (r^4\a)+\Om\trch |r^4\a|^2\\
    &=\int_S 2\Om r^4\a\cdot ( \nab_4 (r^4\a)+ r^3\a )+\Om\left(\trch-\frac{2}{r}\right)|r^4\a|^2\\
    &=\int_S 2\Om r^4\a\cdot (\nab_4(r^5\a )r^{-1}+e_4(r^{-1})r^5\a+r^3\a )+\Gaa|r^4\a|^2\\
    &=\int_S 2\Om r^4\a\cdot (\nab_4(r^5\a )r^{-1})+\Gaa|r^4\a|^2\\
    &=\int_S 2\Om r^4\a\cdot r^3\ac+\Gaa|r^4\a|^2.
\end{align*}
Hence, we obtain
\begin{align*}
    \left|e_4\left( \int_S |r^4\a|^2 \right)\right| \les |r^4\a|_{2,S} |r^3\ac|_{2,S} +\ep |r^4\a|_{{2,S}}|r^2\a|_{2,S},
\end{align*}
which implies 
\begin{align*}
    e_4(|r^4\a|_{2,S})\les |r^3\ac|_{2,S}+\ep |r^2\a|_{2,S}.
\end{align*}
Integrating along $C_u$, applying initial assumption and Proposition \ref{newalpha}, we infer
    \begin{align}
    \begin{split}\label{obstruction}
        |r^4\a|_{2,S}&\les \ep_0+\int_{|u|}^\ub |r^3\ac|_{2,S}d\ub+\ep |r^2\a|_{2,S}\\
        &\les\ep_0+\int_{|u|}^\ub r^{3-\frac{s_0}{2}}|r^\frac{s_0}{2}\ac|_{2,S} d\ub+\int_{|u|}^\ub\frac{\ep}{r^{\frac{3}{2}}}|r^\frac{7}{2}\a|_{2,S}d\ub\\
        &\les\ep_0+\left(\int_{|u|}^\ub r^{6-s_0} d\ub\right)^\frac{1}{2} \left(\int_{|u|}^\ub|r^\frac{s_0}{2}\ac|^2_{2,S} d\ub\right)^\frac{1}{2}+\int_{|u|}^\ub\frac{\ep^2}{r^{\frac{3}{2}}}d\ub\\
        &\les\ep_0+\frac{\ep_0}{|u|^{\frac{s-s_0}{2}}}\left(\int_{|u|}^\ub r^{6-s_0} d\ub\right)^\frac{1}{2}.
    \end{split}
    \end{align}
    Notice that we have
    $$
    \int_{|u|}^\ub r^{6-s_0}d\ub=\left\{
\begin{aligned}
&\frac{1}{7-s_0}(\ub^{7-s_0}-|u|^{7-s_0}),   \qquad\qquad s_0\in (6,7),\\
&\log\left(\frac{\ub}{|u|}\right), \qquad\qquad\qquad\qquad\quad\;\;\, s_0=7,\\
&\frac{1}{s_0-7}(|u|^{7-s_0}-\ub^{7-s_0}), \qquad\qquad\,  s_0>7.
\end{aligned}
\right.
    $$
Hence, we obtain from \eqref{obstruction}
\begin{equation}\label{eq6}
|r^4\a|_{2,S}\les\left\{
\begin{aligned}
&\ep_0 r^{\frac{7-s}{2}},   \qquad\qquad s\in (6,7),\\
&\ep_0 (\log r)^\frac{1}{2}, \qquad\;\;\, s=7,\\
&\ep_0 |u|^{\frac{7-s}{2}}, \qquad\quad\;\,  s>7,
\end{aligned}
\right.
\end{equation}
which implies from \eqref{peelinga} that
    \begin{align}\label{peelingalpha}
        \mr_0^S[\a]\les \ep_0.
    \end{align}
This concludes the proof of Proposition \ref{finalpeeling}.
\end{proof}
Combining Propositions \ref{finalpeeling} with \eqref{oldest}, this concludes the proof of Theorem M1 in the case of $s>6$. Notice that Theorems M0, M2, M3, M4 also hold true in the case $s>6$ since we only used $s>3$ in their proofs\footnote{See the discussion in section \ref{ssecc2}.}. Combining with section \ref{ssecc3}, this concludes the proof of Theorem \ref{th8.1} for $s>3$ as stated.
\subsection{Peeling decay for curvature components}
As a consequence of Theorem M1 in the case $s>7$, the curvature components satisfy the following decay:
\begin{align}
    \begin{split}\label{subpeeling}
        |r^{5-\frac{2}{p}}|u|^\frac{s-7}{2}\a|_{p=2,S}&\les\ep_0,\\
        \sup_{p\in[2,4]}|r^{4-\frac{2}{p}}|u|^{\frac{s-5}{2}}\b|_{p,S}&\les\ep_0,\\
        \sup_{p\in[2,4]}|r^{3-\frac{2}{p}}|u|^{\frac{s-3}{2}}(\rhoc,\si)|_{p,S}&\les \ep_0,\\
        \sup_{p\in[2,4]}|r^{2-\frac{2}{p}}|u|^{\frac{s-1}{2}}\bb|_{p,S}&\les\ep_0,\\
        \sup_{p\in[2,4]}|r^{1-\frac{2}{p}} |u|^{\frac{s+1}{2}}\aa|_{p,S}&\les\ep_0.\\
    \end{split}
\end{align}
\begin{rk}
    Assume $s>7$ and that the initial data in $\kk_{(0)}$ has sufficient regularity properties. Then, commuting $r\nab$ with the Bianchi equations in Corollary \ref{prop7.6} and the Teukolsky equation \eqref{teukolsky}, proceeding as in section \ref{secd3}, we deduce the following strong peeling properties: 
    \begin{align}
    \begin{split}\label{strongpeeling}
        |\a|_{\infty,S(u,\ub)}&\les \frac{\ep_0}{r^{5}|u|^\frac{s-7}{2}},\qquad\quad \,|\b|_{\infty,S(u,\ub)}\les \frac{\ep_0}{r^{4}|u|^\frac{s-5}{2}},\\
        |\rhoc,\si|_{\infty,S(u,\ub)}&\les\frac{\ep_0}{r^3|u|^\frac{s-3}{2}},\qquad\quad\; |\bb|_{\infty,S(u,\ub)}\les \frac{\ep_0}{r^2|u|^\frac{s-1}{2}},\qquad\quad|\aa|_{\infty,S(u,\ub)}\les \frac{\ep_0}{r|u|^\frac{s+1}{2}},
    \end{split}
\end{align}
which recovers the results obtained in \cite{kncqg} by the vectorfield method.
\end{rk}
\section*{Declarations}
\addcontentsline{toc}{section}{Declarations}
\noindent{\bf Funding.} No funding was received to assist with the preparation of this manuscript.\\ \\
{\bf Competing Interest statements.} Conflict of interest does not exist in the manuscript.

\end{document}